\newcommand{\insieme}[1]{\left\{ #1 \right\}}
\definecolor{pingreen}{rgb}{0,39,14}
\newcommand\scalare[1]{{\left\langle #1 \right\rangle}}
\crefname{section}{§}{§§}
\Crefname{section}{§}{§§}
\def\vol{\mathrm{vol}}
\def\lok{\mathrm{loc}}
\def\Int{\mathrm{Int}}
\def\Acal{\mathcal{A}}
\def\Bcal{\mathcal{B}}
\def\Hcal{\mathcal{H}}
\def\rhs{r.h.s.\xspace}
\def\lhs{l.h.s.\xspace}
\def\st{s.t.\xspace}
\newcommand{\hausd}{\mathcal H} % Misura di Hausdorff
\newtheorem*{rep@theorem}{\rep@title}
\newcommand{\newreptheorem}[2]{%
\newenvironment{rep#1}[1]{%
 \def\rep@title{#2 \ref{##1}}%
 \begin{rep@theorem}}%
 {\end{rep@theorem}}}
\newtheorem{theorem}{Theorem}[section]
\newtheorem{lemma}[theorem]{Lemma}
\newtheorem{definition}[theorem]{Definition}
\newtheorem{remark}[theorem]{Remark}
\newtheorem{proposition}[theorem]{Proposition}
\newcommand{\R}{\mathbb{R}}
\newcommand{\Z}{\mathbb{Z}}
\newcommand{\Fcal}{\mathcal{F}}
\newcommand{\Gcal}{\mathcal{G}}
\def\discrete{\mathrm{dsc}}
\def\discreteToCont{{\mathrm{d}\to\mathrm{c}}}
\def\Fj{F_{J,L}}
\def\Scal{\mathcal S}
\def\Scalper{\mathcal S^{\mathrm{per}}}
\def\T{\mathbb{T}}
\def\N{\mathbb N}
\def\eps{\varepsilon}
\def\Fj{\tilde{\mathcal F}_{J,L}}
\def\per{\mathrm{Per}}
\def\eps{\varepsilon}
\def\d {\,\mathrm {d}}
\def\dx{\,\mathrm {d}x}
\def\dz{\,\mathrm {d}z}
\def\ds{\,\mathrm {d}s}
\def\du{\,\mathrm {d}u}
\def\dv{\,\mathrm {d}v}
\def\dt{\,\mathrm {d}t}
\def\dy{\,\mathrm {d}y}
\def\GtL{\mathcal G_{\tau,L}}
\def\FtL{\mathcal F_{\tau,L}}
\numberwithin{equation}{section}
\author[1]{Sara Daneri\thanks{daneri@math.fau.de}}
\author[2]{Eris Runa\thanks{eris.runa@mis.mpg.de}}
\affil[1]{FAU Erlangen--N\"urnberg }
\affil[2]{Max Planck Institut f\"ur Mathematik in den Naturwissenschaften, Leipzig}
  \title{Exact periodic stripes for minimizers of a local/nonlocal interaction functional in general dimension}
\date{}
\begin{document}

\maketitle

\begin{center}
  {\em Dedicated to Prof. Stephan Luckhaus on the occasion of his 65th birthday.  }
\end{center}
  \vskip 7mm

\begin{abstract}
We study the functional considered in~\cite{2011PhRvB..84f4205G,2014CMaPh.tmp..127G,GiuSeirGS} and a continuous version of it, analogous to the one considered in~\cite{GR}. 
The functionals consist of a perimeter term and a nonlocal term which are in competition.  
For both the continuous and discrete problem, we show that the global minimizers are exact periodic stripes. 
One striking feature of the functionals is that the minimizers are invariant under a smaller group of symmetries than the functional itself. 
In the continuous setting, to our knowledge this is the first example of a model with local/nonlocal terms in competition such that the functional is invariant under permutation of coordinates and the minimizers display a pattern formation which is one-dimensional. Such behaviour for a smaller range of exponents in the discrete setting was already shown in~\cite{GiuSeirGS}.

\end{abstract}

\section{Introduction}
\label{sec:introduction}

%\label{subsec:1.1}

% \subsection{Functionals and Main Results}
%

In this paper we study a discrete local/nonlocal functional considered in a series of papers by Giuliani, Lebowitz, Lieb and Seiringer (cf.~\cite{2011PhRvB..84f4205G,2014CMaPh.tmp..127G,GiuSeirGS}) and a continuous version of it. 

The discrete functional is the following: given $E\subset \Z^d$, $d\geq1$, $L\in \N$ % , $Q_L=[0,L)^d$
\begin{equation} 
   \label{eq:giuliani}
   \begin{split}
      \tilde{\mathcal F}_{J,L}^{\discrete} (E) := \frac{1}{L^d}\Big(J \sum_{x\in [0,L)^d\cap\Z^d}\sum_{y\sim x}|\chi_{E}(x) -\chi_{E}(y) |  - \sum_{\underset{x\neq y}{x\in [0,L)^d\cap \Z^d,\,y\in \Z^{d}}} \frac{|\chi_{E}(x)-\chi_{E}(y) |}{|x-y|^{p}} \Big ),
   \end{split}
\end{equation} 
where $J$ is a positive constant, $p\geq d+2$, $y\sim x$ if $x$ and $y$ are neighbouring points in the lattice, and 
\begin{equation*} 
   \begin{split}
      \chi_{E}(x) := 
      \begin{cases}
         1 & \text{if $x\in E$,}\\
         0 & \text{otherwise. }
      \end{cases}
   \end{split}
\end{equation*}

In the continuous setting we consider the following functional: for $E\subset\R^d$, $d\geq1$, $L>0$ %, $Q_L=[0,L)^d$,
\begin{equation}\label{E:F}
\tilde{\mathcal F}_{J,L}(E)=\frac{1}{L^d}\Big(J\per_{1}(E,[0,L)^d)-\int_{[0,L)^d}\int_{\R^d}{|\chi_E(x)-\chi_E(y)|}{K_1(x-y)}\dy\dx\Big),
\end{equation}
where $J$ is a positive constant,
\begin{equation*} 
   \begin{split}
      \per_{1}(E,[0,L)^d):=\int_{\partial E\cap [0,L)^d}|\nu^E(x)|_1\, d\mathcal H^{d-1}(x),\quad \text{$|z|_1=\sum_{i=1}^d|z_i|$},
   \end{split}
\end{equation*} 
with $\nu^E(x)$ exterior normal to $E$ in $x$, is the $1$-perimeter of $E$ and $K_1$ is a kernel. 
The general assumptions on $K_1$ will be specified in \eqref{eq:Kbound}-\eqref{eq:laplpos}.
One particular kernel in this class is 
\begin{equation}\label{e:k_1} %---{{{
   \begin{split}
     K_{1} (\zeta) = \frac{1}{(|\zeta|_{1} + 1)^{p}}.
   \end{split}
\end{equation} %---}}}

Both of the discrete and the continuous models describe systems of particles in which there is a short-range attracting force (the perimeter) and a repulsive long-range force (nonlocal term).

% We assume that, among other assumptions (see \eqref{eq:Kbound}-\eqref{eq:laplpos}),  $\exists\,C$ such that $\frac{1}{C}\frac{1}{(|\zeta|+1)^p}\leq K_1(\zeta)\leq C\frac{1}{(|\zeta|+1)^p}$, $p\geq d+2$, and that the integrals $\widehat K_1(\zeta_i)=\int_{\R^{d-1}}K_1(\zeta_i^\perp,\zeta_i)\d\zeta_i^\perp$ are Laplace transforms of non-negative functions.
% An example is 
%
%
% \begin{equation}\label{e:k_1} %---{{{
%    \begin{split}
%      K_{1} (\zeta) = \frac{1}{(|\zeta|_{1} + 1)^{p}}.
%    \end{split}
% \end{equation} %---}}}

The behaviour of the functional is very similar to the one considered in~\cite{GR}, namely the kernel has the same scaling,  $\frac{1}{C}\frac{1}{(|\zeta|+1)^p}\leq K_1(\zeta)\leq C\frac{1}{(|\zeta|+1)^p}$, and retains the same symmetries as the functional considered in~\cite{GR}, but \eqref{e:k_1} has the advantage that, due to the positivity of the inverse Laplace transform of $\zeta_d  \mapsto \int_{\R^{d-1}}K_1(\zeta_1,\ldots,\zeta_d) \d\zeta_1\cdots\d\zeta_{d-1}$, the reflection positivity technique can be applied (see Section \ref{sec:1D_problem}).

The aim of this paper  is to study the structure of the minimizers of the discrete and continuous functionals.

In order to make our problem well-posed we impose periodic boundary conditions, namely we restrict the functional to $[0,L)^d$-periodic sets.  
However, as we will see from the statement of Theorems~\ref{T:1.3} and \ref{T:1.6}, due to the fact that our result is  independent  of  $L$, this will not be a restriction.

   For both the discrete and continuous problem, there exists a critical constant $J^\discrete_{c}$ ($J_{c}$ respectively) such that if $J>J^{\discrete}_{c}$ (respectively $J> J_c$), then the global minimizers are trivial, namely either empty or the whole domain.

The critical constants $J^{\discrete}_c$ and $J_c$ are (as proven in \cite{2011PhRvB..84f4205G} for the discrete setting and in \cite{GR} for the continuous case)
\begin{equation} \label{eq:jc}
   \begin{split}
      J_{c}^{\discrete}:=\sum_{\substack {y_1>0,\\(y_2,\ldots,y_d)\in\Z^{d-1}}}\frac{y_1}{(y_1^2+ \cdots + y_d^2)^{p\slash2}}
      \qquad \text{and} \qquad
      J_{c}:= \int_{\R^d}{|\zeta_1|}K_{1}(\zeta)\d\zeta.
   \end{split}
\end{equation}

When $J=J^{\discrete}_c-\tau$ (resp. $J=J_c-\tau$) with $0<\tau\leq \bar{\tau}$ for some $\bar{\tau}>0$ small enough, it has been conjectured that the minimizers should be periodic unions of stripes of optimal period.

In order to define what we mean by unions of stripes, let us fix a canonical basis $\{ e_i\}_{i=1}^d$. 
A union of stripes in the continuous setting is a $[0,L)^d$-periodic set which is, up to Lebesgue null sets, of the form $V_i^\perp+\hat Ee_i$ for some $i\in\{1,\dots,d\}$, where $V_i^\perp$ is the $(d-1)$-dimensional subspace orthogonal to $e_i$ and  $\hat{E}\subset \R$ with $\hat E\cap [0,L)=\cup_{k=1}^N(s_i,t_i)$. 
A union of stripes is periodic if $\exists\, h>0$, $\nu\in\R$ s.t. $\hat E\cap [0,L)=\cup_{k=0}^N(2kh+\nu,(2k+1)h+\nu)$.
In the following, we will also sometimes  call unions of stripes simply stripes. 

   The family of unions  of stripes will be denoted by $\Scal$, and the family of  unions of stripes which are $[0,L)^d$-periodic will be denoted by $\Scal^{\mathrm{per}}_L$.

   In the discrete setting the concept of union of stripes is the same, up to intersecting with the discrete lattice and considering $h,L\in \N$.
As for periodic stripes, the width and distance of the intersection of the continuous stripes with the discrete lattice must be the same. 
In particular, if the period $L$ is a prime number there can be no periodic stripes of period less than $L$, thus the only $[0,L)^d$-periodic stripes in the discrete setting are $\Z^d$ and $\emptyset$.

We will show that the conjecture on the structure of minimizers holds both for the discrete and continuous setting.  For the discrete setting, a different proof was already given in \cite{GiuSeirGS} for the smaller range of exponents $p > 2d$. 

We would like to point out that our method applies if, instead of periodic boundary conditions, we imposed optimal periodic Dirichlet boundary conditions, namely asking that the sets $E$ are, outside $[0,L)^d$, periodic unions of stripes of optimal period. 
We prefer periodic boundary conditions since the problem is invariant under coordinate exchange, and thus we are not preselecting a particular direction.  

If one optimizes among periodic unions of stripes, then unions of stripes with optimal energy for $\tilde{\mathcal F}_{J,L}^{\discrete}$ (respectively $\tilde{\mathcal F}_{J,L}$) have width and distance of order $\tau^{-1\slash{(p-d-1)}}$ and energy of order $\tau^{(p-d)\slash(p-d-1)}$. 
% In order to formulate the precise results, we notice, as in~\cite{GR}, that stripes with optimal energy for $\tilde{\mathcal F}_{J,L}^{\discrete}$ (respectively $\tilde{\mathcal F}_{J,L}$) have width of order $\tau^{-1\slash{p-d-1}}$ and energy of order $\tau^{(p-d)\slash(p-d-1)}$. 
Thus it is natural to rescale the functional in such a way that  the width and the energy of the stripes are of order $O(1)$ for $\tau$ small.  
Letting $\beta:= p-d-1$, the rescaling is the following:
\begin{equation}\label{eq:cv1}
   x:=\tau^{-1/\beta}\tilde{x}, \quad L:=\tau^{-1/\beta}\tilde{L} \quad \textrm{and} \quad \Fj(E):= \tau^{(p-d)/\beta}  \mathcal{F}_{\tau,\tilde L}( \tilde{E}).
\end{equation}

   In particular, notice that since the minimizers of the rescaled functional $\Fcal_{\tau,L}$ are unions of stripes, also the minima of $\tilde\Fcal_{J,L}$ with $J=J_c-\tau$ are unions of stripes.

Making the substitutions in \eqref{eq:cv1}, letting also $\zeta=\tau^{-1\slash\beta}\tilde{\zeta}$ and in the end dropping the tildes, one has that the rescaled functional in the continuous setting is given by
\begin{equation}\label{eq:ftauel0}
\begin{split}
\mathcal F_{\tau,L}(E)=\frac{1}{L^d}\Big(-\per_1(E,[0,L)^d)&+\int_{\R^d} K_\tau(\zeta) \Big[\int_{\partial E \cap [0,L)^d} \sum_{i=1}^d|\nu^E_i(x)| |\zeta_i|\d\mathcal H^{d-1}(x)\\
&-\int_{[0,L)^d}|\chi_E(x)-\chi_E(x+\zeta)|\dx\Big]\d\zeta\Big),
\end{split}
\end{equation}
where   $K_\tau(\zeta)=\tau^{-p/\beta}K_1(\zeta\tau^{-1/\beta})$.

  In the discrete setting, for $E\subset\kappa\Z^d$ $[0,L)^d$-periodic, where  $\kappa=\tau^{1/\beta}$ and $L$ is a multiple of $\kappa$, let 
   \begin{equation} 
      \label{eq:definizioneTildakappa}
      \begin{split}
         \tilde{E}^{\kappa}:=\bigcup _{i\in E} \Big(i + [-\kappa/2,\kappa/2)^{d}\Big).
         \qquad
         \per_{1,\kappa}(E,[0,L)^d) := \sum_{x\in [0,L)^d\cap \kappa \Z^{d}} \sum_{y\sim x} |\chi_{E}(x)-\chi_{E}(y)| \kappa^{d-1}.
      \end{split}
   \end{equation} 
   Then, the rescaled functional has the form
   \begin{align}
      \Fcal_{\tau,L}^{\discrete}(E)=\frac{1}{L^d}\Big(-\per_{1,\kappa}(E,[0,L)^d)&+\sum_{\zeta\in\kappa\Z^{d}} K^\discrete_{\kappa}(\zeta) \Big[\int_{\partial \tilde{E}^{\kappa} \cap [0,L)^d} \sum_{i=1}^d|\nu^{\tilde{E}^{\kappa}}_i(x)| |\zeta_i|\d\mathcal H^{d-1}(x)\notag \\
      &-\int_{[0,L)^d}|\chi_{\tilde{E}^{\kappa}}(x)-\chi_{\tilde{E}^{\kappa}}(x+\zeta)|\dx\Big]\Big)\label{eq:ftaudiscintro},
   \end{align}
   where $K^\discrete_{\kappa}(\zeta)=\frac{\kappa^d}{|\zeta|^p}$.

%\begin{equation*}
%\mathcal F_{\tau,L}(E)=\frac{1}{L^d}\Big(-\per_1(E,Q_L)+\int_{\R^d} K_\tau(\zeta) \Big[\int_{\partial E \cap Q_L} \sum_{i=1}^d|\nu^E_i(x)| |\zeta_i|\d\mathcal H^{d-1}(x)-\int_{Q_L}|\chi_E(x)-\chi_E(x+\zeta)|\dx\Big]\d\zeta\Big).
%\end{equation*}
%\begin{align*}
%\Fcal_{\tau,L}^{\discrete}(E)=\frac{1}{L^d}\Big(-\per_{1,\kappa}(E,Q_L)&+\sum_{\zeta\in\kappa\Z^{d}} K^\discrete_{\kappa}(\zeta) \Big[\int_{\partial \tilde{E}^{\kappa} \cap Q_L} \sum_{i=1}^d|\nu^{\tilde{E}^{\kappa}}_i(x)| |\zeta_i|\d\mathcal H^{d-1}(x)\\
%&-\int_{Q_L}|\chi_{\tilde{E}^{\kappa}}(x)-\chi_{\tilde{E}^{\kappa}}(x+\zeta)|\dx\Big]\Big),
%\end{align*}

   For fixed $\tau > 0$, consider first for all $L > 0$ the minimal value obtained by $\Fcal_{\tau,L}$ on $[0,L)^d$-periodic stripes (denoted above by $\Scalper_L$)
  and then the minimal among these values as $L$ varies in $(0,+\infty)$. We will denote this value by $C^*_\tau$, namely

  \begin{equation*}
     \begin{split}
        C^*_{\tau} := \inf_{L>0} \  \inf_{E\in \Scalper_L} \Fcal_{\tau,L}(E)  % \qquad \text{and} \qquad C^{*,\discrete}_{\tau} := \inf_{L>0} \  \inf_{E\in \Scalper_L} \Fcal_{\tau,L}(E).
     \end{split}
  \end{equation*}
  
  By the reflection positivity technique (see~Section~\ref{sec:1D_problem}), this value is attained on periodic stripes of width and distance $h^*_\tau > 0$.

  As in the continuum, in the discrete setting one can define similarly $C^{*,\discrete}_{\tau}$ and $h^{*,\discrete}_\tau$.

 %{\bf \color{blue} Non so se vogliamo scrivere l'insieme $H^*_{\tau}$ in formule??? Non so neanche se vogliamo dire cosa intendiamo per periodic stripes? Forse che confusione con $[0,L)$-periodic stripes?}

% {\color{blue}As all our statements are true for $\tau$ small enough,  we will always restrict the range of $\tau$ where uniqueness of $h^*_\tau$ holds.  }
 
%\achtung{Depending on the specific form of the kernel, the optimal periodicity can be calculated explicitly by using the technique of reflection positivity (see \cite{2011PhRvB..84f4205G}).}
%for applications to $1D$-models similar to ours see \cite{glllRP1}, \cite{glllRP3}, \cite{glllRP3bis} and ). 
%\achtung{For a setting with similar notations to ours see  Lemma~6.1 in \cite{GR} (for $\tau=0$ but an analogous result holds also for $\tau>0$).}

%For the kernel in \eqref{e:k_1}, $h^*_{\tau}$ is unique. For general kernels satisfying our assumptions, uniqueness might not  hold. 

In general, both for the discrete and the continuous setting, one does not expect $h^*_\tau$ and $h^{*,\discrete}_\tau$ to be unique (for the discrete setting see~\cite{GiuSeirGS}). 

% For the discrete setting, uniqueness of the optimal periodic width for unions of stripes in general does not hold (see~\cite{GiuSeirGS}). 

For the continuous setting, our main theorems are the following.
% We first fix a kernel $K_1$ such that its rescalings  $K_\tau(\zeta)=\tau^{-p/\beta}K_1(\zeta\tau^{-1/\beta})$ satisfy assumptions \eqref{eq:Kbound}-\eqref{eq:laplpos}.  

In our first theorem, we show that for  kernels  satisfying assumptions \eqref{eq:Kbound}-\eqref{eq:laplpos},  the period $h^*_{\tau}$ is unique, provided $0<\tau\leq\hat{\tau}$ with $\hat{\tau}>0$ depending on the chosen family of kernels.%  (see~Theorem~\ref{T:main0}). 

\begin{theorem}\label{T:main0}
Let $d\geq1$, $p\geq d+2$ and a family of kernels $\{K_{\tau}\}$ satisfying \eqref{eq:Kmon}-\eqref{eq:laplpos}. Then there exists $\hat{\tau}>0$ s.t. whenever $0< \tau < \hat{\tau}$, $h^*_\tau$ is unique. 
\end{theorem}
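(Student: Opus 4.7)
The strategy is to reduce the uniqueness statement to the analysis of a single real variable. By the reflection positivity reduction quoted before the theorem, $C^*_\tau$ is attained on periodic stripe patterns, so the quantity to minimise can be viewed as a smooth function $\phi_\tau:(0,\infty)\to\R$ of the single parameter $h$ (the common width of the stripes and gaps of period $2h$). The goal is to show that, for $\tau$ sufficiently small, $\phi_\tau$ has a unique global minimiser.

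The first step is to integrate out the $d-1$ directions orthogonal to the stripes and write $\phi_\tau(h)$ explicitly in terms of the one-dimensional marginal kernel $\widehat K_\tau(t) := \int_{\R^{d-1}} K_\tau(t,\zeta')\,d\zeta'$ paired against a kernel coming from the 1D periodic stripe pattern of period $2h$. Using assumption \eqref{eq:laplpos}, one writes $\widehat K_\tau$ as a positive superposition of exponentials $e^{-\alpha|t|}$ against a positive measure $\mu_\tau$; the nonlocal contribution to $\phi_\tau(h)$ then becomes a positive superposition of explicit rational functions of $e^{-\alpha h}$ which can be summed in closed form. From this representation, $\phi_\tau$ is real-analytic on $(0,\infty)$ and its derivatives admit integral representations against $\mu_\tau$.

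The second step is a perturbation argument around $\tau=0$. Thanks to the rescaling \eqref{eq:cv1}, $h^*_\tau$ is \emph{a priori} bounded above and bounded away from $0$ uniformly in $\tau$ small, so one may localise on a compact interval $[h_-,h_+]\subset(0,\infty)$. The limit function $\phi_0$ obtained from the scale-invariant kernel $|\zeta|_1^{-p}$ can be analysed by hand and shown to possess a unique non-degenerate global minimiser $h^*_0>0$: its second derivative there is strictly positive, and the explicit sum rules out further critical points on $[h_-,h_+]$. Since $\phi_\tau\to\phi_0$ in $C^2_{\mathrm{loc}}$, the implicit function theorem applied to $\phi'_\tau$ combined with non-degeneracy produces a unique zero of $\phi'_\tau$ near $h^*_0$, and the uniform $C^0$ convergence rules out competing minimisers on $[h_-,h_+]$.

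The hardest part is making the perturbation argument \emph{globally} uniform in $\tau$: one has to exclude not only spurious critical points close to $h^*_0$ but also any secondary minimum escaping to $0^+$ or $+\infty$ as $\tau\to 0$. This is exactly where \eqref{eq:laplpos} plays the essential role: it converts the oscillating nonlocal interaction into a positive superposition each of whose terms has a definite sign structure as a function of $h$, yielding a convexity-type statement for the nonlocal part that can be matched against the concave $-1/h$ perimeter term. Tracking the dependence on $\tau$ of the resulting estimates, and verifying that they degenerate only as $\tau$ leaves a neighbourhood of $0$, is the technical heart of the proof.
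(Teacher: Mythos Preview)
Your strategy is the paper's: reduce to the one-variable energy density $e_{\infty,\tau}(h)=-1/h+A_\tau(h)$, compute the limit $e_{\infty,0}(h)=-1/h+\bar C_q\,h^{-(q-1)}$ explicitly and check it has a unique non-degenerate minimiser $\bar h^*$, then transfer by $C^2$ convergence.

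The unjustified step is the $C^2_{\mathrm{loc}}$ convergence $\phi_\tau\to\phi_0$, and that is where essentially all the work lies. Your Laplace-superposition route does not supply it for free: writing $\phi_\tau^{(k)}(h)$ as an integral against $\mu_\tau$ still requires uniform-in-$\tau$ control of those integrals, and the hypotheses give no direct information on the measures $\mu_\tau$ --- only that their Laplace transform $\widehat K_\tau$ converges monotonically to $C_q|z|^{-q}$. The paper avoids $\mu_\tau$ altogether. It uses \eqref{eq:laplpos} only to observe that $\widehat K_\tau$ is \emph{convex} on $(0,\infty)$; convexity plus the two-sided bound \eqref{eq:Kbound} then yields $|\widehat K_\tau'(z)|\lesssim z^{-(q+1)}$ on $[\delta,\infty)$ uniformly in small $\tau$ (Steps~1--3), after which one differentiates the explicit integral formula for $A_\tau(h)$ twice and passes to the limit by dominated convergence (Step~5). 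That is the technical heart.

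Your final paragraph also misplaces the role of \eqref{eq:laplpos}. The paper never matches a ``convexity-type statement for the nonlocal part'' against the concave $-1/h$ term. Once $C^2$ convergence on compacta is established and minimisers are confined (which you already argued in your second step via the a~priori bounds from the rescaling), uniqueness follows directly: all critical points of $e_{\infty,\tau}$ cluster near $\bar h^*$, and in a fixed neighbourhood of $\bar h^*$ the second derivative stays uniformly positive by $C^2$ convergence, so $e_{\infty,\tau}$ is strictly convex there and has a single critical point. There is no further ``hardest part'' beyond the $C^2$ convergence itself.
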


In the next two theorems, we deal with the  occurrence of  pattern formation for $\Fcal_{\tau,L}$.

\begin{theorem}\label{T:main}
Let $d\geq1$, $p\geq d+2$, $L>0$. Then there exists $\bar{\tau}>0$ such that $\forall\,0<\tau\leq\bar\tau$ there exists $h_{\tau,L}$  such that  the minimizers of $\mathcal F_{\tau,L}$ are periodic stripes of width and distance $h_{\tau,L}$.
\end{theorem}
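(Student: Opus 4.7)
The plan is to prove Theorem~\ref{T:main} by slicing the functional into one-dimensional contributions along each coordinate axis, invoking the one-dimensional rigidity obtained via reflection positivity in Section~\ref{sec:1D_problem}, and upgrading the resulting lower bound to an equality characterization that forces $E$ to be a pure stripe pattern whose period is compatible with $L$.

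First I would decompose both terms of \eqref{eq:ftauel0} according to the direction of the normal. Writing $\per_1(E,[0,L)^d)=\sum_{i=1}^d\int_{\partial E\cap[0,L)^d}|\nu^E_i|\,d\mathcal H^{d-1}$ and splitting the boundary contribution of the nonlocal term analogously through the factors $|\zeta_i|$, Fubini recasts the $i$-th directional piece as the $[0,L)^{d-1}$-average of a one-dimensional functional acting on the slice $E_{x^\perp_i}\subset\R$. The delicate point is that $|\chi_E(x)-\chi_E(x+\zeta)|$ genuinely couples different slices; the positivity assumption \eqref{eq:laplpos} on the inverse Laplace transform of the $(d-1)$-marginal of $K_1$ is precisely what is needed to bound this $d$-dimensional coupling from below by a sum of purely one-dimensional ones, in the spirit of the reflection-positivity reduction mentioned after \eqref{e:k_1}.

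Once the slicing inequality is in place, I would apply the one-dimensional analysis of Section~\ref{sec:1D_problem} together with Theorem~\ref{T:main0} slice-by-slice: for $\tau$ small, each slice energy is at least $C^*_\tau$, with equality if and only if the slice is a periodic stripe of width and distance $h^*_\tau$. Averaging over $x^\perp_i\in[0,L)^{d-1}$ then produces the global bound $\mathcal F_{\tau,L}(E)\geq C^*_\tau$, so that the theorem reduces to characterizing the equality case: showing that a minimizer must have its normal confined to a single coordinate direction $e_i$ and must carry a common stripe period $h_{\tau,L}$.

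The main obstacle is this rigidity step, since one has to exclude configurations whose slices are simultaneously optimal in several directions, or whose stripes drift in phase with $x^\perp_i$. I would argue that if equality is attained then (i) strict positivity in \eqref{eq:laplpos} forces all the slack coming from slices perpendicular to more than one axis to vanish, which is possible only if $E$ varies in a single direction $e_i$; (ii) Theorem~\ref{T:main0} pins the period of almost every slice to $h^*_\tau$, so in particular all slices have the same width; and (iii) a transverse rigidity argument, again driven by the reflection-positivity coupling, forbids an $x^\perp_i$-dependent phase shift, so all slices coincide up to a global translation. It only remains to identify $h_{\tau,L}$: among the finitely many stripe widths compatible with $[0,L)^d$-periodicity one picks the value closest to $h^*_\tau$, and a perturbation argument around the unique $h^*_\tau$ provided by Theorem~\ref{T:main0} shows this choice is itself unique for $\tau$ small, completing the proof.
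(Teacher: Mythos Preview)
Your proposal has a genuine gap in the slicing step. The decoupling you invoke does not come from \eqref{eq:laplpos}: reflection positivity is a purely one-dimensional tool in this paper (Section~\ref{sec:1D_problem}), and what actually bounds the $d$-dimensional nonlocal term from below by directional pieces is the elementary identity \eqref{E:prod}, which yields \eqref{E:fbelow}, namely
\[
\mathcal F_{\tau,L}(E)\ \ge\ \frac{1}{L^d}\sum_{i=1}^d\Bigl(-\mathrm{Per}_{1i}(E,[0,L)^d)+\mathcal G^i_{\tau,L}(E)+I^i_{\tau,L}(E)\Bigr).
\]
Each summand $-\mathrm{Per}_{1i}+\mathcal G^i_{\tau,L}$ does slice into one-dimensional functionals $\mathcal F^1_{\tau,L}(E_{x_i^\perp})$, so applying Theorem~\ref{T:1d} slice-by-slice gives $-\mathrm{Per}_{1i}+\mathcal G^i_{\tau,L}\ge L^d C^*_{\tau}$ for every $i$. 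But since $C^*_\tau<0$, summing over $i$ yields only $\mathcal F_{\tau,L}(E)\ge d\,C^*_\tau$, which is strictly \emph{below} the conjectured minimum $C^*_\tau$ and hence useless for rigidity. You cannot pick a single direction a priori, because without further information there is no reason any particular slice family should saturate the bound.

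The missing idea is that the transverse contributions must be shown to be \emph{nonnegative}, not merely bounded below by $C^*_\tau$. The paper accomplishes this in two steps that your outline omits. First, the $\Gamma$-convergence and rigidity result (Theorem~\ref{T:gammaconvNew}, Proposition~\ref{prop:rigidity}) forces any minimizer $E_\tau$ to be $L^1$-close to a stripe pattern in some fixed direction, say $e_1$, once $\tau$ is small. Second, the Stability Lemma~\ref{lemma:new_lemma_pre1} shows that, for $E$ close to stripes along $e_1$, the cross term $I^i_{\tau,L}(E)$ dominates the negative perimeter contribution in the transverse directions, giving $-\mathrm{Per}_{1i}+\mathcal G^i_{\tau,L}+I^i_{\tau,L}\ge 0$ for $i\ge 2$, with equality only for exact stripes. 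Only then does the remaining $i=1$ term reduce to the one-dimensional problem and yield the sharp bound $C^*_\tau$ together with its equality characterization. Your steps (i)--(iii) in the equality analysis are not wrong in spirit, but they presuppose a sharp lower bound that your slicing argument does not deliver; the compactness step and the stability estimate are the essential inputs you are missing.
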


%Let us denote by $h_{\tau,L}$ the width and distance of the periodic stripes given by Theorem~\ref{T:main}.
The next theorem shows that $h_{\tau,L}$ is close to   $h^*_\tau$ whenever $L$ is large.

\begin{theorem}\label{T:main2}
There exists a constant $C$ such that for every $0<\tau\leq\bar{\tau}$, one has that the width $h_{\tau,L}$ of a minimizer of $\mathcal F_{\tau,L}$ satisfies
\begin{equation}
|h_\tau^*-h_{\tau,L}|\leq \frac CL.
\end{equation}

  \end{theorem}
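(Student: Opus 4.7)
The plan is to reduce to a one-dimensional optimization in the stripe width $h$ and then exploit a quadratic behaviour of the reduced energy near its unique minimizer $h^*_\tau$. First, by Theorem~\ref{T:main}, any minimizer $E_{\tau,L}$ of $\mathcal F_{\tau,L}$ is a periodic union of stripes of width $h_{\tau,L}$; the $[0,L)^d$-periodicity forces $L=2N h_{\tau,L}$ for some $N\in\N$, so that the set of admissible widths is the discrete lattice $\{L/(2k):k\in\N\}$. On such configurations the functional collapses to a one-dimensional energy $g_\tau(h)$ per unit $d$-dimensional volume, independent of $L$, obtained by integrating $K_\tau$ against $|\zeta_i|$ along the $d-1$ directions transverse to the stripe normal. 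By construction $g_\tau$ is the same function whose global infimum $C^*_\tau$ is attained at $h^*_\tau$.

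Next, I would establish a quadratic expansion of $g_\tau$ around $h^*_\tau$. Since $K_\tau$ is smooth away from the origin with integrable first moment, $g_\tau$ is $C^2$ on $(0,\infty)$, and by Theorem~\ref{T:main0} the minimum $h^*_\tau$ is unique and nondegenerate, so $g''_\tau(h^*_\tau)>0$. This yields a two-sided bound
\begin{equation*}
c\,(h-h^*_\tau)^2 \;\leq\; g_\tau(h)-g_\tau(h^*_\tau) \;\leq\; C\,(h-h^*_\tau)^2,\qquad |h-h^*_\tau|\leq\delta,
\end{equation*}
with constants $c,C,\delta$ uniform in $\tau\in(0,\bar\tau]$ (this uniformity is where the rescaling \eqref{eq:cv1} pays off: both $h^*_\tau$ and $g''_\tau(h^*_\tau)$ stay of order one).

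To conclude, pick $k_L\in\N$ closest to $L/(2h^*_\tau)$ and set $h^{\rm adm}_L:=L/(2k_L)$; a direct estimate gives $|h^{\rm adm}_L-h^*_\tau|\leq (h^*_\tau)^2/L$. Since $h^{\rm adm}_L$ is an admissible stripe width, minimality of $h_{\tau,L}$ implies $g_\tau(h_{\tau,L})\leq g_\tau(h^{\rm adm}_L)$, and combining the upper and lower quadratic bounds yields $|h_{\tau,L}-h^*_\tau|^2\leq C'/L^2$, i.e.\ $|h_{\tau,L}-h^*_\tau|\leq C/L$, once $L$ is large enough that $h_{\tau,L}$ lies in the quadratic regime; for smaller $L$ the bound follows by enlarging $C$, since $h_{\tau,L}$ is a priori bounded above and below by Theorem~\ref{T:main}.

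The main obstacle is the second step: showing that $g''_\tau(h^*_\tau)>0$ uniformly for $\tau\in(0,\bar\tau]$ and that the quadratic regime has size $\delta$ independent of $\tau$. This requires a careful asymptotic analysis of the reduced one-dimensional energy near its minimizer, but should follow from the explicit structure of $g_\tau$ coming from the reflection positivity computation already used to prove uniqueness of $h^*_\tau$ in Theorem~\ref{T:main0}, combined with the scale-invariant properties that underlie the rescaling \eqref{eq:cv1}. A secondary, technical point is verifying that $h_{\tau,L}\to h^*_\tau$ as $L\to\infty$, which is needed to enter the quadratic regime and can be argued by standard $\Gamma$-convergence/compactness reasoning using the uniqueness of $h^*_\tau$.
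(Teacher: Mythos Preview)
Your approach is correct and essentially the same as the paper's, which simply notes that once Theorem~\ref{T:main} reduces to periodic stripes, the bound follows from the one-dimensional Theorem~\ref{T:1d}. Your write-up is more explicit about the final step: optimizing $g_\tau(h)=e_{\infty,\tau}(h)$ over the admissible lattice $\{L/(2k)\}$ via a quadratic expansion near $h^*_\tau$, and the uniform strict convexity you flag as the main obstacle is exactly what is established in the proof of Theorem~\ref{T:main0} (Steps~5--7 show $A''_\tau\to A''_0$ uniformly and $\bar e''_{\infty,0}(\bar h^*)>0$).
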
 

     In Theorem~\ref{T:main} the constant $\bar \tau$ depends on $L$.  One expects $\bar{\tau}$ to be independent on $L$.  In this respect, when $L$ is of the form $L= 2kh^{*}_{\tau}$, the independence is shown in Theorem~\ref{T:1.3}, namely $\tau_0$ does not depend on $L$ if $L=2kh^{*}_{\tau}$. %\achtung{We plan to investigate whether a version of Theorem~\ref{T:main} where $\tau$ is independent from $L$ is true.} 

  \begin{theorem}\label{T:1.3}
        Let $d\geq1$, $p\geq d+2$ and $h^{*}_{\tau}$ be the optimal stripes' width for fixed $\tau$. 
    Then there exists $\tau_{0}$, such that for every $\tau< \tau_{0}$, one has that for every $k\in \N$ and  $L = 2k h_{\tau}^{*}$,   the minimizers $E_{\tau}$ of $\FtL$ are optimal stripes of width $h_{\tau}^{*}$. 
  \end{theorem}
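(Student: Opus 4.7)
The plan is to exploit the fact that when $L = 2kh_\tau^*$, the optimal infinite periodic stripe pattern fits exactly inside $[0,L)^d$, so no ``boundary mismatch'' of order $1/L$ needs to be absorbed. This is what allows the threshold $\tau_0$ to be chosen independently of $L$, unlike in Theorem~\ref{T:main}.

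First, I would establish the upper bound. Let $E^*\in\Scalper_L$ be the periodic stripes of width $h_\tau^*$. Since $L/(2h_\tau^*) = k \in \N$, $E^*$ is $[0,L)^d$-periodic and admissible, and by construction $\Fcal_{\tau,L}(E^*) = C^*_\tau$. Hence every minimizer $E_\tau$ satisfies $\Fcal_{\tau,L}(E_\tau) \leq C^*_\tau$.

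Second, I would establish the matching lower bound $\Fcal_{\tau,L}(E) \geq C^*_\tau$ for every $[0,L)^d$-periodic $E$ and every $L>0$. The strategy, parallel to what the paper sets up for the 1D reduction, is to slice $E$ by hyperplanes perpendicular to a chosen coordinate direction $e_i$, decompose the energy $\Fcal_{\tau,L}$ into a slice-averaged 1D contribution plus a nonnegative ``transversal'' remainder, and then invoke the reflection positivity technique of Section~\ref{sec:1D_problem} to bound each 1D slice from below by the infimum of the 1D periodic problem. By definition of $C^*_\tau$ that infimum, taken over all periods, equals $C^*_\tau$ and is attained at period $2h_\tau^*$; since $L$ is a multiple of $2h_\tau^*$, this global infimum is actually achieved within the class of $[0,L)$-periodic configurations, so the slice-averaged lower bound equals exactly $C^*_\tau$.

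Third, I would analyze the equality case $\Fcal_{\tau,L}(E_\tau) = C^*_\tau$. This forces saturation both in the slicing decomposition (vanishing transversal remainder, which yields product structure, \ie a union of stripes perpendicular to some $e_i$) and in the 1D reflection-positivity step (each slice is $[0,L)$-periodic of optimal period). By Theorem~\ref{T:main0}, for $\tau<\hat\tau$ the optimal period is the unique value $2h_\tau^*$, so $E_\tau$ is periodic stripes of width exactly $h_\tau^*$. Taking $\tau_0 = \hat\tau$ gives the statement, and crucially $\tau_0$ depends only on the kernel family, not on $k$ or $L$.

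The principal obstacle I foresee is the equality analysis in the slicing step: one must show that when $E_\tau$ saturates the slice lower bound, it really is a genuine union of stripes rather than an approximate one. For Theorem~\ref{T:main} this is handled by a quantitative rigidity estimate whose constants degenerate with $L$; here, thanks to the exact-fit hypothesis $L = 2kh_\tau^*$, one needs only the qualitative characterization of the equality cases in reflection positivity, which is $L$-independent. Verifying that the available rigidity statements can be used in this purely qualitative mode is the delicate step of the argument.
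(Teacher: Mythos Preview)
Your proposal has a genuine gap in Step 2. The decomposition you describe is essentially \eqref{E:f0}--\eqref{E:f2}: a 1D-type term in direction $e_i$ plus the ``transversal remainder''
\[
\frac{1}{L^d}\sum_{j\neq i}\Big(-\per_{1j}(E,[0,L)^d)+\Gcal^{j}_{\tau,L}(E)+I^{j}_{\tau,L}(E)\Big).
\]
This remainder is \emph{not} automatically nonnegative, because it contains the negative terms $-\per_{1j}$. Its nonnegativity is precisely the content of the Stability Lemma~\ref{lemma:stability}, whose hypothesis is that $E$ is $L^1$-close to stripes in direction $e_i$. In the proof of Theorem~\ref{T:main} this closeness is supplied by the $\Gamma$-convergence Theorem~\ref{T:gammaconvNew}, and that step is exactly where the $L$-dependence of $\bar\tau$ enters. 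The exact-fit hypothesis $L=2kh^*_\tau$ plays no role in controlling the transversal remainder; it only ensures that the 1D infimum $C^*_\tau$ is actually attained, which is relevant for the equality analysis but not for the lower bound itself. Indeed, your Step~2 never uses $L=2kh^*_\tau$, so if it worked it would prove $\Fcal_{\tau,L}(E)\geq C^*_\tau$ for \emph{every} $L$ and every $\tau$ below a universal threshold, without any local machinery; that is far from obvious and is in fact what Section~\ref{sec:taul} is devoted to.

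The paper's route is quite different: it rewrites the right-hand side of \eqref{gstrF:1} as an average over $z\in[0,L)^d$ of local contributions $\bar F_\tau(E,Q_l(z))$ on cubes of fixed size $l$ (independent of $L$), proves a local rigidity lemma (Lemma~\ref{lemma:local_rigidity}) and a local stability lemma (Lemma~\ref{lemma:stimaContributoVariazionePiccola}) on scale $l$, and then partitions $[0,L)^d$ into regions where $E$ is locally close to stripes in some direction versus ``bad'' regions. An intricate balancing argument shows the bad regions are energetically unfavourable, forcing a single global stripe direction. The key point is that all thresholds are fixed at scale $l$, so $\tau_0$ depends only on $l$ and not on $L$. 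Your qualitative equality-case idea does not substitute for this, because the obstruction lies upstream in the lower bound, not downstream in identifying the equality configurations.
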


         Notice that  the periodic boundary conditions were imposed in order to give sense to the functional which is otherwise not well-defined. If one is interested to show that optimal periodic stripes of width and distance $h^*_\tau$ are "optimal" if one varies also the periodicity, then it is not difficult  to see that Theorem~1.3 is sufficient. This corresponds to the "thermodynamic limit" and is relevant in physics.

	%\label{thm:mainFinIndependent}

For the discrete setting, choosing now $h^{*,\discrete}_\tau$ equal to one of the admissible optimal periodic widths for $\tau>0$ we prove equivalently:

\begin{theorem}\label{T:1.6}
   Let $d\geq1,\,p\geq d+2$ and $h^{*,\discrete}_{\tau}$ be an optimal width for the optimal periodic stripes. Then there exists $\tau_0>0$ s.t. $\forall\,0<\tau<\tau_0$ and $L=2kh_\tau^{*,\discrete}$, $k\in\N$, the minimizers of $\mathcal F^{\discrete}_{\tau,L}$ are periodic stripes of width $h_\tau^{*,\discrete}$.
\end{theorem}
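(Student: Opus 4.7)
The plan is to mirror the proof of Theorem~\ref{T:1.3} with the modifications dictated by the lattice. The three ingredients are: (i) discrete reflection positivity, (ii) reduction to a one-dimensional minimization, and (iii) a rigidity step that upgrades slicewise one-dimensional minimality to an honest union of stripes. For (i), the assumption $p\geq d+2$ together with the kernel $K^{\discrete}_\kappa(\zeta)=\kappa^d/|\zeta|^p$ guarantees that the partial sum $\zeta_d\mapsto \sum_{(\zeta_1,\ldots,\zeta_{d-1})\in \kappa\Z^{d-1}}K^{\discrete}_\kappa(\zeta_1,\ldots,\zeta_d)$ has positive inverse Laplace transform, which is the hypothesis that makes the reflection positivity machinery of Section~\ref{sec:1D_problem} applicable also in the discrete setting, with reflections across hyperplanes through sites of $\kappa\Z^d\cup(\kappa\Z^d+\kappa/2\cdot e_i)$.

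Next, I would use the chessboard estimate obtained from (i) to bound $\Fcal^{\discrete}_{\tau,L}(E)$ from below by an average, over the coordinate directions $i=1,\ldots,d$, of the energies of one-dimensional profiles $\hat E^i\subset \kappa\Z\cap [0,L)$ obtained by reflecting $E$ across hyperplanes orthogonal to $e_i$. The crucial point is that the choice $L=2kh_\tau^{*,\discrete}$ is commensurate with the optimal period $2h_\tau^{*,\discrete}$: the one-dimensional periodic minimizer of period $2h_\tau^{*,\discrete}$ is an admissible $[0,L)$-periodic competitor, so the infimum of the $1$D problem over $[0,L)$-periodic profiles equals $C^{*,\discrete}_\tau$ and is attained precisely at translates of the stripe profile of width $h^{*,\discrete}_\tau$. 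Combining with the chessboard estimate gives
\begin{equation*}
\Fcal^{\discrete}_{\tau,L}(E)\;\geq\; C^{*,\discrete}_\tau,
\end{equation*}
with equality only if, for every $i$, the reflected profile $\hat E^i$ is itself a minimizer of the one-dimensional problem.

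The rigidity step is the heart of the matter. From equality in the chessboard estimate one extracts, as in the continuous case, that $E$ must coincide with its reflections across the relevant hyperplanes in some direction $e_{i_*}$, and that in the remaining $d-1$ directions $\hat E^i$ is trivial (either $\emptyset$ or everything). This forces $E$ to be of the form $V_{i_*}^\perp+\hat E\, e_{i_*}$ with $\hat E\subset \kappa\Z$ a one-dimensional optimal $[0,L)$-periodic profile, i.e.\ a periodic union of stripes of width $h^{*,\discrete}_\tau$. The direction $e_{i_*}$ is not prescribed by the functional (which is symmetric under coordinate exchanges), which accounts precisely for the expected symmetry breaking of the minimizers.

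I expect the main obstacle to be the rigidity / one-dimensionality step. Reflection positivity gives lower bounds directionwise, and one must rule out genuinely multidimensional configurations that are simultaneously close to optimal in more than one coordinate slicing. This is handled by a strict monotonicity/strict convexity feature of the one-dimensional functional (encoded in Section~\ref{sec:1D_problem}), and in the discrete case one has to be careful with a priori ``ties'' at the lattice scale: one must check that the discretization does not create new families of $1$D minimizers compatible with nontrivial profiles in two orthogonal directions. The smallness of $\tau$ (equivalently, the fact that $h^{*,\discrete}_\tau$ is large compared to $\kappa=\tau^{1/\beta}$) is what makes this check go through, and it is the mechanism that fixes the threshold $\tau_0$.
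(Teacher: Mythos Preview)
Your proposal has a genuine gap in step (ii). There is no chessboard estimate that bounds the $d$-dimensional energy $\Fcal^{\discrete}_{\tau,L}(E)$ from below by an average of one-dimensional energies of ``reflected profiles'' $\hat E^i$. Reflection positivity across hyperplanes orthogonal to $e_i$ at best compares $E$ with a configuration that is periodic in the $e_i$ direction, but that configuration is still genuinely $d$-dimensional; nothing in the reflection mechanism forces constancy in the transverse directions. The reduction to one dimension is the whole difficulty, and reflection positivity by itself does not achieve it. In the paper, reflection positivity is used \emph{only} for the one-dimensional problem of Section~\ref{sec:1D_problem}, to show that among stripes the periodic ones of width $h^{*,\discrete}_\tau$ are optimal.

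The paper's route to one-dimensionality is entirely different and does not pass through a $d$-dimensional chessboard argument. It starts from the algebraic lower bound \eqref{E:fbelow_discrete} (the discrete analogue of \eqref{E:fbelow}), which introduces the cross-interaction term $I^{i,\discreteToCont}_{\tau,L}$; this term is strictly positive unless $E$ is a union of stripes, and it is what penalizes multidimensional structure. The proof of Theorem~\ref{T:1.6} then transplants the two-scale machinery of Section~\ref{sec:taul}: one rewrites the lower bound as an average of local functionals $\bar F_\tau(E,Q_l(\cdot))$ over cubes of a fixed mesoscopic size $l$ (Lemma~\ref{lemma:average}), proves a local rigidity lemma (discrete analogue of Lemma~\ref{lemma:local_rigidity}) forcing $E$ to be $L^1$-close to stripes on most cubes, a local stability lemma (analogue of Lemma~\ref{lemma:stimaContributoVariazionePiccola}) showing that transverse oscillations are energetically unfavourable once one is close to stripes, and finally decomposes $[0,L)^d$ into regions $A_{-1},A_0,A_1,\ldots,A_d$ whose contributions are balanced against one another as in the proof of Theorem~\ref{T:1.3}. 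The threshold $\tau_0$ emerges from the local rigidity lemma (it depends on $l,M,\delta$ but not on $L$), not from lattice-scale tie-breaking in the 1D problem. Your final paragraph correctly identifies rigidity as the crux, but the mechanism you propose --- strict convexity of the 1D functional combined with the equality case of a $d$-dimensional chessboard bound --- is not available here.
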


As already noticed, in the discrete for a union of stripes $E$ to be $[0,L)^d$-periodic of width and distance $h$  one has that $L/h \in 2\N$. Given that $h\in \tau^{1/\beta}\Z$  this is not always possible. Therefore, there can not be an analogous statement to Theorem~\ref{T:main} and Theorem~\ref{T:main2}.

\subsection{Scientific Context}

The competition between short-range and long-range forces is  at the base of pattern formation in many areas of physics and biology (see e.g.~\cite{de2000dipolar,harrison2000mechanisms,kohn1992branching,chakrabarty2011modulation,pasta}).

In dimension $d=1$, there are many instances in which  pattern formation is rigorously shown (see~e.g.~\cite{muller1993singular,chenOshPeriodicity,glllRP1}).

However, in dimension $d\geq2$, showing pattern formation is a rather difficult problem which is rigorously solved in very few models: in the discrete case, to our knowledge, only in~\cite{GiuSeirGS,theil2006proof,bourne2012optimality}, and, in the continuous setting, in~\cite{MR2864796}. On the general issue of crystallization see~\cite{BlancLewin}.

In the continuous setting, the closest and most famous model is the sharp interface version of the Ohta-Kawasaki~\cite{OhtaKawasaki} model. This functional is well-studied (e.g.~\cite{MR2338353,CicSpa,KnMu,ACO,pasta,muller1993singular,choksi2010small, goldman2013gamma,knupfer2016low,MoriniSternberg}). Even though periodic pattern formation is expected due to physical experiments and numerical simulations (e.g.~\cite{Seul476,MR2338353}), the problem is still open.

  One of the main difficulties is that the minimizers are invariant under a smaller group of symmetries than the functional itself. This is sometimes called breaking of symmetry.  
% \achtung{ 
  In the continuous setting, to our knowledge this is the first example of a model with local/nonlocal terms in competition such that the functional is invariant under permutation of coordinates and the minimizers display a pattern formation which is one-dimensional. Such behaviour for a smaller range of exponents in the discrete setting was already shown in~\cite{GiuSeirGS}.
% }

As already stated in the beginning of the introduction, in this paper we are  considering questions that have already been studied in a series of papers in~\cite{glllRP1,glllRP2,2011PhRvB..84f4205G,2014CMaPh.tmp..127G,MR2864796,GiuSeirGS}. For this reason we would like discuss some of the similarities and differences to the most recent paper~\cite{GiuSeirGS}.
   In~\cite{GiuSeirGS}, the discrete setting is considered. 
   It can be shown that their setting is equivalent to ours. 

      In the smaller range of exponents $p> 2d$, they prove a result similar to Theorem~\ref{T:1.6}.  We improve their result to the range of exponents $p\geq d+2$.  Our results  can be viewed as progress towards the aim of proving pattern formation for the more  ``physical'' exponents. Among them we recall the case of thin magnetic films ($p=d+1$  see~e.g. \cite{Seul476,glllRP1}), 3D micromagnetics ($p=d$~see~e.g.~\cite{landaulif,hueberschafer,KnMuMicroMagnetics})  and diblock copolymers ($p=d-2$ ~see~e.g.~\cite{OhtaKawasaki}).

Very broadly speaking, the general strategy in this paper has some similarities to~\cite{GiuSeirGS}. Both use  a two-scale approach in order to identify regions in which the set resembles a union stripes (what in \cite{GiuSeirGS} is called ``good'' regions/tiles) and regions in which such resemblance does not hold (``bad'' regions/tiles).
However the criteria on which ``good'' and ``bad'' regions are chosen is different in the two papers.
In \cite{GiuSeirGS}, this corresponds to a localization in terms of droplets and in our setting this corresponds to an averaging argument (e.g.~\eqref{eq:gstr14}). 

In both papers, the goal is to show that not only ``bad'' regions are never convenient but also that in a ``good'' region it is convenient to be ``flat''.  
The way to achieve this is different in the two papers. 

% \achtung{Both the reflection positivity and the two-scale approach have been crucial in similar contexts (for example see~\cite{glllRP1,glllRP3,glllRP3bis,glllRP2} for the reflection positivity and \cite{MR2864796} for the two-scale approach).}

In \cite{GiuSeirGS}, the deviation from being a stripe is measured in terms of ``angles'' and ``holes''. Then, a lower bound in terms of ``angles'' and ``holes'' is shown.

In the continuum one would like to find a  characterization of nonoptimality in terms of geometric quantities. However, the discrete concepts namely ``angle'' and ``hole'' in the continuum are ill-posed. 
% \achtung{Let us remark that a decomposition in droplets in order to make a similar localization to \cite{GiuSeirGS} would also be delicate.}  
Moreover, a characterization of the geometry cannot reduce to just ``angles'' and ``holes'' due to $E\subset \R^d$ and not $E\subset \Z^d$. For this reason one needs to find a decomposition of the functional into terms that measure in a certain sense how much a minimizer deviates from being a union of stripes. 
Such quantities have been introduced in \cite{GR} (see Section \ref{sec:setting_and_preliminary_results}).
% In particular we introduce the functional $I$ which penalizes not being a union of stripes. 
The penalization for not being a union of stripes is expressed through the  rigidity estimate (see Proposition~\ref{prop:rigidity}).
%We have been communicated by M. Goldman, that together he with B. Merlet and  V. Millot, have a different proof of the rigidity Proposition~\ref{prop:rigidity}.
The two-scale approach, although widely used in applied analysis, in this context appeared for the first time in~\cite{MR2864796}.

   A second common point to \cite{GiuSeirGS} is the use of the  technique of reflection positivity, which was introduced in the context of quantum field theory in  \cite{osterNatale} and applied  for the first time to statistical mechanics in \cite{FroSim}. For the first appearance of the technique in models with short-range and Coulomb type interactions see \cite{fro}. 
     For further generalizations to one-dimensional models see \cite{glllRP1,glllRP3,glllRP3bis} and for applications to two-dimensional models see \cite{glllRP2,MR2864796}.
   Such technique allows to show that minimal stripes must be periodic.

   \vskip 2mm
   In \cite{GR}, for a smaller range of exponents ($p > 2d$ instead of $p \geq d+2$) a rigidity estimate was shown, leading to prove that minimizers of $\Fcal_{\tau,L}$ converge in $L^1$ to periodic stripes as $\tau\downarrow 0$. 
In the present paper, we show that pattern formation really appears not only for $\tau$ tending to $0$ (as was done  in \cite{GR}) but for a positive fixed $\tau$, in the range $p\geq d+2$. For this we need a new rigidity argument and a stability result (namely, Lemma~\ref{lemma:new_lemma_pre1}). 
In the rigidity estimate we use a result of~\cite{Bre} (see Section~\ref{sec:d+2}).  
Moreover, we show that in case $L$ is an even multiple of the optimal period $h^*_\tau$,  such $\tau$ does not depend on how big $L$ is.

% In our arguments, first we prove that the minimizers are stripes and then that they are periodic. 

M. Goldman,  B. Merlet and  V. Millot communicated to us that they have an alternative proof of Proposition~\ref{prop:rigidity}.

\subsection{Structure of the paper}
   This paper is organized as follows: 
   in Section~\ref{sec:setting_and_preliminary_results}, we explain the setting and some preliminary result that will be used in the following; 
   in Section \ref{sec:d+2} we improve the key estimate of \cite{GR}, namely the rigidity estimate,  to exponents $p\geq d+2$ (See~Theorem~\ref{thm:gammaconv} and Theorem~\ref{T:gammaconvNew}); 
   in Section~\ref{sec:1D_problem} we show Theorem \ref{T:main0} and that for $\tau > 0$ if one minimizes $\Fcal^{\discrete}_{\tau,L}$ or $\Fcal_{\tau,L}$ among sets which are union  of stripes then the minimizers are periodic stripes (this is done by the so-called reflection positivity); 
   in Section~\ref{sec:discrete}, we show analogous results to~\cite{GR} for the discrete setting using our technique; 
   in Section~\ref{sec:structure_of_minimizers},  we  prove Theorems \ref{T:main} and \ref{T:main2};
   in Section \ref{sec:taul} we prove Theorem~\ref{T:1.3} and Theorem~\ref{T:1.6}, namely that the parameter $\tau>0$ such that minimizers are periodic stripes can be chosen independently of $L$, provided $L$ is an even multiple of $h^*_\tau$.

\section{Setting and Preliminary results}
\label{sec:setting_and_preliminary_results}

In this section, we set the notation and recall some preliminary results on the functional \eqref{E:F} proven in~\cite{GR} which will be used in the proof of our main theorems. 

%Therefore, we will state the main estimates and results without proofs, referring directly to \cite{GR} for the details. As we will point out where needed, the fact that the functional considered in \cite{GR} has a slightly different kernel, will not play any role in this section and one does not need to modify at all the proofs of \cite{GR}. 
\subsection{Notation and preliminary definitions} 

In the following, we let $\N=\{1,2,\dots\}$, $d\geq 1$. On $\R^d$, we let $\langle\cdot,\cdot\rangle$ be the Euclidean scalar product and $|\cdot|$ be the Euclidean norm.  We let $(e_1,\dots,e_d)$ be the canonical basis in $\R^d$ and  for $\zeta\in\R^d$ we let $\zeta_i=\langle\zeta,e_i\rangle e_i$ and $\zeta_i^\perp:=\zeta-\zeta_i$. For $z\in\R^d$, let 
$|z|_1=\sum_{i=1}^d|z_i|$ be its $1$-norm  and $|z|_\infty=\max_i|z_i|$ its $\infty$-norm.

% We will denote by $\hausd^{d-1}$ the 
Given a measurable set $A\subset\R^d$, let us denote by $\mathcal H^{d-1}(A)$ its $(d-1)$-dimensional Hausdorff measure and $|A|$ its Lebesgue measure.

Given a measurable function $f:\R^d\to\R$, $Df$ denotes its distributional derivative.

For a measure $\mu$ on $\R^d$, we denote by $|\mu|$ its total variation. %For $x\in\R^d$, $A\subset\R^d$, $\dist(x,A)$ denotes the usual Euclidean distance of $x$ from the set $A$.

%\vspace{.8mm}
%\emph{Up to Section~\ref{sec:taul}, our results will be dependent on the size of the cube $L$. 
%   For this reason, in order to simplify notation, the following notation will be used:
%   \begin{itemize}
%      \item in Section~\ref{sec:d+2} till Section~\ref{sec:taul}: the symbol $A\lesssim B$ will be used, whenever there exists a  constant $\bar{C}_{d,L}$ (depending on the dimension $d$ and the size $L$) such that $A\leq \bar{C}_{d,L} B$;
%      \item in Section~\ref{sec:taul}: the symbol $A\lesssim B$ will be used, whenever there exists a constant $\bar{C}_{d}$ depending only on the dimension $d$ such that $A\leq \bar{C}_d B$. 
%   \end{itemize}
%}
%\vspace{.8mm}

%For a $Q_L$-periodic set $E\subset\R^d$,
 We are now ready to recall the definition of set of locally finite perimeter (see \cite{AFPBV}).  Such a property is fundamental because the functional~\eqref{E:F} is finite only on $[0,L)^d$-periodic sets of locally finite perimeter. 

\begin{definition}
A set $E\subset\R^d$ is of (locally) finite perimeter if the distributional derivative of $\chi_E$ is a (locally) finite measure. We let $\partial E$ be the reduced boundary of $E$, namely the set of points $x\in\mathrm{spt}(D\chi_E)$ such that the limit
\[
\nu^E(x):=-\underset{r\downarrow0}{\lim}\frac{D\chi_E(B(x,r))}{|D\chi_E|(B(x,r))}
\] 
exists and satisfies $|\nu^E(x)|=1$. We call $\nu^E$ the exterior normal to $E$. In particular, $D\chi_E=-\nu^E\mathcal H^{d-1}\llcorner\partial E$.
\end{definition} 

% \achtung{Notice that, for a $Q_L$-periodic set $E\subset\R^d$ of locally finite perimeter, $E\cap Q_L$ is a set of finite perimeter, so on the torus obtained identifying opposite faces of $Q_L$ $E$ would have finite perimeter.}
Notice that a $[0,L)^d$-periodic set (as those considered in the paper) can not be of finite perimeter.  For this reason we need to introduce the sets of locally finite perimeter.

We define now (up to multiplying by a positive constant $J$) the first term of the functional \eqref{E:F}, namely
\[
   \per_1(E,[0,L)^d):=\int_{\partial E\cap [0,L)^d}|\nu^E(x)|_1\d\mathcal H^{d-1}(x)
\]
and, for $i\in\{1,\dots,d\}$ 
\begin{equation}
   \label{eq:perI}
   \per_{1i}(E,[0,L)^d)=\int_{\partial E\cap [0,L)^d}|\nu^E_i(x)|\d\mathcal H^{d-1}(x),
\end{equation}
thus $\per_1(E,[0,L)^d)=\sum_{i=1}^d\per_{1i}(E,[0,L)^d)$. Notice that in the definition of $\per_1$ the norm applied to the exterior normal $\nu_E$ is not isotropic. For more general reference on anisotropic surface energies see \cite{Maggi}.

Because of  periodicity, w.l.o.g.  we always assume that $|D\chi_E|(\partial [0,L)^d)=0$.

Now, let us look at the definition and the assumptions on the second term of \eqref{E:F}, namely the nonlocal term. Given a function $K_1:\R^d\to\R$, one defines it as

\[
-\int_{ \R^d}\int_{[0,L)^d}K_1(\zeta)|\chi_E(x)-\chi_E(x+\zeta)|\dx\d\zeta.
\]

Now we set our assumptions on $K_1$. They will be expressed in terms of the rescaled kernels 
\begin{equation}
   \label{eq:Krescale}
   \begin{split}
K_\tau(\zeta)=\tau^{-p/\beta}K_1(\zeta\tau^{-1/\beta}), \quad\text{with $\tau>0$ small.}
   \end{split}
\end{equation}
 The reason for this is that, as anticipated in the Introduction and explained in Subsection~\ref{subsec:preliminary}, we will not consider directly the functional in \eqref{E:F} but rescalings of it depending on the parameter $\tau:=J_c-J$ with $J_c$ defined in \eqref{eq:jc} and $J<J_c$. For such rescalings the kernel of the nonlocal term has the form $K_\tau$ depending on $K_1$ as above. Rescaling the functional in \eqref{E:F} will have the advantage that the width and distance of the  periodic optimal stripes as well as their energy will be of order $O(1)$.

We assume that $K_1$ is such that the rescaled kernels $K_\tau$ satisfy the following 

\begin{align}
&\exists\,C:\quad\frac{1}{C}\frac{1}{(|\zeta|+\tau^{1/\beta})^p}\leq K_\tau(\zeta)\leq C\frac{1}{(|\zeta|+\tau^{1/\beta})^p},\quad p\geq d+2,\label{eq:Kbound}\\
&K_\tau(\zeta) \text{{ converges monotonically increasing for $\tau\downarrow 0$ either to }}\label{eq:Kmon} \\ & \hspace{4cm}\frac{1}{|\zeta|_1^p } \quad \text{ or to } \quad \frac{1}{|\zeta|^p }, \notag\\ 
&K_{\tau}\text{ is symmetric under exchange of coordinates, i.e. $K_\tau(P\zeta)=K_\tau(\zeta)$ }\notag\\
&\text{for all permutations $P$ on $d$ indices},\label{eq:Ksymm}\\
&\widehat K_\tau \text{ is the Laplace transform of a nonnegative function}\label{eq:laplpos},
\end{align}
where 
\[
\widehat K_\tau(\zeta_i)=\int_{\R^{d-1}}K_\tau(\zeta_i^\perp,\zeta_i)\d\zeta_i^\perp,
\]
which is independent of $i$ thanks to \eqref{eq:Ksymm}.

An example of such family of kernels is given by the rescaling of   $K_1(\zeta)=\frac{1}{(|\zeta|_1+1)^p}$, namely
\begin{equation}\label{eq:k11}
% K_1(\zeta)=\frac{1}{(|\zeta|_1+1)^p},\quad 
K_\tau(\zeta)=\frac{1}{(|\zeta|_1+\tau^{1\slash(p-d-1)})^p}.
\end{equation}

Indeed, the first two properties are trivial, and there exists a constant $C_{q}$ such that
\begin{equation*}
\widehat K_\tau(z)=C_q\frac{1}{(|z|+\tau^{1/\beta})^q},\quad q=p-d+1.
\end{equation*}

$\widehat K_\tau$ is the Laplace transform of a nonnegative function since, for $s>0$,

\[
\frac{1}{s^q}=\frac{1}{\Gamma(q)}\int_0^{+\infty} \alpha^{q-1}e^{-\alpha s}\d\alpha
\]
where $\Gamma(q)$ is the Euler's Gamma function.  Thus
\[
\frac{1}{(s+\tau^{1/\beta})^q}=\frac{1}{\Gamma(q)}\int_0^{+\infty} \alpha^{q-1}e^{-\alpha\tau^{1/\beta}}e^{-\alpha s}\d\alpha.
\]

Property \eqref{eq:laplpos} will be used in the Section~\ref{sec:1D_problem} and in particular in all one-dimensional optimizations.
Property \eqref{eq:Kbound} will be the main source of inequalities in Section~\ref{sec:structure_of_minimizers} and Section~\ref{sec:taul}. 
Property \eqref{eq:Kmon} is used in order to obtain the $\Gamma$-limit. 

From now on, we fix a kernel satisfying properties \eqref{eq:Kbound}-\eqref{eq:laplpos}.

\begin{remark}
   \label{rmk:theObvious}
   Given that all our analysis depends only on properties \eqref{eq:Kbound}-\eqref{eq:laplpos}, if one is not interested in  relating the structure of minimizers of $\Fcal_{\tau,L}$ with the structure of minimizers of $\tilde{\Fcal}_{J,L}$, then it is not necessary to assume that $K_\tau$ is obtained via rescaling $K_1$ as in \eqref{eq:Krescale}.
\end{remark}

Sets of locally finite perimeter are defined up to Lebesgue null sets, therefore in statements regarding a set of locally finite perimeter $E$ we will always mean they hold up to null sets or for a precise representative.  Given that both the perimeter and the nonlocal quantities defining \eqref{E:F} are invariant under modifications on   null sets, we can always assume to have a precise representative.
For example, the main theorems asserting that in some regimes minimizers of \eqref{E:F} are unions of periodic stripes hold neglecting a Lebesgue-null set.

Slicing (used in different contexts e.g. in  \cite{AFPBV,AFR,MR2096672,BianchiniDaneri,MR2883679,MR2805441,CarDan}) will be the main tool of our analysis. 

For $i\in\{1,\dots,d\}$, let $x_i^\perp$ be a point  in the subspace orthogonal to $e_i$.  We define the one-dimensional slices of $E\subset\R^d$ by

\[
E_{x_i^\perp}:=\bigl\{t\in[0,L):\,te_i+ x_i^\perp\in E\bigr\}.
\]
Notice that in the above definition there is an abuse of notation as the information on the direction of the slice is contained in the index $x^\perp_i$. 
As it would be always clear from the context which is the direction of the slicing, we hope this will not cause confusion to the reader.

Given a set of locally finite perimeter $E$, for a.e. $x_i^\perp$ its slice $E_{x_i^\perp}$ is a set of locally finite perimeter in $\R$ and the following slicing formula (see~\cite{Maggi}) holds for every $i\in\{1,\dots,d\}$
\[
\per_{1i}(E,[0,L)^d)=\int_{\partial E\cap [0,L)^d}|\nu^E_i(x)|\d\mathcal H^{d-1}(x)=\int_{[0,L)^{d-1}}\per_1(E_{x_i^\perp},[0,L))\dx_i^\perp.
\]

Whenever $d=1$, a set $E$ of  locally finite perimeter is up to Lebesgue-null sets a locally finite union of intervals (see~\cite{AFPBV,Maggi}). Therefore, w.l.o.g.  we will write $E=\underset{i\in\Z}{\cup} (s_i,t_i)$ with $t_i<s_{i+1}$. Moreover, one has that the reduced boundary $\partial E$ coincides with the topological boundary of $\underset{i\in\Z}{\cup} (s_i,t_i)$.

Thus, when $d=1$ one can define 
\[
\per_1(E, [0,L))  =\per(E,[0,L))= \#(\partial E \cap [0,L) ),
\]
where $\partial E$ is the reduced boundary of $E$.

While writing slicing formulas, with a slight abuse of notation we will sometimes identify $x_i\in[0,L)^d$ with its coordinate in $\R$ w.r.t. $e_i$ and $\{x_i^\perp:\,x\in[0,L)^d\}$ with $[0,L)^{d-1}\subset\R^{d-1}$. 

In Sections \ref{sec:structure_of_minimizers} and \ref{sec:taul} we will have to apply slicing on smaller cubes around a point. Therefore we need to introduce the following notation.
For $r> 0$ and $x^{\perp}_i$ we let $Q_{r}^{\perp}(x^\perp_{i}) = \{z^\perp_{i}:\, |x^{\perp}_{i} - z^{\perp}_{i} |_\infty \leq r  \}$ or we think of $x_i^\perp\in[0,L)^{d-1}$ and $Q_r^\perp(x_i^\perp)$ as a subset of $\R^{d-1}$. 
Since the subscript $i$ will be always present in the centre (namely $x_i^\perp$) of such $(d-1)$-dimensional cube, the implicit dependence on $i$ of $Q_r^\perp(x_i^\perp)$ should be clear. 
We denote also by $Q^i_r(t_i)\subset\R$ the interval of length $r$ centred in $t_i$.

 In Section \ref{sec:taul}, instead of integrals on $[0,L)^d$ one will often consider integrals on smaller cubes centred at other points of $[0,L)^d$. 
Therefore, for $z\in[0,L)^d$ and $r>0$, we define $Q_r(z)=\{x\in\R^d:\,|x-z|_\infty\leq r\}$.

\vskip 3mm

While doing estimates on slices, we will consider  $E\subset\R$  a set of locally finite perimeter and $s\in\partial E$ a point in the relative boundary of $E$. We will denote by 
\begin{equation}
\label{eq:s+s-}
\begin{split}
s^+ &:= \inf\{ t' \in \partial E, \text{with } t' > s  \} \\ s^- &:= \sup\{ t' \in \partial E, \text{with } t' < s  \}. 
\end{split}
\end{equation}

Estimates will be obtained (see the following subsection) through the function $\eta:\partial E\times \R\to\R$ defined as

\begin{equation}\label{eq:etafunct}
\eta(s,z):=\min(z_+,s-s^-)+\min(z_-,s^+-s),
\end{equation}
where  $z_+=\max\{z,0\}$ and $z_-=-\min\{z,0\}$.
In particular, given a $[0,L)^d$-periodic set $E$ of locally finite perimeter, the  functions $\eta_{x_i^\perp}:\partial E_{x_i^\perp}\times\R\to\R$ are defined as above for the slices $E_{x_i^\perp}$.

\vskip 3mm

In the paper, we will denote constants which depend on $L>0$ and on the dimension $d$ with the symbol $C_{d,L}$ and the constants which depend only on the dimension with $C_d$. In Section \ref{sec:taul}, where the constants do not depend on $L$, in order to simplify notation we will use $A\lesssim B$, whenever there exists a constant ${C}_{d}$ depending only on the dimension $d$ such that $A\leq {C}_d B$.  Notice that, since a kernel has been fixed, then the constants depend also implicitly on the chosen kernel. 

In presence of multiple integrals, we use the convention
\[
\int_{A_1}\dots\int_{A_n} f(x_1,\dots x_n)\dx_n\dots \dx_1=\int_{A_1}\Big(\dots\Big(\int_{A_n}f(x_1,\dots x_n)\dx_n\Big)\dots\Big)\dx_1.
\]

\subsection{Preliminary results}
\label{subsec:preliminary}

Let $E=E_h$ be a periodic union of stripes of width and distance $h$ in the direction $e_i$. Up to relabeling coordinates we can assume that $i=1$, thus $E_h=\hat{E_h}\times[0,L)^{d-1}$, and
 \[
 \Fj(E_h)=-\frac{\tau}{h}+\int_{\R^d}K_1(\zeta)\Big(\frac{|\zeta_1|}{h}-\frac{1}{L^d}\int_{[0,L)^d}|\chi_{E_h}(x)-\chi_{E_h}(x+\zeta_1)|\dx\Big)\d\zeta.
 \] 
 
    As in~\cite{GR}, it is possible to compute the energy $\Fj(E_h)$ to get 
    \[
       \Fj(E_h)\simeq -\frac{\tau}{h}+ h^{-(p-d)}.
    \]
Optimizing in $h$, one finds that the optimal stripes have a width of order $\tau^{-1/(p-d-1)}$ and energy of order $-\tau^{(p-d)/(p-d-1)}$. Letting $\beta:= p-d-1$, this motivates the rescaling
\begin{equation}\label{eq:changevar}
   x:=\tau^{-1/\beta}\tilde{x}, \quad L:=\tau^{-1/\beta}\tilde{L} \quad \textrm{and} \quad \Fj(E):= \tau^{(p-d)/\beta}  \mathcal{F}_{\tau,\tilde L}( \tilde{E}).
\end{equation}

In these variables, the optimal stripes have width of order $O(1)$. 

Making the substitutions in \eqref{eq:changevar} letting also $\zeta=\tau^{-1\slash\beta}\tilde{\zeta}$ and in the end dropping the tildes, one has (see Lemma 3.6 in \cite{GR})

\begin{equation}\label{eq:ftauel}
\begin{split}
\mathcal F_{\tau,L}(E)=\frac{1}{L^d}\Big(-\per_1(E,[0,L)^d)&+\int_{\R^d} K_\tau(\zeta) \Big[\int_{\partial E \cap [0,L)^d} \sum_{i=1}^d|\nu^E_i(x)| |\zeta_i|\d\mathcal H^{d-1}(x)\\
&-\int_{[0,L)^d}|\chi_E(x)-\chi_E(x+\zeta)|\dx\Big]\d\zeta\Big),
\end{split}
\end{equation}
where   $K_\tau(\zeta)=\tau^{-p/\beta}K_1(\zeta\tau^{-1/\beta})$.

   Let us now state an important estimate from below for $\mathcal F_{\tau,L}$ (see~\cite[Lemma 3.2]{GR}).

\begin{align}
|\chi_E(x)-\chi_E(x+\zeta)|=|\chi_E(x)-\chi_E(x+&\zeta_i)|+|\chi_E(x+\zeta_i)-\chi_E(x+\zeta)|\notag\\
&-2|\chi_E(x)-\chi_E(x+\zeta_i)||\chi_E(x+\zeta_i)-\chi_E(x+\zeta)|.\label{E:prod}
\end{align}
 
If the kernel $K_{\tau}$ is symmetric (namely, $K_{\tau}(\zeta_1,\ldots,\zeta_i,\ldots,\zeta_d) = K_{\tau}(\zeta_1,\ldots,-\zeta_i,\ldots,\zeta_d)$ for every $i=1,\ldots,d$),  one has that 
\begin{equation*}
   \begin{split} 
      \int_{[0,L)^d} \int_{\R^d} |\chi_{E}(x+ \zeta) - \chi_{E}(x+\zeta_1)| |\chi_{E}(x+ \zeta_1) - \chi_{E}(x)| K_{\tau}(\zeta) \d\zeta\dx  \\ = 
      \int_{[0,L)^d} \int_{\R^d} |\chi_{E}(x+ \zeta^\perp_1) - \chi_{E}(x)| |\chi_{E}(x+ \zeta_1) - \chi_{E}(x)| K_{\tau}(\zeta) \d\zeta\dx.
   \end{split}
\end{equation*}
For the general case, by using property \eqref{eq:Kbound}, we have that 
\begin{equation}
   \label{eq:simmetria_non_serve}
   \begin{split} 
      \int_{[0,L)^d} \int_{\R^d} &|\chi_{E}(x+ \zeta) - \chi_{E}(x+\zeta_1)| |\chi_{E}(x+ \zeta_1) - \chi_{E}(x)| K_{\tau}(\zeta) \d\zeta\dx \\ 
      & \leq C \int_{[0,L)^d} \int_{\R^d} \frac {|\chi_{E}(x+ \zeta) - \chi_{E}(x+\zeta_1)| |\chi_{E}(x+ \zeta_1) - \chi_{E}(x)|}{ (|\zeta |  + \tau^{1\slash \beta})^p } \d\zeta\dx \\ 
      & \leq C \int_{[0,L)^d} \int_{\R^d} \frac {|\chi_{E}(x+ \zeta_1) - \chi_{E}(x)| |\chi_{E}(x+ \zeta^\perp_1) - \chi_{E}(x)|}{ (|\zeta |  + \tau^{1\slash \beta})^p } \d\zeta\dx \\ 
      & \leq C^2 \int_{[0,L)^d} \int_{\R^d} |\chi_{E}(x+ \zeta^\perp_1) - \chi_{E}(x)| |\chi_{E}(x+ \zeta_1) - \chi_{E}(x)| K_{\tau}(\zeta)\d\zeta\dx,
   \end{split}
\end{equation}
   where $C$ is the  constant appearing in \eqref{eq:Kbound}.

   In the same way as in \cite[Lemma~3.2]{GR}, by  using \eqref{E:prod} and in addition   \eqref{eq:simmetria_non_serve}, one has that
   \begin{align}
      \int_{[0,L)^d}\int_{\R^d} K_\tau(\zeta) |\chi_E(x)&-\chi_E(x+\zeta)|\d\zeta\dx\leq \int_{[0,L)^d}\int_{\R^d} K_\tau(\zeta)\sum_{i=1}^d |\chi_E(x)-\chi_E(x+\zeta_i)|\d\zeta\dx\notag\\
      &- \frac{2C^2}{d}\int_{[0,L)^d}\int_{\R^d} K_\tau(\zeta) \sum_{i=1}^d|\chi_E(x)-\chi_{E}(x+\zeta_i)||\chi_E(x)-\chi_E(x+\zeta_i^\perp)|\d\zeta\dx,\label{E:3.2}.
   \end{align}

   As it will be clear from the proof, the result does not depend on the particular value of $C$, without loss of generality we may assume that $C=1$. 

   Notice also that \eqref{eq:simmetria_non_serve} is an equality if and only if  $\chi_E$ represents  unions of stripes.

Define then, for $i\in \{1,\ldots,d\}$,
\begin{equation*}
   \Gcal_{\tau,L}^i(E):=\int_{\R} \widehat{K}_\tau(\zeta_i)\Big[\int_{\partial E \cap [0,L)^d} |\nu^E_i(x)| |\zeta_i|\d\mathcal H^{d-1}(x)-\int_{[0,L)^d} |\chi_E(x)-\chi_E(x+\zeta_i)|\dx\Big]\d\zeta_i,
\end{equation*}
where $\widehat{K}_\tau(z)=\int_{\R^{d-1}}K_\tau(z,\zeta')\d\zeta' $ and
\begin{equation}\label{eq:I}
   \begin{split}
      I_{\tau,L}^i(E)&:=\frac{2}{d} \int_{[0,L)^d} \int_{\R^d} K_\tau(\zeta) |\chi_E(x)-\chi_{E}(x+\zeta_i)||\chi_E(x)-\chi_E(x+\zeta_i^\perp)|\d\zeta\dx,\\
      I_{\tau,L}(E) &:= \sum_{i=1}^{d} I_{\tau,L}^i(E).
   \end{split}
\end{equation}

Estimate \eqref{E:3.2} implies (see Lemma 3.6 in \cite{GR})
\begin{equation}
   \label{E:fbelow}
      \Fcal_{\tau,L}(E)\ge \frac{1}{L^d}\Big( -\per_1(E,[0,L)^d)+\sum_{i=1}^d \Gcal_{\tau,L}^i(E)+\sum_{i=1}^d I_{\tau,L}^i(E)\Big).
\end{equation}

The estimate we need now is the following (see Lemma 3.4 of~\cite{GR}): for every one-dimensional $L$-periodic set $E\subset \R$ of locally finite perimeter and every $z\in\R$, 

\begin{equation}\label{toprovebandesim}
      \int_{0}^L |\chi_{E}(x)-\chi_{E}(x+z)| \dx\leq \sum_{x\in \partial E\cap [0,L)} \eta(x,z),
   \end{equation}
   where $\eta$ is the function defined in \eqref{eq:etafunct}
   
  % Let us show it for completeness for $z\geq0$ (the case $z\leq0$ is analogous). W.l.o.g., we can assume that $E\cap [0,L) =\overset{N}{\underset{i=1}{\bigcup}}(s_i,t_i)$.
  % By periodicity of $E$,
  % \begin{align}
  % \int_{0}^L |\chi_{E}(x)-\chi_{E}(x+z)| \dx&=\sum_{i=1}^N\int_{s_i}^{t_i}\chi_{E^c}(x+z)\dx+\int_0^{s_i}\chi_{E}(x+z)\dx+\int_{t_N}^{L}\chi_{E}(x+z)\dx\notag\\
  % &+\sum_{i=2}^N\int_{t_{i-1}}^{s_i}\chi_{E}(x+z)\dx\notag\\
  % &=\sum_{i=1}^N\int_{s_i}^{t_i}\chi_{E^c}(x+z)\dx+\sum_{i=1}^N\int_{t_{i-1}}^{s_i}\chi_{E}(x+z)\dx
  % \end{align}
  % where $t_0$ is the first point in $\partial E$ preceding $s_1$.
   
  % For every $i\in\{1,\dots,N\}$ with $x\in(s_i,t_i)$ and $x+z\in E^c$, then $x+z\geq t_i$ and $|x-t_i|\leq\min(z,h(t_i))=\eta(t_i,z)$. Analogously, for $i\in\{1,\dots,N\}$ with $x\in(t_{i-1},s_i)$ and $x+z\in E$, then $|x-s_i|\leq\eta(s_i,z)$. Hence \eqref{toprovebandesim} is proved.

      For every  $\tau\ge 0$, $\zeta\in\R^{d}$,  $[0,L)^d$-periodic set $E \subset \R^d$ of locally finite perimeter and $i\in \{1,\ldots,d\}$, by using \eqref{toprovebandesim} with a slicing argument (see Lemma 3.4 in \cite{GR}) one has that 
      \begin{equation}\label{eq:bandestim0}
         \int_{[0,L)^d} \int_{\R^d} K_\tau(\zeta) |\chi_{E}(x)-\chi_{E}(x+\zeta_i)|\d\zeta\dx\le \int_{\partial E\cap [0,L)^d} |\nu^E_i(x)| \int_{\R^d} K_\tau(\zeta) \eta_{x_i^\perp}(x_i,\zeta_i)\d\zeta\d\mathcal H^{d-1}(x).
      \end{equation}

   We will use the following slicing formula
 \begin{equation}
    \label{eq:gstrf1}
    \begin{split}
 \Gcal_{\tau,L}^i(E)=\int_{[0,L)^{d-1}} \Gcal_{\tau,L}^{1d}(E_{x_i^\perp}) \dx_i^\perp,
    \end{split}
 \end{equation}
 where 
 \begin{equation}
    \label{eq:gstrf2}
    \begin{split}
    \Gcal_{\tau,L}^{1d}(E_{x_i^\perp}):=\int_{\R} \widehat{K}_\tau(z) \Big(\per(E_{x_i^\perp},[0,L))|z|-\int_0^L |\chi_{E_{x_i^\perp}}(x)-\chi_{E_{x_i^\perp}}(x+z)|\dx\Big)\dz.
    \end{split}
 \end{equation}
As a consequence of \eqref{eq:bandestim0} and the fact that $|z|\geq\eta_{x_i^\perp}(x,z)$, $\Gcal_{\tau,L}^i(E)\geq0$.

More precisely, one has the following estimate from below for $\Gcal_{\tau,L}^{1d}$ (see~\cite[Lemma 3.7]{GR}): for every one-dimensional $L$-periodic set $E$ of locally finite perimeter  (recall that $\beta=p-d-1$),

\begin{equation}\label{E:boundg}
 \Gcal_{\tau,L}^{1d}(E) \geq C_{d,L}   \sum_{x\in \partial E\cap [0,L)}  \min((x^+-x)^{-\beta},\tau^{-1})+\min((x-x^-)^{-\beta},\tau^{-1}),
\end{equation}
where $x^+$ and $x^-$ are defined as in \eqref{eq:s+s-}.

   Moreover, for every $\delta\geq\tau^{1\slash\beta}$, one has that (see~\cite[Lemma~3.7]{GR})
   \begin{equation}\label{E:perg}
      \per(E,[0,L))-1\leq C_{d,L} ( L\delta^{-1}+ \delta^{\beta}\Gcal_{\tau,L}^{1d}(E)).
   \end{equation}
   
Although in \cite{GR} the estimates above hold for a slightly different kernel (namely, $\frac{1}{|\zeta|^p+1}$), they continue to hold for the type of kernels considered here since the only thing which is used in the proof is assumption \eqref{eq:Kbound}.
% In \cite{GR}, the kernel $\frac{1}{\tau^{p/\beta} + |\zeta |^p}$ is used. 
% This kernel satisfies hypothesis \eqref{eq:Kbound}-\eqref{eq:Ksymm}.

Optimizing in $\delta$ in \eqref{E:perg}, one has that 
\[
\per(E,[0,L))-1\leq C_{d,L}\max\bigl(\tau\Gcal_{\tau,L}^{1d}(E),\Gcal_{\tau,L}^{1d}(E)^{1\slash{p-d}}\bigr).
\]
Integrating it for $E_{x_i^\perp}$ w.r.t. $x_i^\perp\in[0,L)^{d-1}$ one obtains
\begin{equation}\label{E:estbelow}
\Fcal_{\tau,L}(E)\geq C_{d,L}\Big[-1+ \frac{1}{L^d}\Big(\sum_{i=1}^d\Gcal_{\tau,L}^i(E)+\sum_{i=1}^d I_{\tau,L}^i(E)\Big)\Big]
\end{equation}
and 
\begin{equation}
   \label{eq:gr1}
   \per_1(E,[0,L)^d)\leq C_{d,L} L^d \max(1,\Fcal_{\tau,L}(E)).
\end{equation}
For details see Lemma 3.9 in \cite{GR}.

The main result of \cite{GR} is the following.
% \achtung{
%    For the kernel $\frac{1}{1 + |\zeta |^{p}}$, which satisfies properties \eqref{eq:Kbound}, \eqref{eq:Ksymm} and \eqref{eq:Kmon} with $|\zeta|$ instead of $|\zeta|_{1}$, combining these estimates with measure-theoretic density arguments, it is proven in~\cite{GR} that the following $\Gamma$-convergence result holds: 
% }

\begin{theorem}[{\cite[Theorem~1.1]{GR}}]\label{thm:gammaconv}
   Let $p>2d$ and $L>0$. Then one has that $\Fcal_{\tau,L}$ $\Gamma$-converge in the $L^1$-topology as $\tau\to0$  to a functional $\Fcal_{0,L}$ which is invariant under permutation of coordinates and finite on sets which are (up to permutation of coordinates) of the form $E=F\times\R^{d-1}$ where $F\subset \R$ is $L$-periodic with $\#\{\partial F\cap[0,L)\}<\infty$.

      Moreover, let $\{ E_{\tau}\}$ be a family of $[0,L)^d$-periodic subsets of $\R^d$ such that there exists $M$ such that for every $\tau$ one has that $\mathcal F_{\tau,L}(E_\tau)<M$. Then, up to a permutation of coordinates, one has that there is a subsequence which converges in $L^1$  to some set of the form $E=F\times\R^{d-1}$ with $\#\{\partial F\cap[0,L)\}<\infty$.
\end{theorem}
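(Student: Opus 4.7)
My plan is to split the argument into three pieces: compactness, $\Gamma$-liminf, and $\Gamma$-limsup, all driven by the lower bound \eqref{E:estbelow}--\eqref{eq:gr1} and by the monotonicity hypothesis \eqref{eq:Kmon}.

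\textbf{Compactness.} Starting from a family $\{E_\tau\}$ with $\mathcal F_{\tau,L}(E_\tau) \leq M$, estimate \eqref{eq:gr1} gives a uniform bound on $\mathrm{Per}_1(E_\tau,[0,L)^d)$. After extracting a subsequence I can use standard BV compactness on the torus to pass to an $L^1$-limit $E_0$ which is $[0,L)^d$-periodic and has locally finite perimeter. The remaining task is to force $E_0$ to be (up to coordinate permutation) of the form $F\times\R^{d-1}$ with $F\subset\R$ having finitely many jumps in $[0,L)$. This is the heart of the compactness statement.

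\textbf{Showing $E_0$ is a stripe set.} From \eqref{E:estbelow} and the assumption $\mathcal F_{\tau,L}(E_\tau)\leq M$, the terms $I^i_{\tau,L}(E_\tau)$ defined in \eqref{eq:I} are uniformly bounded. Using $K_\tau(\zeta)\geq c(|\zeta|+\tau^{1/\beta})^{-p}$ from \eqref{eq:Kbound} together with the monotonicity $K_\tau\uparrow |\zeta|^{-p}$ (or $|\zeta|_1^{-p}$) from \eqref{eq:Kmon}, for every $R>0$ one has $K_\tau(\zeta)\geq c_R$ on the annulus $\{1\leq |\zeta|\leq R\}$ uniformly in small $\tau$. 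Fatou therefore yields
\begin{equation*}
\int_{[0,L)^d}\int_{\R^d}|\chi_{E_0}(x)-\chi_{E_0}(x+\zeta_i)||\chi_{E_0}(x)-\chi_{E_0}(x+\zeta_i^\perp)|K_0(\zeta)\d\zeta\dx<\infty
\end{equation*}
for each $i$, where $K_0=|\zeta|^{-p}$ or $|\zeta|_1^{-p}$. If $E_0$ had non-trivial variation in two different directions, say $e_1$ and $e_2$, the nonlocal product above would carry a non-trivial contribution for $i=1$ on a positive-measure set of $(x,\zeta)$, and since $p>2d$ (so $p-2d>0$ makes $K_0$ sufficiently integrable on the complement of a neighborhood of the axes) this forces the integral to diverge. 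Thus at most one direction, say $e_1$, carries boundary, so $E_0=F\times\R^{d-1}$. The finiteness of $\#(\partial F\cap[0,L))$ follows from the $\mathrm{Per}_1$ bound.

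\textbf{$\Gamma$-liminf.} For a sequence $E_\tau\to E_0$ with $E_0=F\times\R^{d-1}$, the terms $I^i_{\tau,L}$ for $i\geq 2$ are non-negative and discarded. The surviving contribution $\sum_i\mathcal G^i_{\tau,L}(E_\tau)-\mathrm{Per}_1(E_\tau,[0,L)^d)$ is slicewise one-dimensional (formulas \eqref{eq:gstrf1}--\eqref{eq:gstrf2}); Fatou combined with the monotone convergence $\widehat K_\tau\uparrow \widehat K_0$ in \eqref{eq:Kmon} identifies the liminf with the natural limit functional evaluated on $F$. I take this pointwise limit as the definition of $\mathcal F_{0,L}(E_0)$.

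\textbf{$\Gamma$-limsup.} For the recovery sequence I take $E_\tau\equiv E_0=F\times\R^{d-1}$. Since $E_0$ is a stripe set, the product terms in \eqref{E:prod} vanish identically for $i=1$ and trivially for $i\geq 2$ (as there is no boundary transverse to $e_1$), so the inequality in \eqref{E:3.2} becomes an equality and $\mathcal F_{\tau,L}(E_0)$ reduces to the one-dimensional expression $-\mathrm{Per}(F,[0,L))/L^d$ plus slices of $\mathcal G^1_{\tau,L}$. Monotone convergence on the bounded number of slices then gives $\mathcal F_{\tau,L}(E_0)\to\mathcal F_{0,L}(E_0)$.

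\textbf{Main obstacle.} The delicate step is the second one: ruling out limits with boundary in more than one direction. One needs to quantify, via the $I^i_{\tau,L}$ terms and the lower bound on $K_\tau$ on a suitable annulus, that non-trivial two-directional oscillation costs infinite energy in the limit; the hypothesis $p>2d$ enters precisely here to guarantee integrability of $K_0$ away from the coordinate hyperplanes and divergence on them. Making this into a clean Fatou argument, rather than a term-by-term comparison, is the technical crux.
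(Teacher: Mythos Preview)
Your overall architecture (perimeter bound $\Rightarrow$ BV compactness, rigidity of the limit, $\Gamma$-liminf via lower semicontinuity of the nonnegative pieces, $\Gamma$-limsup via the constant recovery sequence) is the same skeleton the paper uses; note that this theorem is only quoted from \cite{GR} here, while the paper reproves it in the sharper form Theorem~\ref{T:gammaconvNew}. The compactness and recovery parts are essentially fine.

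The genuine gap is your rigidity step. The sentence ``since $p>2d$ (so $p-2d>0$ makes $K_0$ sufficiently integrable on the complement of a neighbourhood of the axes) this forces the integral to diverge'' is not an argument: integrability of $K_0$ away from the singular set is exactly the reason the integral does \emph{not} automatically diverge, and ``non-trivial variation in two directions'' does not by itself produce a set of positive $(x,\zeta)$-measure on which $f_E(x,\zeta)=1$ accumulates at $\zeta=0$. A set can have $\mathrm{Per}_{1i}>0$ for two indices and still have $I^i_{0,L}<\infty$ without further information; the finiteness of $I_{0,L}$ alone is not known to force stripes. In the paper the rigidity (Proposition~\ref{prop:rigidity}) uses \emph{both} $\sum_i\mathcal G^i_{0,L}(E_0)<\infty$ and $I_{0,L}(E_0)<\infty$: the $\mathcal G$-bound controls the minimal gap $r^i_o$ on slices via \eqref{eq:stimaro}, and this is combined with a quantitative slice-interaction estimate (Lemma~\ref{lemma:p2d_stimaSlice}) to feed into Brezis' constancy criterion (Theorem~\ref{thm:brezis}) for the jump-location function $r^i_\lambda(u,\cdot)$, reducing $E_0$ to stripes or checkerboards; checkerboards are then excluded by an explicit divergence computation near an edge. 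The original \cite{GR} argument for $p>2d$ is different but equally nontrivial. What you have written for this step is a statement of what must be true, not a mechanism for why; you need to supply the slice-level estimate linking $I_{0,L}$ and $\mathcal G_{0,L}$ to the geometry of $\partial E_{0,t_i^\perp}$, and explain precisely where the exponent threshold enters.
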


   % In Section~\ref{sec:d+2}, we will prove the validity of Theorem~\ref{thm:gammaconv} for the range of exponents $p\geq d+2$  instead of $p>2d$ and for the kernels satisfying \eqref{eq:Kbound}-\eqref{eq:Ksymm}. 

\section{From $p>2d$ to $p\geq d+2$}
\label{sec:d+2}

In this section we will prove the following theorem:

\begin{theorem}[{\cite[Theorem~1.1]{GR}} improved]\label{T:gammaconvNew}
   Let $p\geq d+2$ and $L>0$. Then one has that $\Fcal_{\tau,L}$ $\Gamma$-converge in the $L^1$-topology as $\tau\to0$  to a functional $\Fcal_{0,L}$ which is invariant under permutation of coordinates and finite on sets (up to permutation of coordinates) of the form $E=F\times\R^{d-1}$, where $F\subset \R$ is $L$-periodic with $\#\{\partial F\cap[0,L)\}<\infty$. 

   On sets of the form $E=F\times\R^{d-1}$ the functional is defined by
\begin{equation}
   \label{def:F0}
\mathcal F_{0,L}(E)=\frac{1}{L}\Big(-\#\{\partial F\cap[0,L)\}+\int_{\R^d}\frac{1}{|\zeta|^{p}}\Big[\sum_{x\in\partial F\cap[0,L)}|\zeta_1|-\int_0^L|\chi_{F}(x)- \chi_{F}(x+\zeta_1)|\dx\Big]\d\zeta\Big).
\end{equation}

Moreover, let $\{ E_{\tau}\}$ be a family of $[0,L)^d$-periodic subsets of $\R^d$ such that there exists $M$ such that for every $\tau$ one has that $\mathcal F_{\tau,L}(E_\tau)<M$,
 then, up to a permutation of coordinates, one has that there is a subsequence which converges in $L^1([0,L)^d)$  to some set $E=F\times\R^{d-1}$ with $\#\{\partial F\cap[0,L)\}<\infty$.
\end{theorem}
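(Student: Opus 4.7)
The plan is to follow the general scheme used to prove Theorem~\ref{thm:gammaconv} in \cite{GR}, upgrading the single step that fails when $p$ drops below $2d$, namely the rigidity estimate. Once rigidity is available on the larger range $p \geq d+2$, the compactness, $\Gamma$-liminf and $\Gamma$-limsup steps go through unchanged because they only use the structural bounds \eqref{E:fbelow}, \eqref{E:estbelow}, \eqref{eq:gr1} and the monotone convergence of $K_\tau$ in \eqref{eq:Kmon}, all of which are insensitive to the exponent beyond $p \geq d+2$.

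For compactness, I take a sequence $\{E_\tau\}$ with $\mathcal{F}_{\tau,L}(E_\tau)\le M$. Applying \eqref{eq:gr1} gives a uniform perimeter bound, so by $BV$-compactness for $[0,L)^d$-periodic sets, a subsequence $E_\tau \to E$ in $L^1([0,L)^d)$. Combining \eqref{E:fbelow} with \eqref{E:estbelow}, I also obtain uniform bounds on $\sum_i \mathcal{G}^i_{\tau,L}(E_\tau)$ and on $\sum_i I^i_{\tau,L}(E_\tau)$. I then invoke the improved rigidity estimate of Section~\ref{sec:d+2} (Proposition~\ref{prop:rigidity}) to conclude that, up to a permutation of coordinates, $E = F \times \R^{d-1}$ for some $L$-periodic $F \subset \R$ with $\#\{\partial F \cap [0,L)\} < \infty$.

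For the $\Gamma$-liminf on such a limit, I drop the non-negative terms $I^i_{\tau,L}$ and $\mathcal{G}^i_{\tau,L}$ for $i\neq 1$ and apply the slicing formula \eqref{eq:gstrf1}--\eqref{eq:gstrf2} in the privileged direction. Lower semicontinuity of the one-dimensional perimeter under $L^1$-slice convergence, together with Fatou's lemma against $\widehat K_\tau$ and the monotone convergence provided by \eqref{eq:Kmon}, yields $\liminf_\tau \mathcal{F}_{\tau,L}(E_\tau) \ge \mathcal{F}_{0,L}(E)$ as in \eqref{def:F0}. For the recovery sequence I take $E_\tau \equiv E$: the product terms $I^i$ vanish identically on stripes and the perpendicular terms $\mathcal{G}^i$ with $i\neq 1$ vanish as well, while $\mathcal{G}^1_{\tau,L}(E) \to \mathcal{G}^1_{0,L}(E)$ by monotone convergence of $\widehat K_\tau$, giving the matching upper bound.

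The main obstacle, and the reason the theorem is not a trivial restatement, is the rigidity step in the regime $p \geq d+2$. In \cite{GR} the analogous step exploited $p > 2d$ to close a direct interpolation controlling transversal oscillations of $E$ by cross products of characteristic-function increments; for $p \geq d+2$ this interpolation degenerates. I would instead argue as outlined in Section~\ref{sec:d+2}, adapting the nonlocal $BV$-characterisation of \cite{Bre}: the quantity $I^i_{\tau,L}$, although generated by a less integrable kernel, still detects genuine variation of $E$ in the transverse direction in the sense of this characterisation, and a limit set with $\sum_i I^i_{\tau,L}(E_\tau) \le C$ cannot have nontrivial variation in more than one coordinate direction. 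This is the ingredient that allows me to conclude $E = F\times \R^{d-1}$ up to permutation, and hence to close the proof.
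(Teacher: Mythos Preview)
Your overall architecture is correct and matches the paper: compactness via \eqref{eq:gr1}, lower semicontinuity of $\Gcal^i_{\tau,L}$ and $I^i_{\tau,L}$ as in \cite{GR}, and then the rigidity Proposition~\ref{prop:rigidity} applied to the limit set. The $\Gamma$-limsup with the constant recovery sequence is also what the paper does.

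The gap is in your account of rigidity, which is where all the new content lives. Saying that $I^i_{\tau,L}$ ``detects genuine variation of $E$ in the transverse direction in the sense of \cite{Bre}'' is not a proof sketch: Brezis's criterion concerns a scalar function $f:\R^{d-1}\to\R$ and the quotient $|f(x)-f(y)|/|x-y|^{d}$, whereas $I^i_{0,L}$ is built from products of characteristic-function increments against the kernel $|\zeta|^{-p}$. There is no direct way to feed $\chi_E$ into \cite{Bre}. The paper's key idea, which you are missing, is to introduce for each $u$ the auxiliary scalar function $t_i^\perp\mapsto r^i_\lambda(u,t_i^\perp)$, the distance from $u$ to the nearest jump of the slice $E_{t_i^\perp}$, and then to prove the pointwise slice estimate (Lemma~\ref{lemma:p2d_stimaSlice})
\[
\Int(t_i^\perp,t_i'^\perp)\;\gtrsim\;\frac{|r^i_\lambda(u,t_i^\perp)-r^i_\lambda(u,t_i'^\perp)|}{|t_i^\perp-t_i'^\perp|^{d}}
\]
on the good set where the minimal gap $r_o^i$ exceeds $|t_i^\perp-t_i'^\perp|$; this is precisely where $p\ge d+2$ enters, via $K_0(t-t')\gtrsim |t_i^\perp-t_i'^\perp|^{-p}$ on a box of side $\sim|t_i^\perp-t_i'^\perp|$. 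The complementary bad set is controlled using $(r_o^i)^{-\beta}\in L^1$, which comes from the bound on $\Gcal^i_{0,L}$. Only then does \cite{Bre} apply, to $r^i_\lambda(u,\cdot)$, yielding that this function is a.e.\ constant for every $u$ and every $i$.

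A second omission: constancy of all the $r^i_\lambda(u,\cdot)$ does not immediately force stripes; it leaves open the possibility of a checkerboard (boundary a union of axis-orthogonal hyperplanes in at least two directions). The paper rules this out by a separate direct computation showing $I^1_{0,L}=+\infty$ near an edge, which you should also include.
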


The proof of Theorem~\ref{T:gammaconvNew} consists of two main parts: a part in which compactness of sets of equibounded energy  and lower semicontinuity of the functionals $\Gcal^i_{\tau,L}$ and $I^{i}_{\tau,L}$ is proved and a part in which a rigidity estimate is proved.  The rigidity estimate roughly says that in the limit as $\tau\downarrow 0$, sets of equibounded energy must converge to stripes in $L^1([0,L)^d)$.  This in turn says that the limiting problem is one-dimensional. 

For the first part we refer to \cite{GR}, where the quantities and the estimates defined in Section~\ref{sec:setting_and_preliminary_results} are used. 

The second part  is based on different arguments. 
Indeed, the rigidity result  is the core of Theorem~\ref{T:gammaconvNew}. 
Such result is contained in the following proposition, which  substitutes Proposition~4.3 of \cite{GR}. 

\begin{proposition}[Rigidity]
   \label{prop:rigidity}
   Let $p\geq d+2$ and let $E$ be a $[0,L)^d$-periodic set of locally finite perimeter such that $\sum_{i=1}^d\Gcal_{0,L}^i(E)+I_{0,L}(E)<+\infty$. Then, $E$ is one-dimensional, i.e. up to permutation of the coordinates, $E=\widehat{E}\times \R^{d-1}$ for some $L-$periodic set $\widehat{E}$.
\end{proposition}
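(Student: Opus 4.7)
The plan is to split the rigidity claim into two geometric steps:
(i) at $\mathcal H^{d-1}$-a.e.\ point of $\partial E$ the exterior normal $\nu^E$ is parallel to some coordinate axis;
(ii) a single coordinate direction $i^*$ works for $\mathcal H^{d-1}$-a.e.\ point of $\partial E$.
Together with $[0,L)^d$-periodicity and the local finiteness of the perimeter coming from \eqref{E:perg} and \eqref{eq:gstrf1}, these imply that $\partial E$ is contained, up to an $\mathcal H^{d-1}$-null set, in a countable union of hyperplanes orthogonal to $e_{i^*}$, hence $E=\widehat E\times\R^{d-1}$ after permutation of coordinates.

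The finiteness of $\Gcal^i_{0,L}(E)$ provides the underlying one-dimensional geometric control: by \eqref{eq:gstrf1}--\eqref{eq:gstrf2} and \eqref{E:boundg}, for a.e.\ $x_i^\perp$ the slice $E_{x_i^\perp}$ is a locally finite union of intervals with quantitatively separated endpoints. The driving role in both (i) and (ii) is then played by $I^i_{0,L}(E)$, whose integrand $|\chi_E(x+\zeta_i)-\chi_E(x)|\,|\chi_E(x+\zeta_i^\perp)-\chi_E(x)|\,K_0(\zeta)$ is positive precisely when translations of $x$ in both the $e_i$-direction and in the orthogonal hyperplane $e_i^\perp$ cross $\partial E$; in other words, $I^i_{0,L}$ is a nonlocal measure of how often $\partial E$ ``turns a corner'' in the $(e_i,e_i^\perp)$-plane.

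For step (i) I would blow up at a typical $x_0\in\partial E$ for which the normal $\nu^E(x_0)$ has two nonzero components, say $\nu^E_1(x_0),\nu^E_2(x_0)\neq 0$. Near such an $x_0$ the set $E$ resembles, at first order, an oblique half-space, and one seeks a lower bound on the contribution of a small cube centered at $x_0$ to $I^1_{0,L}$ that is incompatible with the assumed finiteness. The direct estimate that uses only the oblique blowup of $E$ together with \eqref{eq:Kbound}, which is the approach of \cite{GR}, yields divergence only for $p>2d$; to reach the sharp range $p\geq d+2$ I would invoke the Besov-type rigidity result of Brezis~\cite{Bre} announced in the introduction (see Section~\ref{sec:d+2}). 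It upgrades the finiteness of the mixed double-difference quotient $I^i_{0,L}$ into a fractional smoothness statement for $\chi_E$ across $\partial E$, which is incompatible with an oblique tangent hyperplane at $x_0$ as soon as $p\geq d+2$.

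Step (ii) reduces to excluding a decomposition $\partial E=\partial^1 E\sqcup\partial^2 E$ with both pieces of positive $\mathcal H^{d-1}$-measure, where $\partial^i E=\{x\in\partial E:\nu^E(x)\in\{\pm e_i\}\}$. Such a decomposition would force the two families of axis-parallel hyperplane pieces to intersect on some $(d-2)$-dimensional interface, producing an axis-aligned corner in the $(e_1,e_2)$-plane; the same Brezis-type input, applied in a neighbourhood of this interface, again forces one of $\partial^1 E$, $\partial^2 E$ to be $\mathcal H^{d-1}$-negligible. The main obstacle throughout is step~(i) at the endpoint $p=d+2$, where the naive blowup lower bound from \cite{GR} is no longer sharp and the full strength of \cite{Bre} is required; step~(ii), while geometrically intuitive, is also delicate in the $L$-periodic setting because one cannot push offending corners to infinity.
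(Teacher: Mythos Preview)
Your overall two-step strategy (first show that $\nu^E$ is a.e.\ a coordinate vector, then show only one coordinate direction occurs) is reasonable, and the paper's proof can be read through this lens. However, the way you propose to implement step~(i) has a genuine gap, and it is exactly the point where the paper's argument is most innovative.

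You write that Brezis's result \cite{Bre} ``upgrades the finiteness of the mixed double-difference quotient $I^i_{0,L}$ into a fractional smoothness statement for $\chi_E$ across $\partial E$'', and you intend to combine this with a blowup at an oblique point. But Theorem~\ref{thm:brezis} is \emph{not} a fractional regularity statement: it asserts that if $f:\Omega\subset\R^{d-1}\to\R$ satisfies $\iint |f(x)-f(y)|/|x-y|^d<\infty$ then $f$ is \emph{constant}. Applied to $\chi_E$ itself on any subdomain this would force $E$ to be trivial there, which is far too strong; moreover, $I^i_{0,L}$ involves a \emph{product} of differences in orthogonal directions, not the single difference quotient appearing in Brezis's criterion, so there is no direct route from $I^i_{0,L}(E)<\infty$ to a Brezis-type bound for $\chi_E$. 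Your blowup idea therefore lacks the bridge between the hypothesis and the tool.

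The paper's key idea, which your proposal is missing, is to apply Theorem~\ref{thm:brezis} not to $\chi_E$ but to the auxiliary scalar function $t_i^\perp\mapsto r^i_\lambda(u,t_i^\perp)$ defined in \eqref{eq:defri_local}, the distance on the slice $E_{t_i^\perp}$ from a fixed height $u$ to the nearest jump point. Lemma~\ref{lemma:p2d_stimaSlice} is the technical heart: it shows that the slice interaction $\Int(t_i^\perp,t_i'^\perp)$ dominates $|r^i_\lambda(u,t_i^\perp)-r^i_\lambda(u,t_i'^\perp)|/|t_i^\perp-t_i'^\perp|^{d}$ on the ``good'' set where $r_o^i$ exceeds $|t_i^\perp-t_i'^\perp|$, while the complementary ``bad'' set is controlled by $\Gcal^i_{0,L}(E)$ via \eqref{eq:stimaro}. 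This is what converts the mixed double-difference hypothesis into the single difference quotient Brezis needs, and it is precisely here that the exponent count $p\ge d+2$ enters (see the two cases in the proof of Lemma~\ref{lemma:p2d_stimaSlice}). Once $r^i_\lambda(u,\cdot)$ is constant for every $u$ and every $i$, a short argument shows $E$ is either a union of stripes or a checkerboard; checkerboards are then ruled out by a direct divergence computation of $I^1_{0,L}$ near an edge, which is the analogue of your step~(ii).
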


%\emph{As the above Rigidity estimate depends on the dimension $d$ and the size of the cube $L$, in this section in order to simplify the notation, we will denote  $A\lesssim B$, whenever there exists  a constant $C_{d,L}$ such that $A\leq C_{d,L} B$.  }

In the proof of Proposition~\ref{prop:rigidity}, we will apply the following result (see~\cite[Proposition~1]{Bre}). It will be applied to the function $r^{i}_{\lambda}(u,\cdot):\R^{d-1}\to\R$ (for the definition see~\eqref{eq:defri_local} below).

\begin{theorem}
   \label{thm:brezis}
   Let $\Omega\subset \R^{d-1}$ be an open and connected set in $\R^{d-1}$ and  $f:\R^{d-1}\to \R$ be a measurable function such that
   \begin{equation} %---{{{
      \label{eq:thm_brezis}
      \begin{split}
         \int_{\Omega\times \Omega}\frac{|f(x) - f(y) |}{|x-y|^{d}} < +\infty. 
      \end{split}
   \end{equation} %---}}}
   Then $f$ is constant almost everywhere, namely there is constant function $\tilde{f}$  such that $f = \tilde{f}$ up to a set of null Lebesgue measure. 
\end{theorem}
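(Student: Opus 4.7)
The plan is to reduce the statement to a vanishing-difference-quotient condition in every direction, exploiting the critical Gagliardo scaling. Writing $n:=d-1$ to lighten the notation, the hypothesis $\int_{\Omega\times\Omega}|f(x)-f(y)|/|x-y|^{n+1}\,dx\,dy<\infty$ is exactly the borderline Sobolev seminorm on $\R^n$, and the goal is to deduce $\nabla f\equiv 0$ distributionally.

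First, fix a ball $B\Subset\Omega$ and $r_0>0$ with $B+B_{r_0}\subset\Omega$, and set
\[
\psi(h):=\int_B|f(x+h)-f(x)|\,dx,\qquad h\in B_{r_0}.
\]
Substituting $y=x+h$ and applying Fubini converts the hypothesis into $\int_{|h|\le r_0}\psi(h)/|h|^{n+1}\,dh<\infty$, and passing to polar coordinates $h=r\omega$ (so $dh=r^{n-1}\,dr\,d\omega$) this becomes
\[
\int_0^{r_0}\!\int_{S^{n-1}}\frac{\psi(r\omega)}{r^2}\,d\omega\,dr<\infty.
\]
Hence for $\mathcal H^{n-1}$-a.e.~$\omega\in S^{n-1}$ the radial integral $\int_0^{r_0}\psi(r\omega)/r^2\,dr$ is finite.

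Fix such an $\omega$. The triangle inequality applied to $f$ yields $\psi((r_1+r_2)\omega)\le\psi(r_1\omega)+\psi(r_2\omega)$, so $r\mapsto\psi(r\omega)$ is subadditive on $(0,r_0)$; the classical fact that $\lim_{r\to 0^+}\psi(r\omega)/r=\inf_{r>0}\psi(r\omega)/r$ combined with the integrability of $\psi(r\omega)/r^2$ near $0$ forces this infimum to vanish, i.e.~$\psi(r\omega)=o(r)$ as $r\to 0^+$. This says exactly that the difference quotient $(f(\cdot+r\omega)-f(\cdot))/r$ tends to $0$ in $L^1(B)$, so for every $\varphi\in C_c^\infty(B)$,
\[
\Big|\int f(x)\,\partial_\omega\varphi(x)\,dx\Big|=\lim_{r\to 0^+}\Big|\int\frac{f(x+r\omega)-f(x)}{r}\varphi(x)\,dx\Big|\le\|\varphi\|_\infty\lim_{r\to 0^+}\frac{\psi(r\omega)}{r}=0,
\]
and therefore $\partial_\omega f=0$ in $\D'(B)$. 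Since $\omega\mapsto\partial_\omega f$ is linear, the set $\{\omega\in\R^n:\partial_\omega f\equiv 0\text{ on }B\}$ is a linear subspace of $\R^n$; containing a subset of $S^{n-1}$ of positive $\mathcal H^{n-1}$-measure, it must equal $\R^n$. Consequently $\nabla f\equiv 0$ in $\D'(B)$, so $f$ is a.e.~constant on $B$, and the connectedness of $\Omega$ together with the arbitrariness of $B$ glues these local constants into a single one.

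The main hurdle is the conversion step $\psi(r\omega)/r\to 0$: promoting mere integrability of $\psi(r\omega)/r^2$ into pointwise-in-$r$ decay is what makes the whole argument work, and it rests entirely on the subadditivity of $\psi$ along rays—itself a one-line consequence of the triangle inequality. The Fubini/polar-coordinate reduction and the final connectedness step are routine, but the critical exponent $n+1$ is precisely what makes the subadditivity-plus-integrability argument sharp.
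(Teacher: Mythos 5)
Your route is genuinely different from the paper's: the authors do not prove Theorem~\ref{thm:brezis} at all but cite \cite{Bre}, remarking only that the smooth case is elementary and that the general case follows by a regularization (mollification) step. Your argument instead slices the hypothesis into rays, shows the directional distributional derivative vanishes for almost every direction, and upgrades this to all directions by the linear-subspace observation; this avoids mollification entirely, and the Fubini/polar reduction, the subspace argument on $S^{d-2}$, and the connectedness step are all sound. (You should also record that the hypothesis forces $f\in L^1_{\mathrm{loc}}(\Omega)$ --- integrate in $x$ for one fixed good $y$ --- since you use this tacitly when passing the difference quotient onto $\varphi$.)

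The step you single out as the crux is, however, wrong as written, on two counts. First, $\psi$ is not subadditive along rays: the triangle inequality gives
\begin{equation*}
\psi((r_1+r_2)\omega)\le \psi(r_1\omega)+\int_{B+r_1\omega}\bigl|f(y+r_2\omega)-f(y)\bigr|\,dy ,
\end{equation*}
and the second integral runs over the \emph{translated} ball $B+r_1\omega$, not over $B$; since $\psi$ is a fixed-window quantity it is not translation invariant, and the inequality $\psi((r_1+r_2)\omega)\le\psi(r_1\omega)+\psi(r_2\omega)$ fails in general. (Take $f=\chi_A$ with $A$ at positive distance from $B$ but meeting $B+h$ for some $|h|<r_0$: then $\psi(h/n)=0$ for $n$ large while $\psi(h)>0$, whereas iterated subadditivity would force $\psi(h)\le n\,\psi(h/n)=0$.) Second, even granting subadditivity, the Fekete-type limit in the regime $r\to0^+$ is $\lim_{r\to0^+}g(r)/r=\sup_{r>0}g(r)/r$; the infimum is the $r\to\infty$ limit, so the "classical fact" is quoted backwards. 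Fortunately neither is needed: the integrability of $r\mapsto\psi(r\omega)/r^{2}$ near $0$ by itself forces $\liminf_{r\to0^+}\psi(r\omega)/r=0$, since otherwise $\psi(r\omega)/r^{2}\gtrsim 1/r$ near the origin. This produces a sequence $r_k\downarrow0$ with $\psi(r_k\omega)/r_k\to0$, and since the limit defining $\int f\,\partial_\omega\varphi$ exists for every $\varphi\in C_c^\infty$ supported well inside $B$, evaluating it along that sequence already gives $\partial_\omega f=0$ in $\mathcal D'(B)$. With that one-line replacement of the subadditivity paragraph, your proof is complete.
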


Notice that the Theorem~\ref{thm:brezis} is trivial whenever $f$ is smooth.  
In order to obtain the nonsmooth case, a regularization step is needed  (see~\cite{Bre} for the details).

Let us also recall our notation: given $t\in \R^d$, we will denote by $t_{i} = \scalare{t, e_i} e_{i} $, $t_{i}^\perp =  t - t_{i}$ and we set
   \begin{equation}\label{eq:fE}
      \begin{split}
         f_{E}(t^{\perp}_i,t_i,t'^\perp_{i},t'_i):=|\chi_{E}(t_{i}^\perp +t_i+ t'_{i}) - \chi_{E}(t_i + t^{\perp}_{i} ) | | \chi_{E}(t_{i}^\perp +t_i+ t'^{\perp}_{i}) - \chi_{E}(t_i + t^{\perp}_{i} )  |.
      \end{split}
   \end{equation}

   In order to be able to use Theorem~\ref{thm:brezis}, we need to change variables and have $K_{0}(t' -t)$ instead of $K_0(t')$ in our formulas. For this reason we will make the change of variables $\tilde{t} = t' + t$. Thus we have that
   \begin{equation*}
      \begin{split}
         \frac{d}{2}I^{i}_{0,L} (E)  & = \int_{[0,L)^{d}} \int_{\R^d} |\chi_{E}(t+ t'^{\perp}_{i}) - \chi_{E}(t)| |\chi_{E}(t+ t'_{i}) - \chi_{E}(t)| K_{0}(t')\dt'\dt  \\ &= 
          \int_{[0,L)^{d}} \int_{\R^d} |\chi_{E}( t_i+ \tilde{t}^{\perp}_{i}) - \chi_{E}(t)| |\chi_{E}(t^{\perp}_i+ \tilde{t}_{i}) - \chi_{E}(t)| K_{0}( \tilde{t}-t)\d\tilde{t} \dt  
          \\ &= \int_{[0,L)^{d}} \int_{\R^d} |\chi_{E}(t_i+ {t'}^{\perp}_{i}) - \chi_{E}(t)| |\chi_{E}(t^{\perp}_i+ {t'}_{i}) - \chi_{E}(t)| K_{0}( {t'}-t)\dt' \dt
          \\ &\geq  \int_{[0,L)^{d}} \int_{[0,L)^{d}} |\chi_{E}(t_i+ {t'}^{\perp}_{i}) - \chi_{E}(t)| |\chi_{E}(t^{\perp}_i+ {t'}_{i}) - \chi_{E}(t)| K_{0}(t' - t) \dt' \dt
      \end{split}
   \end{equation*}
   
   Since we will make a slicing argument, we further rewrite the above as 

\begin{equation}
   \label{eq:gstr11}
   \begin{split}
      \frac{d}{2}I^{i}_{0,L}(E) \geq   \int_{[0,L)^{d-1}}\int_{[0,L)^{d-1}} \Int(t^{\perp}_{i},t'^{\perp}_{i}) \dt^{\perp}_{i} \dt'^{\perp}_{i},
   \end{split}
\end{equation}
where 
\begin{equation}
   \label{eq:definizioneInteraction}
   \begin{split}
      \Int(t^{\perp}_{i},t'^{\perp}_{i})   := 
      \int_{0}^L \int_{0}^L f_{E}(t^{\perp}_i,t_i,t'^\perp_{i}-t^{\perp}_i, t'_i-t_i) K_{0}(t- t')\dt_{i} \dt'_{i} .
   \end{split}
\end{equation}

It is useful to think of $\Int(t^{\perp}_i,t'^{\perp}_i)$ as the interaction between two different slices.

In this section we will use the following notation: given $\lambda\in( 0,\frac L2)$, $u\in (\lambda,L - \lambda)$ and $E$ a set of locally finite perimeter and $t^\perp_{i}\in[0,L)^{d-1}$,  we will denote by  
\begin{equation}
   \label{eq:defri_local}
   \begin{split}
      r^i_{\lambda}(u, t_{i}^\perp) &:= \min \big\{ \inf \{|u-s |:\ s\in \partial E_{t^\perp_i} \text{ and } s\in (\lambda,L-\lambda)  \}, |u-\lambda |, |L-\lambda -u | \big\} \\
    r^i_o(t_{i}^\perp) &:= \inf_{s\in \partial E_{t^\perp_i}\cap [0,L]} \min(s^+-s,s-s^-),
   \end{split}
\end{equation}
where $s^+,s^-$ are defined in \eqref{eq:s+s-}. 

   Notice that  for a set of finite perimeter the followings hold:
   \begin{itemize}
      \item $E_{t^\perp_i}$ is a set  of finite perimeter for almost every $t^\perp_i$,
      \item for every set of finite perimeter in $F\subset \R$, there exist a finite number of intervals $\{ J_i\}_{i=1}^N$ such that $F\cap [0,L)= \bigcup_{i=1}^N J_i$, where the equality is intended in the measure theoretic sense, namely up to a set of null measure (see~\cite{AFPBV}).
   \end{itemize}
   Thus the above map is well-defined for almost every $t^\perp_i$ and measurable.

      \begin{remark}
         \label{rmk:PeriodicitaNonServe}
         When $\lambda = 0$, term $r^i_{\lambda}(u,t^\perp_i)$ measures the distance of $u$ from  jump points on $\partial E_{t_i^\perp}$ which are close. 
         The role of $\lambda > 0$ is technical: it is used to handle the situation in which the next jump point $s\in \partial E_{t^\perp_i}$ is in a $\lambda$-neighborhood of $\{ 0, L\}$. In Proposition~\ref{prop:rigidity}, this technical point  would be unnecessary since $E$ is $[0,L)^d$-periodic. However, we introduce it because it will be needed in the proof of the local rigidity lemma, namely Lemma~\ref{lemma:local_rigidity} (when instead of $[0,L)^d$ we will consider $[0,l)^d$ with $l<L$ and therefore $E$ is not $[0,l)^d$-periodic). We prefer to give here already a more general proof instead of repeating twice a similar argument.
      \end{remark}
   
     Suppose that, for every $u$, one has that $r^{i}_{\lambda}(u,\cdot)$ is constant almost everywhere: if this holds for every $i$, then it is not difficult to see that  $E$  is (up to null sets) either  a union of stripes or a  checkerboards, where by checkerboards we mean any set whose boundary is the union of affine subspace orthogonal to  coordinate axes, and there are at least two of these directions. 
     
     Via an energetic argument one can rule out checkerboards
      (see the comment at the end of the proof of Proposition~\ref{prop:rigidity}).

In order to obtain that $r^i_\lambda(u,\cdot)$ is constant almost everywhere, we will apply Theorem~\ref{thm:brezis}.

The main use of the term $r^{i}_{o}(t_{i})$ is the following: since $\Gcal_{0,L}^{i}(E) < +\infty$ and inequality \eqref{E:boundg} holds for $\tau = 0$, one has that

\begin{equation}
   \label{eq:stimaro}
   \begin{split}
      \int_{[0,L)^{d-1}} (r^{i}_{0}(t_{i}^{\perp}))^{-\beta} \dt^{\perp}_i \leq \Gcal_{0,L}^{i}(E) < +\infty. 
   \end{split}
\end{equation}

The next lemma gives a lower bound for the interaction term of close slices.

   \begin{lemma}
      \label{lemma:p2d_stimaSlice}
      Let $\lambda \in (0,L/2)$ and let $t'^{\perp}_{i},t^{\perp}_{i}\in [0,L)^{d-1}$, $t_i^\perp \neq t'^\perp_i$ be such that  $\min(r^{i}_o(t^{\perp}_i), r^{i}_o(t'^{\perp}_i)) > |t'^\perp_{i} - t^{\perp}_{i}|$ and $|t'^\perp_i - t^\perp_i | \leq \lambda$.
      Then for every $u\in (\lambda, L -\lambda)$ it holds
      \begin{equation} 
         \label{eq:lemma_AB1}
         \Int(t'^{\perp}_{i},t^{\perp}_{i}) \geq C_{d,L} \frac{|r^{i}_\lambda(u,t'^{\perp}_{i}) - r^{i}_{\lambda}(u,t^{\perp}_{i}) |}{|t'^{\perp}_{i} - t^{\perp}_{i}|^{d}}.
      \end{equation}
   \end{lemma}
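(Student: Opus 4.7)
The plan is to produce, in the $(t_i,t'_i)$-plane, a small rectangle $R$ on which the integrand of $\Int(t'^\perp_i,t^\perp_i)$ is pointwise $\geq c\,\delta^{-p}$ (with $\delta:=|t'^\perp_i-t^\perp_i|$) and whose area is at least $\delta\cdot \min(\delta,|a-b|)$, where I write $a:=r^i_\lambda(u,t^\perp_i)$ and $b:=r^i_\lambda(u,t'^\perp_i)$. Since the right-hand side of \eqref{eq:lemma_AB1} is symmetric in the two horizontal positions, I relabel so that $b<a$ (the case $a=b$ being trivial); the strict inequality forces $b$ to be attained at a true boundary point $s^{**}\in\partial E_{t'^\perp_i}\cap(\lambda,L-\lambda)$, because $|u-\lambda|$ and $|L-\lambda-u|$ are common upper bounds for both $a$ and $b$. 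After a reflection I take $s^{**}=u+b$.

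The min-gap hypothesis $r^i_o(t'^\perp_i)>\delta$ combined with the minimality of $s^{**}$ yields $s^{**}_+>s^{**}+\delta$ and $s^{**}_-\leq u-b<s^{**}-\delta$. The slice $E_{t^\perp_i}$ has no boundary in $(u-a,u+a)$ and thus equals a constant $v\in\{0,1\}$ there, while $E_{t'^\perp_i}$ equals $w_1:=\chi_{E_{t'^\perp_i}}(u)$ on $(s^{**}_-,s^{**})$ and $w_2:=1-w_1$ on $(s^{**},s^{**}_+)$. The two factors of
\[
f_E(t'^\perp_i,t_i,t^\perp_i-t'^\perp_i,t'_i-t_i)=\bigl|\chi_{E_{t'^\perp_i}}(t_i)-\chi_{E_{t'^\perp_i}}(t'_i)\bigr|\cdot\bigl|\chi_{E_{t^\perp_i}}(t_i)-\chi_{E_{t'^\perp_i}}(t_i)\bigr|
\]
are therefore controlled by the relative positions of the $t_i$, $t'_i$ intervals with respect to $s^{**}$, and by the values $v,w_1,w_2$.

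The two relevant cases depend on whether $v$ and $w_1$ agree. If $v\neq w_1$, the slices already disagree on all of $(s^{**}_-,s^{**})\cap(u-a,u+a)$, and I take
\[
R := \bigl(s^{**}-\min(\delta,a+b),\,s^{**}\bigr)\times \bigl(s^{**},\,s^{**}+\delta\bigr).
\]
On $R$ the first factor is $|w_1-w_2|=1$ because the two intervals lie on opposite sides of $s^{**}$, and the second factor is $|v-w_1|=1$ because the $t_i$-interval is contained in $(u-a,u+a)$. If instead $v=w_1$, the disagreement of the two slices moves to $(u+b,u+a)$, and I take
\[
R := \bigl(s^{**},\,s^{**}+\min(\delta,a-b)\bigr)\times \bigl(s^{**}-\delta,\,s^{**}\bigr);
\]
the analogous check, using $s^{**}+\min(\delta,a-b)\leq u+a$ and $s^{**}_-<s^{**}-\delta$, shows that both factors again equal $1$. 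In either case $|R|\geq \delta\cdot \min(\delta,a-b)$, using $a+b\geq a-b$ in the first case.

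On $R$ one has $|t-t'|^2=|t_i-t'_i|^2+\delta^2\leq 5\delta^2$, so by \eqref{eq:Kbound} $K_0(t-t')\geq c\,\delta^{-p}$, whence
\[
\Int(t'^\perp_i,t^\perp_i) \;\geq\; c\,\min(\delta,a-b)\,\delta^{1-p}.
\]
With $\beta := p-d-1\geq 1$, a short case analysis on whether $a-b\leq \delta$ or $a-b>\delta$ together with $\delta\leq\lambda\leq L/2$ and $a-b\leq L$ converts this lower bound into the required $C_{d,L}\,(a-b)\,\delta^{-d}$. The main obstacle is the case split according to whether $v=w_1$ or $v\neq w_1$: only in the case $v=w_1$ does the disagreement region of the two slices have length exactly $a-b$, and one must verify geometrically in the opposite case that the automatic rectangle of size $\delta\times\delta$ still dominates $\delta\times\min(\delta,a-b)$.
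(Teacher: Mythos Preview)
Your proof is correct and follows essentially the same strategy as the paper: locate a jump $s^{**}$ on the slice with the smaller value of $r^i_\lambda$, use the min-gap hypothesis $r^i_o>\delta$ to freeze both slices on intervals of length $\sim\delta$ around $s^{**}$, and build a rectangle of area $\gtrsim \delta\cdot\min(\delta,|a-b|)$ on which $f_E\equiv 1$ and $K_0\gtrsim\delta^{-p}$. The only organizational difference is the case split: the paper distinguishes $|a-b|>\delta/2$ versus $|a-b|\le\delta/2$, whereas you distinguish whether the two slices agree at $u$ (i.e.\ $v=w_1$ or $v\neq w_1$); both splits lead to the same rectangle estimate and the same final conversion via $p\ge d+2$ and $\delta\le L/2$.
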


   \begin{proof}
      W.l.o.g. let us assume that $r^{i}_\lambda(u, t^\perp_{i}) < r^i_\lambda(u, t'^{\perp}_i)$.  In particular this implies that $r^i_\lambda(u,t^\perp_i) < \min(|u-\lambda|, | L - \lambda -u |)$, and hence there exists a point  $s_o \in (\lambda,L-\lambda)$  
      such that 
      \begin{equation*}
         \begin{split}
            |u - s_o| = \inf \{ |u-s|:\ s\in \partial E_{t^\perp_i}, s\in (\lambda, L-\lambda)  \}.
         \end{split}
      \end{equation*}
      
      For simplicity of notation denote $\delta = |t'^\perp_i -  t^\perp_i|$ and $r = |r^i_\lambda(u,t^\perp_i) - r^i_\lambda(u,t'^{\perp}_{i})|$.  
            Since $r_{o}(t^{\perp}_{i}) > \delta$, one has that
            \begin{equation*}
               \begin{split}
                  (s_{o} -\delta , s_{o} +\delta) \cap E_{t^{\perp}_i} = (s_{o} , s_{o} +\delta )  \qquad\text{or} \qquad 
                  (s_{o} -\delta , s_{o} +\delta) \cap E_{t^{\perp}_i} = (s_{o} -\delta, s_{o}  )  .
               \end{split}
            \end{equation*}
            Notice that since $\lambda \geq \delta$, we have that $(s_o - \delta, s_o + \delta ) \subset [0,L)$.
            We will assume
            \begin{equation}
               \label{eq:gstr33}
               \begin{split}
                  (s_{o} -\delta , s_{o} +\delta) \cap E_{t^{\perp}_i} = (s_{o} , s_{o} +\delta ) 
               \end{split}
            \end{equation}
            The other case is analogous.

      We will distinguish two subcases: 
      
      \begin{enumerate}[(i)]
         \item  Suppose $r  > \delta/2$. 
            From the definition of $\delta$ and $r$, then for the slice in $t'^\perp_i$ one has that
      \begin{equation*}
         \begin{split}
      (s_{o} - \delta/2 , s_{o} + \delta/2) \cap E_{t'^{\perp}_i} = (s_{o} - \delta/2 , s_{o} + \delta/2 ) 
      \qquad \text{or}\qquad
      (s_{o} - \delta/2 , s_{o} + \delta/2) \cap E_{t'^{\perp}_i} = \emptyset. 
         \end{split}
      \end{equation*}
      Indeed on the slice $E_{t'^\perp_i}$, the closest jump point to $s_o$  is at least $r$ distant and $r>\delta/2$. 
      We will assume the first alternative above. The other case is analogous.  
      % Thus, since $r > \delta/2$, one has that
      % \begin{equation*}
      %    \begin{split}
      % (s_{o} -\delta/2 , s_{o} +\delta/2) \cap E_{t'^{\perp}_i} =  (s_{o} -\delta/2 , s_{o} +\delta/2).
      %    \end{split}
      % \end{equation*}

            Then for every $a \in (s_o-\delta/2, s_o)$ and $a' \in (s_o, s_o+\delta/2)$, one has that 
            \begin{equation*}
               \begin{split}
                  f_{E}(t^\perp_i, a, t'^\perp_i -  t^\perp_i, a' - a) = 1.
               \end{split}
            \end{equation*} 
            Thus given that $r^i_{\lambda}(u,t^\perp_i) \leq L $, we have that
            \begin{equation*}
               \begin{split}
                  \Int(t^{\perp}_{i},t'^{\perp}_{i})   & = 
      \int_{0}^L \int_{0}^L f_{E}(t^{\perp}_i,t_i,t'^\perp_{i}-t^{\perp}_i, t'_i-t_i) K_{0}(t'- t)\dt_{i} \dt'_{i} 
      \\ &\geq \int_{s_o-\delta /2}^{s_o} \int_{s_o}^{s_o + \delta /2} f_{E}(t^{\perp}_i,t_i,t'^\perp_{i}-t^{\perp}_i, t'_i-t_i) K_{0}(t'- t)\dt'_{i} \dt_{i} 
      \\ &\geq \int_{s_o-\delta /2}^{s_o} \int_{s_o}^{s_o + \delta /2}  K_{0}(t'- t)\dt'_{i} \dt_{i} \geq C_d\frac{\delta ^2}{\delta ^p} \geq C_d \frac{|r^i_\lambda(u,t^\perp_i) - r^i_\lambda(u,t'^\perp_i)|}{2L}\frac{1}{\delta ^{p-2}}
      \\ &  \geq C_{d,L} \frac{|r^i_\lambda(u,t^\perp_i) - r^i_\lambda(u,t'^\perp_i)|}{|t'^\perp_{i} - t^{\perp}_{i}|^{d}},
               \end{split}
            \end{equation*}
            where in the second last line above we have used that for every $t_i \in (s_o- \delta /2, s_o)$ and $t'_i \in (s_o, s_o+ \delta /2)$ one has that  
            \begin{equation}
               \label{eq:gstr31}
               \begin{split}
                  K_0(t' -t ) \geq \frac{C_d}{|t'^\perp_i - t^\perp_i|^{p}}.
               \end{split}
            \end{equation}

\begin{figure}
	\centering
	\def\svgwidth{14cm}
	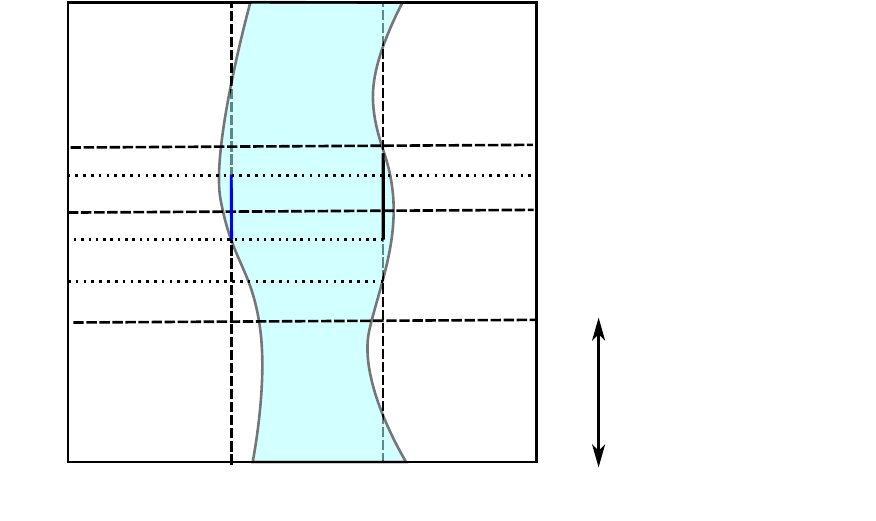
\caption{
{The points in the proof of Lemma \ref{lemma:p2d_stimaSlice} case (ii) are depicted. Recall that $\delta=|t^\perp_i-t'^\perp_i|\leq\lambda$, $r=|r_\lambda^i(u,t'^\perp_i)-r_\lambda^i(u,t^\perp_i)|$ is less than or equal to $\delta/2$. In the estimate for the interaction term $\Int(t'^{\perp}_{i},t^\perp_{i})$ one takes points \textcolor{red}{$(a,t_i^\perp)$}, \textcolor{red}{$(a,t'^\perp_i)$} with $a\in(s_o-r,s_o)$ and   \textcolor{blue}{$(a',t_i^\perp)$} with $a'\in(s_o,s_o+\frac\delta 2)$.}
}
\label{fig:1}
\end{figure}

         \item  Let us assume now that $r \leq  \delta/2$. 
            Since $r_{o}(t'^{\perp}_{i}),r_{o}(t^{\perp}_{i}) > \delta$, one has that
             either
            \begin{equation*}
               \begin{split}
                  &(s_{o} - r , s_{o} + \delta/2) \cap E_{t'^{\perp}_i} = (s_{o} - r , s_{o} + \delta/2 ) \quad\text{or}\quad (s_{o} - r , s_{o} + \delta/2) \cap E_{t'^{\perp}_i} =\emptyset\\
                  \text{or}\quad&(s_{o} - \delta/2 , s_{o} +r) \cap E_{t'^{\perp}_i} = (s_{o} - \delta/2 , s_{o} + r )\quad\text{or}\quad(s_{o} - \delta/2 , s_{o} +r) \cap E_{t'^{\perp}_i} =\emptyset.
               \end{split}
            \end{equation*}

            Indeed if none of the above were true we would have that $\#(\partial E_{t'^{\perp}_{i}} \cap (s_o -\delta/2 , s_o +\delta/2)) \geq 2 $, which contradicts $r_o(t'^{\perp}_i) > \delta$. 

            W.l.o.g. (see Figure \ref{fig:1}) we will assume 
            \begin{equation*}
               \begin{split}
                  (s_{o} - r , s_{o} + \delta/2) \cap E_{t'^{\perp}_i} = (s_{o} - r , s_{o} + \delta/2 ). 
               \end{split}
            \end{equation*}
            The other cases are analogous.

            Then for every $ a \in (s_o - r, s_o)$ and $a' \in (s_o, s_o+ \delta/2)$, one has that $f_{E}(t^\perp_i, a, t'^\perp_i -  t^\perp_i, a' - a) = 1$.  Thus 
            \begin{equation*}
               \begin{split}
                  \Int(t^{\perp}_{i},t'^{\perp}_{i})  & = 
      \int_{0}^L \int_{0}^L f_{E}(t^{\perp}_i,t_i,t'^\perp_{i}-t^{\perp}_i, t'_i-t_i) K_{0}(t'- t)\dt_{i} \dt'_{i} 
      \\ &\geq \int_{s_o - r}^{s_o} \int_{s_o}^{s_o + \delta/2} f_{E}(t^{\perp}_i,t_i,t'^\perp_{i}-t^{\perp}_i, t'_i-t_i) K_{0}(t'- t)\dt'_{i} \dt_{i} 
      \\ &\geq \int_{s_o - r}^{s_o} \int_{s_o}^{s_o + \delta/2}  K_{0}(t'- t)\dt'_{i} \dt_{i} \geq C_d \frac{\delta r}{\delta^p} \geq C_{d,L}\frac{|r^i_\lambda(u,t^\perp_i) - r^i_\lambda(u,t'^\perp_i)|}{|t'^\perp_{i} - t^{\perp}_{i}|^{d+1}},
               \end{split}
            \end{equation*}
      where in the last line we have used  \eqref{eq:gstr31}.
      \end{enumerate}

   \end{proof}

\begin{lemma}
   \label{lemma:ri_constantNew}
   Assume that $E\subset \R^d$ is a set of locally finite perimeter such that $(r_o^i)^{-\beta}\in L^1([0,L)^d)$, $p \geq d+2$ and $I_{0,L}(E)<+\infty$.  Let $r^i_\lambda(u,\cdot)$ be as defined in \eqref{eq:defri_local}.  Then, we have that $r^{i}_\lambda(u,\cdot)$ is constant almost everywhere. 
\end{lemma}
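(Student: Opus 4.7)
The plan is to apply Theorem~\ref{thm:brezis} to the function $f(t^\perp_i):=r^i_\lambda(u,t^\perp_i)$, restricted to a small open ball $\Omega\subset[0,L)^{d-1}$ of diameter at most $\lambda$, and then to conclude by a covering argument on $[0,L)^{d-1}$. Thus the task reduces to verifying that
\[
\int_{\Omega\times\Omega}\frac{|f(t^\perp_i)-f(t'^\perp_i)|}{|t^\perp_i-t'^\perp_i|^d}\,\dt^\perp_i\,\dt'^\perp_i<+\infty.
\]
I split the domain into the ``matched'' pairs $A_1:=\{(t^\perp_i,t'^\perp_i)\in\Omega\times\Omega:\min(r^i_o(t^\perp_i),r^i_o(t'^\perp_i))>|t^\perp_i-t'^\perp_i|\}$ and the complementary set $A_2$.

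On $A_1$, since $\mathrm{diam}(\Omega)\leq\lambda$, every such pair satisfies the hypotheses of Lemma~\ref{lemma:p2d_stimaSlice}, so the integrand is bounded by $C_{d,L}^{-1}\Int(t^\perp_i,t'^\perp_i)$. Integrating and invoking \eqref{eq:gstr11}, the $A_1$-contribution is dominated by $C\cdot I^i_{0,L}(E)<+\infty$, which is the hypothesis of the lemma.

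On $A_2$ the integrand is singular but we use that $f$ is uniformly bounded: $0\leq f\leq L$, so $|f(t^\perp_i)-f(t'^\perp_i)|\leq 2L$. By symmetry, WLOG $r^i_o(t^\perp_i)\leq|t^\perp_i-t'^\perp_i|$, and a polar-coordinate estimate in $\R^{d-1}$ yields
\[
\int_{\{t'^\perp_i\in\Omega:\,|t'^\perp_i-t^\perp_i|\geq r^i_o(t^\perp_i)\}}\frac{\dt'^\perp_i}{|t^\perp_i-t'^\perp_i|^d}\leq \frac{C_d}{r^i_o(t^\perp_i)}.
\]
The $A_2$-contribution is therefore bounded by $C_{d,L}\int_\Omega (r^i_o(t^\perp_i))^{-1}\dt^\perp_i$. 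Since $p\geq d+2$ means $\beta=p-d-1\geq 1$, H\"older's inequality gives $\int_\Omega (r^i_o)^{-1}\leq(\int_\Omega (r^i_o)^{-\beta})^{1/\beta}|\Omega|^{(\beta-1)/\beta}$, which is finite by the hypothesis $(r^i_o)^{-\beta}\in L^1$. This step is exactly where the improvement from $p>2d$ to $p\geq d+2$ enters, and it is the delicate point of the argument.

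Having verified both bounds, Theorem~\ref{thm:brezis} yields that $f$ is constant a.e.\ on $\Omega$. Covering $[0,L)^{d-1}$ by a finite family of such overlapping balls of diameter $\leq \lambda$, and noting that two constants agreeing on an overlap of positive measure must coincide, one concludes that $r^i_\lambda(u,\cdot)$ is constant almost everywhere on $[0,L)^{d-1}$, completing the proof.
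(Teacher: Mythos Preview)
Your proof is correct and follows essentially the same approach as the paper: both apply Theorem~\ref{thm:brezis} after splitting pairs according to whether $\min(r^i_o(t^\perp_i),r^i_o(t'^\perp_i))$ exceeds $|t^\perp_i-t'^\perp_i|$, invoking Lemma~\ref{lemma:p2d_stimaSlice} on the ``good'' pairs and the integrability of $(r^i_o)^{-\beta}$ on the rest. The only cosmetic difference is that the paper works directly on $[0,L)^{d-1}\times[0,L)^{d-1}$ with a third region $\{|t^\perp_i-t'^\perp_i|>\lambda\}$ handled trivially, whereas you restrict to small balls and patch via a covering argument; your use of H\"older for $\int (r^i_o)^{-1}$ is a clean alternative to the paper's pointwise comparison.
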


\begin{proof}
   We will apply Theorem~\ref{thm:brezis} to the function $r^i_\lambda(u,\cdot)$. 
   Notice that because $[0,L)^{d-1} \supset  (0,L)^{d-1}$, by showing \eqref{eq:thm_brezis} with $[0,L)^{d-1}$ instead of $(0,L)^{d-1}$ we have that $r^i_{\lambda}(u,\cdot)$ is constant  almost everywhere. 

   \begin{equation}
      \label{eq:gstr29}
      \begin{split}
         \int_{[0,L)^{d-1}} \int_{[0,L)^{d-1}} \frac{|r^i_\lambda(u, t^{\perp}_{i}) - r^i_\lambda(u, t'^{\perp}_{i})|}{|t'^{\perp}_{_{i}} - t^{\perp}_{_{i}}|^d} \dt'^{\perp}_{i} \dt^{\perp}_{i} \leq C_{d,L} ( \mathcal G^i_{0,L}(E) + I^i_{0,L} (E)) < + \infty. 
      \end{split}
   \end{equation}
   and since $I^i_{0,L}(E)$ is finite we have that $r^i_\lambda(u,\cdot)$ is constant almost everywhere. 

   We have that 
   \begin{equation}
      \label{eq:split1}
      \begin{split}
         \int_{[0,L)^{d-1}} \int_{[0,L)^{d-1}} &\frac{|r^i_\lambda(u, t^{\perp}_{i}) - r^i_\lambda(u, t'^{\perp}_{i})|}{|t'^{\perp}_{_{i}} - t^{\perp}_{_{i}}|^d} \dt^{\perp}_{i}\dt'^{\perp}_{i}            
         \\ \leq&  \int_{[0,L)^{d-1}} \int_{[0,L)^{d-1}} 
         \chi_{A}(t^\perp_{i},t'^\perp_{i})
         \frac{|r^i_\lambda(u, t'^\perp_{i}) - r^i_\lambda(u, t^\perp_{i})|}{|t^\perp_{i}  - t'^\perp_{i} |^d}\dt'^{\perp}_{i}\dt^{\perp}_{i}
         \\ &+  \int_{[0,L)^{d-1}} \int_{[0,L)^{d-1}} 
         \chi_{B}(t^\perp_{i},t'^\perp_{i})
         \frac{|r^i_\lambda(u, t'^\perp_{i}) - r^i_\lambda(u, t^\perp_{i})|}{|t^\perp_{i}  - t'^\perp_{i} |^d}\dt'^{\perp}_{i}\dt^{\perp}_{i}
        \\& +   \int_{[0,L)^{d-1}} \int_{[0,L)^{d-1}} 
         \chi_{C}(t^\perp_{i},t'^\perp_{i})
         \frac{|r^i_\lambda(u, t'^\perp_{i}) - r^i_\lambda(u, t^\perp_{i})|}{|t^\perp_{i}  - t'^\perp_{i} |^d}\dt'^{\perp}_{i}\dt^{\perp}_{i}
      \end{split}
   \end{equation}

   where 
   \begin{equation*}
      \begin{split}
         A&: = \insieme{ (t'^\perp_{i},t^\perp_{i})\in [0,L)^{d-1}\times[0,L)^{d-1} :  \min(r_{o}( t'^\perp_{i}) , r_o(t^\perp_{i})) \leq  |t^{\perp}_{i} - t'^\perp_{i} |, \text{ and } |t'^\perp_i -t^\perp_i| \leq \lambda}\\
         B&: = \insieme{ (t'^\perp_{i},t^\perp_{i})\in [0,L)^{d-1}\times[0,L)^{d-1} :  \min(r_{o}( t'^\perp_{i}) , r_o(t^\perp_{i})) > |t^{\perp}_{i} - t'^\perp_{i} |,\text{ and } |t'^\perp_i -t^\perp_i| \leq  \lambda}\\
         C&: = \insieme{ (t'^\perp_{i},t^\perp_{i})\in [0,L)^{d-1}\times[0,L)^{d-1} : |t'^\perp_i -t^\perp_i | > \lambda}.
      \end{split}
   \end{equation*}
   The last term on the \rhs of \eqref{eq:split1} is trivially bounded. 
  
   Because of \eqref{eq:lemma_AB1}, we have that
   \begin{equation}
   \label{eq:stima1I}
      \begin{split}
          \int_{[0,L)^{d-1}} \int_{[0,L)^{d-1}} 
         \chi_{B}(t^\perp_{i},t'^\perp_{i})
         \frac{|r^i_\lambda(u, t'^\perp_{i}) - r^i_\lambda(u, t^\perp_{i})|}{|t^\perp_{i}  - t'^\perp_{i} |^d}\dt'^{\perp}_{i}\dt^{\perp}_{i}& \leq C_{d,L}
          \int_{[0,L)^{d-1}} \int_{[0,L)^{d-1}}  \Int(t'^{\perp}_{i},t^\perp_{i})\\
          & \leq C_{d,L}\frac{d}{2}I^i_{0,L}(E)
      \end{split}
   \end{equation}

   Since
   \begin{equation*}
      \begin{split}
         A= \{(t^\perp_i,t'^\perp_i): r_{o}(t^\perp_i) \leq |t^{\perp}_{i} - t'^\perp_{i} |\} \cup \{(t^\perp_i,t'^\perp_i): r_{o}(t'^\perp_i) \leq |t^{\perp}_{i} - t'^\perp_{i} |\},
      \end{split}
   \end{equation*}
   it is sufficient to estimate the contribution of $\{ (t^\perp_i, t'^\perp_i):\ r_{o}(t^\perp_i) \leq |t'^\perp_i - t^\perp_i |\}$. 
   Thus, one has that
   \begin{equation}
   \label{eq:stima2G}
      \begin{split}
          \iint_{[0,L)^{d-1}\times [0,L)^{d-1}}& 
          \chi_{A}(t'^{\perp}_{i},t^{\perp}_{i}) \frac{|r^{i}_\lambda(u,t^{\perp}_{i}) - r^{i}_\lambda(u,t'^{\perp}_{i})|}{|t^{\perp}_{i} - t'^\perp_{i} |^{d}} \dt'^{\perp}_{i}\dt^{\perp}_{i} \\ 
          &\leq2 
          \int_{[0,L)^{d-1}}
          \int_{\{ r_o(t^\perp_i) < |t'^\perp_i - t^\perp_i |\}} 
           \frac{|r^{i}_\lambda(u,t^{\perp}_{i}) - r^{i}_\lambda(u,t'^{\perp}_{i})|}{|t^{\perp}_{i} - t'^\perp_{i} |^{d}} \dt'^{\perp}_{i}\dt^{\perp}_{i} \\ 
          &\leq2 
          \int_{[0,L)^{d-1}}
          \int_{\{ r_o(t^\perp_i) < |t'^\perp_i - t^\perp_i |\}} 
           \frac{2L}{|t^{\perp}_{i} - t'^\perp_{i} |^{d}} \dt'^{\perp}_{i}\dt^{\perp}_{i} \\ 
           %    &+
           % \int_{[0,L)^{d-1}}
           % \int_{A_{t'^\perp_{i}}\cap \{ r_o(t'^\perp_i) < |t'^\perp_i - t^\perp_i |\}} 
           % \frac{|r^{i}_\lambda(u,t^{\perp}_{i}) - r^{i}_\lambda(u,t'^{\perp}_{i})|}{|t^{\perp}_{i} - t'^\perp_{i} |^{d}} \dt^{\perp}_{i}\dt'^{\perp}_{i} \\ 
          %  &\leq 
          % \int_{[0,L)^{d-1}}
          % \int_{\{ r_o(t^\perp_i) < |t'^\perp_i - t^\perp_i |\}} 
          %  \frac{|r^{i}_\lambda(u,t^{\perp}_{i}) - r^{i}_\lambda(u,t'^{\perp}_{i})|}{|t^{\perp}_{i} - t'^\perp_{i} |^{d}} \dt'^{\perp}_{i}\dt^{\perp}_{i} 
          %  \\ 
          %    &+ 
          %  \int_{[0,L)^{d-1}}
          %  \int_{\{ r_o(t'^\perp_i) < |t'^\perp_i - t^\perp_i |\}} 
          %  \frac{|r^{i}_\lambda(u,t^{\perp}_{i}) - r^{i}_\lambda(u,t'^{\perp}_{i})|}{|t^{\perp}_{i} - t'^\perp_{i} |^{d}} \dt^{\perp}_{i}\dt'^{\perp}_{i} 
          %  \\ 
         & \leq C_d
          \int_{[0,L)^{d-1}}
          \int_{r_o(t^\perp_i) }^L 
           \frac{2L \rho^{d-2}}{\rho^d} \d\rho \dt^\perp_i \\
           & \leq C_{d,L} \int_{[0,L)^{d-1}} r_o(t^\perp_{i})^{-1} \dt^{\perp}_{i}\\
           &\leq C_{d,L} \int_{[0,L)^{d-1}} r_o(t^\perp_{i})^{-\beta} \dt^{\perp}_{i}\\
           &\leq C_{d,L}\mathcal{G}_{0,L}^i(E)
      \end{split}
   \end{equation}
   where from the third to the fourth line we have used a change to polar coordinates, from the fifth to the sixth line the fact that $\beta = p-d -1 \geq 1$ and in the last line estimate  \eqref{eq:stimaro}.

   By combining estimates \eqref{eq:stima1I} and \eqref{eq:stima2G} with the boundedness of the last term in \eqref{eq:split1} we have~\eqref{eq:gstr29}.

\end{proof}

\begin{proof}[Proof of Proposition~\ref{prop:rigidity}]
   From the Lemma~\ref{lemma:ri_constantNew} $r^i_\lambda(u,\cdot)$ is constant almost everywhere for every $u$ and for every $i$. 
   Fix $u$ and $\lambda$ sufficiently small such that $r^i_\lambda(u,\cdot) \neq \min(|u-\lambda | , |L-\lambda -u|)$. If this is not possible, then $E\cap [0,L)^d$ is either $[0,L)^d$ or $\emptyset$.  Therefore for every $t^\perp_i$, the minimizers of $|s-u|$ for $s\in \partial E_{t^\perp_i}$ are either $u+ r^{i}_{\lambda}(u,t^\perp_i)$ or $u- r^{i}_{\lambda}(u,t^\perp_i)$. The fact that $r^i_{\lambda}(u', t^\perp_i)$ is also constant almost everywhere for $u'\in (u-\varepsilon, u+\varepsilon)$ implies that one of the following three cases holds
   \begin{enumerate}[(a)]
      \item $u+ r^{i}_{\lambda}(u,t^{\perp}_{i})\in\partial E_{t^\perp_i}$ for all $t^{\perp}_{i} \in [0,L)^{d-1}$
      \item $u- r^{i}_{\lambda}(u,t^{\perp}_{i})\in\partial E_{t^\perp_i}$ for all $t^{\perp}_{i} \in [0,L)^{d-1}$
      \item $u+ r^{i}_{\lambda}(u,t^{\perp}_{i})\in\partial E_{t^\perp_i}$ for all $t^{\perp}_{i} \in [0,L)^{d-1}$ and  $u- r^{i}_{\lambda}(u,t^{\perp}_{i})\in\partial E_{t^\perp_i}$ for all $t^{\perp}_{i} \in [0,L)^{d-1}$.
   \end{enumerate}
   Thus, since this holds for every $i$, we have that $E$ must be a checkerboard or a union of stripes. We recall that by a checkerboard we mean  any set whose boundary is the union of affine hyperplanes orthogonal to  coordinate axes, and there are at least two of these directions. 
   
   However, one can rule out checkerboard.
   To see this we consider the contribution to the energy given in a neighbourhood of an edge.  W.l.o.g. we may assume that around this edge the set $E$ is of the following form $-\varepsilon \leq x_1\leq 0$ and $ -\varepsilon \leq x_2 \leq 0$ and $x_i \in (-\varepsilon, \varepsilon)$ for $i\neq 1,2$. 
   Notice that for every $\zeta$ such that $\zeta_1  + x_1 > 0$, $\zeta_2 + x_2 > 0$ and $\zeta_i\in (-\varepsilon,\varepsilon)$ for $i\neq 1,2$,  the integrand in $I^1_{0,L}(E)$ is equal to $1/|\zeta|_1^p$. 
   Then  by setting  $A_{x_1,x_2}:= (-x_1, -x_1+ \varepsilon)\times (-x_2, -x_2 + \varepsilon) \times (-\varepsilon, \varepsilon)^{d-2}$, one has that
   \begin{equation*}
      \begin{split}
         \frac d2 I^1_{0,L}(E) \geq  \int_{(-\varepsilon,0)^2 \times (-\varepsilon, \varepsilon)^{d-2}}    \int_{A_{x_1,x_2}}  \frac{1}{|\zeta|_1^p} \d\zeta \dx \geq  \int_{(0, \varepsilon)^{2d}} \frac{1}{(x_1 + \zeta_1 + x_2 + \zeta_2 + \sum_{i=3}^{d} \zeta_i)^p} \d\zeta \dx 
      \end{split}
   \end{equation*}

   The above can be further bounded from below (up to a constant depending on the dimension) by  
   \begin{equation*}
      \begin{split}
         \varepsilon^{d-2}  \int_{(0,\varepsilon)^{d+2}} \frac{1}{|t|^p} \dt
      \end{split}
   \end{equation*}
   which, by making a change of variables  to polar coordinates,  diverges. 
   A similar calculation was made also in  \cite{2011PhRvB..84f4205G}.

\end{proof}

\section{The one-dimensional problem}
\label{sec:1D_problem}
Let us consider the following one-dimensional functional:  on an $L$-periodic set $E\subset\R$ of locally finite perimeter
 \[
 \Fcal_{\tau,L}^1(E)=\frac1L\Big(-\per(E,[0,L))+\int_{\R}\widehat K_\tau(z)\Big[\per(E, [0,L))|z|-\int_0^L|\chi_E(x)-\chi_E(x+z)|\dx\Big]\dz\Big),
 \]
 where
  \begin{equation*} %---{{{
 \begin{split}
 \widehat{K}_{\tau} (z) = \int_{\R^{d-1}} K_{\tau}( z,z' ) \dz'.
 \end{split}
 \end{equation*}

 We will consider as before kernels which satisfy conditions \eqref{eq:Kbound}-\eqref{eq:laplpos}. In particular,
 \begin{equation}
    \label{eq:hatKInequality}
 \frac{C_q}{C(\tau^{1\slash\beta} + |z|)^q}\leq \widehat{K}_{\tau}(z)\leq \frac{CC_q}{(\tau^{1\slash\beta} + |z|)^q},\quad q:=p-d+1,
 \end{equation}

 where $C$ is the constant appearing in \eqref{eq:Kbound} and $C_q$  appears in 
 \begin{equation*}
    \begin{split}
       \int_{\R^{d-1}} \frac{\d\zeta_1 \cdots\d\zeta_{d-1}}{(\tau^{1/\beta} + |\zeta_1| + \cdots + |\zeta_d| )^p }   =  \frac{C_q}{ (\tau^{1/\beta} + |\zeta_1|)^q}.
    \end{split}
 \end{equation*}

 The above functional corresponds to $\Fcal_{\tau,L}(E)$ when the set $E$ is a union of stripes. 

 The main tool of this section is the  reflection positivity technique, introduced in the context of quantum field theory \cite{osterNatale} and then applied for the first time in statistical mechanics in \cite{FroSim}. For the first appearance of the technique in models with short-range and Coulomb type interactions see \cite{fro}. See \cite{glllRP1}, \cite{glllRP3}, \cite{glllRP3bis} for further generalizations to one-dimensional models and \cite{glllRP2}, \cite{MR2864796} for applications to two-dimensional models. 
 
    Only for notational reasons, we follow~\cite{GR}.
    Except for Theorem~\ref{T:main0}, the rest of this section can be indeed  obtained by simple modifications of well-known results.

%\emph{In order to simplify notation, in this section we will use $A\lesssim B$,  whenever there exists  a constant $\bar{C}_{d,L}$ such that $A\leq \bar{C}_{d,L} B$.  }

The main statements that will be shown in this section are Theorem~\ref{T:main0} and the following 
% The aim of this section is to prove that Theorem~\ref{T:main0} and Theorem~\ref{T:main2}, which will be restated below for the convenience of the reader. 

%there exists $\hat \tau$ such that $h^*_\tau$ is unique and analogously to~\cite[Theorem~6.4]{GR}, to show the Theorem~\ref{T:main2} which is restated below

 \begin{theorem}\label{T:1d} 
There exists $C>0$ and $\hat{\tau}>0$ such that for every $0\leq\tau<\hat{\tau}$, $\exists\,h_\tau^*>0$ s.t., for every $L>0$, the minimizers of $\Fcal_{\tau,L}^1$ are periodic stripes of period $h_\tau$ for some $h_{\tau}>0$ satisfying
  \[
     |h_\tau-h_\tau^*|\leq \frac {C} L.
  \]
  $h^*_\tau$ is the period of the stripes giving the optimal energy density among all $[0,L)^d$-periodic sets, as $L$ varies.  
  \end{theorem}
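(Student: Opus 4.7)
The plan is to combine the reflection positivity technique with the uniqueness statement of Theorem~\ref{T:main0} to reduce the problem to a discrete optimization over admissible stripe widths.

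First, using assumption~\eqref{eq:laplpos}, I would write $\widehat K_\tau(z) = \int_0^\infty g_\tau(\alpha)\, e^{-\alpha|z|}\,d\alpha$ for some $g_\tau \geq 0$, so that
\begin{equation*}
L\,\mathcal F_{\tau,L}^1(E) \;=\; -\per(E,[0,L)) \;+\; \int_0^\infty g_\tau(\alpha)\,\mathcal E_\alpha(E)\,d\alpha,
\end{equation*}
where $\mathcal E_\alpha$ denotes the one-dimensional functional with kernel $e^{-\alpha|z|}$ in place of $\widehat K_\tau$. For each such exponential kernel, the classical reflection positivity/chessboard estimate (cf.~\cite{glllRP1,glllRP3}) shows that reflection of an $L$-periodic configuration across any $s \in \partial E \cap [0,L)$ cannot increase $\mathcal E_\alpha$, with strict inequality unless the configuration is already symmetric across $s$. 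Since the perimeter is invariant under reflection across a jump point and $g_\tau \geq 0$, the same monotonicity is inherited by $\mathcal F_{\tau,L}^1$. Iterating over all jump points of a minimizer (finitely many by~\eqref{E:perg}) forces invariance under every such reflection, so the minimizer must be a periodic union of stripes of some width and gap $h_\tau > 0$ with $L/(2h_\tau) \in \mathbb N$.

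Second, with the geometry of the minimizer determined, the problem reduces to the discrete optimization
\begin{equation*}
h_\tau \;=\; \argmin\bigl\{\,e_\tau(L/(2k)) \;:\; k \in \mathbb N \,\bigr\},\qquad e_\tau(h) \;:=\; \mathcal F^1_{\tau,2h}(F_h),
\end{equation*}
where $F_h$ is the canonical stripe pattern of width and gap $h$; note that $e_\tau$ is independent of $L$ and can be written as an explicit one-dimensional integral against $\widehat K_\tau$. By Theorem~\ref{T:main0}, for $\tau < \hat\tau$ the function $e_\tau$ has a unique minimizer $h_\tau^*$, which coincides with the optimal period defined in the introduction. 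To obtain $|h_\tau - h_\tau^*| \leq C/L$, I would use a nondegeneracy estimate of the form $e_\tau(h) - e_\tau(h_\tau^*) \geq c(\tau)\,|h-h_\tau^*|^2$ in a neighborhood of $h_\tau^*$ (which underlies the proof of Theorem~\ref{T:main0}). Then, with $k_\tau$ the integer nearest $L/(2h_\tau^*)$, the admissible $\tilde h = L/(2k_\tau)$ satisfies $|\tilde h - h_\tau^*| \leq 1/(2k_\tau) \leq C/L$; any other admissible width $L/(2k')$ further than $C'/L$ from $h_\tau^*$ has strictly larger energy by the quadratic lower bound, forcing $h_\tau = \tilde h$.

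The main obstacle is the strict convexity/nondegeneracy of $e_\tau$ at its minimizer, which I expect to be the technical heart of Theorem~\ref{T:main0}. The function $e_\tau(h)$ decomposes as the explicit singular term $-1/h$ plus an integral against $\widehat K_\tau$, and one needs uniform control of its second derivative near $h_\tau^*$. The natural strategy is to use the monotone convergence~\eqref{eq:Kmon} to pass to the limit $\tau \to 0$, where $\widehat K_0(z) \propto |z|^{-q}$ permits an explicit computation of $e_0$, verify existence, uniqueness and strict convexity at the positive critical point, and transfer the bound to small $\tau > 0$ by continuity together with the uniform estimates~\eqref{eq:hatKInequality} on $\widehat K_\tau$.
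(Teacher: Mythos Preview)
Your proposal follows the paper's route exactly: Laplace representation of $\widehat K_\tau$ (Lemma~\ref{lemma:laplace}), the reflection-positivity/chessboard inequality for the exponential kernel (Lemma~\ref{lemma:reflpos}) leading to Lemma~\ref{L:chessboard}, and then the strict-convexity/nondegeneracy of $e_{\infty,\tau}$ near $h_\tau^*$ established in the proof of Theorem~\ref{T:main0} to pin the admissible optimum $h_\tau$ within $C/L$ of $h_\tau^*$.

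One caveat on your first step: the claim that ``the perimeter is invariant under reflection across a jump point'' is true for the global reflection $E\mapsto\theta E$, but \emph{not} for the cut-and-paste competitors $E^\pm$ that reflection positivity actually compares on the torus (reflecting about $s$ and $s+L/2$ simultaneously, one typically loses the jumps at those two points, so $\per(E^+)+\per(E^-)$ need not equal $2\per(E)$). Hence the iterative-reflection heuristic does not transfer directly from $\mathcal E_\alpha$ to the full $\mathcal F^1_{\tau,L}$. The chessboard inequality~\eqref{eq:chessboardineq} sidesteps this cleanly: the perimeter contribution is purely local and is absorbed into the $-1/h$ part of $e_{\infty,\tau}$ with equality, while reflection positivity is applied only to the nonlocal exponential piece. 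From~\eqref{eq:chessboardineq} one gets $\mathcal F^1_{\tau,L}(E)\ge \tfrac{1}{L}\sum_j \ell_j\, e_{\infty,\tau}(\ell_j)$, and the passage to periodic stripes then uses (as in~\cite{GR}) convexity of $h\mapsto h\,e_{\infty,\tau}(h)$ together with the rigidity of the equality case, rather than the reflection-on-a-minimizer argument you sketched.
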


  %From the $\Gamma$-convergence result of Theorem \ref{thm:gammaconv}, due to the $L$-periodicity of the sets $E_{h_\tau}$, one also has the following
 
   %  \begin{corollary}
   %     For every $L>0$, there exists $\tau_L>0$ such that, for all $0\leq\tau\leq\tau_L$, $h_\tau=h$, where $h$ is the width of stripes minimizing $\mathcal F_{0,L}^1$. 
   %  \end{corollary}

  In order to prove Theorem~\ref{T:main0} and Theorem~\ref{T:1d}, we proceed using the method of reflection positivity.  
  % We go along the same path and stating lemmas analogous to those used in~\cite{GR}, adapted to the setting $\tau>0$. 
  Where the proofs are the same as in \cite{GR}, we refer directly to that paper for the details.
  
  For $h>0$, recall that $E_h:=\cup_{k\in \Z} [(2k)h,(2k+1)h]$. Then, define
  \[e_{\infty,\tau}(h):=\Fcal_{\tau,2h}^1(E_h)=\lim_{L\to+\infty} \Fcal_{\tau,L}^1(E_h).\]
  
  % For the specific kernel in \eqref{eq:k11} we will use the notation $\bar e_{\infty,\tau}$.
  
  Analogously to~\cite[Lemma~6.1]{GR}, one can see that
  \begin{equation}\label{eq:einftytau}
  e_{\infty, \tau}(h) = -\frac 1h+ A_\tau(h),
  \end{equation}
  where 
  \begin{equation*}
    \begin{split}
     A_\tau(h) := \int_h^{+\infty} (z-h)\widehat K _\tau (z) \dz + \sum_{k\in \N} \int_0^h \int_{2kh }^{(2k+1)h} \widehat K_\tau(x-y) \dy\dx.
    \end{split}
  \end{equation*}

     Thus one can define $h^*_\tau$ equivalently, as the minimizer of $e_{\infty,\tau}$. 

     For convenience of notation, let us denote by $\bar{e}_{\infty,\tau}$, the corresponding $e_{\infty,\tau}$ relative to the specific kernel defined in \eqref{eq:k11}, namely
\begin{equation*}
K_\tau(\zeta)=\frac{1}{(|\zeta|_1+\tau^{1\slash(p-d-1)})^p}.
\end{equation*}

%  First we prove the following

  \begin{proof}[Proof of Theorem~\ref{T:main0}]

  We will show that for $\tau$ small enough the function $e_{\infty,\tau}$ has a unique minimizer. 

  We will divide the proof of into several steps.

  {\bf Step 0:}  It is not difficult to see that for the specific kernel defined in \eqref{eq:k11} the minimizer of $\bar e_{\infty,0}$ is unique. Indeed, performing calculations analogous to those in Lemma 6.1 of \cite{GR}, one has that for that kernel
  \begin{equation}\label{eq:bareinfty}
  \bar e_{\infty,0}=-\frac 1h+A_0(h)=-\frac 1h+\bar C_{q}\frac{1}{h^{q-1}},
  \end{equation}
  which has the unique minimizer 
  \[
  \bar h^*=((q-1)\bar C_{q})^{1/(q-2)}.
  \]

 {\bf Step 1:} $\widehat K_\tau $ is $C^{\infty}$ and convex. This comes from the definition of reflection positivity. Indeed, if $\widehat K _\tau(z) = \int_0^{+\infty} e^{ - \lambda z } \d\mu(\lambda)$, for a positive measure $\mu$, then  $\widehat K'' _\tau(z) = \int_0^{+\infty} \lambda^2 e^{ - \lambda z } \d\mu(\lambda) \geq 0$.

{\bf Step 2:}  Let $f:\R\to\R$ be a  $C^1$ and convex function and let $f_n:\R\to\R$ be a family of $C^1$ and convex functions such that for every $x\in \R$ one has that $f_n(x) \to f(x)$. One can show that $f'_n(x)  \to f'(x)$ for every $x$.
    Indeed, let $x\in \R$ and $\varepsilon > 0$.
    By definition,  there exists $h>0$ such that $\frac {f(x+h)-f(x)}h<f'(x)+\varepsilon$. Moreover, since $f_n$ converges to $f$ pointwise, there exists $n\in \N$ such that $\frac {f_k(x+h)-f_k(x)}h<f'(x)+\varepsilon$ for all $k\geq n$.
    By using the  convexity and differentiability of $f_k$, one  can  observe that $ f'_k(x)\leq \frac {f_k(x+h)-f_k(x)}h$, hence $f'_k(x)<f'(x)+\varepsilon$ for all $k\geq n$. The  inequality $f'(x) -\varepsilon\leq f'_k(x)$ can be obtained via a similar reasoning.

 {\bf Step 3:} We now show that for every $\delta > 0$ there exists $\bar\tau >0 $ such that  $|\widehat K' _{\tau} (z)| \leq \frac 1 {z^{q+1}}$ for every $z \geq \delta$ and $\tau\leq\bar{\tau}$.  Indeed, let $z \geq \delta$. Then, by assumption, one has that $\widehat K_\tau(z) \geq \frac{C_q} {C(\tau^{q \slash \beta}  + z^q)}$ and $\widehat K _\tau (z/2) \leq \frac {CC_q}{\tau^{q \slash \beta}  + (z/2)^q} $. 
    Thus, from the convexity of $\widehat K_\tau $ proved in Step 1 one has that
    \begin{equation}
      |\widehat K' _\tau(z)| \leq  \frac{|\widehat K_\tau(z) -\widehat K_\tau(z/2)|}{z/2}  \leq  \frac{\Big|   \frac{C_q}{{C}(\tau^{q/\beta} + z ^q)} - \frac{CC_q} {\tau^{q/\beta} + (z/2) ^q} \Big|  }    {z/2}.
      \end{equation}
      Given that $\tau \ll \delta$, there exist $\hat c$, $\hat C$ and $\bar C$, such that
    \begin{equation*}
      |\widehat K' _\tau(z)| \leq  \frac{\Big|   \frac{C_q}{C(\tau^{q/\beta} + z ^q)} - \frac {C_q} {\tau^{q/\beta} + (z/2) ^q} \Big|  }    {z/2} \leq
      \frac{\Big|   \frac{\hat c}{ z ^q} - \frac {\hat C} { (z/2) ^q} \Big|  }    {z/2} \leq \frac{\bar C}{z^{q+1}}.
    \end{equation*}
  {\bf Step 4: } From Steps 2 and Step 3, one has that the convergence of $ \widehat K_\tau(z)$ to $\frac1 {z^{q}}$ in property \eqref{eq:Kmon} can be upgraded to uniform convergence on compact intervals of $(0, + \infty)$. Indeed, let $[a,b]\subset(0,+\infty)$ and let $y \in [a,b]$. Then $\widehat K_\tau(y) =\widehat K_\tau(a) +  \int_a^y \widehat K'_\tau(t) \dt $. On one side, $\widehat K_\tau(a)$ converges to $\frac1{a^q}$. On the other side, the pointwise convergence of $\widehat K'_\tau$ from Step 2 turns in a convergence of the integral $\int_a^y \widehat K'_\tau(t) \dt$ thanks to the bound in Step 3 for $\tau$ small enough depending on $a$ and dominated convergence.
  
  {\bf Step 5: }  From the definition of $A_\tau(h)$ one has that
  \begin{equation*}
  \begin{split}
  A'_\tau(h)  &= \int_h ^{+\infty} \widehat K_\tau (z) \dz+ \sum_{k\in \N}\int_{2kh}^{(2k + 1 )h} \widehat K _\tau ( h - y ) \dy \\ &+ 
  \sum_{k\in \N}(2k+1)\int_0^h \widehat K _\tau (x - (2k+1)h) \dx
  			  - \sum_{k\in \N}2k \int_0^h \widehat K_\tau (x - 2kh) \dx
  \end{split}
  \end{equation*}
  
  and 
  \begin{equation*}
  \begin{split}
  A''_\tau(h) = &- \widehat K _\tau (h) + \sum_{k\in \N} (2k + 1) \widehat K _\tau (-2kh ) - \sum_{k\in \N}2k  \widehat K _\tau ( (1- 2k) h )    
  +\sum_{k\in \N} \int_{2kh}^{(2k+1)h}  \widehat K'_\tau (h-y) \dy
  \\
  &- \sum_{k\in \N}(2k+1)^2 \int_0^h \widehat K' _\tau (x - (2k+1)h ) \dx
  +\sum_{k\in \N} (2k+1) \widehat K _\tau (-2kh)
  \\&+ \sum_{k\in \N}(2k)^2 \int_0^h \widehat K' _\tau (x - (2k)h ) \dx
  - \sum_{k\in \N}2k \widehat K _\tau ((1-2k)h).
  \end{split}
  \end{equation*}
  
  In particular, for $\delta>0$, on $[\delta,+\infty)$ $A'_\tau$ and $A''_\tau$ converge uniformly to $A'_0$ respectively $A''_0$,  where $A_0$ is defined in \eqref{eq:bareinfty}. This follows from Step 4, the decay property \eqref{eq:Kbound}, pointwise convergence of $\widehat K'_\tau$ given by Step 2 and the bound for $\widehat K'_\tau$ given in Step 3.

  {\bf Step 6: }As a consequence of the convergence of $A'_\tau$ in Step 5, one has that 
  \[
  \lim_{\tau\to0} \sup \{ |h-\bar h^*|:\,e'_{\infty,\tau}(h)=0 \} = 0 
  \]
  
  {\bf Step 7: } The function $\bar e_{\infty,0}$ defined in \eqref{eq:bareinfty} is strictly convex in a neighborhood of the unique minimizer. Indeed,
  \[
  \bar e''_{\infty,0}(h)=\frac 1{h^3}\Big[\bar C_q(q-1)q\frac{1}{h^{q-2}}-1\Big]
  \]
  which on the minimizer $\bar h^*=((q-1)\bar C_{q})^{1/(q-2)}$ gives $\frac{1}{{\bar h}^{*3}}(q-1)>0$.
  
  {\bf Conclusion: } By Step 6, critical points of $e_{\infty,\tau}$ for $\tau$ small enough have to be contained in a connected neighborhood of $\bar h^*$ where $\bar e''_{\infty,0}\geq\alpha>0$. Since by Step 5 $A''_\tau$ converges uniformly to $A''_0=\bar e''_{\infty,0}$, then also $e''_{\infty,\tau}>0$ where $e'_{\infty,\tau}=0$ for $\tau$ small. Therefore the critical points of $e'_{\infty,\tau}$ are local minima, and since the neighborhood of $\bar h^*$ where they are contained is connected, then they are also unique.

\end{proof}

%  which has a positive minimizer $h_\tau^*$, unique for the kernel \eqref{eq:k11}.
  
  The main ingredient to prove Theorem \ref{T:1d} is the following estimate, which recalls~\cite[Lemma~6.3]{GR}.

     \begin{lemma}\label{L:chessboard}
        For every $L-$periodic set $E$ of locally finite perimeter, it holds
        \begin{equation}\label{eq:chessboardineq}
           \Fcal^1_{\tau,L}(E)\ge \frac{1}{2L} \sum_{x\in \partial E\cap [0,L)} (x^+-x) e_{\infty,\tau}(x^+-x)+(x-x^-) e_{\infty,\tau}(x-x^-),
        \end{equation}
        where $x^+,x^-$ have been defined in \eqref{eq:s+s-}.
  \end{lemma}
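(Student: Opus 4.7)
The plan is to deduce the inequality from the reflection positivity property \eqref{eq:laplpos}, following closely the strategy of \cite[Lemma~6.3]{GR}. By assumption there exists a nonnegative Borel measure $\mu_\tau$ on $(0,+\infty)$ such that
\[
\widehat{K}_\tau(z)=\int_0^{+\infty} e^{-\lambda|z|}\,d\mu_\tau(\lambda).
\]
Substituting this representation into the definition of $\Fcal^1_{\tau,L}(E)$ and applying Fubini, the nonlocal part of the functional is rewritten as an integral over $\lambda\in(0,+\infty)$ of one-dimensional functionals whose interaction kernel is purely exponential, $e^{-\lambda|z|}$. This reduces matters to proving an analogous lower bound for each fixed $\lambda>0$ and then integrating against $d\mu_\tau(\lambda)$. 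The perimeter term, being local and already a sum over boundary points, splits trivially.

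For a fixed exponential two-body kernel, the classical Osterwalder--Schrader reflection positivity applies: given any $a\in\partial E$, if one replaces $E$ by the set obtained by reflecting one of its two halves (to the left or to the right of $a$) across the vertical line through $a$, then the total energy (perimeter plus exponential nonlocal interaction) does not increase. The essential point is that the reflection is performed at an actual boundary point, so no extra perimeter is created at $a$; moreover, the decomposition of the nonlocal interaction across the reflection axis into a sum of squares of Laplace-type quantities yields a Cauchy--Schwarz inequality that bounds the cross terms by the ``diagonal'' reflected terms.

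Iterating this reflection procedure at two consecutive boundary points $x<x^+$ of $E$, one obtains in the limit a $2(x^+-x)$-periodic union of stripes $E_{x^+-x}$ whose energy density equals $e_{\infty,\tau}(x^+-x)$ by \eqref{eq:einftytau}. Reflection positivity then yields that the portion of $\Fcal^1_{\tau,L}(E)$ attributable to the interval $(x,x^+)$ is at least $(x^+-x)\,e_{\infty,\tau}(x^+-x)/L$. Each boundary point $x\in\partial E\cap[0,L)$ lies at the common endpoint of the two adjacent intervals $(x^-,x)$ and $(x,x^+)$, so summing the two one-sided contributions associated to $x$ and dividing by $2$ to compensate the double counting of each interval by its two endpoints produces the right-hand side of \eqref{eq:chessboardineq}.

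The main technical obstacle is making the iterated reflection rigorous on the $L$-periodic one-dimensional domain (in particular, justifying that the limiting periodic stripe configuration has the claimed energy density and that all partial sums stay controlled), but this is exactly the content of the argument carried out in \cite[Lemma~6.3]{GR}; the only change here is that the kernel $\widehat{K}_\tau$ is a general Laplace transform of a nonnegative function rather than a specific inverse power, and since the reflection step only uses the existence of such a representation together with \eqref{eq:hatKInequality}, no new idea is needed.
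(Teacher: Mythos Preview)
Your proposal is correct and follows essentially the same approach as the paper: the paper also deduces Lemma~\ref{L:chessboard} by first using the Laplace transform representation of $\widehat{K}_\tau$ to reduce to exponential kernels (Lemma~\ref{lemma:laplace}) and then applying the reflection positivity/chessboard estimate for each fixed $\alpha>0$ (Lemma~\ref{lemma:reflpos}, cited from \cite[Lemma~6.10]{GR}). The only cosmetic difference is that the paper packages the two steps into separate lemmas, whereas you describe them inline and cite \cite[Lemma~6.3]{GR} for the combined argument.
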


Theorem \ref{T:1d} follows from \eqref{eq:chessboardineq} in a few lines as in~\cite{GR}.
  
 Lemma \ref{L:chessboard} is a direct consequence of the following two Lemmas.
 
 The first, analogous to~\cite[Lemma~6.6]{GR}, rewrites the second term of the functional $\Fcal_{\tau,L}^1$ as a Laplace transform. 
 
 \begin{lemma}
 \label{lemma:laplace}
 For every $E\in[0,L)$, $\tau>0$,
 \begin{align}
 \Fcal_{\tau,L}^1(E)&=\frac 1L \per(E,[0,L))\Big(-1+\int_{\R}\widehat K_\tau(z){|z|}\dz\Big)\notag\\
 &-\int_0^{+\infty}f(\alpha)\Big(\int_{[0,L)} \int_{\R}|\chi_E(x)-\chi_E(y)|e^{-\alpha|x-y|}\dy\dx\Big)\d\alpha,\label{E:functlapl}
 \end{align}
 where $f$ is a nonnegative integrable function, the inverse Laplace transform of $\widehat K_\tau$.
 \end{lemma}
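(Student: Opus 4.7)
The plan is to directly expand $\Fcal^1_{\tau,L}(E)$ according to its definition and separate the perimeter contribution from the nonlocal contribution. Starting from
\[
\Fcal^1_{\tau,L}(E)=\frac{1}{L}\Big(-\per(E,[0,L))+\per(E,[0,L))\int_{\R}\widehat K_\tau(z)|z|\dz-\int_{\R}\widehat K_\tau(z)\int_0^L|\chi_E(x)-\chi_E(x+z)|\dx\dz\Big),
\]
the first two terms already match the first line on the right-hand side of \eqref{E:functlapl}. So the task reduces to rewriting the third (nonlocal) term.

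For the nonlocal term, I would first use Fubini (justified by the finiteness of $\Fcal^1_{\tau,L}(E)$ and Tonelli, since the integrand is nonnegative) to exchange the order of integration, and then change variables $y=x+z$ keeping $x\in[0,L)$ fixed. Using $L$-periodicity of $\chi_E$, this yields
\[
\int_{\R}\widehat K_\tau(z)\int_0^L|\chi_E(x)-\chi_E(x+z)|\dx\dz=\int_{[0,L)}\int_{\R}\widehat K_\tau(y-x)|\chi_E(x)-\chi_E(y)|\dy\dx.
\]
Next, I would invoke assumption \eqref{eq:laplpos}: there exists a nonnegative function $f\in L^1((0,+\infty))$ such that
\[
\widehat K_\tau(z)=\int_0^{+\infty}f(\alpha)e^{-\alpha|z|}\d\alpha.
\]
Substituting this representation and applying Fubini once more (again legitimate by Tonelli, as the triple integrand is nonnegative) completes the identification with the second line of \eqref{E:functlapl}.

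The only potentially delicate step is the justification of Fubini, but since everything in sight is nonnegative and the overall quantity is assumed finite on the sets of interest, Tonelli's theorem suffices and no obstacle arises. The integrability of $f$ (needed to call $f$ the inverse Laplace transform in the classical sense) follows from the growth/decay of $\widehat K_\tau$ under \eqref{eq:hatKInequality}, or explicitly for the reference kernel \eqref{eq:k11} where $f(\alpha)=\alpha^{q-1}e^{-\alpha\tau^{1/\beta}}/\Gamma(q)$ is manifestly integrable for $q\geq 1$.
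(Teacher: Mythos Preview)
Your proof is correct and follows the same approach as the paper: separate the finite perimeter contribution from the nonlocal term, then apply the Laplace-transform representation \eqref{eq:laplpos} of $\widehat K_\tau$ to the latter via Tonelli. The paper's proof is in fact briefer than yours, merely noting that the perimeter part is finite for $\tau>0$ (so it need not be transformed) and that the nonnegativity of $f$ follows from assumption \eqref{eq:laplpos}.
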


 \begin{proof}
 Notice that, in comparison with the functional $\mathcal F_{0,L}^1$ studied in~\cite{GR}, the part of $\Fcal^1_{\tau,L}$ which multiplies the perimeter of $E$ has finite energy (due to the presence of $\tau^\beta$ in the kernel) and then it does not need to be transformed through Laplace transform together with the last term.
 
 For the last term, it is sufficient as in Section \ref{sec:setting_and_preliminary_results} to notice that, due to assumption \eqref{eq:laplpos}, $f$ is nonnegative.
 \end{proof}

 \begin{lemma}[{\cite[Lemma 6.10]{GR}}]
 \label{lemma:reflpos}
  For every $\alpha>0$ and every $L-$periodic set $E$
    \begin{equation}\label{chessboardLapl}
       -\int_{[0,L]}\int_{\R} |\chi_{E}(x)-\chi_{E}(y)|e^{-\alpha |x-y|} \dy\dx\ge \frac{1}{2}\sum_{x\in \partial E\cap [0,L)} (x^+-x) e_{\alpha,\infty}(x^+-x)+(x-x^-) e_{\alpha,\infty}(x-x^-). 
    \end{equation}
 where, for $\alpha,h>0$,
 \[e_{\alpha,\infty}(h):=-\frac{1}{2h}\int_{0}^{2h} \int_{\R}|\chi_{E_h}(x)-\chi_{E_h}(y)| e^{-\alpha |x-y|}\dy \dx=\lim_{L\to +\infty} -\frac{1}{L} \int_{[0,L]}\int_{\R} |\chi_{E_h}(x) -\chi_{E_h}(y)| e^{-\alpha|x-y|}\dy \dx.\]
 \end{lemma}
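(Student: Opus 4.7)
The plan is to prove \eqref{chessboardLapl} by the reflection positivity technique applied to the exponential kernel $K_\alpha(x,y):=e^{-\alpha|x-y|}$, along the lines of \cite[Lemma~6.10]{GR} and its antecedents \cite{FroSim,glllRP1}. The starting observation is the elementary factorization
\[
e^{-\alpha|x-y|}=e^{-\alpha(x-x_0)}\,e^{-\alpha(x_0-y)}\qquad\text{whenever }y\leq x_0\leq x,
\]
which, with the reflection $R_{x_0}(y):=2x_0-y$, yields for every $x_0\in\R$ and every integrable $g$ the identity
\[
\int_{x\geq x_0}\!\int_{y\leq x_0} g(x)\,g(R_{x_0}y)\,e^{-\alpha|x-y|}\,\dy\dx=\Big(\int_{x_0}^{+\infty}g(x)\,e^{-\alpha(x-x_0)}\,\dx\Big)^{2}\geq 0.
\]
This is the reflection positivity of $K_\alpha$ with respect to any point $x_0$.

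From this positivity one deduces, by a standard argument, that the functional
\[
\mathcal{Q}(E):=-\int_{[0,L)}\int_{\R}|\chi_E(x)-\chi_E(y)|\,e^{-\alpha|x-y|}\,\dy\dx
\]
satisfies, for every $L$-periodic set $E$ of locally finite perimeter and every $x_0\in\R$, the basic reflection inequality
\[
\mathcal{Q}(E)\ \geq\ \tfrac12\bigl(\mathcal{Q}(E^{+}_{x_0})+\mathcal{Q}(E^{-}_{x_0})\bigr),
\]
where $E^{\pm}_{x_0}$ are the two sets obtained from $E$ by keeping the part to the right (resp. left) of $x_0$ and completing it by reflection through $x_0$, so that each is symmetric under $R_{x_0}$.

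Next, I will apply this inequality iteratively, alternating reflections through two consecutive points $x$ and $x^{+}$ of $\partial E$. After infinitely many alternating reflections the resulting configuration is invariant under translations of size $2(x^{+}-x)$ and coincides, up to translation, with the periodic stripe set $E_{x^{+}-x}$. Combining the bound produced at each step with the very definition of $e_{\alpha,\infty}(h)$ as the asymptotic density of $\mathcal{Q}(E_h)$, one shows that each pair of consecutive boundary points $(x,x^{+})$ contributes to $\mathcal{Q}(E)$ at least $(x^{+}-x)\,e_{\alpha,\infty}(x^{+}-x)$. Since in the sum on the right-hand side of \eqref{chessboardLapl} each interval $(x,x^{+})$ is counted twice (once as a right-neighbour of $x$ and once as a left-neighbour of $x^{+}$), the overall factor $\tfrac12$ appears and one obtains the stated inequality.

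The principal technical obstacle is making the iterated reflection procedure compatible with the $L$-periodicity of $E$: each reflected configuration must itself be an $L$-periodic (or doubled-period) set, and the limit must produce a genuine periodic-stripe configuration against which $\mathcal{Q}$ can be compared using $e_{\alpha,\infty}$. This is handled exactly as in \cite[Lemma~6.10]{GR}; since the only structural property of the kernel used is its reflection positivity, which here follows from assumption \eqref{eq:laplpos} together with the Laplace-transform representation of Lemma~\ref{lemma:laplace}, the argument carries over to the present setting without essential change.
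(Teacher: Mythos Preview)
Your proposal is correct and follows essentially the same approach as the paper, which in fact does not give an independent proof but simply refers to \cite[Lemma~6.10]{GR} for the reflection positivity/chessboard argument; your sketch (factorization of the exponential kernel, the resulting reflection inequality $\mathcal{Q}(E)\geq\tfrac12(\mathcal{Q}(E^+_{x_0})+\mathcal{Q}(E^-_{x_0}))$, and iteration through consecutive boundary points) is precisely that argument.
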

 
 For the proof of this result, based on the so-called reflection positivity technique, see~\cite[Lemma~6.10]{GR}.

  \begin{proof}[Proof of Theorem~\ref{T:1d}]
   Theorem~\ref{T:1d}  follows in the same way as in \cite{GR} once Lemma~\ref{L:chessboard} is shown.
   We refer to Section 6 in \cite{GR} for the details. 
  \end{proof}

\section{Discrete $\Gamma$-convergence Result}
\label{sec:discrete}

In this section, we will show how to adapt the proof of Theorem~\ref{T:gammaconvNew} in order to obtain the analogous result for the discrete setting. In order to state the precise result we need two preliminary steps.

\ 

{\bf Continuous representation of a discrete set $E\subset \varepsilon\Z^d$. }

Given $E\subset \varepsilon \Z^{d}$ $\varepsilon > 0$, we will associate to it $\tilde{E}^\varepsilon\subset \R^{d}$ via
\begin{equation} 
   \label{eq:definizioneTilda}
   \begin{split}
      \tilde{E}^{\varepsilon}:=\bigcup _{i\in E} \Big(i + [-\varepsilon/2,\varepsilon/2)^{d}\Big).
      \qquad
         \per_{1,\varepsilon}(E,[0,L)^d) := \sum_{x\in [0,L)^d\cap \varepsilon \Z^{d}} \sum_{y\sim x} |\chi_{E}(x)-\chi_{E}(y)| \varepsilon^{d-1}.
   \end{split}
\end{equation} 
We call $\per_{1,\varepsilon}$ the $(1,\eps)$-perimeter. Notice that
\begin{equation*} 
   \begin{split}
      \per_{1,\varepsilon}(E,[0,L)^d) = \per_{1}(\tilde{E}^{\varepsilon},[0,L)^d).
   \end{split}
\end{equation*} 

Let also denote by
\begin{equation*}
   \begin{split}
      \Bcal_{\varepsilon} := \insieme{F\subset \R^d:\ \exists E\subset \varepsilon \Z^d \text{ such that } \chi_{\tilde{E}^{\varepsilon}}(x) = \chi_F(x) \text{ for a.e. $x\in \R^d$  }} .
   \end{split}
\end{equation*}

   Since for every $F\in \Bcal_{\varepsilon}$ there exists only one $E\subset \varepsilon \Z^d$ such that $\chi_{\tilde{E}^{\varepsilon} }(x) = \chi_{F}(x)$ for almost every point $x$, we will use the above relation to identify the discrete $E\subset \varepsilon\Z^d$ set with the corresponding continuous set $\tilde{E}^{\varepsilon}$.

Letting $E\subset \Z^d$ and $\tilde{E}^{1} \subset \R^d$ as in \eqref{eq:definizioneTilda} (for $\varepsilon=1$), then the discrete functional defined in \eqref{eq:giuliani}  can be rewritten as
\begin{equation*}
   \begin{split}
   \tilde{\Fcal}^{\discrete}_{J,L}(E):=\frac{1}{L^d}\Big(J \per_{1,1}(E,[0,L)^d)-\int_{[0,L)^d}\sum_{ \zeta\in \Z^{d}\setminus\{ 0 \}}K^\discrete(\zeta)|\chi_{\tilde{E}^1}(x+\zeta)-\chi_{\tilde{E}^1}(x)|\dx  \Big),
   \end{split}
\end{equation*}
where $K^\discrete(\zeta):=\frac{1}{|\zeta|^p}$ for some $p\geq d+2$.

\ 

{\bf  Scaling of the functional. }

Let us denote by $K^\discrete_{\varepsilon}(\zeta):=\frac{\varepsilon^{d}}{|\zeta|^p}$ and let $\tau:= J^{\discrete}_c-J$ for $J<J_c^{\discrete}$, where $J_c^\discrete$ has been defined in \eqref{eq:jc}. 

The factor $\varepsilon^d$ in the definition of $K^\discrete_\varepsilon$, comes from the rescaling in the spatial variables. 
Indeed $\varepsilon^d$ corresponds to the volume of  $[-\varepsilon/2,\varepsilon/2)^d$. 
It is not difficult to see that the measure $\mu_{\varepsilon} = \sum_{\zeta\in \varepsilon \Z^d } K^{\discrete}_\varepsilon(\zeta)\,\delta_\zeta$ 
(by $\delta_\zeta$ we denote the Dirac measure concentrated in $\zeta$)  converge weakly* to $K^\discrete(\zeta)\d\zeta$ in $\R^d\setminus\{0\}$.
Moreover the piecewise constant function associated to $\mu_\varepsilon$ 
\begin{equation*}
   \begin{split}
      x\to \sum_{\zeta \in \varepsilon \Z^{d}\setminus\{ 0\}} \frac{1}{|\zeta|^p}\chi_{Q_\varepsilon(\zeta)}(x)
   \end{split}
\end{equation*}
converges in $L^1_{\lok}(\R^d\setminus\{ 0\})$ to $\frac{1}{|x|^{p}}$.

   As in the continuous case, if one optimizes in the discrete setting on the family of periodic stripes, one finds that the optimal stripes have a width of order $\tau^{-1/(p-d-1)}$ and energy of order $-\tau^{(p-d)/(p-d-1)}$ (see \cite{2014CMaPh.tmp..127G}). Letting $\beta:= p-d-1$, this motivates the rescaling 
\begin{equation}
   \label{eq:changevar-discrete}
   \begin{split}
      x:=\tau^{-1/\beta}\widehat{x}, \quad L:=\tau^{-1/\beta}\widehat{L} \quad \textrm{and} \quad \tilde{\Fcal}^{\discrete}_{J,L}(E):= \tau^{(p-d)/\beta}  \mathcal{F}_{\tau,\widehat L}^{\discrete}( \widehat{E}),
   \end{split}
\end{equation}
where now $\hat{E}\subset \tau^{1/\beta}\Z^{d}$. 

In these variables, the optimal stripes have width of order one. 
For simplicity of notation we will denote by $\kappa = \tau^{1/\beta}$. 
After rescaling the functional (similarly to the continuum) we obtain a new functional defined for $[0,L)^d$-periodic  sets $E\subset \kappa\Z^{d}$ (where $L$ is a multiple of $\kappa$) via the formula
\begin{align}
   \Fcal_{\tau,L}^{\discrete}(E)=\frac{1}{L^d}\Big(-\per_{1,\kappa}(E,[0,L)^d)&+\sum_{\zeta\in\kappa\Z^{d}\setminus\{0\}} K^\discrete_{\kappa}(\zeta) \Big[\int_{\partial \tilde{E}^{\kappa} \cap [0,L)^d} \sum_{i=1}^d|\nu^{\tilde{E}^{\kappa}}_i(x)| |\zeta_i|\d\mathcal H^{d-1}(x)\notag \\
   &-\int_{[0,L)^d}|\chi_{\tilde{E}^{\kappa}}(x)-\chi_{\tilde{E}^{\kappa}}(x+\zeta)|\dx\Big]\Big)\label{eq:ftaudisc},
\end{align}
where $\nu^{\tilde{E}^\kappa}$ is the generalized normal of $\partial \tilde{E}^\kappa$.

Before stating the main result of this section, define the functional
\begin{equation*}
\Fcal_{\tau,L}^\discreteToCont(F):=\left\{\begin{aligned}
&\Fcal^\discrete_{\tau,L}(E) && &\text{if $F=\tilde E^\kappa$, for some $E\subset\kappa\Z^d$}\\
&+\infty && &\text{otherwise}
\end{aligned}\right.
\end{equation*}

Our main result is the following:

\begin{theorem}
   \label{thm:gammaconv-discrete}
   Let $p\geq d+2$ and $L>0$. Then one has that $\Fcal^\discreteToCont_{\tau,L}$ $\Gamma$-converge in the $L^1$-topology as $\tau\to0$  to a functional $\widehat{\Fcal}_{0,L}$ which is invariant under permutation of coordinates and finite on sets of the form $E=F\times\R^{d-1}$ where $F\subset \R$ is $L$-periodic with $\#\{\partial F\cap[0,L)\}<\infty$. On such sets the functional is defined by
\begin{equation*}
\widehat{\mathcal F}_{0,L}(E)=\frac{1}{L}\Big(-\#\{\partial F\cap[0,L)\}+\int_{\R^d} K^\discrete(\zeta)\Big[\sum_{x\in\partial F\cap[0,L)}|\zeta_1|-\int_0^L|\chi_{F}(x)- \chi_{F}(x+\zeta_1)|\dx\Big]\d\zeta\Big).
\end{equation*}

Moreover, if there exists $M$ and a family of sets $E_{\tau}\subset \tau^{1\slash\beta} \Z^d$ such that for every $\tau$ one has that $\mathcal F^\discreteToCont_{\tau,L}(\tilde E_\tau^\kappa)<M$, then up to a relabeling of the coordinate axes, one has that there is a subsequence which converges in $L^1$  to some set $E=F\times\R^{d-1}$ with $\#\{\partial F\cap[0,L)\}<+\infty$.
\end{theorem}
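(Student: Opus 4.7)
The overall plan is to import the machinery developed for the continuous functional in Section~\ref{sec:setting_and_preliminary_results} via the continuous representative $\tilde E^\kappa$, and then appeal to the rigidity Proposition~\ref{prop:rigidity} to identify the $\Gamma$-limit. Two structural observations make this transfer straightforward: first, $\per_{1,\kappa}(E,[0,L)^d)=\per_1(\tilde E^\kappa,[0,L)^d)$, so $\tilde E^\kappa$ is a set of locally finite perimeter in $\R^d$; second, as noted just before the statement, the piecewise-constant density associated to $K^\discrete_\kappa$ converges in $L^1_{\lok}(\R^d\setminus\{0\})$ to $|\zeta|^{-p}$, and the pointwise bound $K^\discrete_\kappa(\zeta)\les(|\zeta|+\kappa)^{-p}$ on the cell centred at $\zeta$ plays the role of assumption \eqref{eq:Kbound}.

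First I would reproduce the product decomposition \eqref{E:prod} and the bound \eqref{eq:simmetria_non_serve} for $\tilde E^\kappa$, obtaining the discrete analogue of \eqref{E:fbelow}, namely
\[
\Fcal^\discrete_{\tau,L}(E)\geq \frac{1}{L^d}\Bigl(-\per_{1,\kappa}(E,[0,L)^d)+\sum_{i=1}^d \widehat{\Gcal}^i_{\tau,L}(\tilde E^\kappa)+\sum_{i=1}^d \widehat{I}^i_{\tau,L}(\tilde E^\kappa)\Bigr),
\]
where $\widehat{\Gcal}^i$ and $\widehat{I}^i$ are the discrete counterparts of $\Gcal^i$ and $I^i$. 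The one-dimensional slicing estimates \eqref{toprovebandesim}, \eqref{eq:bandestim0} and \eqref{E:perg} apply unchanged to the slices of $\tilde E^\kappa$, which are finite unions of intervals with endpoints in $\kappa\Z$. In particular the discrete version of \eqref{eq:gr1} yields $\per_1(\tilde E^\kappa_\tau,[0,L)^d)\leq C_{d,L,M}$ from $\Fcal^{\discreteToCont}_{\tau,L}(\tilde E^\kappa_\tau)\leq M$, and $BV$ compactness furnishes a subsequence of $\tilde E^\kappa_\tau$ converging in $L^1([0,L)^d)$ to a $[0,L)^d$-periodic set $E$ of finite perimeter.

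Next I would establish the lower-semicontinuity inequalities
\[
\liminf_{\tau\to 0}\widehat{\Gcal}^i_{\tau,L}(\tilde E^\kappa_\tau)\geq \Gcal^i_{0,L}(E),\qquad \liminf_{\tau\to 0}\widehat{I}^i_{\tau,L}(\tilde E^\kappa_\tau)\geq I^i_{0,L}(E),
\]
via Fatou combined with the $L^1_{\lok}$ kernel convergence away from the origin. These force $\sum_{i=1}^d\Gcal^i_{0,L}(E)+I_{0,L}(E)<+\infty$, so Proposition~\ref{prop:rigidity} gives $E=F\times\R^{d-1}$ with $F\subset\R$ an $L$-periodic set with $\#(\partial F\cap[0,L))<+\infty$. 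The $\Gamma$-liminf bound $\liminf\Fcal^{\discreteToCont}_{\tau,L}(\tilde E^\kappa_\tau)\geq\widehat{\Fcal}_{0,L}(E)$ then reduces via the slicing formula \eqref{eq:gstrf1} to the one-dimensional $\Gamma$-convergence of the discrete functionals to their continuous counterpart, which can be proved exactly as in \cite{GR}. For the $\Gamma$-limsup, given $E=F\times\R^{d-1}$ with $\partial F\cap[0,L)=\{t_1,\dots,t_N\}$, I would build the recovery sequence by snapping each $t_j$ to the nearest point of $\kappa\Z$ and letting $E_\tau\subset\kappa\Z^d$ be the corresponding cylindrical discrete union of stripes; the total snapping error is $O(N\kappa)$, and standard Riemann-sum estimates together with the kernel convergence yield $\Fcal^{\discreteToCont}_{\tau,L}(\tilde E^\kappa_\tau)\to\widehat{\Fcal}_{0,L}(E)$.

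The main obstacle is the lower-semicontinuity step for $\widehat{I}^i_{\tau,L}$, because the bound $K^\discrete_\kappa(\zeta)\ges(|\zeta|+\kappa)^{-p}$ degrades exactly at scale $|\zeta|\sim\kappa$. I would handle this by a dyadic splitting of the interaction: at scales $|\zeta|\gg\kappa$ dominated convergence with the $L^1_{\lok}$ kernel convergence gives the continuous limit directly, while at scales $|\zeta|\les\kappa$ the integrand $|\chi_{\tilde E^\kappa}(x+\zeta_i)-\chi_{\tilde E^\kappa}(x)|\,|\chi_{\tilde E^\kappa}(x+\zeta_i^\perp)-\chi_{\tilde E^\kappa}(x)|$ is controlled by the slicing estimate \eqref{eq:bandestim0} and the equibounded perimeter, showing that this short-range contribution vanishes as $\tau\to 0$ and therefore does not spoil the Fatou inequality.
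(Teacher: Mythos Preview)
Your proposal follows the paper's proof closely: derive the lower bound with discrete analogues $\widehat{\Gcal}^i$ and $\widehat{I}^i$, bound the perimeter and extract a convergent subsequence, prove lower semicontinuity of both pieces, apply Proposition~\ref{prop:rigidity} to force the limit to be a union of stripes, and recover with discretized stripes. This is exactly the paper's Steps~1--5.

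There is, however, one point where your argument goes astray. You flag the short-range part of $\widehat{I}^i_{\tau,L}$ as the main obstacle and propose to show that it vanishes via \eqref{eq:bandestim0} and the perimeter bound. That does not work: \eqref{eq:bandestim0} only gives $\int_{[0,L)^d} f^i_{\tilde E^\kappa}(x,\zeta)\dx\les |\zeta|\,\per_1(\tilde E^\kappa,[0,L)^d)$, and summing $\kappa^d|\zeta|^{1-p}$ over nonzero lattice points with $|\zeta|\le\delta$ produces a term of order $\kappa^{-(p-d-1)}$, which diverges rather than vanishes. More importantly, no such control is needed. Since the integrand $f^i_{\tilde E^\kappa}\ge 0$, for the liminf inequality you simply \emph{drop} the short-range part: for any fixed $\delta>0$,
\[
\widehat{I}^i_{\tau,L}(\tilde E^\kappa_\tau)\ \ge\ \frac{2}{d}\sum_{\substack{\zeta\in\kappa\Z^d\\|\zeta|>\delta}}K^\discrete_\kappa(\zeta)\int_{[0,L)^d}f^i_{\tilde E^\kappa_\tau}(x,\zeta)\dx,
\]
replace the truncated sum by the integral over $\{|\zeta|>\delta\}$ (the Riemann-sum/Taylor error is $O(\kappa/\delta^{p-d+1})$), pass to the liminf using the $L^1$ convergence of $\tilde E^\kappa_\tau$, and then send $\delta\downarrow 0$ by monotone convergence. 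This is precisely the paper's Step~3, and it sidesteps the singularity at the origin entirely.
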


\begin{proof}

As the method used in the continuum  applies almost identically to the discrete setting, 
instead of repeating the same arguments we will give the main steps.

{\bf Step 1:} Lower bound of the functional. 

As in the continuum (see~Section~2), we find a lower bound for the discrete functionals very similar to \eqref{E:fbelow}. In order to obtain it let us define quantities analog to those in  Section~\ref{sec:setting_and_preliminary_results}.

Let 
 \begin{equation}
    \label{eq:gstrf1_discrete}
    \begin{split}
 \Gcal_{\tau,L}^{i,\discreteToCont}(\tilde{E}^{\kappa}):=\Gcal_{\tau,L}^{i,\discrete}({E}):=\int_{[0,L)^{d-1}} \Gcal_{\tau,L}^{1d,\discreteToCont}(\tilde{E}^{\kappa}_{x_i^\perp}) \dx_i^\perp,
    \end{split}
 \end{equation}
 where for every $E\subset \Z$ and  thus $\tilde{E}^{\kappa} \subset \R$, one has that
 \begin{equation}
    \label{eq:gstrf2_discrete}
    \begin{split}
    \Gcal_{\tau,L}^{1d,\discreteToCont}(\tilde{E}^{\kappa}):= \sum_{z\in \kappa\Z \setminus \{0\}} \widehat{K}^\discrete_\kappa(z) \Big(\per(\tilde{E}^{\kappa},[0,L))|z|-\int_0^L |\chi_{\tilde{E}^\kappa}(x)-\chi_{\tilde{E}^{\kappa}}(x+z)|\dx\Big),
    \end{split}
 \end{equation}
and $\widehat{K}^\discrete_\kappa(z)=\sum_{{\zeta' \in \kappa\Z^{d-1}}}K^\discrete_\kappa(z,\zeta')$.

Notice that when $z = 0$, $\widehat K^\discrete_\kappa(z) = +\infty$. 
However in \eqref{eq:gstrf2_discrete} $\widehat K^\discrete_\kappa (0)$ is multiplied by $0$. Thus, by using the standard convention in analysis $+\infty \cdot 0 = 0$, the formula would be the same also including $0$ in the sum.    
Moreover, let
\begin{equation}\label{eq:I_discrete}
   \begin{split}
      \tilde{I}_{\tau,L}^{i,\discreteToCont}(\tilde{E}^{\kappa})&:=\frac{2}{d}\sum_{\zeta \in \kappa\Z^{d}\setminus \{0\}}\int_{[0,L)^d} K^\discrete_\kappa(\zeta) |\chi_{\tilde{E}^{\kappa}}(x)-\chi_{\tilde{E}^{\kappa}}(x+\zeta_i)||\chi_{\tilde{E}^{\kappa}}(x)-\chi_{\tilde{E}^{\kappa}}(x+\zeta_i^\perp)|\dx,\\
      \tilde{I}_{\tau,L}^{\discreteToCont}(\tilde{E}^{\kappa}) &:= \sum_{i=1}^{d} I_{\tau,L}^{i,\discreteToCont}(\tilde{E}^{\kappa}).
   \end{split}
\end{equation}

We also define for $E\subset \R^d$  $[0,L)^d$-periodic 

\begin{equation*}
\label{eq:gstrf10_discrete}
\begin{split}
\Gcal_{0,L}^{i,\discreteToCont}(E)&:=\int_{[0,L)^{d-1}} \Gcal_{0,L}^{1d,\discreteToCont}(E_{x_i^\perp}) \dx_i^\perp,\\
\Gcal_{0,L}^{1d,\discreteToCont}({E})&:= \int_{\R} \widehat{K}^\discrete(z) \Big(\per({E},[0,L))|z|-\int_0^L |\chi_{E}(x)-\chi_{E}(x+z)|\dx\Big)\dz, \\
I_{0,L}^{i,\discreteToCont} (E)& :=\int_{[0,L)^d} \int_{\R^d} K^\discrete(\zeta) |\chi_{E}(x+ \zeta_{i}) - \chi_E(x) | |\chi_{E}(x+ \zeta^\perp_{i}) - \chi_E(x) | \d\zeta \dx
\end{split}
\end{equation*}

In a similar fashion  to Section~\ref{sec:setting_and_preliminary_results}, one obtains that

\begin{equation}
   \label{E:fbelow_discrete}
   \Fcal_{\tau,L}^\discreteToCont(\tilde{E}^{\kappa}) =    \Fcal^\discrete_{\tau,L}({E})\ge \frac{1}{L^d}\Big( -\per_{1}(\tilde{E}^{\kappa},[0,L)^d)+\sum_{i=1}^d \Gcal^{i,\discreteToCont}_{\tau,L}(\tilde{E}^{\kappa})+\sum_{i=1}^d I_{\tau,L}^{i,\discreteToCont}(\tilde{E}^{\kappa})\Big).
\end{equation}
and for $E\subset \kappa\Z$ 
\begin{equation}\label{E:boundg_discrete}
 \Gcal_{\tau,L}^{1d,\discreteToCont}(\tilde{E}^{\kappa}) \geq C_{d,L}    \sum_{x\in \partial \tilde{E}^{\kappa}\cap [0,L)}  \min((x^+-x)^{-\beta},\tau^{-1})+\min((x-x^-)^{-\beta},\tau^{-1}).
\end{equation}
where $x^+,x^-$ are defined in \eqref{eq:s+s-}. Inequality \eqref{E:boundg_discrete} follows also with the discrete kernel in the same way as in the continuum.

{\bf Step 2:} Estimate of the perimeter. 

   From Step~1 and by using a similar reasoning  to the one leading to \eqref{eq:gr1}, one can obtain that for $E\subset \kappa \Z^d$ it holds 
\begin{equation}
   \label{eq:gr1_discrete}
   \per_1(\tilde{E}^{\kappa},[0,L)^d)\leq C_{d,L} L^d \max(1,\Fcal^{\discreteToCont}_{\tau,L}(\tilde{E}^{\kappa})).
\end{equation}

The above in particular implies a compactness result, namely given $E_{\tau}\subset \kappa\Z^{d}$ such that $\Fcal^{\discreteToCont}_{\tau,L}(\tilde{E}^{\kappa}_\tau) < + \infty$, then there exists a set of locally finite perimeter $E\subset \R^d$ and a subsequence $\{ \tilde{E}^{\kappa_{n}}_{\tau_n}\}$ such that 
$\tilde{E}^{\kappa_{n}}_{\tau_n} \to E$  in $L^1$. 

{\bf Step 3:} Lower semicontinuity of $I_{\tau,L}^{i,\discreteToCont}$.

For  $E \subset \kappa\Z^d$, let us define
\begin{equation*}
   \begin{split}
      f^{i}_{\tilde{E}^{\kappa}}(x,\zeta) = |\chi_{\tilde{E}^\kappa}(x+\zeta_i)- \chi_{\tilde{E}^\kappa}(x) | |\chi_{\tilde{E}^\kappa}(x+\zeta^{\perp}_i)- \chi_{\tilde{E}^\kappa}(x) |
   \end{split}
\end{equation*}

Let us fix $1 \gg \delta \gg \kappa$ and let $x,\zeta\in \kappa \Z^d$ with $|\zeta| > \delta$.  Then,  for every $\zeta'\in Q_\kappa(\zeta)$, by using  $\zeta > \delta \gg \kappa$ and Taylor's remainder theorem,  one has that $\frac{1}{|\zeta|^p} \leq  \frac{1}{|\zeta'|^p} + C_d\frac{\kappa}{|\zeta|^{p+1}}$, and thus
\begin{equation}
   \label{gsmstr1}
   \begin{split}
      \int_{Q_{\kappa}(x)} \kappa^d \frac{f^i_{\tilde{E}^\kappa}(x',\zeta)}{|\zeta|^{p}} \dx' 
       &= \int_{Q_{\kappa}(x)}\int_{Q_{\kappa}(\zeta)} \frac{f^i_{\tilde{E}^\kappa}(x',\zeta)}{|\zeta|^{p}} \d\zeta' \dx' \\ &
      = \int_{Q_\kappa(x)} \int_{Q_\kappa(\zeta)} {f^i_{\tilde{E}^{\kappa}}(x',\zeta')}K^\discrete(\zeta') \d\zeta' \dx' + C_d\kappa^{d+1}/|\zeta|^{p+1}.
   \end{split}
\end{equation}

By summing over $x\in [0,L)^d$ and $\zeta\in\kappa \Z^d\setminus \{0\}$ and using the definition  of $I^{i,\discreteToCont}_{\tau,L}$ (see~\eqref{eq:I_discrete})
and using the fact that $\kappa^d \#([0,L)^d \cap \kappa \Z^d) \simeq L^d$,
one has that
\begin{equation*}
   \begin{split}
     I^{i,\discreteToCont}_{\tau,L}(\tilde{E}^{\kappa})  \geq  \int_{[0,L)^d} \int_{\R^d} \chi_{B^c_{\delta}(0)} (\zeta) {f^{i}_{\tilde{E}^{\kappa}}(x,\zeta)}K^\discrete(\zeta) \d\zeta\dx + L^d C_d \kappa/\delta^{p - d+1}.
   \end{split}
\end{equation*}
Finally, let us assume that $\tilde{E}^{\kappa}_{\tau} \subset \R^d$ is such that there exists $E\subset\R^d$ such that $\tilde{E}^{\kappa}_{\tau}$  converges to ${E}$ in $L^1_{\lok}$.  Then 
\begin{equation*}
   \begin{split}
      \liminf_{\tau\downarrow 0} I^{i,\discreteToCont}_{\tau,L}(\tilde{E}^{\kappa}_{\tau}) & \geq \liminf_{\tau\downarrow 0}  \int_{[0,L)^d} \int_{\R^d} \chi_{B^c_{\delta}(0)} (\zeta) {f^i_{\tilde{E}^{\kappa}_{\tau}}(x,\zeta)}K^\discrete(\zeta) \d\zeta\dx \\  & \geq  \int_{[0,L)^d} \int_{\R^d} \chi_{B^c_{\delta}(0)} (\zeta) {f^i_{{E}}(x,\zeta)}K^\discrete(\zeta) \d\zeta\dx ,
   \end{split}
\end{equation*}
thus from the arbitrariness of $\delta$  one has that 
\begin{equation*}
   \begin{split}
      \liminf_{\tau\downarrow 0} I^{i,\discreteToCont}_{\tau,L}(\tilde{E}^{\kappa}_{\tau}) \geq  \int_{[0,L)^d} \int_{\R^d}  {f^{i}_{{E}}(x,\zeta)}K^\discrete(\zeta) \d\zeta\dx,
   \end{split}
\end{equation*}

{\bf Step 4:} Semicontinuity of $\Gcal_{\tau,L}^{i,\discreteToCont}$. 

Let $E_{\tau}\subset \kappa\Z^d$.  Moreover, assume that $\tilde{E}^{\kappa}_{\tau}$ converges to $E$ in $L^1_{\mathrm{loc}}$. Then
\begin{equation}
   \label{eq:gstrf_discrete1}
   \begin{split}
      \lim_{\tau\downarrow 0}\sum_{\substack{z\in \kappa\Z^d\\ |z|>\delta}}\int _{[0,L)^d}  \kappa^d\frac{|\chi_{\tilde{E}^{\kappa}_{\tau}}(x+z) - \chi_{\tilde{E}^{\kappa}_{\tau}}(x) |}{|z|^{p}} \dx = \int_{{\R^d }} \int_{[0,L)^d}  \chi_{\R^d\setminus B_{\delta}(0)}(\zeta) {|\chi_{{E}}(x+\zeta) - \chi_{{E}}(x) |}K^\discrete(\zeta) \d\zeta\dx . 
   \end{split}
\end{equation}

In order to prove the above notice that given $x,\zeta\in \kappa \Z^d$, then whenever $|\zeta| > \delta \gg \kappa$ one has that (similarly to \eqref{gsmstr1})
   
\begin{equation*}
   \begin{split}
      \int_{Q_{\kappa}(x)} \kappa^d \frac{|\chi_{\tilde{E}^{\kappa}_{\tau}}(x' + \zeta) - \chi_{\tilde{E}^{\kappa}_{\tau}}(x')|}{|\zeta |^p}  \dx'& = \int_{Q_{\kappa}(x)} \int_{Q_{\kappa}(\zeta)}  {|\chi_{\tilde{E}^{\kappa}_{\tau}}(x' + \zeta') - \chi_{\tilde{E}^{\kappa}_{\tau}}(x') |}K^\discrete(\zeta') \d\zeta'\dx'\\
      &  + C_d \kappa^{d+1}\slash |\zeta|^{p+1}
   \end{split}
\end{equation*}
Thus summing over $x$ and $\zeta$ one obtains \eqref{eq:gstrf_discrete1}.

We would like to show the semicontinuity $\Gcal^{i,\discreteToCont}_{\tau,L}$.   Let $E_{\tau}\subset \kappa \Z^d$, such that $\tilde{E}^{\kappa}_{\tau}$ converges to $E\subset \R^d$   in $L^1_{\lok}$. Then
\begin{equation*}
   \begin{split}
      \liminf_{\tau\downarrow 0} \Gcal^{i,\discreteToCont}_{\tau,L}(\tilde{E}^{\kappa}_{\tau}) \geq \Gcal^{i,\discreteToCont}_{0,L}(E).
   \end{split}
\end{equation*}

To prove it, it is sufficient to show a similar statement for the $\Gcal^{1d, \discreteToCont}_{\tau,L}$.  Namely let $E_\tau\subset \kappa \Z$  such that $\tilde{E}^\kappa_\tau$ converges to $E\subset \R$ in $L^1_\lok$.  Then

\begin{equation}
   \label{gstrpd1}
   \begin{split}
      \liminf_{\tau\downarrow 0} \Gcal^{1d,\discreteToCont}_{\tau,L}(\tilde{E}^{\kappa}_{\tau}) \geq \Gcal^{1d,\discreteToCont}_{0,L}(E).
   \end{split}
\end{equation}

Let us now show \eqref{gstrpd1}. Given that (see the comment after \eqref{eq:gstrf2})
\begin{equation*}
   \begin{split}
    \per(\tilde{E}^{\kappa}_{\tau},[0,L))|z|-\int_0^L |\chi_{\tilde{E}^\kappa_\tau}(x)-\chi_{\tilde{E}^{\kappa}_{\tau}}(x+z)|\dx \geq 0,
   \end{split}
\end{equation*}
 one has that for every $\delta> 0$, it holds
\begin{equation*}
   \begin{split}
      \Gcal_{\tau,L}^{1d,\discreteToCont}(\tilde{E}^{\kappa}_\tau)\geq \sum_{\substack{z\in \kappa\Z \\ |z|\geq \delta}} \widehat{K}^\discrete_\kappa(z) \Big(\per(\tilde{E}^{\kappa}_{\tau},[0,L))|z|-\int_0^L |\chi_{\tilde{E}^\kappa_{\tau}}(x)-\chi_{\tilde{E}^{\kappa}_\tau}(x+z)|\dx\Big).
   \end{split}
\end{equation*}
Thus by passing to the liminf in the above, and using the lower semicontinuity of the perimeter and \eqref{eq:gstrf_discrete1} one has that
\begin{equation*}
   \begin{split}
      \liminf_{\tau\downarrow 0} \Gcal^{1d,\discreteToCont}_{\tau,L} (\tilde{E}^{\kappa}_{\tau}) &\geq \liminf_{\tau\downarrow 0}  \sum_{\substack{z\in \kappa\Z \\ |z|\geq \delta}} \widehat{K}^\discrete_\kappa(z) \Big(\per(\tilde{E}^{\kappa}_{\tau},[0,L))|z|-\int_0^L |\chi_{\tilde{E}^\kappa_{\tau}}(x)-\chi_{\tilde{E}^{\kappa}_\tau}(x+z)|\dx\Big)\\  &\geq 
       % \liminf_{\tau\downarrow 0} \Gcal^{1d,\discrete}_{\tau,L} (\tilde{E}^{\kappa}_{\tau}) 
        \liminf_{\tau\downarrow 0}  \int_{\R} \chi_{B_{\delta}^c(0)}(z)  \widehat{K}^{\discrete}(z)\Big(\per(\tilde{E}^{\kappa}_{\tau},[0,L))|z|-\int_0^L |\chi_{\tilde{E}^\kappa_{\tau}}(x)-\chi_{\tilde{E}^{\kappa}_\tau}(x+z)|\dx\Big)\dz\\
        &\geq \int_{\R} \chi_{B_{\delta}^c(0)}(z)  \widehat{K}^{\discrete}(z)\Big(\per({E},[0,L))|z|-\int_0^L |\chi_{{E}}(x)-\chi_{{E}}(x+z)|\dx\Big)\dz,
   \end{split}
\end{equation*}
where $\widehat K^\discrete (z) = \int_{\R^{d-1}} K^\discrete(\zeta',z)\d\zeta' $. 
By the arbitrarity of $\delta$ one has the desired claim.

{\bf Step 5:} Gamma limit. 

Let $E_{\tau}\subset \Z^d$ such that 
\begin{equation*}
   \begin{split}
      \sup_{\tau} \Fcal_{\tau,L}^\discreteToCont(\tilde{E}^{\kappa}_{\tau})  < +\infty.
   \end{split}
\end{equation*}

From Step 2, one has that $\tilde{E}^\kappa_{\tau}$ is a sequence of sets of locally finite perimeter, thus there exists $E\subset \R^d$  such that $E$ is a set of locally finite perimeter 
and $\tilde{E}^\kappa_\tau$ converges to $E$ in $L^1_\lok$. 
From the lower semicontinuity Steps 3 and 4, one has that for every $i\in \{ 1,\ldots,d\}$ it holds $\Gcal^{i,\discreteToCont}_{0,L}(E)$, $I^{i,\discreteToCont}_{0,L}(E) < +\infty$.

Thus from the rigidity estimate (Proposition~\ref{prop:rigidity}), one has that $E$ is a union of stripes. 

The $\Gamma$-limit result consists in two parts:  
\begin{enumerate}[(i)]
   \item $\Gamma$-liminf
   \item $\Gamma$-limsup
\end{enumerate}

The  $\Gamma$-liminf is obtained by Step 4 above. In order to obtain the $\Gamma$-limsup,  we need to find a recovery sequence $E_{\tau}$ such that $\Fcal^\discreteToCont_{\tau,L}(\tilde{E}^{\kappa}_{\tau})\to \widehat{\Fcal}_{0,L}(E)$. To do so it is sufficient to consider for every $\tau$,  a union of stripes $\tilde{E}^{\kappa}_\tau$ such that $\tilde{E}^{\kappa}_\tau$ is $[0,L)^d$-periodic and such that it is close in $L^1$ to $E$. 

\end{proof}

\section{Structure of minimizers}
\label{sec:structure_of_minimizers}

In this section we prove Theorem \ref{T:main}, asserting that minimizers of $\Fcal_{\tau,L}$ are periodic stripes of period $h_\tau$, provided $\tau$ is small enough.

   The idea of the proof of Theorem~\ref{T:main} is to  start from  \eqref{E:fbelow}, namely
   \begin{equation}
      \label{gstrF:1}
      \begin{split}
         \Fcal_{\tau,L}(E)\ge 
         \frac{1}{L^d}\Big( -\sum_{i=1}^d\per_{1i}(E,[0,L)^d)+\sum_{i=1}^d \Gcal_{\tau,L}^i(E)+\sum_{i=1}^d I_{\tau,L}^i(E)\Big).
      \end{split}
   \end{equation}
   Since $I^i_{\tau,L}(E) = 0$ if and only if $E$ is a union of  stripes, one has  that  the \lhs and \rhs of the above are equal whenever $E$ is a union of stripes. Thus,  it is sufficient to show that optimal stripes are minimizers of the \rhs of the above. % and for optimal stripes the values of the \lhs and \rhs coincide. 

   For the r.h.s., in order to show optimality of stripes we will initially use Theorem~\ref{T:gammaconvNew}  in order to reduce ourselves to a situation in which the minimizer of $\FtL$ are close to optimal stripes $S$. This holds for $\tau < \bar{\tau}$, where $\bar{\tau}$ depends on $L$. 
   In this situation we will show that oscillations of the characteristic function of the set $E$ in directions which are orthogonal to the direction of $S$ increase necessarily the \rhs  of \eqref{gstrF:1} (this is done in Lemma~\ref{lemma:new_lemma_pre1}). Thus the \rhs of \eqref{gstrF:1} can be further bounded from below by
   \begin{equation}
      \label{eq:gstrF:2}
      \begin{split}
         \frac{1}{L^d} \Big(-\per_{1i}(E,[0,L)^d)  + \Gcal^i_{\tau,L}(E)\Big),
      \end{split}
   \end{equation}
   where $e_i$ is the orientation of the stripes. 
   Finally, optimal stripes minimize \eqref{eq:gstrF:2}, since it corresponds to the one-dimensional problem of Section \ref{sec:1D_problem}.

\vspace{3mm}

   Informally, the next lemma says the following: suppose that $E$ is $L^1$-close to a set $S$ which is a union of periodic stripes in the direction $e_1$ (see Figure \ref{fig:stability} (a)), then the contribution of the functional from directions other than $e_1$ are nonnegative. This claim is formally expressed in \eqref{eq:lemma_pre1_1}. 
   Moreover, these contribution are equal to zero if and only if the set $E$ is  also union of stripes.

\begin{figure} [!ht]
   \centering
\def\svgwidth{5cm}
\subfloat[] {
      \fbox{\includegraphics[width=5cm]{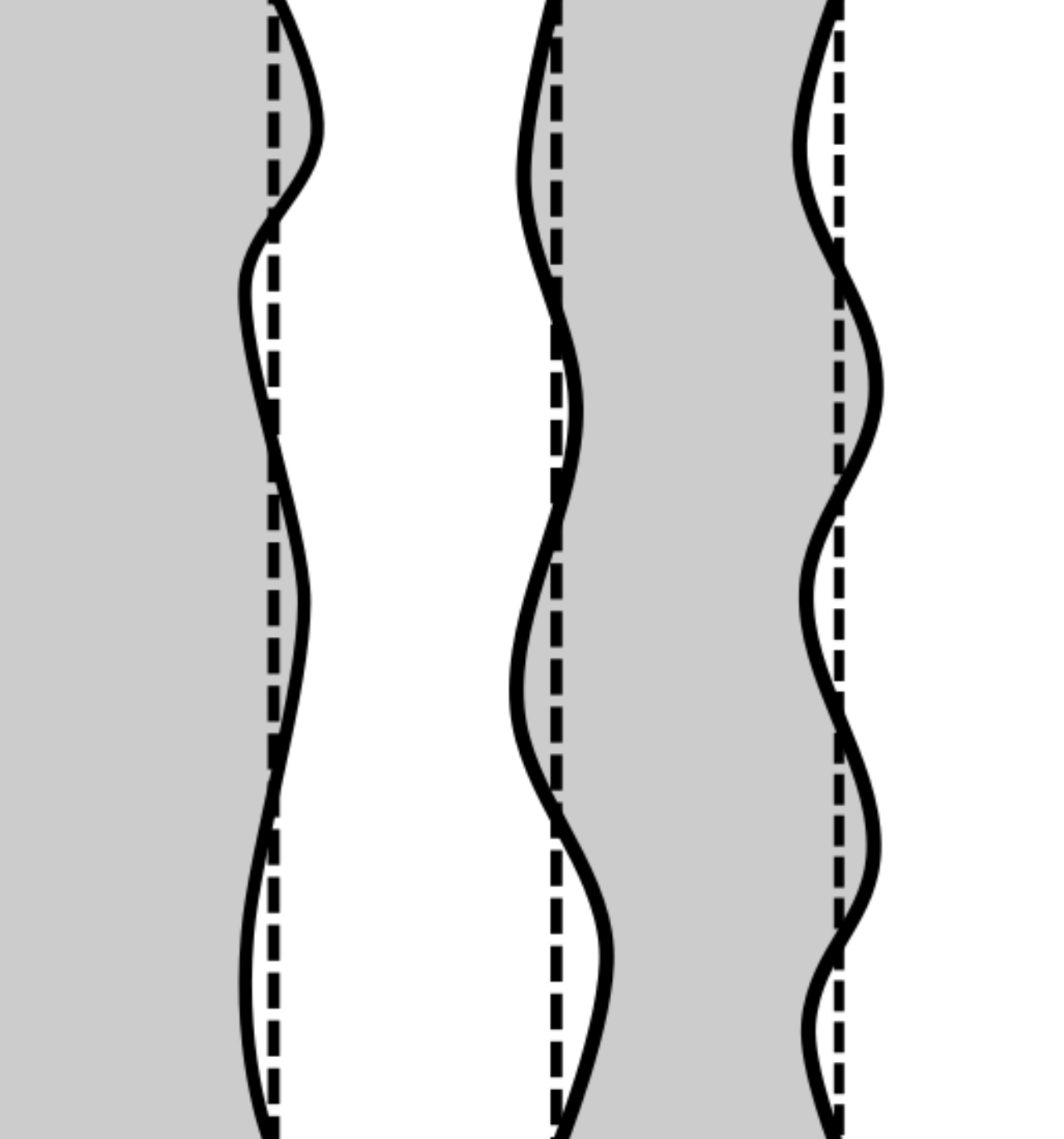}}
}
\subfloat[] {
      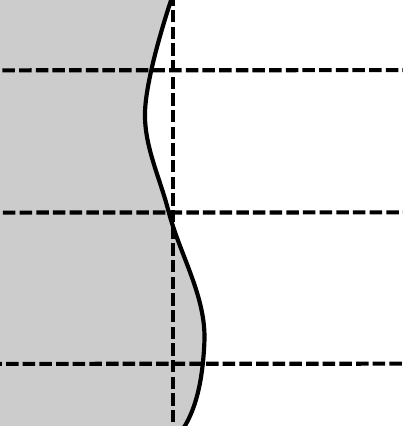
   }
      % \label{fig:lemma_stability_b}
      % \caption{\hspace{10mm} }
      % %\caption{A zoomed-in portion of the where the set deviates from being a union of stripes}
      % \end{subfigure}
      \caption{In (a) a set which is close to optimal periodic stripes is depicted.  In (b) a  zoomed-in portion where the set deviates from being a union of stripes is depicted. }
      \label{fig:stability}
\end{figure}

   \begin{lemma}[Stability]
      \label{lemma:new_lemma_pre1}
      \label{lemma:stability}
      Let $E\subset \R^d$ be a $[0,L)^d$-periodic set of locally finite perimeter, and $S$ be a set which is a union of periodic stripes, i.e.  (up to exchange of coordinates and translations) there exists $\hat E \subset \R$ such that  $S = \hat E \times \R^{d-1}$ and 
      \begin{equation}
         \hat E = \bigcup_{k\in \Z} [2kh, (2k +1 ) h),
      \end{equation}
      for a suitable $h$.  Then, there exist $\bar{\varepsilon},\bar \tau > 0 $ such that if $\|\chi_E  -\chi_S\|_{L^1} < \bar \varepsilon$  and $\tau < \bar{\tau}$, one has that, for $i\in\{2,\dots,d\}$,
      \begin{equation} 
         \label{eq:lemma_pre1_1}
         \begin{split}
            -  \per_{1i}(E,[0,L)^d) +  \Gcal^{i}_{\tau,L}(E) + I^{i}_{\tau,L}(E)\geq 0 .
         \end{split}
      \end{equation} 
      Moreover, in \eqref{eq:lemma_pre1_1} equality holds if and only if $E$ is a union of stripes in direction $e_1$. 
   \end{lemma}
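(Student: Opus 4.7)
My plan is to prove the inequality \eqref{eq:lemma_pre1_1} separately for each $i\in\{2,\dots,d\}$ by a slicing argument in direction $e_i$, using the off-diagonal term $I^i_{\tau,L}(E)$ to compensate for the ``loss'' produced by the one-dimensional reflection-positivity bound. The slicing identities \eqref{eq:perI}, \eqref{eq:gstrf1}--\eqref{eq:gstrf2} give
$$-\per_{1i}(E,[0,L)^d)+\Gcal^i_{\tau,L}(E)=L\int_{[0,L)^{d-1}}\Fcal^1_{\tau,L}(E_{x_i^\perp})\dx_i^\perp,$$
so the statement becomes $L\int \Fcal^1_{\tau,L}(E_{x_i^\perp})\dx_i^\perp+I^i_{\tau,L}(E)\ge 0$. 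Applying Lemma \ref{L:chessboard} on each $1$D slice and observing that $\psi_\tau(h):=h\,e_{\infty,\tau}(h)=-1+hA_\tau(h)$ attains a minimum $c^*_\tau\in(-1,0)$ at $h=h^*_\tau$ which stays bounded away from $-1$ for $\tau$ small (the minimizer $h^*_\tau$ has a finite positive limit as $\tau\downarrow 0$, as follows from the convergence results of Section \ref{sec:1D_problem}), I obtain
$$L\int \Fcal^1_{\tau,L}(E_{x_i^\perp})\dx_i^\perp\ge c^*_\tau\per_{1i}(E,[0,L)^d)=-|c^*_\tau|\per_{1i}(E,[0,L)^d),$$
and the task is reduced to showing $I^i_{\tau,L}(E)\ge (|c^*_\tau|+\delta_0)\,\per_{1i}(E,[0,L)^d)$ for some $\delta_0>0$.

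For this I would localize $I^i_{\tau,L}(E)$ near $\partial E$. For $x=(x_i^\perp,x_i)$ with $x_i\in\partial E_{x_i^\perp}$, the factor $|\chi_E(x)-\chi_E(x+\alpha e_i)|$ equals $1$ for $\alpha$ in a one-sided interval of length $\min(x_i^+-x_i,\,x_i-x_i^-)$. Integrating the first factor in the definition of $I^i_{\tau,L}(E)$ in the $\zeta_i$-variable over this window and bounding $K_\tau$ from below by \eqref{eq:Kbound}, followed by slicing in direction $e_i$, yields
$$I^i_{\tau,L}(E)\gtrsim \int_{\partial E\cap[0,L)^d}|\nu^E_i(x)|\,\Phi(x)\,\d\mathcal H^{d-1}(x),$$
where $\Phi(x):=\int_{\R^{d-1}} M_\tau(\zeta_i^\perp)\,|\chi_E(x)-\chi_E(x+\zeta_i^\perp)|\,\d\zeta_i^\perp$ and $M_\tau$ is an explicit transverse kernel obtained by integrating $K_\tau$ in the admissible $\zeta_i$-window. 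Replacing $E$ by $S$ inside $\Phi$ gives a function depending only on $x_1$ through $\chi_{\hat E}$; since $S$ is a stripe pattern of width $h$ in direction $e_1$, the identity $|\chi_S(x)-\chi_S(x+\eta e_1)|=1$ on a positive fraction of an interval of length comparable to $h$ combined with \eqref{eq:Kbound} gives $\Phi_S(x)\ge c_0>0$ with $c_0$ uniform in $\tau$ for $\tau$ small. The closeness hypothesis $\|\chi_E-\chi_S\|_{L^1}<\bar\varepsilon$ and Fubini in $\zeta_i^\perp$ then transfer this to $\Phi(x)\ge c_0/2$ outside an exceptional subset of $\partial E$ whose $\mathcal H^{d-1}$-measure is controlled by $\bar\varepsilon$ and by the a priori perimeter bound \eqref{eq:gr1}. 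Choosing $\bar\varepsilon$ and $\bar\tau$ small enough so that $c_0/2\ge |c^*_\tau|+\delta_0$ closes the inequality.

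For the equality case, the chain above is strict as soon as $\per_{1i}(E,[0,L)^d)>0$, since then $\{|\nu^E_i|>0\}$ carries positive $\mathcal H^{d-1}$-mass on which $\Phi\ge c_0/2$. Thus equality in \eqref{eq:lemma_pre1_1} forces $\per_{1i}(E,[0,L)^d)=0$, which (applied for every $i\in\{2,\dots,d\}$) means $\partial^* E$ has normal parallel to $e_1$ a.e., i.e. $E$ is, up to a null set, a union of stripes in direction $e_1$. The main technical obstacle is making the transfer from $\Phi_S\ge c_0$ to $\Phi\ge c_0/2$ on most of $\partial E$ rigorous and quantitative: $L^1$-closeness of $E$ to $S$ in the bulk does not directly provide pointwise information on the reduced boundary, so a careful Fubini argument combined with the perimeter estimate \eqref{eq:gr1} is needed to keep the exceptional $\mathcal H^{d-1}$-mass small, while simultaneously one must verify that $c_0$ and $\delta_0$ can be chosen independently of $\tau$ for $\tau\le\bar\tau$ — which relies on the uniform nondegeneracy of $e_{\infty,\tau}$ and $\widehat K_\tau$ established in Section \ref{sec:1D_problem}.
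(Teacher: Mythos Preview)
Your approach contains a genuine error at the chessboard step. You claim that $\psi_\tau(h):=h\,e_{\infty,\tau}(h)$ attains a minimum $c^*_\tau\in(-1,0)$ at $h=h^*_\tau$. This is false. From \eqref{eq:einftytau} one has $\psi_\tau(h)=-1+hA_\tau(h)$, and (for the model kernel, and by the convergence results of Section~\ref{sec:1D_problem} also for the general kernels) $hA_\tau(h)\sim C_q h^{-(q-2)}$ is monotone decreasing to $0$ as $h\to\infty$; hence $\psi_\tau$ is monotone with $\inf_h\psi_\tau(h)=-1$, not attained. You have confused the minimizer $h^*_\tau$ of $e_{\infty,\tau}$ with a minimizer of $h\,e_{\infty,\tau}(h)$; these are different objects, and the latter does not exist. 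Consequently the chessboard bound only yields $L\int\Fcal^1_{\tau,L}(E_{x_i^\perp})\dx_i^\perp\ge -\per_{1i}(E,[0,L)^d)$, i.e.\ $\Gcal^i_{\tau,L}(E)\ge 0$, which is trivial and already known. Your reduction then demands $I^i_{\tau,L}(E)\ge(1+\delta_0)\per_{1i}(E,[0,L)^d)$ with a coefficient essentially equal to $1$; your constant $c_0$, coming from kernel integrals over a window of size $\sim h$, has no reason to exceed $1$, and it cannot be tuned by taking $\tau,\bar\varepsilon$ small.

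The paper avoids this pitfall by \emph{not} invoking the chessboard estimate at all. Instead it uses the pointwise lower bound \eqref{E:boundg} for $\Gcal^{1d}_{\tau,L}(E_{x_i^\perp})$ in terms of $\min((s^+-s)^{-\beta},\tau^{-1})$, and splits $[0,L)^{d-1}$ into three sets: slices with no boundary points ($A_2^i$, zero contribution), slices containing a small gap ($A_3^i(\eta)$, where \eqref{E:boundg} already makes the integrand positive), and slices where all gaps exceed $\eta$ ($A_1^i(\eta)$). The key point is that on $A_1^i(\eta)$ the number of boundary points per slice is at most $L/\eta$, so the negative contribution is bounded by $-(L/\eta_0)\,\hausd^{d-1}(A_1^i(\varepsilon))$ for suitable $\varepsilon<\eta$. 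One then only needs a lower bound on $I^i_{\tau,L}(E)$ restricted to $A_1^i(\varepsilon)$ that beats this fixed constant per slice --- not a bound proportional to the full perimeter. This is obtained by a direct local computation around each boundary point $t_i\in\partial E_{t_i^\perp}$ (with $t_i^\perp\in A_1^i(\varepsilon)$), using that $S$ is constant in the $e_i$-direction together with $\|\chi_E-\chi_S\|_{L^1}<\bar\varepsilon$ to guarantee that a definite fraction of the transverse $\varepsilon$-cube lies on the ``wrong'' side; the resulting contribution to $I^i_{\tau,L}$ scales like $\varepsilon^{d+1}/(\varepsilon^p+\tau^{p/\beta})$ and can be made as large as desired by taking $\varepsilon,\tau$ small. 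Your acknowledged difficulty of transferring $L^1$-bulk information to $\hausd^{d-1}$-a.e.\ behaviour on $\partial E$ is thus sidestepped: the argument is run slice by slice on $A_1^i(\varepsilon)$ and uses only a volume comparison on small cubes, not any pointwise control of $\Phi$ along the reduced boundary.
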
 

   \begin{proof}

Let $i\in\{2,\dots,d\}$. From \eqref{eq:perI} and \eqref{eq:gstrf1},
\[
-\per_{1i}(E,[0,L)^d)+\GtL^i(E)=\int_{[0,L)^{d-1}}\Big[-\#\partial E_{x_i^\perp}+\GtL^{1d}(E_{x_i^\perp})\Big]\dx_i^\perp.
\]

   From \eqref{E:boundg}, 
   \begin{align}\label{E:boundi}
      -\per_{1i}(E,[0,L)^d)+\GtL^i(E)\geq\int_{[0,L)^{d-1}}\sum_{s\in\partial E_{x_i^\perp}} [-1&+C_{d,L}\min\bigl((s^+-s^-)^{-\beta},\tau^{-1}\bigr)\\ &+C_{d,L}\min\bigl((s-s^-)^{-\beta},\tau^{-1}\bigr)]\dx_i^{\perp},
   \end{align}
   where $s^+$ and $s^-$ are defined after \eqref{eq:s+s-}.

Let $\tau_0>0$ and $\eta_0>0$ such that whenever $\tau<\tau_0$ and $\rho\leq\eta_0$
\[
-1+C_{d,L}\min(\rho^{-\beta}, \tau^{-1})>0.
\]

Thus, given that there are at most $L/\eta_{0}$ points $s\in \partial E_{x^\perp_i}$ in the slice with $s^+-s, s-s^- > \eta_0$, one has that the \rhs of \eqref{E:boundi}, can be bounded from below by $-L\slash\eta_{0}$.

Consider the following decomposition
\begin{equation*} 
   \begin{split}
      [0,L)^{d-1}=A_1^i(\eta)\cup A_2^i\cup A_3^i(\eta),
   \end{split}
\end{equation*} 
where 
\begin{align}
A_1^i(\eta)&=\{x_i^\perp\in[0,L)^{d-1}:\,\underset{s\in\partial E_{x_i^\perp}}{\min}(s^+-s,s-s^-)\geq\eta\};\\
A_2^i&=\{x_i^\perp\in[0,L)^{d-1}:\,\partial E_{x_i^\perp}=\emptyset\};\\
A_3^i(\eta)&=\{x_i^\perp\in[0,L)^{d-1}:\,\exists\,s\in\partial E_{x_i^\perp}\text{ s.t. }s^+-s<\eta\text{ or }s-s^-<\eta\}
\end{align}
where $\eta\leq\eta_0$ is such that, for $\tau\leq\bar{\tau}$ with $0<\bar\tau<\tau_0$, and for $\rho\leq\eta$

\begin{equation}
   \label{eq:gstrff1}
   \begin{split}
-1+C_{d,L}\min\bigl(\rho^{-\beta},\tau^{-1}\bigr)>\frac{L}{\eta_0}.
   \end{split}
\end{equation}

The integrand in the \rhs of \eqref{E:boundi} can be estimated as follows:
\begin{enumerate}
   \item  if  $x_i^\perp\in A_1^i(\eta)$ it can be estimated from below by $-L/\eta_0$,
   \item if $x_i^\perp\in A_2^i$ it is zero,
   \item  if  $x_i^\perp\in A_3^i(\eta)$ it is positive.  Indeed,
if $x_i^\perp\in A_3^i(\eta)$, then 
there exists a point $s\in\partial E_{x^\perp_i}$ such that for $\rho$ equal to either $s^+-s$ or $s-s^- $, \eqref{eq:gstrff1} holds. 
Since for the choice of $\eta_0$, the  sum of the negative terms in \rhs of \eqref{E:boundi} is  bigger than or equal to $-L/\eta_{0}$, one has that for every $x^\perp_i\in A^i_{3}(\eta)$ the integrand in the \rhs of \eqref{E:boundi} is positive. 
\end{enumerate}

% Observe that, if $x_i^\perp\in A_2^i$, then the integrand in the r.h.s. of \eqref{E:boundi} is zero.
%
% For  $x_i^\perp\in A_1^i(\eta)$, one has that the integrand in the \rhs of 
% % This implies that $\# \partial E_{x^{\perp}_i} < \frac{L}{\eta}$, thus the integrand in the r.h.s. of \eqref{E:boundi}  is bigger or equal than $-\frac {L}{\eta}$.  
% If $x_i^\perp\in A_3^i(\eta)$, then 
% there exists a point $s\in\partial E_{x^\perp_i}$ such that for $\rho$ equal to either $h_{x^\perp_i}(s)$ or $g_{x_i^\perp}(s) $, \eqref{eq:gstrff1} holds. 
% Since for the choice of $\eta_0$, the  sum of the negative terms in \rhs of \eqref{E:boundi} is  bigger or equal than $-L/\eta_{0}$, one has that for every $x^\perp_i\in A^i_{3}(\eta)$ the integrand in the \rhs of \eqref{E:boundi} is positive. 

Hence,
\begin{equation*}
-\per_{1i}(E,[0,L)^d)+\Gcal_{\tau,L}^i(E)\geq-\frac{L}{\eta_0}\mathcal H^{d-1}(A_1^i(\eta))+c\mathcal H^{d-1}(A_3^i(\eta)),\qquad \text{for some $c>0$.}
\end{equation*}

We will proceed now to show that  there exists $\varepsilon < \eta$ \st $I_{\tau,L}^{i}(E) > 2\frac{L}{\varepsilon}\Hcal^{d-1}(A_1^{i}(\varepsilon))$ for $\tau$ small enough. In order to do so, we will estimate via slicing the contribution in $I_{\tau,L}^{i}(E)$ for fixed $x_{i}^{\perp}\in A_{1}^{i}(\varepsilon)$ and show that it is larger than $2\frac{L}{\varepsilon}$ for a certain choice of parameters $\varepsilon$ and $\tau$.

Let us  now estimate $I^{i}_{\tau,L}(E)$. Recall that

\begin{equation*} 
   \begin{split}
     \frac{d}{2} I^{i}_{\tau,L}(E) = \int_{\R^{d-1}}\int_{\R}\int_{[0,L)^{d-1}}\int_{[0,L)} K_{\tau}(\zeta) f_{E}(x^{\perp}_i,x_i,\zeta_{i}^{\perp},\zeta_i)\,\dx_{i}\dx_{i}^{\perp}  \d\zeta_{i}\d\zeta_{i}^{\perp} ,
   \end{split}
\end{equation*} 
where $f_E$ was defined in \eqref{eq:fE}, namely
\begin{equation*} 
   \begin{split}
      f_{E}(x^{\perp}_i,x_i,\zeta_{i}^{\perp},\zeta_i)=|\chi_{E}(x_{i}^\perp +x_i+ \zeta_{i}) - \chi_{E}(x_i + x^{\perp}_{i} ) | | \chi_{E}(x_{i}^\perp +x_i+ \zeta_{i}^{\perp}) - \chi_{E}(x_i + x^{\perp}_{i} )  |.
   \end{split} 
\end{equation*} 
Choose $(16\bar{\varepsilon})^{1/d} < \eta$,  $(16\bar\varepsilon)^{1/d} < \varepsilon < \eta$ and fix $t_{i}^{\perp}\in A_{1}^{i}(\varepsilon)$. The choice of $\varepsilon,\bar{\varepsilon}$ is made in order to have \eqref{eq:defBarEps}.

Because of the definition of $A^{i}_1(\varepsilon)$, there exists $t_i\in \partial E_{t_{i}^{\perp}}$ such that one of the following holds 
\begin{enumerate}[(i)]
   \item $(t_i-\varepsilon,t_i)\subset E_{t_{i}^{\perp}}$ and $(t_i,t_i+ \varepsilon)\subset E_{t_{i}^{\perp}}^c$ 
   \item $(t_i- \varepsilon,t_i)\subset E^c_{t_{i}^{\perp}}$ and $(t_i,t_i+\varepsilon)\subset E_{t_{i}^{\perp}}$ ,
\end{enumerate}
 
W.l.o.g., we may assume that (i) above holds (see~Figure~\ref{fig:stability} (b)) and that $i=d$. 
We recall from Section \ref{sec:setting_and_preliminary_results} that for $\varepsilon> 0$ and $t^{\perp}_d\in [0,L)^{d-1}$ we let $Q_{\varepsilon}^{\perp}(t^\perp_{d}) = \{z^\perp_{d} \in [0,L)^{d-1}: |t^{\perp}_{d} - z^{\perp}_{d} | \leq \varepsilon  \}$.

Since $ \|\chi_E -\chi_S \|_{L^1([0,L)^d)} < \bar \varepsilon$,  choosing  $\bar\varepsilon$ as above, one has 
   \begin{equation} %---{{{
      \label{eq:defBarEps}
      \begin{split}
         \max \Big(\frac{| Q_{\varepsilon}^{\perp}(t^{\perp}_d)\times(t_{d}-\varepsilon,t_{d}) \cap E^{c}|} {| Q_{\varepsilon}^{\perp}(t_d^{\perp})\times(t_{d}-\varepsilon,t_{d})|} ,
         \frac{|Q_{\varepsilon}^{\perp}(t^{\perp}_{d})\times(t_{d},t_{d}+\varepsilon) \cap E|} {| Q^{\perp}_{\varepsilon}(t^{\perp}_{d})\times(t_{d}-\varepsilon,t_{d})|}  
         \Big)  \geq  \frac{7}{16}.
      \end{split}
   \end{equation} %---}}}
      For a set which is a union of stripes in the direction $e_d$ the above is trivial with $1\slash2$ instead of $7\slash16$. For a general set  $E$, we use the hypothesis that it is  close to stripes in the direction $e_1$ less than $\varepsilon^d/16$ with respect to the distance induced by $L^1([0,L)^d)$. 

Indeed, we have that 
\begin{equation*}
   \begin{split}
      \mathcal L^d \Big(\big( Q^{\perp}_{\varepsilon}(t_{d}^{\perp})\times( t_{d} - \varepsilon,t_d )\big) \cap E^c\Big) \geq  \mathcal L^d \Big(\big( Q^{\perp}_{\varepsilon}(t_{d}^{\perp})\times( t_{d} - \varepsilon,t_d )\big) \cap S^c\Big)  -\int_{ Q^{\perp}_{\varepsilon}(t_{d}^{\perp})\times( t_{d} - \varepsilon,t_d ) } |\chi_{E}(x) - \chi_{S}(x) |\dx  \\
      \geq  \mathcal L^d \Big(\big( Q^{\perp}_{\varepsilon}(t_{d}^{\perp})\times( t_{d} - \varepsilon,t_d )\big) \cap S^c\Big)  -\int_{[0,L)^d} |\chi_{E}(x) - \chi_{S}(x) |\dx \geq  \mathcal L^d \Big(\big( Q^{\perp}_{\varepsilon}(t_{d}^{\perp})\times( t_{d} - \varepsilon,t_d )\big) \cap S^c\Big)  -\bar{\varepsilon}
   \end{split}
\end{equation*}
or 
\begin{equation*}
   \begin{split}
      \mathcal L^d \Big(\big( Q^{\perp}_{\varepsilon}(t_{d}^{\perp})\times( t_{d} ,t_d+ \varepsilon )\big) \cap E\Big)  & \geq  \mathcal L^d \Big(\big( Q^{\perp}_{\varepsilon}(t_{d}^{\perp})\times( t_{d},t_d + \varepsilon )\big) \cap S\Big)  -\int_{ Q^{\perp}_{\varepsilon}(t_{d}^{\perp})\times( t_{d}, t_d +\varepsilon) } |\chi_{E}(x) - \chi_{S}(x) |\dx. 
      \\ & \geq  \mathcal L^d \Big(\big( Q^{\perp}_{\varepsilon}(t_{d}^{\perp})\times( t_{d}, t_d+\varepsilon )\big) \cap S\Big)  -\bar{\varepsilon}
   \end{split}
\end{equation*}

Given that for every  $t_{d}^\perp\in [0,L)^{d-1}$ and $S$ periodic union of stripes of period $h$ (actually it would suffice that $S$ are stripes) it holds
\begin{equation*}
   \max \Big(
   \mathcal L^d \Big(\big( Q^{\perp}_{\varepsilon}(t_{d}^{\perp})\times( t_{d},t_d + \varepsilon )\big) \cap S\Big), 
\mathcal L^d \Big(\big( Q^{\perp}_{\varepsilon}(t_{d}^{\perp})\times( t_{d} - \varepsilon,t_d )\big) \cap S^c\Big)
\Big) \geq \varepsilon^d/2,
\end{equation*}
one has the desired claim.

   Hence, from the above, we can further assume that
   \begin{equation} %---{{{
      \label{eq:caso1} %TODO
      \begin{split}
         (t_{d}-\varepsilon,t_d) \subset E_{t_d^\perp} \quad \text{and} \quad\frac{| Q_{\varepsilon}^{\perp}(t^{\perp}_{d})\times(t_{d}-\varepsilon,t_{d}) \cap E^{c}|}{| Q^{\perp}_{\varepsilon}(t^{\perp}_{d})\times(t_{d}-\varepsilon,t_{d})|} \geq \frac{7}{16}.
      \end{split}
   \end{equation} %---}}}

   For every $s\in (t_{d}-\varepsilon,t_{d})$, $(\zeta_{d}^{\perp} , s) \not \in E$ and $\zeta_{d} +s \in (t_{d},t_{d}+\varepsilon)$ we have that $f_{E}(t^{\perp}_{d},s,\zeta^{\perp}_{d},\zeta_{d}) = 1$. Thus by integrating initially in $\zeta_{d}$ and estimating by \eqref{eq:Kbound} $K_{\tau}(\zeta)$ with $\frac{C}{\varepsilon^{p} + \tau^{p/\beta}}$, we have that 
   \begin{equation*} %---{{{
      \begin{split}
         \int_{t_{d}-\varepsilon}^{t_{d}+\varepsilon}\int_{t_{d}-s}^{t_{d}+\varepsilon-s} \int_{Q^{\perp}_{\varepsilon}(t^{\perp}_{d})} f_{E}(t^{\perp}_{d},s,\zeta^{\perp}_{d},\zeta_{d}) &K_{\tau}(\zeta)  \d\zeta^\perp_d \d\zeta_d \d s\geq\\
         &\geq \frac{C}{\varepsilon^{p} + \tau^{ p/\beta }} \varepsilon \int_{Q^{\perp}_{\varepsilon}(t^{\perp}_{d})} \int_{t_{d}-\varepsilon}^{t_{d}} |\chi_{E_{t_{d}^{\perp}}}(s) - \chi_{E_{t_{d}^{\perp}+ \zeta^{\perp}_{d}}}( s)  | \ds \d\zeta_{d}^{\perp} \\ 
         &\geq  \frac{C}{\varepsilon^{p} + \tau^{ p/\beta}} \varepsilon  \int_{Q^{\perp}_{\varepsilon}(t^{\perp}_{d})} \int_{t_{d}-\varepsilon}^{t_{d}} |1 - \chi_{E_{t_{d}^{\perp}+ \zeta^{\perp}_{d}}}( s)  | \ds \d\zeta_{d}^{\perp}  \\
         &\geq  \frac{C}{\varepsilon^{p} + \tau^{ p/\beta}} \varepsilon | Q^{\perp}_{\varepsilon}(t_{d}^{\perp})\times(t_{d}-\varepsilon, t_{d}) \cap E^c| \geq\frac{7C\slash 16 \varepsilon^{d+1}}{\varepsilon^{p} +\tau^{ p/\beta}}.
      \end{split}
   \end{equation*} %---}}}
   In order to conclude we notice that 
   \begin{equation}\label{eq:6.13}
      \begin{split}
        \frac{d}{2} I^{d}_{\tau,L}(E) \geq \int_{A_1^d (\varepsilon)}  \Big( \sum_{t_d\in \partial E_{t_d^\perp}} \int_{t_{d} -\varepsilon} ^{t_d +\varepsilon} \int_{t_d-s} ^{t_d+\varepsilon-s} \int_{Q^\perp_\varepsilon(t_d^\perp)} f_E(t_d^\perp,s,\zeta_d^\perp,\zeta_d) \d\zeta_{d}^{\perp} \d\zeta_d \ds\Big) \dt_{d}^{\perp} \\ 
         \geq \int_{A_1^d (\varepsilon)}  \Big( \sum_{t_d\in \partial E_{t_d^\perp}} \frac{C \varepsilon^{d+1}}{\varepsilon^p +\tau^{p/\beta}} \Big)\dt_{d}^{\perp}.
      \end{split}
   \end{equation}
   Finally by choosing $\varepsilon$ and $\tau$ small we have the desired
   result, namely  that  there exists $\varepsilon < \eta$ \st
   $I_{\tau,L}^{d}(E) > 2\frac{L}{\varepsilon}\Hcal^{d-1}(A_1^{d}(\varepsilon))$ for $\tau$ small enough. 
   Up to a permutation of coordinates, this naturally holds also for $i=2,\ldots,d-1$.

   \end{proof}

\begin{proof}[Proof of Theorem \ref{T:main}]
   Before proceeding to the proof, let us explain its strategy. 
   In the first step  we  show  that the minimizers are $L^1([0,L)^d)$-close to some set $S$ that consists of optimal periodic stripes. W.l.o.g. let us assume that $S = \hat{E} \times \R^{d-1}$, where
\begin{equation*} 
   \begin{split}
      \hat E = \bigcup_{k\in \Z} [2kh,(2k+1)h).
   \end{split}
\end{equation*} 

   Once in this configuration, we  do a slicing argument. Namely, we split the contributions of the functional in two parts: one of which is given by
   \begin{equation} 
      \label{eq:contribution_1}
      \begin{split}
         -\int_{[0,L)^{d-1}} \per(E_{x_1^{\perp}},[0,L)) \dx_{1}^{\perp} + \int_{[0,L)^{d-1}} \Gcal_{\tau,L}^{1d}(E_{x_{1}^{\perp}}) \dx_{1}^{\perp} + I^{1}_{\tau,L}(E)
      \end{split}
   \end{equation} 
   and one which is larger than
      \begin{equation} 
         \label{eq:contribution_2}
         \begin{split}
               - \sum_{i=2}^{d} \per_{1i}(E,[0,L)^d)  + \sum_{i=2}^{d}\Gcal_{\tau,L}^{i}(E)  + \sum_{i=2}^{d}I^{i}_{\tau,L}(E).
         \end{split}
      \end{equation} 
   Afterwards, we  notice that optimal stripes minimize the first term \eqref{eq:contribution_1} and  are such that \eqref{eq:contribution_2} is equal to zero. 
   On the other side, for anything that does not consists of stripes and is $L^1([0,L)^d)$-close to $S$, one has that the contribution given from \eqref{eq:contribution_2} is strictly positive. 
   Thus optimal stripes are minimizers for $\Fcal_{\tau,L}$. 
   To prove the positivity of \eqref{eq:contribution_2} in case of nonoptimal stripes, we  use Lemma~\ref{lemma:new_lemma_pre1}.

\textbf{\underline{Step 1}: }
{From the $\Gamma$-convergence result} (see~Theorem~\ref{T:gammaconvNew}), we have that for every $\varepsilon> 0 $, there exists a ${\tau_{0} = \tau_0(\varepsilon)} > 0$ such that, for every $0< \tau < {\tau_0}$ and for every minimizer $E_{\tau}$ of $\Fcal_{\tau,L}$, one has that $E_{\tau}$ is $\varepsilon$-close to ${S}$ in $L^1([0,L)^d)$, where $S$ is a periodic stripe of size $2h$. W.l.o.g., we may assume that $S = \hat{E} \times [0,L)^{d-1}$, where
      \begin{equation*} 
         \begin{split}
            \hat E = \bigcup_{k=0}^{j} [2kh,(2k+1)h),
         \end{split}
      \end{equation*} 
      for some $j\in \N$.

      We fix $\varepsilon=\bar{\varepsilon}$ and $\tau < \bar{\tau}$ as in Lemma~\ref{lemma:new_lemma_pre1}.

\textbf{\underline{Step 2}: } 
Let us consider the original functional $\Fcal_{\tau,L}$, for $\tau\leq\bar\tau$ as in Step~1 and set $E= E_\tau$. Recall that one has 
\begin{align}
   \Fcal_{\tau,L}(E)&=\frac{1}{L^d}\Big(-\per_1(E,[0,L)^d)+\int_{\R^d} K_\tau(\zeta) \Big[\int_{\partial E \cap [0,L)^d} \sum_{i=1}^d|\nu^E_i(x) | |\zeta_i|\d\hausd^{d-1}(x)\notag\\&-\int_{[0,L)^d}|\chi_E(x)-\chi_E(x+\zeta)|\dx \Big]\d\zeta\Big)\notag\\
   &\geq\frac{1}{L^d}\Big(-\per_{11}(E,[0,L)^d)+\int_{\R^d} K_\tau(\zeta) \Big[\int_{\partial E \cap [0,L)^d} |\nu^E_1(x)| |\zeta_1|\d\hausd^{d-1}(x)\notag\\
   &-\int_{[0,L)^d}|\chi_E(x)-\chi_E(x+\zeta_1)|\dx\Big]d\zeta\Big)\label{E:f0}
   \\ & - \sum_{i=2}^{d} \per_{1i}(E,[0,L)^d)  + \sum_{i=2}^{d}\Gcal_{\tau,L}^{i}(E)  + \sum_{i=2}^{d}I^{i}_{\tau,L}(E).\label{E:f2}
\end{align}
One notices immediately that, thanks to Theorem \ref{T:1d}, if $E=\hat{E}_\tau\times [0,L)^{d-1}$ with $\hat{E}_\tau$ one-dimensional periodic set of period $h_\tau$, then the first term \eqref{E:f0} of $\Fcal_{\tau,L}$ is minimized, while the terms in \eqref{E:f2} are equal to zero. 

On the other hand, from Lemma~\ref{lemma:new_lemma_pre1}, if a minimizer $E$ does not have such a structure, or more in general $E$ is not a stripe in direction $e_1$, then the last term \eqref{E:f2} is strictly positive.

\end{proof}

\begin{proof}[Proof of Theorems \ref{T:main2}] 
   This claim is implied by the one-dimensional result of Theorem~\ref{T:1d} once from Theorem~\ref{T:main} one knows that the minimizers are stripes. 
\end{proof}

\section{Independence of $\tau$ from $L$}
\label{sec:taul}

The main purpose of this section will be to prove  Theorem~\ref{T:1.3}.

   At the end of this section, we will briefly say how to deal with the discrete setting, namely how to prove Theorem~\ref{T:1.6}.

\subsection{Outline of the Proof}

Let us first give an idea of the proof. 

We  say that a union of stripes $S$ is oriented along  the direction $e_i$, if $S$ is invariant with respect to every translation orthogonal to $e_i$. 

As in Section~\ref{sec:structure_of_minimizers}, instead of the functional $\Fcal_{\tau,L}$ it is convenient to consider the \rhs of \eqref{gstrF:1} and show that its minimizers are stripes. 

The main ingredient in the proof of  Theorem~\ref{T:main} was to use the rigidity argument in order to show that the minimizers of $\FtL$ are close to being stripes (say oriented along $e_i$).
Once in this situation we showed that on slices $E_{t_j^\perp}$ in directions $e_j\neq e_i$ having points in $\partial E_{t_j^\perp}$ increases necessarily the energy. Thus we are left to optimize on the slices in direction $e_i$. On the slices in direction $e_i$ the energy contribution is bounded from below by the energy contribution of the periodic stripes of period $h^*_{\tau}$. 

The main  difficulty in proving Theorem~\ref{T:1.3} when compared to Theorem~\ref{T:main} lies in the fact that the rigidity result (Proposition~\ref{prop:rigidity} and Theorem~\ref{T:gammaconvNew}) can not be applied directly in order to imply the closeness of the minimizers in $L^1([0,L)^d)$ to optimal stripes or even stripes. This is due to the fact that the rigidity argument works for fixed $L$ and $\tau\downarrow 0$.

In order to overcome this issue,  the r.h.s. of \eqref{gstrF:1} on an arbitrary large cube of size $L$ is rewritten as an average of local contributions on smaller cubes of size $l$ (the functionals $\bar F_\tau(E,Q_l(z))$ defined in \eqref{eq:fbartau}), where $l$ is fixed independently of $L$ and $l<L$ ($l$ depends only on a constant depending only on the dimension which comes out of the estimates, as explained at the end of this outline). Namely, we will show that (see~\eqref{eq:gstr14})
\begin{equation}
   \label{eq:stimaDaDimostrare1}
   \begin{split}
      \Fcal_{\tau,L}(E) \geq \frac{1}{L^d}\int_{[0,L)^d} \bar F_{\tau}(E, Q_{l}(z))\dz.
   \end{split}
\end{equation}

% \achtung{
   The aim of this section will be to show  that  the minimizers of the \rhs of the above are optimal periodic stripes and on optimal periodic stripes the above inequality is an equality. 
   In this outline, we will speak of contributions to the energy of subsets of $[0,L)^d$. For a generic subset $B \subset [0,L)^d$, such contribution is 
   \begin{equation*}
      \begin{split}
         \int_{B} \bar{F}_{\tau}(E,Q_l(z))\dz. 
      \end{split}
   \end{equation*}
   When a set $J$ is contained in a one-dimensional slice then its contribution is given by
   % For any $z$, we speak of the contribution of $z$ to the energy as $\bar{F}_{\tau}(E,Q_l(z))$. 
   % For any segment $J$ contained in a slice of $[0,L)^d$  in direction $e_i$, we speak of the contribution of $J$ to the energy as 
   \begin{equation}
      \label{eq:contributionSegment}
      \begin{split}
         \int_{J} \bar{F}_{\tau}(E,Q_l(z)) \d\hausd^1(z). 
      \end{split}
   \end{equation}
   where $\hausd^1$ is the usual one-dimensional Hausdorff measure. 
   Indeed, since we use slicing arguments in the proof, often the contribution on a set $B$ will be recovered by integrating the contributions given by its slices. 
% }

Let $E_\tau$ be a minimizer.  
The functional $\bar{F}_{\tau}(E_\tau,Q_l(z))$ will contain by construction a term of the form
% Once the functional is rewritten as an average of  $\bar{F}_\tau(E,Q_{l}(z))$, one notices that on each $Q_l(z)$ there is a contribution of the form  
\begin{equation}
   \label{eq:outline1}
   \frac{1}{l^d}\sum_{i=1}^d \int_{Q_{l}(z)}\int_{ \R^d} {|\chi_{E_\tau}(x + \zeta_i) - \chi_{E_\tau}(x)||\chi_{E_\tau}(x + \zeta^\perp_i) - \chi_{E_\tau}(x)|}K_{\tau}( \zeta )\d\zeta\dx,
\end{equation}
which is a kind of local version (on scale $l$) of the cross-interaction term $I_{\tau,L}(E_\tau)$ defined in \eqref{eq:I}.
As we prove in the local rigidity lemma, namely Lemma~\ref{lemma:local_rigidity}, such term will be large, for $E$ not close in $L^1$ to stripes in $Q_{l}(z)$ for $\tau < \tau_0(l)$ (our measure of closeness will be quantified in Definition \ref{def:defDEta}). This is the local counterpart of the rigidity Proposition \ref{prop:rigidity}.
A first consequence of this fact is that, from average arguments, one has that for ``most'' of the $z$ contained in $[0,L)^d$, it holds that $E_{\tau}\cap Q_{l}(z)$ has to be close to a set which is a union of stripes. The aim is then to prove that one is $L^1$-close to stripes in some fixed direction on the whole cube $[0,L)^d$.

 A clearer picture of what happens in $[0,L)^d$ is given by  %Intuitively, one expects: whenever we are in a region where we are close to stripes, the oscillations in directions orthogonal to the orientation of the stripes should increase the energy. 
the following decomposition: 

$[0,L)^d =A_{-1}\cup A_0 \cup \ldots \cup A_d$ where
\begin{itemize}
   \item $A_i$ with $i > 0$ are the set of points $z$ such that there is only one direction $e_i$, such that $E_\tau\cap Q_{l}(z)$ is close to stripes oriented in  direction~$e_i$. 
   \item $A_{-1}$ is the set of points $z$ such that there exist directions $e_i$ and $e_j$ ($i\neq j$) and stripes $S_i$ (oriented in direction $e_i$)  and $S_j$ (stripes oriented in direction $e_j$) such that  $E_\tau\cap Q_{l}(z)$ is close to both $S_i\cap Q_l(z)$ and $S_j\cap Q_l(z)$. In particular, this implies that either $|E_\tau\cap Q_l(z)|\ll l^d$ or $|E_{\tau}^c\cap Q_l(z)|\ll l^d$ (see Remark \ref{rmk:lip} (ii)).
   \item $A_{0}$ is the set of points $z$ where none of the above points is true.
\end{itemize}

The aim is then to show that $A_0\cup A_{-1} = \emptyset$ and  that there exists only one $A_i$ with $i >  0$. 
Thus, by the local version of the Stability Lemma \ref{lemma:new_lemma_pre1}, namely Lemma \ref{lemma:stimaContributoVariazionePiccola}, minimizers must be stripes, which by Theorem \ref{T:main2} are periodic of period $h^*_\tau$ (with $\tau$ depending on $l$ and not on $L$).

 As we will show in the proof of Theorem \ref{T:1.3},  $A_0 \cup A_{-1}$ separates the different $A_i$, namely every continuous curve $\gamma:[0,T]\to [0,L)^d$ intersecting $A_i$ and $A_j$ has necessarily to intersect $A_0 \cup A_{-1}$.

Let us initially explain what is intuitively expected:
\begin{enumerate}[(i)]
   \item for any $z\in A_i$ with $i > 0$,  when slicing in directions orthogonal to $e_i$, alternation between regions in $E$ and regions in $E^c$ should increase the energy (similarly 
      to Lemma~\ref{lemma:new_lemma_pre1}). Thus one expects the contribution of $A_i$ to be  
      \begin{equation*}
         \begin{split}
            C^*_\tau |A_i |/L^d - C_l |\partial A_i |/L^d
         \end{split}
      \end{equation*}

      where $C_l$ is a constant depending on $l$ and $C^*_\tau$ is the energy of periodic stripes of width $h^*_\tau$.  
   \item for any $z\in A_0$ or $z\in A_{-1}$ we expect sub-optimal contributions, namely larger than $C^*_\tau$. Thus having $A_0\cup A_{-1}$ is not energetically convenient. Since $A_0 \cup A_{-1}$ separates the different $A_i$, one has that $|A_0\cup A_{-1}|$ acts like a boundary term and compensates the boundary term $|\partial A_i|$ in (i).
\end{enumerate}
We will show that by choosing $\tau$ small but independent of $L$, the contribution of $A_{0}\cup A_{-1}$ balances the contribution due to the presence of $\partial A_{i}$. %term in (ii) contrasts the boundary term in (i).

\vspace{2mm}

Let us now give more specific technical details as a guideline in the reading of the proof. 

% \achtung{The reader may want to skip this part till is more familiar with the notation or read directly the proof}.
In a similar way to the proof of Theorem~1.1 (see~Lemma~\ref{lemma:new_lemma_pre1}), we first show that, once we are in a region $A_i$ with $i>0$, alternations between regions in $E$ and regions in $E^c$ on slices in directions perpendicular to $e_i$ increase necessarily the value of the functional (see Lemma \ref{lemma:stimaContributoVariazionePiccola}). 

%\achtung
{Therefore, we can ignore for regions $A_i$ contributions along $e_j$ for $j\neq i$. }

Thus we are left to bound from below the contributions of the slices of $A_i$s  in direction $e_i$, for all $i> 0$. 

% If the maximal interval in $A_i$ for $i\geq 1$ is $[0,L)$ then the effect of the boundary term disapperars adn the constant $M_0$ as welll. 

Let us consider a slice of $[0,L)^d$ in the direction $e_i$. There are two cases: 
\begin{enumerate}[(i)]
   \item all the slice is contained in $A_i$; 
   \item there are points in the slice belonging to $\partial A_{i}$. 
\end{enumerate}

% \achtung{The energy can be bounded by the contribution of the slices to the energy}  

In the first case, we show that the contribution of the slice to the energy is bigger or equal to $C^{*}_{\tau}L$, which would be the contribution of periodic stripes of period $h^*_{\tau}$. 

In the second case, points in the slice which belong to  the boundary of $A_i$ necessarily belong to $A_0 \cup A_{-1}$ and we prove the following estimates. 
Let $I \subset A_i$ be a maximal interval on the slice in direction $e_i$. 
The optimal contribution of $I$ whenever $\partial I\cap A_0\neq\emptyset$ is bigger than $(C^*_\tau|I| - M_0l)$ where $|I|$ is the length of the interval, $C^*_\tau$ is the optimal energy density for stripes of width $h^*_\tau$ and $M_0$ is a constant not depending on $\tau$ but depending only on the dimension
(see Lemma \ref{lemma:stimaLinea}). 
The constant $M_0$ comes from the nonlocal interactions close to the boundary. 
Analogously, if $I\subset A_i$ satisfies  $\partial I\subset A_{-1}$ one has that the contribution has the better lower bound  $(C^*_\tau|I| - M_0/l)$ (due to the fact  that, close to the boundary and then to $A_{-1}$, one is close also to stripes in a direction different from $i$ and then Lemma \ref{lemma:stimaContributoVariazionePiccola} can be applied).

% \achtung{
%    If $M_0$ were to be zero, since the optimal energy density $C^{*}_{\tau}$ is negative, we would obtain that optimal periodic stripes are indeed optimal among all the sets. 
%    However, if on a slice there exists $i,j\in\{1,\dots,d\}$, $i\neq j$, such that the intersection of $A_i$ and $A_j$ with the slice is non-empty, one has that $\partial A_i\neq\emptyset$ and then $M_0$ is in general not $0$. 
% }

In order to balance the negative term ($-M_0l$ or $-M_0/l$) coming from the presence of the boundary of $I$,  we will use the fact that the adjacent regions in $A_0$ or $A_{-1}$ are ``thick'' enough. 
% we will use the fact that whenever in some slices there are points $x\in A_i$ and $y\in A_j$ for $i\neq j$ and $i,j>0$, then $[x,y]\cap (A_0\cup A_{-1})\neq\emptyset$. 
% First of all, notice that both regions $A_0$ and $A_{-1}$ are ''thick'' enough. 
For $A_0$, this is a consequence of the fact that the map 
\begin{equation*}
E \mapsto \text{``$L^1$-distance of $E$ from the stripes''}.
\end{equation*}
is Lipschitz in $L^1$ (see Remark \ref{rmk:lip} (i)). For $A_{-1}$ this follows from the continuity of the $L^1$ measure w.r.t. translations.

If $\partial I \cap A_0 \neq \emptyset$, choosing  $\tau=\tau(l)$  sufficiently small but finite, the term \eqref{eq:outline1} in such a region will give a large positive contribution $M$ (see the Local Rigidity Lemma \ref{lemma:local_rigidity}), that will compensate the term $-M_0l$.

If $\partial I\cap A_{-1} \neq \emptyset$, the total negative contribution of an interval $I\subset A_i$ and the neighbouring $J\subset A_{-1}$ will be at most of the order of $C^*_{\tau}|I|-\max(M_0,1)|J|/l$, due to the fact that $A_{-1}$ is ``thick'', i.e. $|J|\geq1$.  

In the end, integrating the contribution of the slices and dividing by $L^d$, the energy will be  bounded from below by 
\begin{equation*}
C^*_\tau\sum_{i=1}^d|A_i|/{L^d}-M_0\frac{|A_0\cup A_{-1}|}{lL^d}\geq C^*_{\tau}-C^*_{\tau}\frac{|A_0\cup A_{-1}|}{L^d}-M_0\frac{|A_0\cup A_{-1}|}{lL^d},
\end{equation*}
which, provided $l$ is chosen bigger than some constant depending only on the dimension chosen at the beginning, is greater than $C^*_\tau$ and strictly greater than $C^*_\tau$ if $|A_0\cup A_{-1}|>0$.

Thus, there exists just one $A_i$, $i>0$, with $|A_i|>0$. 
Then, as a consequence of Lemma \ref{lemma:stimaContributoVariazionePiccola}, when one slices $[0,L)^d$ in directions orthogonal to $e_i$, any set that deviates from being exactly a stripe in direction $e_i$ gives a positive contribution (see \eqref{eq:gstr21}) while slicing in direction $e_i$ one has that periodic unions of stripes of period $h^*_\tau$ in direction $e_i$ are optimal. 
Therefore, as in the proof of Theorem \ref{T:main} one gets that the minimizers must be periodic unions of stripes of period $h^*_\tau$ in direction $e_i$.

\subsection{Preliminary Lemmas}

Let $t\in \R^d$. We recall that we denote by $t_{i} = \scalare{t, e_{i}}e_{i}$ and $t ^{\perp}_{i} = t - t_{i}$. We will also denote by $Q^{\perp}_{l}(z_i^\perp)$ the cube of size $l$ centered at $z$ in the subspace which is orthogonal to $e_{i}$. As explained in Section \ref{sec:setting_and_preliminary_results}, with a slight abuse of notation with might identify points $x_i$ with their $e_i$-coordinates in $\R$ and points $x_i^\perp$ with points of $\R^{d-1}$.

\emph{In order to simplify notation, we will use $A\lesssim B$, whenever there exists a constant $\bar{C}_{d}$ depending only on the dimension $d$ such that $A\leq \bar{C}_d B$.}

In the definitions \eqref{eq:ritau}, \eqref{eq:witau} and \eqref{eq:vitau} below, we introduce the different terms which, summed together, give rise to the ``local contribution'' $\bar F_\tau(E, Q_l(z))$ to the energy on a square $Q_l(z)$ (defined in \eqref{eq:fbartau}). Let us see how these terms come out naturally from the lower bound \eqref{gstrF:1} on the functional $\mathcal F_{\tau,L}$.

Let us recall that (see~\eqref{gstrF:1} and \eqref{E:fbelow}) 
\begin{align}
   \label{eq:gstr1}
      \FtL (E) &\geq 
      \frac{1}{L^d}\Big( -\sum_{i=1}^d\per_{1i}(E,[0,L)^d)+\sum_{i=1}^d \Gcal_{\tau,L}^i(E)+\sum_{i=1}^d I_{\tau,L}^i(E)\Big)\notag\\
      &=-\frac{1}{L^d}\sum_{i=1}^{d}\per_{1i}(E,[0,L)^d) + \frac{1}{L^d}\sum_{i=1}^{d} \Big[\int_{[0,L)^d\cap \partial E} \int_{\R^d} |\nu^{E}_{i} (x)| |\zeta_{i} | K_{\tau}(\zeta) \d\zeta \d\mathcal H^{d-1}(x) \notag\\ 
      &- \int_{[0,L)^d } \int_{\R^d} |\chi_{E}(x + \zeta_i)  - \chi_{E}(x) | K_{\tau}(\zeta) \d\zeta\dx \Big] \notag\\ 
               &+ \frac{2}{d} \frac{1}{L^d}\sum_{i=1}^d \int_{[0,L)^d } \int_{\R^d} |\chi_{E}(x + \zeta_{i}) -\chi_{E}(x) | | \chi_{E}(x + \zeta^{\perp}_i) - \chi_{E}(x) | K_{\tau}(\zeta)
                  \d\zeta \dx
\end{align}

and recall that in the above equality holds whenever the set $E$ is a union of stripes.  Thus, proving that optimal stripes are the minimizers of the r.h.s. of \eqref{eq:gstr1} implies that they are the minimizers for $\FtL$.

Now we want to further express the \rhs  of  \eqref{eq:gstr1} as a sum of contributions obtained first by slicing and then considering interactions with neighbouring points on the slice, namely
\begin{equation*}
   \begin{split}
      &-\frac{1}{L^d}\per_{1i}(E,[0,L)^d)+ \frac{1}{L^d}\Big[\int_{[0,L)^d\cap \partial E} \int_{\R^d} |\nu^{E}_{i} (x)| |\zeta_{i} | K_{\tau}(\zeta)\d\zeta\d\mathcal H^{d-1}(x) \\& - \int_{[0,L)^d } \int_{\R^d} |\chi_{E}(x + \zeta_i)  - \chi_{E}(x) | K_{\tau}(\zeta) \d\zeta \dx \Big]
      = \frac{1}{L^d}\int_{[0,L)^{d-1}}  \sum_{s\in  \partial E_{t^{\perp}_{i}}\cap [0,L]} r_{i,\tau}(E,t_{i}^{\perp},s)
      \dt^{\perp}_i, 
   \end{split}
\end{equation*}
where  for $s\in \partial E_{t_{i}^{\perp}}$,
\begin{equation}\label{eq:ritau}
   \begin{split}
      r_{i,\tau}(E, t_{i}^{\perp},s) := -1 + \int_{\R} |\zeta_{i}| \widehat{K}_{\tau}(\zeta_{i})\d\zeta_i &- \int_{s^-}^{s}\int_{0}^{+\infty} |\chi_{E_{t_{i}^{\perp}}}(u + \rho) - \chi_{E_{t_{i}^{\perp}}}(u) | \widehat{K}_{\tau}(\rho)\d\rho \du  \\ & - \int_{s}^{s^+}\int_{-\infty}^{0} |\chi_{E_{t_{i}^{\perp}}}(u + \rho) - \chi_{E_{t_{i}^{\perp}}}(u) |\widehat{K}_{\tau}(\rho) \d\rho \du\\ 
   \end{split}
\end{equation}
and
\begin{equation}
   \label{eq:gstr2}
   \begin{split}
      s^+ &= \inf\{ t' \in \partial E_{t_{i}^{\perp}}, \text{with } t' > s  \} \\ s^- &= \sup\{ t' \in \partial E_{t_{i}^{\perp}}, \text{with } t' < s  \}
   \end{split}
\end{equation}
are as in \eqref{eq:s+s-}.

%To prove the above formula (namely \eqref{eq:ritau}), we initially notice that for a set of locally finite perimeter $E$ one has that for almost every $t^\perp_i$

Indeed, given that $E$ is a set of locally finite perimeter, we can use slicing arguments (see Section~\ref{sec:setting_and_preliminary_results}). Let us slice in the directions $e_i$, $i=1,\ldots,d$. One has that
%for almost every $t^\perp_i$ the slice $E_{t^\perp_i}$ is a set of finite perimeter in $[0,L)$ and thus  up to null sets a finite union of intervals. Hence we have that
\begin{equation*}
   \begin{split}
     &-\frac{1}{L^d}\per_{1i}(E,[0,L)^d)+ \frac{1}{L^d}\Big[\int_{[0,L)^d\cap \partial E} \int_{\R^d} |\nu^{E}_{i} (x)| |\zeta_{i} | K_{\tau}(\zeta)\d\zeta \d\mathcal H^{d-1}(x)\\
    & - \int_{[0,L)^d } \int_{\R^d} |\chi_{E}(x + \zeta_i)  - \chi_{E}(x) | K_{\tau}(\zeta) \d\zeta \dx  \Big]
       =  \frac{1}{L^d}\int_{[0,L)^{d-1}} \Big  ( -\per(E_{t^\perp_i},[0,L)) + \sum_{s\in \partial E_{t^\perp_i}}\int_\R |\rho| \widehat{K}_{\tau}(\rho)\d\rho    \\ 
      &- \int_{0}^{L} \int_{-\infty}^0  |\chi_{E_{t^\perp_i}}(\rho+ u) - \chi_{E_{t^\perp_i}}(u)| \widehat{K}_{\tau} (\rho)\d\rho  \du 
       - \int_{0}^{L} \int_0^{+\infty}  |\chi_{E_{t^\perp_i}}(\rho+ u) - \chi_{E_{t^\perp_i}}(u)| \widehat{K}_{\tau} (\rho)\d\rho  \du  \Big) \dt^\perp_i.
   \end{split}
\end{equation*}

Now, given a measurable  and $L$-periodic function $f$ in $\R$, it is immediate to notice that 
\begin{equation}
  \label{eq:fu}
   \begin{split}
      \int _0^L f(u) \du = \sum_{s\in \partial E_{t^\perp_i}} \int_{s^-} ^s f(u)\du = \sum_{s\in \partial E_{t^\perp_i}} \int_{s} ^{s^+} f(u)\du.
   \end{split}
 \end{equation}

 Therefore we have that

 \begin{equation*}
 \begin{split}
   \int_{0}^{L} \int_{-\infty}^0  |\chi_{E_{t^\perp_i}}(\rho+ u) - \chi_{E_{t^\perp_i}}(u)| \widehat{K}_{\tau} (\rho)\d\rho  \du  =\sum_{s\in \partial E_{t^\perp}} \int_{s} ^{s^+}
   \int_{-\infty}^0   |\chi_{E_{t^\perp_i}}(\rho+ u) - \chi_{E_{t^\perp_i}}(u)| \widehat{K}_{\tau} (\rho)\d\rho  \du 
 \end{split}
 \end{equation*}
 and analogously
 \begin{equation*}
 \begin{split}
   \int_{0}^{L} \int_0^{+\infty}  |\chi_{E_{t^\perp_i}}(\rho+ u) - \chi_{E_{t^\perp_i}}(u)| \widehat{K}_{\tau} (\rho)\d\rho  \du  =\sum_{s\in \partial E_{t^\perp}} \int_{s^-} ^{s}
   \int_0^{+\infty}   |\chi_{E_{t^\perp_i}}(\rho+ u) - \chi_{E_{t^\perp_i}}(u)| \widehat{K}_{\tau} (\rho)\d\rho  \du.
 \end{split}
 \end{equation*}

For notational reasons it is convenient to introduce  the one-dimensional analogue of \eqref{eq:ritau}. Namely, let $E\subset \R$ be a set of locally finite perimeter and let $s^-, s,s^+\in \partial E$.   We define
\begin{equation}\label{eq:rtau1D}
   \begin{split}
      r_{\tau}(E,s) := -1 & + \int_\R |\rho| \widehat{K}_{\tau}(\rho)\d\rho  -  \int_{s^-}^{s} \int_0^{+\infty}  |\chi_{E}(\rho+ u) - \chi_{E}(u)| \widehat{K}_{\tau} (\rho)\d\rho  \du \\ & - \int_{s}^{s^+} \int_{-\infty}^0  |\chi_{E}(\rho+ u) - \chi_{E}(u)| \widehat{K}_{\tau} (\rho)\d\rho  \du. 
   \end{split}
\end{equation}

The quantities defined in \eqref{eq:ritau} and \eqref{eq:rtau1D} are related via $r_{i,\tau}(E,t^\perp_i,s) = r_{\tau}(E_{t^\perp_{i}},s)$.

The next Remark is the analogue of \eqref{E:boundg}, where now instead of considering all the contributions from the points in $\partial E_{t_i^\perp}$, we restrict to the points neighbouring $s$.

\begin{remark}
   \label{rmk:stimax1}
   There exists $\eta_0 > 0$ and $\tau_{0} > 0$ such that, for $E\subset \R^d$, $s^-,s,s^+\in \partial E_{t_{i}^{\perp}}$ three consecutive points, whenever $\tau< \tau_0$ and  $\min(|s - s^- |,|s^+ -s |) <\eta_0$, then $r_{i,\tau}(E,t_{i}^{\perp},s) > 0$. 
   
   Indeed, since
   \begin{equation*} %---{{{
      \begin{split}
        \forall\,\rho\in(0,+\infty),\quad\text{it holds:}\quad \int_{s^-}^{s}  |\chi_{E_{t^\perp_i}}(u + \rho) -\chi_{E_{t^\perp_i}}(u) | \du \leq \min(\rho,|s - s^-|)\\
         \forall\,\rho\in(-\infty,0),\quad\text{it holds:}\quad \int_{s}^{s^+}  |\chi_{E_{t^\perp_i}}(u + \rho) -\chi_{E_{t^\perp_i}}(u) | \du \leq \min(-\rho,|s - s^+|),
      \end{split}
   \end{equation*} %---}}}
  (see \eqref{toprovebandesim}), thus 
   \begin{equation*} %---{{{
      \begin{split}
         r_{i,\tau}(E,t_{i}^{\perp},s) & \geq -1 + \int_{0} ^{+\infty} \big(\rho - \min(\rho,|s - s^-|) \big) \widehat{K}_{\tau}(\rho) \d\rho +  \int_{-\infty} ^{0} \big(-\rho - \min(-\rho,|s - s^+|)\big) \widehat{K}_{\tau}(\rho) \d\rho. 
          \\ & \geq -1 + \int_{2|s - s^-|} ^{+\infty} \rho \widehat{K}_{\tau}(\rho) \d\rho + \int_{-\infty}^{-2|s - s^+|} -\rho \widehat{K}_{\tau}(\rho) \d\rho. 
      \end{split}
   \end{equation*} %---}}}
   From the above formula the claim follows directly. 
   Moreover, by using the elementary inequality 
   \begin{equation*}
      \begin{split}
        \min\Big(\int_{-\infty}^{-2 \alpha } -\rho \widehat{K}_{\tau}(\rho) \d\rho,\int_{2\alpha}^{+\infty} \rho\widehat K_\tau(\rho)\d\rho\Big) \gtrsim \min (\alpha^{-\beta}, \tau^{-1}),
      \end{split}
   \end{equation*}
   one can further estimate the above as
   \begin{equation}
      \label{eq:stimamax1_eq}
      \begin{split}
         r_{i,\tau}(E,t^{\perp}_{i},s) \geq -1 + C\min(|s-s^+ |^{-\beta},\tau^{-1}) + C\min(|s-s^-|^{-\beta} , \tau^{-1})
      \end{split}
   \end{equation}
   where $C$ is a constant depending on the kernel and $\beta = p -d -1$. 
   The above is the ``local'' analogue of \eqref{E:boundg}.

\end{remark}

% Recall that $t=t_i^\perp+t_ie_i$.
Recalling the notation in \eqref{eq:fE}
\begin{equation} %---{{{
   \label{eq:defFE}
   \begin{split}
      f_{E}(t_{i}^{\perp},t_{i},\zeta_{i}^{\perp},\zeta_{i}) =  |\chi_{E}(t_i+t_i^\perp + \zeta_{i} )  - \chi_{E}(t_i+t_i^\perp)| |\chi_{E}(t_i+t_i^\perp + \zeta^{\perp}_{i}) - \chi_{E}(t_i+t_i^\perp) |,
   \end{split}
\end{equation} %---}}}
we can also rewrite the last term on the r.h.s.  of  \eqref{eq:gstr1} as  
\begin{align}
  \label{eq:decomp_double_prod}
      \frac{2}{d}\frac{1}{L^d} \int_{[0,L)^d } \int_{\R^d} f_{E}(t_{i}^{\perp},t_{i},\zeta_{i}^{\perp},\zeta_{i}) K_{\tau}(\zeta)  \d\zeta \dt &=\frac{1}{L^d}  \int_{[0,L)^{d-1}} \sum_{s\in \partial E_{t_{i}^{\perp}}\cap [0,L]} v_{i,\tau}(E,t_{i}^{\perp},s)\dt_{i}^\perp \notag\\
      &+ \frac{1}{L^d}\int_{[0,L)^d} w_{i,\tau}(E,t_i^\perp,t_i) \dt
\end{align}

where
\begin{equation}\label{eq:witau}
      {w}_{i,\tau}(E,t_{i}^{\perp},t_{i}) = \frac{1}{d}\int_{\R^d}  
      f_{E}(t_{i}^{\perp},t_{i},\zeta_{i}^{\perp},\zeta_{i}) K_{\tau}(\zeta)  \d\zeta. 
\end{equation}
and
\begin{equation}\label{eq:vitau}
   v_{i,\tau}(E,t_{i}^{\perp},s) =  \frac{1}{2d}\int_{s^{-}}^{s^{+}} \int_{\R^{d}} f_{E}(t^{\perp}_{i},u,\zeta^{\perp}_{i},\zeta_{i}) K_{\tau}(\zeta) \d\zeta\du
\end{equation}
and $s^+,s^-$ as in \eqref{eq:gstr2}.

Notice that $w_{i,\tau}$ is closely related to $I^i_{\tau,L}$ in \eqref{eq:I}. Indeed, $w_{i,\tau}$ can be seen as a localization or density of $I^i_{\tau,L}$.  More precisely,
\begin{equation*}
   \begin{split}
      I^{i}_{\tau,L}(E) = 2 \int_{[0,L)^d} w_{i,\tau}(E,t_i^\perp,t_i)\dt_i^\perp\dt_i. 
   \end{split}
\end{equation*}

Let us now show the decomposition claimed in \eqref{eq:decomp_double_prod}. By using \eqref{eq:fu}, one  has that
  \begin{equation*}
    \begin{split}
      \sum_{s \in \partial E_{t^\perp_i}}& v_{i,\tau}(E,t^\perp_i ,s ) = 
        \frac{1}{2d} \sum_{s \in \partial E_{t^\perp_i}}\int_{s^{-}}^{s^{+}} \int_{\R^{d}} f_{E}(t^{\perp}_{i},u,\zeta^{\perp}_{i},\zeta_{i}) K_{\tau}(\zeta) \d\zeta\du
      \\ =& 
        \frac{1}{2d}\sum_{s \in \partial E_{t^\perp_i}}\int_{s}^{s^{+}} \int_{\R^{d}} f_{E}(t^{\perp}_{i},u,\zeta^{\perp}_{i},\zeta_{i}) K_{\tau}(\zeta) \d\zeta\du
        +\frac{1}{2d} \sum_{s \in \partial E_{t^\perp_i}}  \int_{s^{-}}^{s} \int_{\R^{d}} f_{E}(t^{\perp}_{i},u,\zeta^{\perp}_{i},\zeta_{i}) K_{\tau}(\zeta) \d\zeta\du \\ 
        & = \frac{1}{d}\int_{0}^{L} \int_{\R^{d}} f_{E}(t^{\perp}_{i},u,\zeta^{\perp}_{i},\zeta_{i}) K_{\tau}(\zeta) \d\zeta\du 
    \end{split}
  \end{equation*}
  Finally, integrating over $t^\perp_i$ one has that

  \begin{equation}
    \label{eq:decomp_double_prod_1}
    \begin{split}
      \frac{1}{L^d}\int_{[0,L)^{d-1}}\sum_{s \in \partial E_{t^\perp_i}}& v_{i,\tau}(E,t^\perp_i ,s ) \dt^\perp_i = \frac{1}{d} \frac{1}{L^d} \int_{[0,L)^d} \int_{\R^d} f_E(t^\perp_i,t_i , \zeta^\perp_i, \zeta_i) \d\zeta \dt.
    \end{split} 
  \end{equation}
  
  To conclude the proof of the decomposition claimed in \eqref{eq:decomp_double_prod}, it is sufficient to combine \eqref{eq:decomp_double_prod_1} and the definition of $w_{i,\tau}$ (see~\eqref{eq:witau}).

The intuition of the role of the  terms $r_{i,\tau}$ and $v_{i,\tau}$ is the following. The term $r_{i,\tau}$ first penalizes oscillations with high frequency  in direction $e_i$, namely sets whose slices in direction $i$ have boundary points at small minimal distance. Indeed, fix $t^{\perp}_{i}$ and consider $s\mapsto \chi_{E_{t^{\perp}_{i}}}(s)$. If this function oscillates with high frequency, there exist $s,s^+\in \partial E_{t^\perp_i}$ such that $|s- s^+|$ is small. Hence by \eqref{eq:stimamax1_eq} the contribution of $r_{i,\tau}$ will be positive and large.

The term $v_{i,\tau}$  penalizes oscillations in direction $e_i$ whenever the neighbourhood of the point $(t_{i}^{\perp}+se_i)$ is close in $L^1$ to a stripe oriented along $e_j$. 
This last statement will be made precise in Lemma~\ref{lemma:stimaContributoVariazionePiccola}.

Finally, for every $Q_{l}(z)$, define
 \begin{equation} %---{{{
 \label{eq:fbartau}
   \begin{split}
      \bar{F}_{i,\tau}(E,Q_{l}(z)) &:= \frac{1}{l^d  }\Big[\int_{Q^{\perp}_{l}(z_{i}^{\perp})} \sum_{\substack{s \in \partial E_{t_{i}^{\perp}}\\ t_{i}^{\perp}+se_i\in Q_{l}(z)}} (v_{i,\tau}(E,t_{i}^{\perp},s)+ r_{i,\tau}(E,t_{i}^{\perp},s)) \dt_{i}^{\perp} + \int_{Q_{l}(z)} {w_{i,\tau}(E,t_{i}^{\perp}, t_i) \dt}\Big],\\
      \bar{F}_{\tau}(E,Q_{l}(z)) &:= \sum_{i=1}^d\bar F_{i,\tau}(E,Q_{l}(z)).
   \end{split}
\end{equation} %---}}}

The above consists in the ``local contribution'' to the energy in a cube $Q_{l}(z)$ mentioned in the outline. 
More precisely, we will write the \rhs of \eqref{eq:gstr1} in terms of $\bar{F}_{\tau}(E,Q_{l}(\cdot))$ via an averaging process.
In order to do this we will need the following lemma. 

\begin{lemma} %---{{{
   \label{lemma:bohx1}
   \label{lemma:average}
   Let $\mu$ be a $[0,L)^d$-periodic locally finite measure, namely a measure invariant under translations in  $L\Z^d$.  Then  one has that
   \begin{equation} %---{{{
      \label{eq:gstr18}
      \begin{split}
         \int_{[0,L)^d}\d\mu(x) = \frac{1}{l^d} \int_{[0,L)^d} \int_{Q_{l}(z)} \d \mu(x) \dz.
      \end{split}
   \end{equation} %---}}}

\end{lemma}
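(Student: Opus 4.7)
The plan is a routine Fubini-plus-periodicity computation; the only work is setting up the folding carefully. First I would swap the order of integration on the right-hand side of \eqref{eq:gstr18} via Fubini--Tonelli. Using the symmetry $\chi_{Q_l(z)}(x) = \chi_{Q_l(x)}(z)$ (both express $|x-z|_\infty \le l/2$, in the convention consistent with $|Q_l| = l^d$), this gives
\[
\int_{[0,L)^d}\int_{Q_l(z)} \d\mu(x)\,\dz \;=\; \int_{\R^d} \lt(\int_{[0,L)^d} \chi_{Q_l(x)}(z)\,\dz\rt) \d\mu(x) \;=\; \int_{\R^d} |Q_l(x)\cap [0,L)^d|\,\d\mu(x).
\]
Note that the outer integral runs over all of $\R^d$, because for $z$ near $\partial[0,L)^d$ the cube $Q_l(z)$ can reach outside the fundamental domain, so its characteristic function is nonzero for $x$ in neighbouring copies of $[0,L)^d$.

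Next I would fold the $\R^d$-integral back onto $[0,L)^d$ using the $L\Z^d$-periodicity of $\mu$. Since $\R^d = \bigsqcup_{k\in\Z^d}([0,L)^d + Lk)$ up to Lebesgue-null sets, for any nonnegative measurable $f$ the change of variables $x = y+Lk$ combined with the invariance $\mu(A+Lk)=\mu(A)$ yields
\[
\int_{\R^d} f(x)\,\d\mu(x) \;=\; \sum_{k\in\Z^d}\int_{[0,L)^d+Lk} f(x)\,\d\mu(x) \;=\; \int_{[0,L)^d}\sum_{k\in\Z^d} f(y+Lk)\,\d\mu(y).
\]
Applying this to $f(x) = |Q_l(x)\cap [0,L)^d|$ and using $Q_l(y+Lk) = Q_l(y)+Lk$ reduces the whole computation to the tiling identity
\[
\sum_{k\in\Z^d} |(Q_l(y)+Lk)\cap [0,L)^d| \;=\; l^d \qquad \text{for every } y \in [0,L)^d.
\]

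This last identity is the only elementary step left. Translating each summand by $-Lk$, one has $|(Q_l(y)+Lk)\cap [0,L)^d| = |Q_l(y)\cap([0,L)^d - Lk)|$, and since $\{[0,L)^d - Lk\}_{k\in\Z^d}$ forms a partition of $\R^d$ up to null sets, the pieces $Q_l(y)\cap([0,L)^d-Lk)$ reassemble to $Q_l(y)$. Hence the sum equals $|Q_l(y)| = l^d$, and substituting back gives exactly $l^d\int_{[0,L)^d} \d\mu(y)$, which proves \eqref{eq:gstr18}. There is no real obstacle here: local finiteness of $\mu$ together with boundedness of $Q_l(y)\cap [0,L)^d$ guarantee absolute convergence of all integrals and series, and the whole argument is pure measure-theoretic bookkeeping.
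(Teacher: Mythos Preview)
Your proof is correct and follows the same approach as the paper: Fubini plus the symmetry $\chi_{Q_l(z)}(x)=\chi_{Q_l(x)}(z)$, followed by the observation that the $z$-integral of $\chi_{Q_l(x)}(z)$ yields $l^d$. The paper compresses the periodicity/folding step into a single line (implicitly identifying $[0,L)^d$ with the torus so that the domain swap after Fubini is immediate), whereas you spell out the unfolding over $\R^d$ and the tiling identity explicitly; this is the same argument, just written more carefully.
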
 %---}}}

\begin{proof} %---{{{

   The proof of~\eqref{eq:gstr18} is done by changing order of integration (namely Fubini): first integrating in $z$. 

   Indeed,

   \begin{equation*} %---{{{
      \begin{split}
         \int_{[0,L)^d}  \int_{Q_{l}(z)} \d\mu(x)\dz &= \int_{[0,L)^d} \int_{\R^d} \chi_{Q_{l}(z)}(x)\d\mu(x) \dz = \int_{[0,L)^d} \int_{\R^d}\chi_{Q_{l}(x)} (z )  \dz \d\mu(x) \\ &= l^{d} \int_{[0,L)^d}\d\mu(x).
      \end{split}
   \end{equation*} %---}}}

\end{proof} %---}}}

   By Lemma~\ref{lemma:bohx1}, we have that the \rhs  of \eqref{eq:gstr1} is equal to
   \begin{equation}
      \label{eq:gstr15}
      \frac{1}{L^d}\int_{[0,L)^d}\bar{F}_{\tau}(E,Q_{l}(z))\dz. 
   \end{equation}
   Indeed, since $E$ is $[0,L)^d$-periodic we can see that the \rhs of \eqref{eq:gstr1} as an integration with respect to a $[0,L)^d$-periodic measure. 
   % \achtung{scrivere qualcosa in piu?}
   This implies that 
   \begin{equation}
      \label{eq:gstr14}
      \begin{split}
         \Fcal_{\tau,L}(E) \geq \frac{1}{L^d} \int_{[0,L)^d}  
         \bar{F}_{\tau}(E,Q_{l}(z)) \dz. 
      \end{split}
   \end{equation}
   Given that, in the above inequality, equality holds for stripes, if we show that the minimizers of \eqref{eq:gstr15} are periodic optimal stripes, then the same claim holds for $\Fcal_{\tau,L}$.

We will say that a set which is a union of stripes, $S$, is oriented along  the direction $e_i$, if $S$ is invariant with respect to every translation orthogonal to $e_i$. 

In the next definition we define a quantity which measures the distance of a set from being a union of stripes. 
Such a quantity being small means for us to be ($L^1$-)``close'' to stripes in a given cube.
% Such quantitiy being small corresponds in our language to be ``close'' to stripes in a given cube. 

\begin{definition}
   \label{def:defDEta}
   For every $\eta$ we denote by $\Acal^{i}_{\eta}$ the family of all sets $F$ such that  
   \begin{enumerate}[(i)]
      \item they are union of stripes oriented along the direction $e_i$ 
      \item their connected components of the boundary are distant at least $\eta$. 
   \end{enumerate}
   We denote by 
   \begin{equation} %---{{{
      \label{eq:defDEta}
      \begin{split}
         D^{i}_{\eta}(E,Q) := \inf\Big\{ \frac{1}{\vol(Q)} \int_{Q} |\chi_{E} -\chi_{F}|:\ F\in \Acal^{i}_{\eta} \Big\} \quad\text{and}\quad D_{\eta}(E,Q) = \inf_{i} D^{i}_{\eta}(E,Q).
      \end{split}
   \end{equation} %---}}}
   Finally, we let $\mathcal A_\eta:=\cup_{i}\mathcal A^i_{\eta}$.
\end{definition}

Let $S$ be union of stripes, namely $S =\hat{S}\times \R^{d-1}$, with $\hat{S} = \bigcup_{i\in\Z} (\alpha_i,\beta_{i})$.  Then condition  (ii) in Definition~\ref{def:defDEta}, says that $\inf_{i,j} |\alpha_i -\beta_j| > \eta$. This corresponds to the minimal distance between  the connected components of the boundaries of the stripes in $S$.

In the following  remark, we first notice that the local distance \eqref{eq:defDEta} from a family of stripes in a certain direction is a Lipschitz function w.r.t. the centre of the cube we consider. In particular, what we need in the proof of Theorem~\ref{T:1.3} is that, if at a point we are far from being stripes, then in a neighbourhood of it we also have approximately the same distance (``thickness'' of the set $A_0$ mentioned in the outline and that will be defined in \eqref{a0}).

Moreover, in point (ii) of the next remark we notice that, if we are sufficiently close to stripes in different directions, then either $|E\cap Q_l(z) | \ll l^d$ or $|E^c \cap Q_l(z)| \ll l^d $ (property of the set $A_{-1}$ mentioned in the Outline and that will be defined in \eqref{a1}).

\begin{remark}
   \label{rmk:lip} \ 
   \begin{enumerate}[(i)]
      \item Let $E, F \subset \R^d$.  Then the map $z\mapsto D_{\eta}(E,Q_{l}(z))$ is Lipschitz, with Lipschitz constant $C_d/l$, where $C_d$ is a constant depending only on the dimension $d$. 
         In order to see this for fixed $F$ consider the map
         \begin{equation*}
            \begin{split}
               T_F: z\mapsto \frac{1}{l^d}\int_{Q_{l}(z)} |\chi_{E}(x) - \chi_{F}(x) | \dx. 
            \end{split}
         \end{equation*}
         Then 
         \begin{equation*}
            \begin{split}
               T_F(z') = \frac{1}{l^d}\int_{Q_l(z')} |\chi_{E}(x)- \chi_{F}(x) |\dx &  \leq \frac{1}{l^d}\int_{Q_{l}(z)}  |\chi_{E}(x) - \chi_F(x) |\dx + \frac{1}{l^d} |Q_{l}(z)\Delta Q_{l}(z')|
               \\ &\leq T_{F}(z) +   \frac{C_{d}}l|z-z'|,
            \end{split}
         \end{equation*}
         where $C_{d}$ is a constant depending only on the dimension $d$ and $Q_l(z)\triangle Q_l(z')=(Q_l(z)\setminus Q_l(z'))\cup(Q_l(z')\setminus Q_l(z))$. 
         Finally given that $D_{\eta}^i(E,Q_{l}(\cdot))$ and $D_{\eta}(E,Q_{l}(\cdot))$  are the infima of $T_{F}(\cdot)$ for $F\in \Acal^i_\eta$ and $\Acal_\eta$ respectively,  we have that $D_{\eta}^i(E,Q_{l}(\cdot))$  and $D_{\eta}(E,Q_l(\cdot))$  are Lipschitz with Lipschitz constant $C_d/l$. 

         In particular, whenever $D_{\eta}(E,Q_{l}(z)) > \alpha$ and $D_{\eta}(E,Q_{l}(z')) < \beta$,  then $|z - z'|> l(\alpha - \beta)/C_{d}$.

         \item
            For every $\varepsilon$ there exists ${\delta}_0= \delta_0(\varepsilon)$ such that  for every $\delta \leq \delta_0 $ whenever $D^{j}_{\eta}(E,Q_{l}(z))\leq \delta$ and $D^{i}_{\eta}(E,Q_{l}(z))\leq \delta$ with $i\neq j$ for some $\eta>0$,  it holds 
            \begin{equation}
               \label{eq:gsmstr2}
               \begin{split}
                  \min\big(|Q_l(z)\setminus E|, |E \cap Q_l(z)| \big) \leq\varepsilon. 
               \end{split}
            \end{equation}
            The above claim follows easily by contradiction.  Indeed, suppose that  there exist   $\varepsilon> 0$, a sequence of sets in $\{ E_n \}$,  and  sequences $\delta_n\downarrow 0$ and $\eta_n>0$ such that  
            \begin{equation}
               \label{eq:gsmstr6}
               \begin{split}
                  D^{j}_{\eta_n}(E_n,Q_{l}(z))\leq \delta_n\qquad \text{and} \qquad D^{i}_{\eta_n}(E_n,Q_{l}(z))\leq \delta_n \qquad \text{ (with $i\neq j$) }
               \end{split}
            \end{equation}
            
            and  such that $\min\big(|Q_l(z)\setminus E|, |E \cap Q_l(z)| \big) > \varepsilon$. 
            % \begin{equation*}
            %    % \label{eq:gsmstr5}
            %    \begin{split}
            %       \min\big(|Q_l(z)\setminus E|, |E \cap Q_l(z)| \big) > \varepsilon. 
            %    \end{split}
            % \end{equation*}
           W.l.o.g. we assume that $z=0$.
            From \eqref{eq:gsmstr6}, we have that there exist two sets $S^i_n$  and $S^j_n$ such that the distance of $E_n$ is $\delta_n$-close in $L^1$ to $S^i_n$ and $S^j_n$. Thus
            \begin{equation}
               \label{eq:gsmstr3}
               \begin{split}
                  \int_{Q_l(0)} |\chi_{S^i_n}(x)  -\chi_{S^i_n}(x) | \dx \leq 2\delta_n. 
               \end{split}
            \end{equation}
            It is not difficult to see that \eqref{eq:gsmstr3} holds if and only if both $S^i_n \cap Q_l(0)$ and $S^j_n\cap Q_l(0)$ are $L^1$-close (depending on $\delta_n$) either to $Q_l(0)$ or to  $\emptyset$. 
            Indeed, without lost of generality we can assume that $d=2$, thus $S^1_n = F^1_n \times \R $ and $S^2_n = \R \times F^2_n$ where $F^1_n,F^2_n\subset \R$. 

            Rewriting \eqref{eq:gsmstr3} and using Fubini, we have that
            \begin{equation*}
               \begin{split}
                  \int_{Q_l(0)} |\chi_{S^i_n}(x)  -\chi_{S^i_n}(x) |\dx    = 
                  \int_{-l/2}^{l/2}\int_{-l/2}^{l/2} | \chi_{F^1_n}(x_1) - \chi_{F^2_n}(x_2) |    \dx_1 \dx_2
                  \leq 2\delta_n. 
               \end{split}
            \end{equation*}
            % \achtung{
               Noticing  that $\chi_{F^2_n}(x_2)$  does not depend on $x_1$ and that $\chi_{F^2_n}(x_2)\in \{ 0,1\}$, we immediately deduce that $\chi_{F_1}\cap (-l/2,l/2)$  is close (depending on $\delta_n$) in $L^1((-l/2,l/2))$ to either $( -l/2, l/2 )$  or to $\emptyset$, which in turn implies that $E_n$ is close in $L^1(Q_l(0))$ to $Q_l(0)$ or to $\emptyset$.  
            % }
            Notice that the above reasoning does not depend on $\eta$, since the only thing used is that $\chi_{S^i_\varepsilon}$ and $\chi_{S^i_\varepsilon}$ are invariant with respect  to two different directions and take values in $\{ 0,1\}$. 

   \end{enumerate}
\end{remark}

The following lemma is a technical lemma that is used in  the Local Rigidity Lemma (Lemma~\ref{lemma:local_rigidity}).

In particular, it says that if a family of sets $E_\tau$ of locally finite perimeter in $\R$ converges in $L^1$ to a set $E_0$ of locally finite perimeter and the local contributions given by $r_\tau(E_\tau,s)$ defined in \eqref{eq:rtau1D} (which for slices $E_{\tau,t_i^\perp}$ of $E_\tau\subset\R^d$ coincides with $r_{i,\tau}(E,t_i^\perp,s)$ in \eqref{eq:ritau}) are uniformly bounded, then 
%also $r_0(E_0,s)$ is bounded, namely 
   \eqref{eq:gstr5} holds. This is one of the preliminary steps used in Lemma \ref{lemma:local_rigidity} to show that $E_0$ is a union of stripes.  

\begin{lemma}
   \label{lemma:technicalBeforeLocalRigidity}
   Let $E_{0}, \{E_{\tau}\}\subset \R$  be a family of sets of locally finite perimeter and $I\subset \R$ be an open interval.   Moreover, assume that $E_{\tau}\to E_0$ in $L^1(I)$.  
   If we denote by $\{k^{0}_{1},\ldots,k^{0}_{m}\} = \partial E_{0}\cap I $, then
   \begin{equation}
      \label{eq:gstr5}
      \liminf_{\tau\downarrow 0}\sum_{\substack{s\in \partial E_{\tau}\\ s\in I}}r_{\tau}(E_{\tau},s) \geq \sum_{i=1}^{m}(-1 + C|k^0_{i} - k^0_{i+1} |^{-\beta}),
   \end{equation}
   where $r_{\tau}$ is defined in \eqref{eq:rtau1D}.%, namely
\end{lemma}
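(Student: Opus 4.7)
The plan is to combine the pointwise lower bound on $r_\tau$ from Remark~\ref{rmk:stimax1} with a concentration argument showing that $\partial E_\tau \cap I$ must cluster near $\partial E_0 \cap I$ as $\tau \downarrow 0$. First, I would fix $\delta > 0$ smaller than half of $\min_i |k_i^0 - k_{i+1}^0|$ and smaller than $\dist(\{k_1^0,\dots,k_m^0\}, \partial I)$. The $L^1(I)$ convergence $\chi_{E_\tau} \to \chi_{E_0}$ then forces, for $\tau < \tau_0(\delta)$, the interval $(k_i^0 - \delta, k_i^0 + \delta)$ to meet $\partial E_\tau$ for every $i \in \{1, \dots, m\}$: otherwise $\chi_{E_\tau}$ would be constant there while $\chi_{E_0}$ changes value at $k_i^0$, producing an $L^1$-discrepancy uniformly bounded below by a constant depending only on $\delta$, contradicting the convergence.

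Next I would select a ``primary'' boundary point $s_{i,\tau} \in \partial E_\tau \cap (k_i^0 - \delta, k_i^0 + \delta)$ for each $i$. Since the $s_{i,\tau}$ lie in disjoint intervals, the successor/predecessor satisfy $s_{i,\tau}^+ \leq s_{i+1,\tau}$ and $s_{i,\tau}^- \geq s_{i-1,\tau}$, so
\begin{equation*}
s_{i,\tau}^+ - s_{i,\tau} \leq |k_{i+1}^0 - k_i^0| + 2\delta, \qquad s_{i,\tau} - s_{i,\tau}^- \leq |k_i^0 - k_{i-1}^0| + 2\delta.
\end{equation*}
Applying \eqref{eq:stimamax1_eq} from Remark~\ref{rmk:stimax1} to each primary point and sending $\tau \downarrow 0$ and then $\delta \downarrow 0$, the contribution of the primaries alone satisfies the lower bound required in \eqref{eq:gstr5} (in fact a two-sided strengthening).

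It remains to show that ``extra'' boundary points, that is, those of $\partial E_\tau \cap I$ not selected above, contribute nonnegatively in the liminf. Such extras necessarily come in pairs inside the components of $I \setminus \bigcup_i (k_i^0 - \delta, k_i^0 + \delta)$ and I would perform a case analysis on the minimal separation $\sigma_\tau$ between two consecutive extras. If $\sigma_\tau \to 0$ along a subsequence, then $\min(\sigma_\tau^{-\beta}, \tau^{-1}) \to +\infty$ and by \eqref{eq:stimamax1_eq} the positive contribution from the pair absorbs the $-2$ cost and diverges to $+\infty$. If $\sigma_\tau$ stays bounded below by some $\sigma_0 > 0$, then the two extras enclose (or exclude) a full interval of length $\geq \sigma_0$ where $\chi_{E_\tau} \neq \chi_{E_0}$, contradicting $L^1$ convergence on $I$ for $\tau$ small. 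Iterating this peeling argument leaves at most the $m$ primaries, yielding \eqref{eq:gstr5}.

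The main obstacle is the bookkeeping in the last step: one must simultaneously control several extra pairs and configurations where an extra sits adjacent to a primary (so that the ``gap'' $s_{i,\tau}^\pm - s_{i,\tau}$ is much smaller than $|k_i^0 - k_{i\pm1}^0|$ and the two terms might partly cancel). The clean way around this is to always remove extras by pairs starting from the ones with smallest separation, noting that each such removal either strictly increases the lower bound (if $\sigma_\tau \to 0$) or must be ruled out in the limit (by $L^1$ convergence), so the final liminf is controlled by the primary contribution already estimated in the second step.
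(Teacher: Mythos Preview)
Your approach uses the same two ingredients as the paper---the pointwise bound \eqref{eq:stimamax1_eq} and $L^1(I)$ convergence---but the paper organizes the argument more efficiently. Rather than selecting primaries and peeling extras, it makes a single global dichotomy on $\eta_\tau := \min_j |k^\tau_{j+1} - k^\tau_j|$ taken over all of $\partial E_\tau \cap I$: if $\eta_\tau$ drops below a fixed threshold $\eta$, one term in the sum already dominates the entire right-hand side by \eqref{eq:stimamax1_eq}; if instead $\eta_\tau \geq \eta$ along a subsequence, the uniform gap bound upgrades $L^1$ convergence to convergence of boundaries (so $m^\tau = m$ and $k^\tau_j \to k^0_j$), and the estimate follows by direct substitution into \eqref{eq:stimamax1_eq}. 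This bypasses entirely the bookkeeping you flag as the main obstacle.

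Two points in your write-up are loose as stated. The claim that extras ``necessarily come in pairs'' inside the components of $I \setminus \bigcup_i (k_i^0 - \delta, k_i^0 + \delta)$ is not justified: parity of $\#(\partial E_\tau \cap J)$ is not forced a priori, and extras may also lie inside the $\delta$-neighborhoods themselves. And in the case $\sigma_\tau \geq \sigma_0$, two consecutive extras need not enclose an interval where $\chi_{E_\tau} \neq \chi_{E_0}$ (the discrepancy may sit outside them). Both issues dissolve once you observe that a uniform lower bound on \emph{all} gaps in $\partial E_\tau \cap I$, combined with $L^1$ convergence, already forces Hausdorff convergence of the boundaries---which is precisely the paper's second case, reached without the peeling.
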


\begin{proof}

   Let us denote by $\{k^\tau_{1},\ldots,k^\tau_{m^{\tau}}\} = \partial E_{\tau}\cap I$. 
   We will also denote by
   \begin{equation*}
      k^{\tau}_0 = \sup\{ s\in \partial E_\tau: s < k^\tau_1\} \qquad\text{and}\qquad 
      k^\tau_{m^\tau+1} = \inf\{ s\in \partial E_\tau: s > k^\tau_{m^\tau}\} .
   \end{equation*}
   Denote by $A$ the \rhs of \eqref{eq:gstr5}. 
   From \eqref{eq:stimamax1_eq}, one has that $r_{\tau}(E_\tau,k^{\tau}_{i}) \geq -1  + C \max(|k^{\tau}_{i} - k^{\tau}_{i+1} |^{-\beta},\tau^{-1})$. 
   Thus, there exist $\eta$ and $\bar{\tau}> 0$ such that for every $\tau <\bar{\tau}$, whenever 
   \begin{equation*}
      \begin{split}
         \min_{i\in \{0,\ldots,{m^\tau}\}}|k^{\tau}_{i+1}- k^{\tau}_{i}| < \eta
      \end{split}
   \end{equation*}
   then
   \begin{equation*}
       \sum_{\substack{s\in \partial E_{\tau}\\ s\in I}}r_{\tau}(E_{\tau},s) \gtrsim A. 
   \end{equation*}
   Hence, assume there exists a subsequence $\tau_{k}$ such that $|k^{\tau_k}_{i+1}- k^{\tau_{k}}_{i}| > \eta$ for all $i\leq m^{\tau_k}$. 
   Up to  relabeling, let us assume that it holds true  for the whole sequence of $E_\tau$.  
   % This property together with the $L^1$ convergence of $E_\tau \to E_0$ shows that  
   % $k^\tau_i \to k^0_i$ \  as \  $\tau\downarrow 0$. 

   Since $\min_{i} | k^{\tau}_{i+1} - k^\tau_{i} | > \eta$ the convergence $E_{\tau}\to E_{0}$ in  $L^1(I)$ can be upgraded to the convergence of the boundaries, namely one has that there exists a $\bar{\tau}$ such that for $\tau<\bar{\tau}$, it holds $\#(\partial E_{\tau}\cap I) = \#(\partial E_{0}\cap I)$ and $k^{\tau}_{i} \to k^0_{i}$. 

   Then because of the convergence of the boundaries, we have that 
   \begin{equation}
      \label{eq:gstr13}
      \begin{split}
         \liminf_{\tau\downarrow 0}\sum_{\substack{s\in \partial E_{\tau}\\ s\in I}}r_{\tau}(E_{\tau},s) &\geq \liminf_{\tau\downarrow 0}\sum_{j=1}^{m} \big( - 1 + C\max(|k^{\tau}_{i} - k^\tau_{i+1}|^{-\beta},\tau^{-1}) \big)
         \\ & \geq\sum_{j=1}^{m} \big( - 1 + C|k^{0}_{i} - k^0_{i+1}|^{-\beta}\big).
      \end{split}
   \end{equation}

\end{proof}

The following lemma contains the local version of {Theorem~\ref{T:gammaconvNew}}, namely the rigidity estimate mentioned in the outline. 

Its content can be summarized as follows.  
Given a sequence of sets $E_\tau\subset\R^d$ of bounded local energy, by Remark \ref{rmk:stimax1} their boundary points on the slices are not too close and then they converge to a set of locally finite perimeter $E_0$. 
Then, using the lower semicontinuity result of Lemma \ref{lemma:technicalBeforeLocalRigidity} and the monotonicity in $\tau$ of the kernel, one gets as $\tau\to0$ a bound similar to $\sum_{i=1}^d\mathcal G^i_{0,L}(E)+I_{0,L}(E)<+\infty$, but with $\mathcal G^i_{0,L}$ and $I_{0,L}$ substituted by their local counterparts ($r_{i,\tau}$ and $w_{i,\tau}$). 
Then, applying Proposition \ref{prop:rigidity}, which has been already proved without using any periodicity assumption on $E$ (see~Remark~\ref{rmk:PeriodicitaNonServe}), one has that the set $E_0$ has to be a union of stripes. 
Therefore, for $\tau>0$ sufficiently small but depending only on $l$, the sets $E_\tau$ will be close to $E_0$ in the sense of Definition \ref{def:defDEta}.

\begin{lemma}[Local Rigidity] %---{{{
   \label{lemma:Stima2}
   \label{lemma:local_rigidity}
    For every $M > 1,l,\delta > 0$, there exist $\bar{\tau},\bar{\eta} >0$ %($\bar{\tau}$ and $\bar{\eta}$ independent of $L$)
     such that whenever $\tau< \bar{\tau}$  and $\bar F_{\tau}(E,Q_{l}(z)) < M$ for some $z\in [0,L)^d$ and $E\subset\R^d$ $[0,L)^d$-periodic, with $L>l$, then it holds $D_{\eta}(E,Q_{l}(z))\leq\delta$ for every $\eta < \bar{\eta}$. Moreover $\bar{\eta}$ can be chosen independent on $\delta$.  Notice that $\bar{\tau}$ and $\bar{\eta}$ are independent of $L$.
\end{lemma}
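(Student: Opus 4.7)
The plan is to argue by contradiction using a compactness/rigidity scheme, localizing the global argument of Section~\ref{sec:d+2}. Assume the statement fails: there exist $M,l,\delta>0$ and sequences $\tau_n\downarrow 0$, $L_n>l$, $z_n\in[0,L_n)^d$, $[0,L_n)^d$-periodic sets $E_n$, and $\eta_n\downarrow0$ with $\bar F_{\tau_n}(E_n,Q_l(z_n))<M$ but $D_{\eta_n}(E_n,Q_l(z_n))>\delta$. By translation we may assume $z_n=0$. The bound on $\bar F_{\tau_n}$ forces, via Remark~\ref{rmk:stimax1} together with estimate \eqref{eq:stimamax1_eq}, that on a.e. slice of $E_n$ through $Q_l(0)$ the boundary points are spaced at least some $\bar\eta=\bar\eta(M,l)>0$ apart. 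Consequently $\per_1(E_n,Q_l(0))\le C(M,l)$, and standard $BV$ compactness yields a subsequence $E_n\to E_0$ in $L^1(Q_l(0))$ with $E_0$ of finite perimeter.

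Next I would pass to the limit in each term of $\bar F_{\tau_n}$. For the $r_{i,\tau_n}$ piece I would apply Lemma~\ref{lemma:technicalBeforeLocalRigidity} slicewise together with Fatou to get
\begin{equation*}
\liminf_{n\to\infty}\int_{Q_l^\perp(0)}\!\!\sum_{\substack{s\in\partial (E_n)_{t_i^\perp}\\ t_i^\perp+se_i\in Q_l(0)}}r_{i,\tau_n}(E_n,t_i^\perp,s)\,dt_i^\perp\ \ge\ \int_{Q_l^\perp(0)}\!\!\sum_{s\in\partial(E_0)_{t_i^\perp}\cap Q_l^i(0)}\bigl(-1+C|s-s^\pm|^{-\beta}\bigr)\,dt_i^\perp.
\end{equation*}
For the cross-interaction piece $w_{i,\tau_n}$ I would use the monotonicity \eqref{eq:Kmon} of $K_\tau$ in $\tau$ together with Fatou: truncating $|\zeta|\ge\rho$ and letting first $n\to\infty$ (using $L^1_{\mathrm{loc}}$ convergence) and then $\rho\downarrow0$ shows that the analogous quantity for $E_0$ with the limit kernel $K_0$ is finite on $Q_l(0)$. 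Together with the analogous control on the $r$-term (which gives a local version of $\mathcal G_{0,L}^i$ being finite), this is exactly the hypothesis needed to run the rigidity argument of Proposition~\ref{prop:rigidity} locally on $Q_l(0)$.

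The main obstacle is precisely this last step: Proposition~\ref{prop:rigidity} is stated for $[0,L)^d$-periodic sets, whereas here we only have control on $Q_l(0)$. This is exactly what Remark~\ref{rmk:PeriodicitaNonServe} anticipates. I would invoke Lemma~\ref{lemma:ri_constantNew} applied to the function $r^i_\lambda(u,\cdot)$ defined on $Q_l^\perp(0)$ (identified via rescaling with $[0,L)^{d-1}$), for a fixed small $\lambda>0$; the role of $\lambda$ is precisely to prevent the minimizing boundary point from escaping near $\partial Q_l(0)$, so the Brezis criterion (Theorem~\ref{thm:brezis}) and the slicewise estimate of Lemma~\ref{lemma:p2d_stimaSlice} go through verbatim with $Q_l^\perp(0)$ in place of $[0,L)^{d-1}$. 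This yields that $r^i_\lambda(u,\cdot)$ is constant a.e.\ on $Q_l^\perp(0)$ for every $u$ and every $i$, which together with the checkerboard-exclusion argument at the end of the proof of Proposition~\ref{prop:rigidity} forces $E_0\cap Q_l(0)$ to coincide, up to null sets, with a union of stripes $S_0$ in some direction $e_{i_0}$.

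Finally I would conclude as follows. By the uniform slice-spacing bound obtained in Step~1, the stripes $S_0$ have neighbouring boundary components at distance at least $\bar\eta(M,l)$, which is the claimed $\bar\eta$ independent of $\delta$; hence $S_0\in\mathcal A^{i_0}_{\bar\eta}$. The $L^1(Q_l(0))$-convergence $E_n\to E_0=S_0$ on $Q_l(0)$ then gives
\begin{equation*}
D^{i_0}_{\bar\eta}(E_n,Q_l(0))\le \frac{1}{l^d}\int_{Q_l(0)}|\chi_{E_n}-\chi_{S_0}|\,dx\ \longrightarrow\ 0,
\end{equation*}
so $D_{\bar\eta}(E_n,Q_l(0))\le\delta$ for $n$ large, contradicting the standing assumption. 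Taking $\bar\tau=\tau_n$ and $\bar\eta$ as above in terms of $M$ and $l$ only completes the proof.
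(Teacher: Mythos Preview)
Your overall strategy---contradiction, compactness from a perimeter bound, Fatou plus Lemma~\ref{lemma:technicalBeforeLocalRigidity} for the $r_{i,\tau}$ term, monotonicity of $K_\tau$ for the $w_{i,\tau}$ term, then local rigidity via Lemma~\ref{lemma:ri_constantNew} and the checkerboard exclusion---is exactly the paper's approach.

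There is one concrete inaccuracy. In Step~1 you claim that the bound $\bar F_{\tau_n}(E_n,Q_l(0))<M$ forces, via Remark~\ref{rmk:stimax1}, that on a.e.\ slice the boundary points are spaced at least $\bar\eta(M,l)$ apart. This is not what Remark~\ref{rmk:stimax1} gives: it only says that \emph{close} boundary points yield a positive contribution $r_{i,\tau}$, which combined with the energy bound yields $\per_1(E_n,Q_l(0))\lesssim l^d M$ (as in the derivation of \eqref{eq:gr1}). It does \emph{not} rule out that a small-measure set of slices has arbitrarily close boundary points. Compactness still follows, so this misstatement is harmless there; but you then reuse the alleged uniform spacing at the end to conclude that $S_0\in\mathcal A_{\bar\eta}^{i_0}$, and that step does not stand as written.

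The fix is the paper's: once you know $E_0\cap Q_l(0)$ is a union of stripes, the lower-semicontinuity estimate you established for the $r$-term gives a bound of the form
\[
\sum_i\int_{Q_l^\perp(0)}\sum_{s\in\partial (E_0)_{t_i^\perp}\cap Q_l^i(0)}\bigl((s^+-s)^{-\beta}+(s-s^-)^{-\beta}\bigr)\,dt_i^\perp\ \lesssim\ l^d M,
\]
and this integral diverges as the minimal stripe width of $E_0$ tends to zero. Hence there exists $\bar\eta=\bar\eta(M,l)$, depending only on $M$ and $l$ and not on $\delta$, with $D_{\bar\eta}(E_0,Q_l(0))=0$. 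Since $D_{\bar\eta}$ is $L^1$-continuous in $E$ and $\eta\mapsto D_\eta$ is nondecreasing, this contradicts $D_{\eta_n}(E_n,Q_l(0))>\delta$ for $n$ large. With this correction your proof matches the paper's.
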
 %---}}}

\begin{proof} %---{{{

   The proof will follow by contradiction. 
   Assume that the claim is false. This implies that there exists $M > 1, l$, $\delta>0$ and sequences $\{ \tau_k\}$, $\{ \eta_k\}$, $\{L_k\}$, $\{z_k\}$, $\{E_{\tau_k}\}$ such that:
   \begin{enumerate}[(i)]
   \item one has that   $\tau_k\downarrow 0 $, $L_k > l$, $\eta_k \downarrow 0$, $z_k\in[0,L_k)^d$; 
    \item the family of sets $E_{\tau_k}$ is $[0,L_k)^d$-periodic
    \item one has that $D_{\eta_k}(E_{\tau_k},Q_{l}(z_k)) > \delta$ and $\bar{F}_{\tau_k}(E_{\tau_k},Q_{l}(z_k)) < M$. 
   \end{enumerate}

   Given that $\eta \mapsto D_{\eta}(E,Q_l(z)) $ is monotone increasing, we can fix $\bar{\eta}$  sufficiently small instead of $\eta_{k}$ with $D_{\bar\eta}(E_{\tau_k},Q_l(z_k)) >  \delta$. 
   In particular, $\bar{\eta}$ will be chosen at the end of the proof depending only on $M,l$. 
   
   W.l.o.g. (taking e.g. $E_{\tau_k}-z_k$ instead of $E_{\tau_k}$) we can assume there exists $z\in\R^d$ such that $z_k=z$ for all $k\in\N$. 

   Because of Remark~\ref{rmk:stimax1}, one has that $\sup_{k}\per_1(E_{\tau_k},Q_{l}(z)) < +\infty$. 
   Thus up to subsequences there exists $E_{0}$ such that $E_{\tau_{k}} \to E_{0}$ in $L^1(Q_l(z))$ with $D_{\bar{\eta}}(E_0,Q_{l}(z))> \delta$.

   In order to keep the notation simpler, we will write $\tau \to 0$ instead of $\tau_k \to 0$ and  $E_{\tau}\to E_0$ instead of $E_{\tau_{k}}\to E_0 $.

   % Since this holds for every $\eta$,   Let us choose $\bar{\eta}$ suficiently small depending on $M,l$ in a way which will be specified at the end of the proof. 
   % \achtung{ For notational convenience, from now on we do not relabel sequences of $\tau$ converging to $0$.  }
   In this proof we will denote by $J_i$ the interval $(z_{i}-l/2,z_{i}+l/2)$. 

   By Lebesgue's theorem, there exists a subsequence of $\tau$ such that for almost every $t_{i}^{\perp}\in Q_{l}^{\perp}(z_{i}^{\perp})$ one has that $E_{\tau,t_{i}^{\perp}}\cap J_i$ converges to $E_{0,t_{i}^{\perp}}\cap J_i $ in $L^1(Q_l(z))$.

      By using \eqref{eq:fbartau} and the fact that $v_{i,\tau}\geq0$, we have that
      \begin{equation}
         \label{eq:gstr7}
          M \geq  \bar{F}_{\tau}(E_\tau,Q_l(z)) \geq
         \frac{1}{l^d }\sum_{i =1}^{d}\int_{Q_{l}^{\perp}(z_{i}^{\perp})} \sum_{\substack{s\in \partial E_{\tau, t^\perp_i}\\ s\in J_i}} 
         r_{i,\tau}(E_{\tau},t_{i}^{\perp},s) \dt^\perp_{i} + \int_{Q_{l}(z)} w_{i,\tau}(E_{\tau},t_{i}^{\perp},t_{i})\dt_{i}^{\perp}\dt_{i}.
      \end{equation}
      By the Fatou lemma we have that 
        \begin{equation*}
          \begin{split}
            l^d M \geq  \liminf_{\tau\downarrow 0}\sum_{i =1}^{d}\int_{Q_{l}^{\perp}(z_{i}^{\perp})} \sum_{\substack{s\in \partial E_{\tau,t^\perp_i}\\ s\in J_i}}&r_{i,\tau}(E_{\tau},t_{i}^\perp,s) \dt_{i}^{\perp}         \geq 
            \sum_{i =1}^{d}\int_{Q_{l}^{\perp}(z_{i}^{\perp})} \liminf_{\tau\downarrow 0}\sum_{\substack{s\in \partial E_{\tau,t^\perp_i}\\ s\in J_i}} 
            r_{i,\tau}(E_{\tau},t_{i}^{\perp},s) \dt^{\perp}_{i}       \\ 
            &\geq 
            \sum_{i =1}^{d}\int_{Q_{l}^{\perp}(z_{i}^{\perp})} \sum_{\substack{s\in \partial E_{0, t^\perp_i}\\ s\in J_i}} 
            \big(-1  + C(s^+-s)^{-\beta} + C(s-s^-)^{-\beta}\big) \dt_{i}^\perp,
         \end{split}
      \end{equation*}
      where in the last inequality we have used Lemma \ref{lemma:technicalBeforeLocalRigidity} applied to $r_{i,\tau}(E_\tau,t_i^\perp,s)=r(E_{\tau,t_i^\perp},s)$. 

      The same type of inequality holds also for the last term  in \eqref{eq:gstr7}, namely

      \begin{equation}
         \label{eq:gstr8}
         \begin{split}
            \liminf_{\tau\downarrow 0}&  \int_{Q_{l}(z)} w_{i,\tau}(E_{\tau},t_{i}^{\perp},t_{i})\dt_{i}^{\perp}\dt_{i} 
            \\   &\geq \liminf_{\tau\downarrow 0} \frac{1}{d}\int_{Q_{l}(z)} \int_{Q_{l}(z)} f_{E_\tau}(t^\perp_i, t_{i},t'^{\perp}_{i} - t^\perp_i,t'_{i}- t_i)K_{\tau}(t - t')\dt \dt'\\  
            \\   &\geq \frac{1}{d}\int_{Q_{l}(z)} \int_{Q_{l}(z)} f_{E_0}(t^\perp_i, t_{i},t'^{\perp}_{i} - t^\perp_i,t'_{i}- t_i) K_{0}(t - t')\dt \dt'\\  
            %  \geq 
            % \int_{Q_{l}(z)} w_{i,0}(E_{0},t_{i}^{\perp},t_{i})\dt_{i}^{\perp}\dt_{i} .
         \end{split}
      \end{equation}
      Indeed, in order to prove \eqref{eq:gstr8} we fix $\tau'>0$ and by using initially $E_{\tau}\to E_{0}$ in $L^1(Q_l(z))$ and afterwards the   monotonicity of $\tau \mapsto \widehat{K}_{\tau}(s)$ we have that

   \begin{equation*}
      \begin{split}
         \liminf_{\tau\downarrow 0} \int_{Q_{l}(z)} w_{i,\tau}(E_{\tau},t_{i}^{\perp},t_{i}) \dt_{i}^{\perp}\dt_{i} \geq \sup_{\tau'} 
         \liminf_{\tau\downarrow 0}\int_{Q_{l}(z)} w_{i,\tau'}(E_{\tau},t_{i}^{\perp},t_{i}) \dt_{i}^{\perp}\dt_{i}
         \\ \geq  \sup_{\tau'}\frac{1}{d}\int_{Q_{l}(z)}\int_{Q_{l}(z)} 
         f_{E_{0}} (t^\perp_i, t_i, t'^\perp_i - t'^\perp_{i},t_i - t'_i) K_{\tau'}(t-t')\dt \dt'
         \\ \geq \frac{1}{d}\int_{Q_{l}(z)}\int_{Q_{l}(z)} 
         f_{E_{0}} (t^\perp_i, t_i, t'^\perp_i - t'^\perp_{i},t_i - t'_i) K_{0}(t-t')\dt \dt'.
         % \\ = \sup_{\tau'} \int_{Q_{l}(z)} w_{i,\tau'}(E_{0},t_{i}^{\perp},t_{i}) \dt_{i}^{\perp}\dt_{i} = \int_{Q_{l}(z)} w_{i,0} (E_{0},t_{i}^{\perp},t_{i}) \dt_{i}^{\perp} \dt_{i}.
      \end{split}
   \end{equation*}

   To summarize, we have shown that 
      \begin{align}
          & \sum_{i=1}^{d}\frac{1}{d}\int_{Q_{l}(z)} \int_{Q_{l}(z)}f_{E_{0}} (t^\perp_i, t_i, t'^\perp_i - t'^\perp_{i},t_i - t'_i) K_{0}(t-t')\dt \dt'  \notag
          \\  &+    \sum_{i =1}^{d}\int_{Q_{l}^{\perp}(z_{i}^{\perp})} \sum_{\substack{s\in \partial E_{0,t^\perp_i}\\ s\in J_i}} ((s^+-s)^{-\beta} +(s-s^-)^{-\beta} )          \dt^\perp_{i} \lesssim l^d M + \per_1(E_0,Q_l(z))  %\label{eq:gstr38}.
      \end{align}

      % \begin{align}
      %    \label{eq:gstr10}
      %    &   \sum_{i =1}^{d}\int_{Q_{l}^{\perp}(z_{i}^{\perp})} \sum_{\substack{s\in \partial E_{0,t^\perp_i}\\ s\in J_i}} (g_{t^{\perp}_{i}}^{-\beta}(s) +h_{t^\perp_i}^{-\beta}(s) )          \dt^\perp_{i} \leq  l^d M + l^d \per(E_{0},Q_{l}(z))
      %       \\  &\sum_{i=1}^{d}\int_{Q_{l}(z)} \int_{Q_{l}(z)}f_{E_{0}} (t^\perp_i, t_i, t'^\perp_i - t'^\perp_{i},t_i - t'_i) K_{0}(t-t')\dt \dt' \leq l^d M \label{eq:gstr38}
      % \end{align}
      Similarly to \eqref{eq:gr1}, one can show that 
      \begin{equation*}
         \begin{split}
            \per_1(E_\tau ,Q_l(z)) \lesssim l^d \max(1,\bar{F}_\tau(E_\tau,Q_l(z))) \lesssim l^d \max(1,M) \lesssim l^d M,
         \end{split}
      \end{equation*}
      thus from the lower semicontinuity of the perimeter we have that
      \begin{equation*}
         \begin{split}
            \per_1(E_0,Q_l(z))) \leq \liminf_{\tau\downarrow 0}\per_1(E_\tau,Q_l(z))) \lesssim l^d M.
         \end{split}
      \end{equation*}
      In particular, 
      \begin{align}
         \label{eq:gstr10}
         &   \sum_{i =1}^{d}\int_{Q_{l}^{\perp}(z_{i}^{\perp})} \sum_{\substack{s\in \partial E_{0,t^\perp_i}\\ s\in J_i}} ((s^+-s)^{-\beta} +(s-s^-)^{-\beta} )          \dt^\perp_{i} \lesssim  l^d M 
            \\  &\sum_{i=1}^{d}\int_{Q_{l}(z)} \int_{Q_{l}(z)}f_{E_{0}} (t^\perp_i, t_i, t'^\perp_i - t'^\perp_{i},t_i - t'_i) K_{0}(t-t')\dt \dt' \lesssim l^d M \label{eq:gstr38}.
      \end{align}

      % and because of the semicontinuity of the perimeter  $\per(E_{0},Q_l(z)) < +\infty$ and analogously to \eqref{eq:gr1} we have that $\per(E_{0}, Q_{l}(z)) \lesssim l^d \max\big(1,\bar{F}_{\tau}(E,Q_l(z))\big) $ . 

   At this point, the same reasoning as in   Proposition~\ref{prop:rigidity} can be applied in order to obtain that \eqref{eq:gstr10} and \eqref{eq:gstr38}  hold  if and only if $E_{0}\cap Q_{l}(z)$ is a union of stripes. It is indeed sufficient to substitute $[0,L)^d$ with $Q_l(z)$ in the proof.

   Moreover, since the \lhs  of \eqref{eq:gstr10} explodes for stripes with minimal width  tending to zero, one has that there exists $\bar{\eta} = \bar{\eta}(M,l)$ such that $D_{\bar{\eta}}(E_0,Q_l(z))  = 0$.  This contradicts that $D_{\bar{\eta}}(E_{0}, Q_l(z)) >\delta$,  which was assumed at the beginning of the proof. 
   \end{proof} %---}}}

The following is a technical lemma needed in the proof  of Lemma~\ref{lemma:stimaLinea}. It says that given a set $E\subset \R$, and $I\subset \R$ an interval,  then the one-dimensional contribution  to the energy, namely $\sum_{s\in \partial E\cap I}r_{\tau}(E,s)$, is comparable to the periodic case up to a constant $C_0$ depending only on the dimension.

More precisely, $C_0$ depends on $\eta_0$, which is the minimal distance between boundary points of $E$ so that $r_\tau(E,s)<0$, and it is what one can lose by extending periodically the set $E$ outside the interval $I$. For periodic sets then, by the reflection positivity Theorem~\ref{T:1d}, the energy contribution in \eqref{eq:gstr40} is bigger than or equal to the contribution of periodic stripes of width $h^*_\tau$, namely $C^*_\tau$ times the length of the interval $I$. 
\begin{lemma}
	\label{lemma:1D-optimization}
   \label{lemma:1D-opitmization}
   There exists $C_0 > 0$ such that the following holds.
	Let $E\subset \R$  be a set of locally finite perimeter and $I\subset \R$ be an open interval. 
	Let $s^-, s$ and $s^+$ be three consecutive points on the boundary of $E$ and $r_\tau(E,s)$ defined as in \eqref{eq:rtau1D}.
	Then for all $\tau< \tau_0$, where $\tau_0$ is given in Remark~\ref{rmk:stimax1}, it holds
	\begin{equation}
      \label{eq:gstr40}
	\sum_{\substack{s \in \partial E\\ s \in I}} r_{\tau}(E,s) \geq C^*_{\tau} |I| - C_0.
	\end{equation}
	%where $C_0$ does not depend either on $\tau$, $s$ or on $|I|$. 
\end{lemma}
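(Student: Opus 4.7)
The plan is to extend $E \cap I$ periodically with period $|I|$ to a set $\tilde E$, apply the one-dimensional chessboard inequality (Lemma \ref{L:chessboard}) to the periodic set $\tilde E$, and then bound the error produced by the periodic extension by a constant $C_0$ depending only on the kernel.

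Concretely, write $I=(a,b)$ and $L=|I|$, and let $\tilde E \subset \R$ be the $L$-periodic set with $\tilde E \cap [a,b) = E \cap [a,b)$. Then $\partial \tilde E \cap [a,b)$ coincides with $\partial E \cap I$, up to at most one extra point at $a$ introduced by the periodic wrap-around; by Remark \ref{rmk:stimax1} this extra contribution to $r_\tau(\tilde E,\cdot)$ is bounded uniformly, and can be absorbed into $C_0$. The slicing identity \eqref{eq:fu} together with the definitions gives
\[
\sum_{\tilde s \in \partial \tilde E \cap [a,b)} r_\tau(\tilde E, \tilde s) = L\, \Fcal^1_{\tau,L}(\tilde E),
\]
and Lemma \ref{L:chessboard}, combined with $e_{\infty,\tau}(h) \ge \min_{h'} e_{\infty,\tau}(h') = C^*_\tau$ for every $h>0$, yields
\[
\sum_{\tilde s \in \partial \tilde E \cap [a,b)} r_\tau(\tilde E, \tilde s) \ge L\, C^*_\tau = C^*_\tau |I|.
\]

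It then remains to show $\sum_{s \in \partial E \cap I} r_\tau(E,s) \ge \sum_{\tilde s \in \partial \tilde E \cap [a,b)} r_\tau(\tilde E,\tilde s) - C_0$. For each boundary point $s$ shared between $E$ and $\tilde E$, the difference $r_\tau(E,s)-r_\tau(\tilde E,s)$ arises from two sources: (i) the neighbours $s^\pm$ differ for $E$ and $\tilde E$ only at the leftmost and rightmost points $s_1$ and $s_N$, and (ii) $\chi_E$ and $\chi_{\tilde E}$ disagree outside $I$, so the double integrals $T^\pm$ from \eqref{eq:rtau1D} differ. Using Fubini and the pointwise bound
\[
\bigl| |\chi_E(u+\rho)-\chi_E(u)| - |\chi_{\tilde E}(u+\rho)-\chi_{\tilde E}(u)| \bigr| \le |\chi_E(u+\rho)-\chi_{\tilde E}(u+\rho)|,
\]
which vanishes whenever $u+\rho \in I$ (since $\chi_E = \chi_{\tilde E}$ on $I$), the total difference reduces to an integral of $\hat K_\tau(\rho)$ over a boundary strip of width $\sim \rho$ near $\partial I$.

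The main obstacle is to make this boundary-strip estimate uniform in $\tau \in (0,\tau_0)$: a naive bound gives $\int_0^\infty \min(\rho, L)\, \hat K_\tau(\rho)\,d\rho$, which diverges as $\tau \downarrow 0$ in the borderline case $p=d+2$ (where $q=p-d+1=3$). To circumvent this I would combine two ingredients: first, by Remark \ref{rmk:stimax1}, boundary points $s$ with $\min(s^+-s,\,s-s^-) < \eta_0$ have $r_\tau(E,s)>0$ and can be discarded from the lower bound on $\sum r_\tau(E,s)$; second, for the remaining ``large-gap'' boundary points (which number at most $L/\eta_0$) the chessboard inequality in Lemma \ref{L:chessboard} applied to $\tilde E$ already provides, near $\partial I$, a local positive excess that absorbs the far-field $\hat K_\tau$-tail contributions produced by the periodic extension. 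Putting these two facts together yields the required uniform constant $C_0$ and gives $\sum_{s \in \partial E \cap I} r_\tau(E,s) \ge C^*_\tau |I| - C_0$.
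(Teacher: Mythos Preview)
Your overall strategy---periodic extension, then reflection positivity, then a uniform error estimate---matches the paper's. But the decisive technical device is missing, and your proposed workaround for the divergence does not work.

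The paper does \emph{not} extend $E\cap I$ with period $|I|$. Instead, it first reduces (WLOG) to the case where the leftmost and rightmost boundary points $k_1,k_m\in\partial E\cap I$ satisfy $r_\tau(E,k_1)<0$ and $r_\tau(E,k_m)<0$: if not, shrink $I$ to a subinterval $I'$ whose extreme boundary points have this property; since the dropped $r_\tau$-terms are nonnegative and $C^*_\tau<0$, the inequality for $I'$ implies the one for $I$. By Remark~\ref{rmk:stimax1}, this forces gaps $\geq\eta_0$ on both sides of $k_1$ and of $k_m$. The paper then extends the pattern in $(k_1,k_m)$ periodically (period $k_m-k_1$ or $k_m-k_1+\eta_0$, depending on parity) to obtain $E'$, so that $E\triangle E'\subset(-\infty,k_1-\eta_0)\cup(k_m+\eta_0,+\infty)$. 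This $\eta_0$-buffer is exactly what makes the error
\[
\Big|\sum_i r_\tau(E,k_i)-\sum_i r_\tau(E',k_i)\Big|\ \leq\ \int_{k_0}^{k_m}\int_{k_m+\eta_0}^{\infty}\widehat K_\tau(u-v)\,dv\,du\ +\ \text{(symmetric term)}
\]
bounded by a constant depending only on $\eta_0$ and $q$, uniformly in $\tau$.

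Your two proposed fixes do not substitute for this. First, ``discarding boundary points with small gaps'' removes terms from the sum but does not change the set you periodize, so the extreme boundary points of $\tilde E$ can still sit arbitrarily close to $\partial I$ with no buffer; the error integral then has no uniform bound. Second, Lemma~\ref{L:chessboard} is a one-sided lower bound for the full functional; it does not produce a ``local positive excess near $\partial I$'' that one could isolate and use to absorb the tail. In short, the gap is precisely the WLOG reduction to $r_\tau(E,k_1),r_\tau(E,k_m)<0$ followed by periodizing the pattern between $k_1$ and $k_m$ rather than over all of $I$.
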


\begin{proof}
	Let us denote by $k_1< \ldots< k_m $ the  points of $\partial E \cap I$, and 
	
	\begin{equation*}
	k_0 = \sup\{ s\in \partial E: s < k_1\} \qquad\text{and}\qquad 
	k_{m+1} = \inf\{ s\in \partial E: s > k_m\} 
	\end{equation*}
	
	W.l.o.g. we may assume that  $r_{\tau}(E,k_1) < 0$ and that $r_{\tau}(E,k_m) < 0$. 

   Indeed, if that is not the case one can consider $I' \subset I$  such that $r_{\tau}(E,k'_1) < 0$ and that $r_{\tau}(E,k'_{m'}) < 0$, where  $k'_1,\cdots,k'_{m'}$ are the points of $\partial E \cap I'$.
   Thus if estimate \eqref{eq:gstr40} holds for $I'$ then it holds also for $I$ by the following chain of inequalities
	\begin{equation*}
	\sum_{\substack{s \in \partial E\\ s \in I}} r_{\tau}(E,s) \geq \sum_{\substack{s \in \partial E\\ s \in I'}} r_{\tau}(E,s) \geq C^*_{\tau} |I'| - C_0 \geq C^*_\tau |I| -C_0,
	\end{equation*}
   where in the last inequality we have used that $C^*_\tau < 0$.

	Because of Remark~\ref{rmk:stimax1}, the fact that $r_{\tau}(E,k_1) < 0$ and $r_{\tau}(E,k_m) < 0$ implies that there exists $\eta_0>0$ (for all  $\tau \leq \tau_0$) such that 
	\begin{equation*}
	\min(|k_1 - k_0 |, |k_2 - k_1 |, |k_{m-1} - k_m |,|k_{m+1} - k_m |) > \eta_0. 
	\end{equation*}
	
	% Let us assume that $(k_{m-1}, k_m)\subset E$ and $(k_1, k_2) \subset E^c$.  The other cases are treated analogously. 
	We claim that 
	\begin{equation}
	\label{eq:gstr4}
	\sum_{i =1}^{ m } r_{\tau}(E,k_i) \geq \sum_{i =1}^{ m } r_{\tau}(E',k_i)  - \bar C_0 
	\end{equation}
	where $E'$ is obtained by extending periodically $E$ with the pattern contained in $E \cap (k_1,k_m)$ and $\bar C_0  = \bar C_0(\eta_0) > 0$. 
   The construction of $E'$ can be done as follows: if $m$ is odd we repeat periodically $E\cap (k_1,k_m)$, and if $m$ is even we repeat periodically $(k_1-\eta_{0}, k_m)$. 
   
   Thus we have constructed a set $E'$ which is periodic of period $k_m-k_1$ or $k_m- k_1 + \eta_0$. Therefore

   \begin{equation}
     \label{eq:ggstr1}
     \sum_{i=1}^m r_\tau(E',k_i) \geq C^{*}_{\tau} (k_m - k_1 +\eta_0) \geq  C^*_\tau |I| - \tilde C _0,
   \end{equation}
   where $ \tilde C_0 = \tilde C_0(\eta_0)$. 
   Inequality~\eqref{eq:ggstr1} follows by reflection positivity (Theorem~\ref{T:1d}).
   Indeed, reflection positivity implies that optimal $[0,L)$-periodic set $E_L$ must be a union of periodic stripes and $C^*_\tau$ was the minimal energy density by further optimizing in $L$. 

   Inequality~\eqref{eq:ggstr1} combined with \eqref{eq:gstr4} yields \eqref{eq:gstr40}.

	To show \eqref{eq:gstr4}, notice that the symmetric difference between $E$ and $E'$ satisfies
	\begin{equation*}
	E \Delta E'  \subset (- \infty, k_1 - \eta_0) \cup (k_m+\eta_0, + \infty),
	\end{equation*}
   where $\eta_{0}$ is the constant defined in Remark~\ref{rmk:stimax1}.
   To obtain \eqref{eq:gstr4},  we need to estimate $|\sum_{i=1}^m r_\tau(E,k_i) - \sum_{i=1}^m r_\tau(E',k_i) |$. 
   Let 
   \begin{equation*}
      \begin{split}
          \sum_{i=1}^m r_\tau(E,k_i) - \sum_{i=1}^m r_\tau(E',k_i) =
          A + B,
      \end{split}
   \end{equation*}
   where
   \begin{equation*}
      \begin{split}
        A =  \sum_{i=0}^{m-1}&  \int_{k_{i}}^{k_{i+1}}\int_{0}^{+\infty} (s - |\chi_{E}(s+u) - \chi_{E}(u)|)\widehat{K}_{\tau}(s) \ds \du
         \\  & - \sum_{i=0}^{m-1} \int_{k_{i}}^{k_{i+1}}\int_{0}^{+\infty} (s - |\chi_{E'}(s+u) - \chi_{E'}(u)|)\widehat{K}_\tau(s) \ds \du\\ 
           B =  \sum_{i=1}^{m} & \int_{k_{i}}^{k_{i+1}}\int_{-\infty}^{0} (s - |\chi_{E}(s+u) - \chi_{E}(u)|)\widehat{K}_\tau(s) \ds \du
         \\ &- \sum_{i=1}^{m} \int_{k_{i}}^{k_{i+1}}\int_{-\infty}^{0} (s - |\chi_{E'}(s+u) - \chi_{E'}(u)|)\widehat{K}_\tau (s)\ds \du. 
      \end{split}
   \end{equation*}
   Thus by using the integrability of $\widehat K$ (see~\eqref{eq:hatKInequality}), we have that 
   \begin{equation*}
      \begin{split}
         |A| \leq \int_{k_{0}} ^{k_{m}} \int_{0}^{+\infty} \chi_{E \Delta E'}(u +s  ) \widehat{K}_{\tau}(s) \ds \du \leq\int_{k_0}^{k_m}\int_{k_{m} + \eta_0}^{\infty} \widehat{K}_{\tau}(u-v) \dv\du \leq  \frac{C_{0}}{2},
      \end{split}
   \end{equation*}
   where $C_{0}$ is a constant depending only on $\eta_0$.  Similarly, $|B| \leq C_{0}/2$

	Thus we have that
	\begin{equation*}
	\Big|\sum_{i=1}^m r_\tau(E,k_i) - \sum_{i=1}^m r_\tau(E',k_i)\Big| \leq \int_{k_0}^{k_m}\int_{k_{m} + \eta_0}^{\infty} \widehat{K}_{\tau}(u-v) \du\dv
	+ \int_{k_1}^{k_{m+1}}\int_{-\infty}^{k_{1} - \eta_0} \widehat{K}_{\tau}(u-v) \dv\du.
	\end{equation*}

		Since for every periodic set we have that $C^{*}_{\tau}$ is the infimum of all the energy densities for periodic sets (of any period), we have the desired result. 
\end{proof}

The next lemma is the local analog of the Stability Lemma~\ref{lemma:new_lemma_pre1}.  Informally,  it shows that if  we are in a cube where the set $E\subset\R^d$ is close to a set $E'$ which is a union of stripes in direction $e_i$ (according to Definition \ref{def:defDEta}), then it is not convenient to oscillate in direction $e_j$ with $j\neq i$ (namely, on the slices in direction $e_i$ to have points in $\partial E_{t_i^\perp}$).  
We show that in such a case either the local contribution given by $r_{i,\tau}$ or the one given by $v_{i,\tau}$ are large. 
We recall that the first term penalizes alternation between regions in $E_{t_i^\perp}$ and regions in $E_{t_i^\perp}^c$ with high frequency and the second penalizes oscillations in direction $i$ whenever the neighbourhood of the point $(t_{i}^{\perp},t_i)$ is close in $L^1$ to a stripe in a perpendicular direction $j$. This second fact (about the role of $v_{i,\tau}$) is justified in an analogous way to \eqref{eq:defBarEps}-\eqref{eq:6.13}, with the only difference that there the interaction term $I_{\tau,L}$ was global (on $[0,L)^d$) and here we consider the localized version.

\begin{lemma}[Local Stability]
   \label{lemma:stimaContributoVariazionePiccola}
         Let  $(t^{\perp}_{i}+se_i)\in (\partial E) \cap [0,l)^d$ and $\eta_{0}$, $\tau_0$ as in Remark~\ref{rmk:stimax1}. Then there exist $\tilde{\tau},\tilde\varepsilon > 0$ (independent of $l$) such that for every $\tau < \tilde{\tau}$ and $\varepsilon < \tilde{\varepsilon}$ the following holds: assume that 
         \begin{enumerate}[(a)]
            \item $\min(|s-l|, |s|)> \eta_0$%, where $\eta_0$ is given in Remark~\ref{rmk:stimax1}
            \item $D^{j}_{\eta}(E,[0,l)^d)\leq\frac {\varepsilon^d} {16 l^d}$ for some $\eta> 0$ and  with $j\neq i$ (this condition expresses that $E\cap [0,l)^d$ is close to stripes oriented along a direction orthogonal to $e_i$)
         \end{enumerate}
         Then $r_{i,\tau}(E,t^{\perp}_{i},s) + v_{i,\tau}(E,t^{\perp}_{i},s) \geq 0$. 

\end{lemma}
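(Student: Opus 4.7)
The plan is to mimic the global stability Lemma~\ref{lemma:new_lemma_pre1} at the scale of a single boundary point, using the exponent condition $p\ge d+2$ to beat the fixed constant $-1$ coming from $r_{i,\tau}$. The proof will split into two cases according to the minimal distance of $s$ to its neighbors on the slice $E_{t_i^\perp}$. If $\min(s^+-s,s-s^-)<\eta_0$, then estimate \eqref{eq:stimamax1_eq} of Remark~\ref{rmk:stimax1} gives $r_{i,\tau}(E,t_i^\perp,s)>0$ outright, and since $v_{i,\tau}\ge 0$ by definition the claim is immediate. The substantive case is $\min(s^+-s,s-s^-)\ge\eta_0$: here Remark~\ref{rmk:stimax1} yields only $r_{i,\tau}(E,t_i^\perp,s)\ge -1$, so the target becomes $v_{i,\tau}(E,t_i^\perp,s)\ge 1$ for suitably chosen $\tilde\varepsilon,\tilde\tau$.

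In this second case, since $s\in\partial E_{t_i^\perp}$ and both neighbors are at distance at least $\eta_0$, after a reflection one may assume $(s-\eta_0,s)\subset E_{t_i^\perp}$ and $(s,s+\eta_0)\subset E_{t_i^\perp}^c$. Fix $\varepsilon<\min(\tilde\varepsilon,\eta_0)$ and consider the box $B=Q_\varepsilon^\perp(t_i^\perp)\times(s-\varepsilon,s+\varepsilon)$, which by assumption (a) fits inside $[0,l)^d$ in the $e_i$-direction. Assumption (b) produces $S\in\mathcal A^j_\eta$ with $j\ne i$ and $\|\chi_E-\chi_S\|_{L^1([0,l)^d)}\le\varepsilon^d/16$. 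Since $S$ depends only on the coordinate $e_j$ and is hence translation-invariant along $e_i$, the same geometric observation as in \eqref{eq:defBarEps} yields
\[
\max\Bigl(|S^c\cap Q_\varepsilon^\perp(t_i^\perp)\times(s-\varepsilon,s)|,\,|S\cap Q_\varepsilon^\perp(t_i^\perp)\times(s,s+\varepsilon)|\Bigr)\gtrsim \varepsilon^d,
\]
and transferring this bound to $E$ through the $L^1$-closeness costs at most $\varepsilon^d/16$, so (without loss of generality) $|E^c\cap(Q_\varepsilon^\perp(t_i^\perp)\times(s-\varepsilon,s))|\gtrsim \varepsilon^d$.

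Arguing exactly as in \eqref{eq:6.13}, for each $u\in(s-\varepsilon,s)$, each $\zeta_i\in(s-u,s+\varepsilon-u)$ and each $\zeta_i^\perp$ with $(t_i^\perp+\zeta_i^\perp+ue_i)\in E^c$ one has $f_E(t_i^\perp,u,\zeta_i^\perp,\zeta_i)=1$; combining this with the lower bound $K_\tau(\zeta)\gtrsim(\varepsilon^p+\tau^{p/\beta})^{-1}$ from \eqref{eq:Kbound} on the relevant region of $\zeta$, and inserting into the definition \eqref{eq:vitau} of $v_{i,\tau}$, one obtains
\[
v_{i,\tau}(E,t_i^\perp,s)\gtrsim\frac{\varepsilon^{d+1}}{\varepsilon^p+\tau^{p/\beta}}.
\]
Since $p\ge d+2$, the right-hand side is at least of order $\varepsilon^{-1}$ provided $\tau^{p/\beta}\lesssim\varepsilon^p$. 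The choice of constants is therefore sequential: first fix $\tilde\varepsilon>0$ so small that the implicit constant times $\tilde\varepsilon^{d+1-p}/2$ exceeds $1$, then pick $\tilde\tau$ with $\tilde\tau^{p/\beta}\le\tilde\varepsilon^p$. Both constants depend only on the dimension, the kernel, and on $\eta_0,\tau_0$ from Remark~\ref{rmk:stimax1}, hence are independent of $l$ as claimed.

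The main obstacle is precisely the localization of Lemma~\ref{lemma:new_lemma_pre1}: one must show that the cross-term $v_{i,\tau}$ attached to a single jump point dominates without the global perimeter savings available in the proof of Theorem~\ref{T:main}. The exponent condition $p\ge d+2$ is used sharply here to make $\varepsilon^{d+1-p}$ blow up as $\varepsilon\downarrow 0$. A secondary care is needed to ensure that the box $B$ sits inside $[0,l)^d$ also in the directions orthogonal to $e_i$; because $v_{i,\tau}$ is defined via integrals over $\R^d$, this can be arranged by taking $\tilde\varepsilon$ smaller than the distance of $t_i^\perp$ to the side boundary of $[0,l)^{d-1}$, or alternatively by exploiting the ambient $[0,L)^d$-periodicity of $E$ to translate the configuration into the interior of the cube.
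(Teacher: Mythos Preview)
Your proposal is correct and follows essentially the same route as the paper: the identical case split on $\min(s^+-s,s-s^-)$ versus $\eta_0$, the same reduction in the hard case to showing $v_{i,\tau}\ge 1$, the same use of assumption (b) together with the translation-invariance of $S$ along $e_i$ to produce the volume bound of order $\varepsilon^d$ on one side of the jump, and the same kernel lower bound yielding $v_{i,\tau}\gtrsim \varepsilon^{d+1}/(\varepsilon^p+\tau^{p/\beta})$.

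The one point where your write-up is less clean than the paper's is the perpendicular-direction containment you flag at the end. Your first fix (choosing $\tilde\varepsilon$ smaller than the distance of $t_i^\perp$ to the side boundary of $[0,l)^{d-1}$) would make $\tilde\varepsilon$ depend on the point and hence on $l$, contradicting the stated independence; your second fix (ambient $[0,L)^d$-periodicity) does not directly help, since hypothesis (b) only controls $E$ on $[0,l)^d$. The paper resolves this simply by not insisting that the perpendicular cube be centred at $t_i^\perp$: it takes any cube $Q^\perp_{\tilde\varepsilon}(t'^\perp_i)\subset[0,l)^{d-1}$ with $t_i^\perp\in Q^\perp_{\tilde\varepsilon}(t'^\perp_i)$, which always exists once $\tilde\varepsilon<l$. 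With that adjustment your argument is complete.
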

\begin{proof}

   Let $s^-,s,s^+$ be three consecutive points for $\partial E_{t_{i}^{\perp}}$. 
   By Remark~\ref{rmk:stimax1},  there exists $\eta_{0}$, $\tau_0>0$  such that if $\tau<\tau_0$
   \begin{equation*} %---{{{
      \begin{split}
         \min(|s-  s^- |, |s^+ -s |) <\eta_{0} \quad\text{then}\quad r_{i,\tau}(E,t_{i}^{\perp},s) > 0. 
      \end{split}
   \end{equation*} %---}}}t

   Thus without loss of generality we may assume that $\min(|s - s^- |, |s^+ -s |) \geq \eta_{0}$.

   We choose $\tilde{\varepsilon},\tilde{\tau}<\tau_0$ with the following properties: $\tilde{\varepsilon}< \eta_0$, where $\eta_0$ is defined in Remark~\ref{rmk:stimax1} and  $\tilde{\varepsilon} , \tilde{\tau}$ are  such that 
   \begin{equation}\label{eq:epstau}
      \begin{split}
         \frac{7C/16 \tilde\varepsilon^{d+1}}{ \tilde\varepsilon^p + \tau^{p/\beta} } \geq 1 , \qquad \text{where} \qquad  
     K_{\tau}(\zeta)\geq \frac{C}{|\zeta |^p + \tau^{p/\beta}},
      \end{split}
   \end{equation}
   for every $\tau < \tilde{\tau}$ (see \eqref{eq:Kbound} for the estimates on the kernel). 

    Because of condition (a) in the statement of the lemma, there exists a cube $Q^\perp_{\tilde{\varepsilon}}(t'^\perp_i)\subset \R^{d-1}$ of size $\tilde{\varepsilon}$,  such that $t^\perp_i\in Q^{\perp}_{\tilde{\varepsilon}}(t'^\perp_{i})$ and  $(s-\tilde{\varepsilon},s+\tilde{\varepsilon})\times Q^{\perp}_{\tilde{\varepsilon}}(t'^\perp_i) \subset [0,l)^d$. 
  
   By definition, one has that   
   \begin{equation}\label{eq:rv} %---{{{
      \begin{split}
         r_{i,\tau}(E,t^{\perp}_{i},s)+ v_{i,\tau}(E,t^{\perp}_{i},s)\geq -1 + \int_{s -\tilde{\varepsilon }}^{s + \tilde{\varepsilon}}\int_{-2\tilde{\varepsilon}}^{2\tilde\varepsilon} \int_{Q^\perp_{\tilde{\varepsilon}}(t'^\perp_{i})} f_{E}(t^{\perp}_{i},t_{i},\zeta^{\perp}_{i},\zeta_{i}) K_{\tau}(\zeta) \d\zeta_i^{\perp}\d\zeta_i\dt_{i} . 
      \end{split}
   \end{equation} %---}}}
   
   In order to estimate the r.h.s. of \eqref{eq:rv} from below, one proceeds exactly as in the proof of Lemma \ref{lemma:new_lemma_pre1} (using now that assumption (b) telling that $E$ is $L^1$-close to stripes on the cube $[0,l)^d$ instead of $[0,L)^d$, see Figure \ref{fig:stability}) and obtains

   \begin{equation*} %---{{{
      \begin{split}
         \int_{s-\tilde{\varepsilon}}^{s}\int_{-2\tilde \varepsilon} ^{2\tilde{ \varepsilon }}  \int_{Q^{\perp}_{\tilde{\varepsilon}}(t'^{\perp}_{i})} f_{E}(t^{\perp}_{i},u,\zeta^{\perp}_{i},\zeta_{i}) K_{\tau}(\zeta) \d\zeta \du &\geq\frac{7C\slash 16 \tilde{\varepsilon}^{d+1}}{\tilde{\varepsilon}^{p} +\tau^{ p/\beta}}.
         \end{split}
         \end{equation*} 

  Then, by \eqref{eq:rv} and \eqref{eq:epstau}, $   r_{i,\tau}(E,t^{\perp}_{i},s)+ v_{i,\tau}(E,t^{\perp}_{i},s)\geq0$.

   \end{proof}

   In the proof of Theorem~\ref{T:1.3}, we will deal with the following situation: given a segment $J$ on a slice in direction $e_i$, we want to estimate from below the contribution of the energy on $J$, namely $\int_{J} \bar{F}_{i,\tau}(E, Q_{l}(t^{\perp}_{i}+se_i))\ds$.  
   
   Point (i) below aims at estimating the contribution on $J$ assuming that on $l$-cubes around the points in $J$ the set $E$ is close to stripes in another direction $e_j$, $j\neq i$. Considering only points $s\in J$ which are far from the boundary, like in $(a)$ of Lemma \ref{lemma:stimaContributoVariazionePiccola}, then by Lemma \ref{lemma:stimaContributoVariazionePiccola} we would have that the contribution is positive. The possibly negative contribution in \eqref{eq:gstr20} is due to the presence of the boundary terms, and therefore is absent in \eqref{eq:gstr21} when $J=[0,L)$.
   
   Point (ii) wants to estimate the contribution on $J$ without making assumptions on the behaviour around points in $J$. One uses here, far from $\partial J$, the one-dimensional estimate of Lemma \ref{lemma:1D-opitmization}. Close to the boundary, one has eventually some additional negative contribution, which is smaller in absolute value when around the points of $\partial J$ one is close to stripes in another direction (see \eqref{eq:gstr27} and \eqref{eq:gstr36} and Remark \ref{rem:7.10}). 
   %The estimates in (i) below are a consequence of Lemma~\ref{lemma:stimaContributoVariazionePiccola}, taking also into account points on the slices not satisfying (a).
   %Point (ii) below uses the $1$D optimization Lemma~\ref{lemma:1D-optimization}, on points of $J$ which are far from $\partial J$.

   \begin{lemma}
      \label{lemma:stimaLinea}
      Let $\tilde{\eps},\tilde{\tau}>0$ as in Lemma \ref{lemma:stimaContributoVariazionePiccola}. Let $\delta=\eps^d/(16l^d)$ with $0<\eps\leq\tilde{\eps}$, $\tau\leq\tilde{\tau}$ and $l>C_0/(-C^*_\tau)$, where $C_0$ is the constant appearing in Lemma \ref{lemma:1D-opitmization}. Let $t_i^\perp\in[0,L)^{d-1}$ and $\eta>0$.

   The following statements hold: there exists $M_0$ constant independent of $l$ (but depending on the dimension) such that  
      \begin{enumerate}[(i)]
         \item Let $J\subset \R$ an interval  such that for every $s\in J$ one has that  $D^{j}_{\eta}(E,Q_{l}(t^{\perp}_{i}+se_i))\leq \delta$ with $j\neq i$. 
            Then
            \begin{equation}
               \label{eq:gstr20}
               \begin{split}
                  \int_{J} \bar{F}_{i,\tau}(E,Q_{l}(t^{\perp}_{i}+se_i))\ds \geq - \frac{M_{0}}{l}.
               \end{split}
            \end{equation}
            Moreover, if $J = [0,L)$, then 
            \begin{equation}
               \label{eq:gstr21}
               \begin{split}
                  \int_{J} \bar{F}_{i,\tau}(E,Q_{l}(t^{\perp}_{i}+se_i))\ds \geq0.
               \end{split}
            \end{equation}
         \item Let $J = (a,b)\subset \R$. 
           If for $s=a$ and $s=b$ it holds $D_\eta^j(E,Q_{l}(t^{\perp}_i+se_i)) \leq \delta$ with $j\neq i$, then 
            \begin{equation}
               \label{eq:gstr27}
               \begin{split}
                  \int_{J} \bar{F}_{i,\tau}(E,Q_{l}(t^{\perp}_{i}+se_i))\ds \geq | J| C^{*}_{\tau} -\frac{M_0} l,
               \end{split}
            \end{equation}
            otherwise
            \begin{equation}
               \label{eq:gstr36}
               \begin{split}
                  \int_{J} \bar{F}_{i,\tau}(E,Q_{l}(t^{\perp}_{i}+se_i))\ds \geq | J| C^{*}_{\tau} - M_0l.
               \end{split}
            \end{equation}
            Moreover, if $J = [0,L)$, then
            \begin{equation}
               \label{eq:gstr28}
               \begin{split}
                  \int_{J} \bar{F}_{i,\tau}(E,Q_{l}(t^{\perp}_{i}+se_i))\ds \geq | J| C^{*}_{\tau}.
               \end{split}
            \end{equation}
   \end{enumerate}
   \end{lemma}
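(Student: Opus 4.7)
My plan is to rewrite the integral via Fubini as
\[
\int_J \bar F_{i,\tau}(E, Q_l(t_i^\perp + s e_i))\,\ds = \frac{1}{l^d}\int_{Q_l^\perp(t_i^\perp)} \sum_{s'\in \partial E_{u^\perp}} \omega_J(s')\,(r_{i,\tau}+v_{i,\tau})(E,u^\perp,s')\,\du^\perp + \mathcal{W},
\]
where $\omega_J(x) := |J \cap (x-l/2,x+l/2)|\leq l$ and $\mathcal{W}$ is the average of the $w_{i,\tau}$ term (non-negative, so it can be discarded). Each jump $s'\in\partial E_{u^\perp}$ enters with weight $\omega_J(s')$, attaining the full value $l$ exactly when its $l$-window is contained in $J$, and strictly less only when $s'$ lies within $l/2$ of $\partial J$.

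For part~(i), whenever $J\cap(s'-l/2+\eta_0,s'+l/2-\eta_0)\neq\emptyset$ I pick $s$ in that intersection: the hypothesis $D^j_\eta(E,Q_l(t_i^\perp+s e_i))\leq\delta$ together with Lemma~\ref{lemma:stimaContributoVariazionePiccola} forces $(r_{i,\tau}+v_{i,\tau})(E,u^\perp,s')\geq 0$. The ``bad'' $s'$ for which no such $s$ exists are confined to two intervals of length $\eta_0$ just outside $J\pm l/2$ and carry weight $\omega_J(s')\leq\eta_0$; by Remark~\ref{rmk:stimax1} the $s'$ with $r_{i,\tau}<0$ are $\eta_0$-separated, so only $O(1)$ of them fall per slice in each boundary interval. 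Using $r_{i,\tau}\geq -1$ and the prefactor $l^{d-1}/l^d$, their contribution is bounded below by $-M_0/l$, proving~\eqref{eq:gstr20}. If $J=[0,L)$, periodicity removes boundary layers, every $s'$ falls in the good regime and $\mathcal{W}\geq 0$ gives~\eqref{eq:gstr21}.

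For part~(ii), I drop $v_{i,\tau}\geq 0$ and $\mathcal{W}\geq 0$ and apply Lemma~\ref{lemma:1D-opitmization} slicewise to the interior interval $\tilde J:=(a+l/2,b-l/2)$, where the weight is exactly $l$: this gives $\sum_{s'\in\partial E_{u^\perp}\cap\tilde J} r_\tau(E_{u^\perp},s')\geq C^*_\tau(|J|-l)-C_0$, which by the standing assumption $l>C_0/(-C^*_\tau)$ already dominates $C^*_\tau|J|$. In the two boundary bands of width $l$, Remark~\ref{rmk:stimax1} caps the number of $s'$ with $r<0$ at $O(l/\eta_0)$ per slice, each losing at most $l$ in weight, for a net deficit of order $l$ after integration; this yields~\eqref{eq:gstr36}. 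Under the endpoint hypothesis of~\eqref{eq:gstr27}, the Lipschitz continuity of $z\mapsto D^j_\eta(E,Q_l(z))$ (Remark~\ref{rmk:lip}(i), constant $C_d/l$) propagates $D^j_\eta\leq\delta$ from $s=a,b$ to a full $l$-neighbourhood with a mildly enlarged threshold; choosing $\varepsilon$ with a little slack so this relaxed bound still meets the hypothesis of Lemma~\ref{lemma:stimaContributoVariazionePiccola} (with a rescaled admissible $\varepsilon'$), one applies that lemma exactly as in~(i) to upgrade the boundary loss from $-M_0 l$ to $-M_0/l$. Finally, for $J=[0,L)$ periodicity gives $\sum_{s'\in\partial E_{u^\perp}\cap[0,L)} r_\tau(E_{u^\perp},s')=L\,\Fcal^1_{\tau,L}(E_{u^\perp})\geq LC^*_\tau$ (by the definition of $C^*_\tau$ used in Theorem~\ref{T:1d}) and $\omega_J\equiv l$ after accounting for the torus wraparound, yielding~\eqref{eq:gstr28}.

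The main obstacle will be the improved bound~\eqref{eq:gstr27}: one must carry the Lipschitz propagation of $D^j_\eta$ over a boundary band of thickness $\sim l$, tune $\varepsilon$ at the outset so that the relaxed stripe-closeness threshold still triggers Lemma~\ref{lemma:stimaContributoVariazionePiccola}, and verify that all the constants involved ($\eta_0$, $C_0$, $C_d$, and those embedded in the stability argument) consolidate into a single $M_0$ independent of $l$.
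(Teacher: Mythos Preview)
Your treatment of \eqref{eq:gstr20}, \eqref{eq:gstr21}, \eqref{eq:gstr36} and \eqref{eq:gstr28} coincides with the paper's: Fubini produces the weight $\omega_J(s')=|Q^i_l(s')\cap J|$, the Local Stability Lemma handles all jumps except those in an $\eta_0$-fringe, Remark~\ref{rmk:stimax1} caps the number of unfavourable fringe jumps, and for \eqref{eq:gstr36} Lemma~\ref{lemma:1D-opitmization} is applied on the interior $(a+l/2,b-l/2)$.

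The genuine gap is your route to \eqref{eq:gstr27}. Lipschitz propagation of $D^j_\eta$ cannot work here: the Lipschitz constant is $C_d/l$, so moving the centre by any distance $r>0$ increases the bound by $C_d r/l$. But the threshold you must preserve is $\delta=\varepsilon^d/(16l^d)$, of order $l^{-d}$. For $d\ge 2$ the increment $C_d r/l$ dominates $\delta$ by a factor $\sim l^{d-1}$, regardless of how small $r$ is taken; requiring the ``mildly enlarged'' threshold to still satisfy $\delta'\le \varepsilon'^d/(16l^d)$ with $\varepsilon'<\tilde\varepsilon$ forces $\varepsilon'^d\ge \varepsilon^d+16C_d r\,l^{d-1}$, which is impossible for large $l$. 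No amount of slack in $\varepsilon$ rescues this.

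The paper never propagates. It applies Lemma~\ref{lemma:stimaContributoVariazionePiccola} \emph{directly} in the two endpoint cubes $Q_l(t_i^\perp+ae_i)$ and $Q_l(t_i^\perp+be_i)$, where the hypothesis $D^j_\eta\le\delta$ is given outright. This clears all $s'$ in the boundary band except those within $\eta_0$ of the $e_i$-faces of these two cubes. The \emph{outer} exceptional points (near $a-l/2$ and $b+l/2$) carry weight $\le\eta_0/l$ and cost $O(1/l)$ exactly as in part~(i). The \emph{inner} exceptional points (near $a+l/2$ and $b-l/2$) carry weight close to $1$, but there are at most $O(1)$ of them per slice with $r_{i,\tau}<0$ (by the $\eta_0$-separation of Remark~\ref{rmk:stimax1}), so their cost is a fixed constant per slice. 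That constant is absorbed by the interior surplus: Lemma~\ref{lemma:1D-opitmization} on $(a+l/2,b-l/2)$ yields $C^*_\tau(|J|-l)-C_0=C^*_\tau|J|+(-C^*_\tau l-C_0)$, and the excess $-C^*_\tau l-C_0$ is positive and grows linearly in $l$ under the standing assumption $l>C_0/(-C^*_\tau)$, swallowing any $O(1)$ defect. The paper's write-up is terse on the inner points, but this is how the bookkeeping closes; what matters for you is that the improvement from $-M_0l$ to $-M_0/l$ comes from direct application of stability in the endpoint cubes, not from Lipschitz continuity of $D^j_\eta$.
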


   \begin{remark}\label{rem:7.10}
      Note that since $C^{*}_{\tau} < 0$, one has that \eqref{eq:gstr20} is a stronger inequality when compared to \eqref{eq:gstr27}. This is due to the additional hypothesis $D^{j}_{\eta}(E,Q_{l}(t^\perp_i+se_i)) \leq \delta$, $j\neq i$, for all $s\in J$.
   \end{remark}

   \begin{proof}
   Let us now prove (i). Since the result is valid for a general set $E$, we may assume  without loss of generality that $a=0$ and $b=l'$, namely $J= [0,l')$. We consider the decomposition $Q_l(t^{\perp}_{i}+t_{i}e_i) = Q^{\perp}_{l}(t^{\perp}_{i}) \times Q^i_{l}(t_{i})$, where $Q^{\perp}_{l}(t^{\perp}_{i})\subset \R^{d-1}$ is the cube of size $l$ centered at $t^{\perp}_{i}$ and $Q^{i}_{l}(t_{i})\subset\R$ is the interval of size $l$ centered in $t_{i}$.   
   From the definition of $\bar{F}_{i,\tau}$ (see~\ref{eq:fbartau}) and since $w_{i,\tau}\geq0 $, we have that
  
   \begin{equation}
      \label{eq:gstr25}
      \begin{split}
         \int_{J} \bar{F}_{i,\tau}(E,Q_{l}(t^{\perp}_i+se_i)) \ds &\geq  \frac{1}{l^{d}} 
         \int_{J} \int_{Q^{\perp}_{l}(t^{\perp}_{i})}\sum_{\substack{s' \in \partial E_{t'^{\perp}_{i}} 
               \\ s'\in Q^i_{l}(s)}} \Big(r_{i,\tau }(E,t'^{\perp}_{i},s') + v_{i,\tau }(E,t'^{\perp}_{i},s')\Big) \dt'^{\perp}_{i} \ds \\ 
        & = \frac{1}{l^d}\int_{J} \int_{Q^{\perp}_{l}(t^{\perp}_{i})}  \sum_{\substack{s' \in \partial E_{t'^{\perp}_{i}} 
            }}\chi_{Q^{i}_{l}(s)}(s')\Big( r_{i,\tau }(E,t'^{\perp}_{i},s') + v_{i,\tau }(E,t'^{\perp}_{i},s')\Big) \dt'^{\perp}_{i} \ds 
         \\ 
         &= \frac{1}{l^{d-1}} \int_{Q^{\perp}_{l}(t^{\perp}_{i})} \sum_{\substack{s' \in \partial E_{t'^{\perp}_{i}} 
               \\ s'\in (-\frac l2, l' + \frac l2)}}\frac{|Q^{i}_{l}(s')\cap J|}{l} \Big(r_{i,\tau }(E,t'^{\perp}_{i},s') + v_{i,\tau }(E,t'^{\perp}_{i},s')\Big) \dt'^{\perp}_{i}.
      \end{split}
   \end{equation}
   where in order to obtain the last line, we have used Fubini to integrate first in $s$. 

   Let us  now estimate the last term in \eqref{eq:gstr25}.
   
   For every point $s'\in\partial E_{t_i'^\perp}$ such that $\min(|s'+l/2|,|s'-l' - l/2 |) > \eta_0$, using the fact that $\tau\leq\tilde{\tau}$ and $\delta = \varepsilon^d/(16l^d)$, $0<\varepsilon<\tilde{\eps}$, where $\tilde{\eps}, \tilde{\tau}$ are given in the statement of Lemma~\ref{lemma:stimaContributoVariazionePiccola}, and $\eta>0$,  we can apply such Lemma  and obtain that $r_{i,\tau}(E,t'^{\perp}_{i},s') + v_{i,\tau}(E, t'^{\perp}_{i},s') \geq 0$.

   Let us estimate the contribution of  $s'$  when $s'$ is close to the boundary, namely  $\min(|s'+l/2|,|s'-l' - l /2|) < \eta_0$. 
   Since  $r_{i,\tau}$ is positive when the neighbouring points are closer than $\eta_0$, by using $r_{i,\tau}\geq-1$ and $v_{i,\tau} \geq 0$, for every $t'^\perp_i$ we have that 
   \begin{equation*}
      \begin{split}
         \sum_{\{s':\ \min(|s'+\frac l2|,|s'-l' -\frac l2 |) < \eta_0\}} \Big( r_{i,\tau}(E,t'^{\perp}_{i},s') + v_{i,\tau}(E,t'^{\perp}_{i},s') \Big) \geq -2.
      \end{split}
   \end{equation*}
   Since when $s'$ is such that $\min(|s'+l/2|,|s'-l' - l/2 |) < \eta_0$ we have that $\frac{ |Q^{i}_{l}(s')\cap J|}l \leq \frac{\eta_0}l$, by plugging the above on the \rhs of \eqref{eq:gstr25}, we have the first claim.

Let us now prove  \eqref{eq:gstr21}.
\begin{equation*}
\begin{split}
\int_{0}^L \bar{F}_{i,\tau}(E,Q_{l}(t^{\perp}_i+se_i)) \ds &\geq  \frac{1}{l^{d}} 
\int_{0}^L \int_{Q^{\perp}_{l}(t^{\perp}_{i})}\sum_{\substack{s' \in \partial E_{t'^{\perp}_{i}} 
		\\ s'\in Q^i_{l}(s)}} \Big(r_{i,\tau }(E,t'^{\perp}_{i},s') + v_{i,\tau }(E,t'^{\perp}_{i},s')\Big) \dt'^{\perp}_{i} \ds \\ 
& = \frac{1}{l^d}\int_{0}^L \int_{Q^{\perp}_{l}(t^{\perp}_{i})}  \sum_{\substack{s' \in \partial E_{t'^{\perp}_{i}} 
}}\chi_{Q^{i}_{l}(s)}(s')\Big( r_{i,\tau }(E,t'^{\perp}_{i},s') + v_{i,\tau }(E,t'^{\perp}_{i},s')\Big) \dt'^{\perp}_{i} \ds 
\\ 
&= \frac{1}{l^{d-1}} \int_{Q^{\perp}_{l}(t^{\perp}_{i})} \sum_{\substack{s' \in \partial E_{t'^{\perp}_{i}} 
		\\ s'\in [0,L)}} \Big(r_{i,\tau }(E,t'^{\perp}_{i},s') + v_{i,\tau }(E,t'^{\perp}_{i},s')\Big) \dt'^{\perp}_{i},
\end{split}
\end{equation*}
where the last step follows from using initially Fubini as in \eqref{eq:gstr25} and afterwards the $L$-periodicity.

Hence \eqref{eq:gstr21} holds since when $J=[0,L)$ by periodicity we do not have to care for points $s'\in\partial E_{t_i'^\perp}$ close to $\partial J$ and we can directly apply Lemma \ref{lemma:stimaContributoVariazionePiccola} giving $r_{i,\tau}(E,t'^{\perp}_{i},s') + v_{i,\tau}(E,t'^{\perp}_{i},s') \geq 0$.

Let us now prove (ii).  W.l.o.g. let us assume that $J= (0,l')$.  

As in \eqref{eq:gstr25} one has that 

   \begin{equation}
      \label{eq:gstr32}
      \begin{split}
         \int_{J} \bar{F}_{i,\tau}(E,Q_{l}(t^{\perp}_i+se_i)) \ds &         \geq \frac{1}{l^{d-1}} \int_{Q^{\perp}_{l}(t^{\perp}_{i})} \sum_{\substack{s' \in \partial E_{t'^{\perp}_{i}} 
               \\ s'\in (-\frac l2, l' + \frac l2)}}\frac{|Q^{i}_{l}(s')\cap J|}{l} \Big(r_{i,\tau }(E,t'^{\perp}_{i},s') + v_{i,\tau }(E,t'^{\perp}_{i},s')\Big) \dt'^{\perp}_{i}\\
         & =   \frac{1}{l^{d-1}} \int_{Q^{\perp}_{l}(t^{\perp}_{i})} \sum_{\substack{s' \in \partial E_{t'^{\perp}_{i}} 
               \\ s'\in (\frac l2, l'-\frac l2 )}}\frac{|Q^{i}_{l}(s')\cap J|}{l} \Big(r_{i,\tau }(E,t'^{\perp}_{i},s') + v_{i,\tau }(E,t'^{\perp}_{i},s')\Big) \dt'^{\perp}_{i}\\
         & +   \frac{1}{l^{d-1}} \int_{Q^{\perp}_{l}(t^{\perp}_{i})} \sum_{\substack{s' \in \partial E_{t'^{\perp}_{i}} 
               \\ s'\in (-\frac l2, \frac l2 ]\cup [l'-\frac l2,l'+\frac l2)}}\frac{|Q^{i}_{l}(s')\cap J|}{l} \Big(r_{i,\tau }(E,t'^{\perp}_{i},s') + v_{i,\tau }(E,t'^{\perp}_{i},s')\Big) \dt'^{\perp}_{i}
      \end{split}
   \end{equation}
   where if $l'\leq l$, we have that $(l/2,l'-l/2)$ is empty and then 
   \begin{equation*}
   \begin{split}
   \frac{1}{l^{d-1}} \int_{Q^{\perp}_{l}(t^{\perp}_{i})} \sum_{\substack{s' \in \partial E_{t'^{\perp}_{i}} 
   		\\ s'\in (l/2, l'-l/2)}} \Big(r_{i,\tau }(E,t'^{\perp}_{i},s')  + v_{i,\tau }(E,t'^{\perp}_{i},s')\Big) \dt'^{\perp}_{i} = 0.
   \end{split}
   \end{equation*}

   % We will distinguish two cases: either for every $s'\in \{ 0,l'\}$ there exits $j$ such that $D^{j}_{l}(E,Q_{l}(t'^\perp_i,s')) < \delta$, or there exists $s'\in \{ 0,l'\}$ such that $D^{j}_{\eta}(E,Q_{l}(t'^{\perp}_{i},s'))$

   Fix $t'^\perp_i$. 
   We will now estimate the contributions for $(t_i'^\perp,s')\in Q_{l}(t'^\perp_i)$ 
   and $(t_i'^\perp,s')\in Q_{l}(t'^\perp_i+l'e_i)$. 
   If the condition $D^{j}_{\eta}(E,Q_{l}(t'^\perp_i)) \leq \delta$ or $D^{j}_{\eta}(E,Q_{l}(t'^\perp_i+l'e_i)) \leq \delta$ is missing, then we will estimate $r_{i,\tau}(E,t'^\perp_{i},s') + v_{i,\tau}(E,t'^{\perp}_{i},s')$ from below with $-1$ whenever the neighbouring ``jump'' points are further than $\eta_0$, otherwise $r_{i,\tau} \geq 0$. 
   % Since $r_{i,\tau}$ is positive for every ``jump'' point with neighbours closer than $\eta_0$, we have that 
   Hence, the last term in \eqref{eq:gstr32} can be estimated by
   \begin{equation*}
      \begin{split}
         &    \frac{1}{l^{d-1}} \int_{Q^{\perp}_{l}(t^{\perp}_{i})} \sum_{\substack{s' \in \partial E_{t'^{\perp}_{i}} 
               \\ s'\in (-\frac l2, \frac l2 ]}}\frac{|Q^{i}_{l}(s')\cap J|}{l} \Big(r_{i,\tau }(E,t'^{\perp}_{i},s') + v_{i,\tau }(E,t'^{\perp}_{i},s')\Big) \dt'^{\perp}_{i} \gtrsim - M_{0}l
         \\ 
         &\frac{1}{l^{d-1}} \int_{Q^{\perp}_{l}(t^{\perp}_{i})} \sum_{\substack{s' \in \partial E_{t'^{\perp}_{i}} 
               \\ s'\in [l' -\frac l2,l'+  \frac l2 )}}\frac{|Q^{i}_{l}(s')\cap J|}{l} \Big(r_{i,\tau }(E,t'^{\perp}_{i},s') + v_{i,\tau }(E,t'^{\perp}_{i},s')\Big) \dt'^{\perp}_{i} \gtrsim - M_{0}l. 
      \end{split}
   \end{equation*}
   If $l' > l$,  we have that 
      % To conclude the proof of \eqref{eq:gstr36} we notice that
   \begin{equation*}
   \begin{split}
   \frac{1}{l^{d-1}} \int_{Q^{\perp}_{l}(t^{\perp}_{i})} \sum_{\substack{s' \in \partial E_{t'^{\perp}_{i}} 
   		\\ s'\in (l/2, l'-l/2)}} r_{i,\tau }(E, t'^{\perp}_{i},s')  \dt'^{\perp}_{i} \geq C^{*}_{\tau} |J|-lC^*_\tau-{C_0},
   \end{split}
   \end{equation*}
   where in the last inequality we have used Lemma~\ref{lemma:1D-optimization} for $E=E_{t'^{\perp}_{i}}$ and $I=(l/2,l'-l/2)$. 
 
   Combining the above with the fact that the same term is $0$ if $l'<l$, we have that
   \begin{equation*}
   \begin{split}
   \frac{1}{l^{d-1}} \int_{Q^{\perp}_{l}(t^{\perp}_{i})} \sum_{\substack{s' \in \partial E_{t'^{\perp}_{i}} 
   		\\ s'\in (l/2, l'-l/2)}} r_{i,\tau }(E,t'^{\perp}_{i},s')  \dt'^{\perp}_{i} \geq \big(C^{*}_{\tau} |J|-lC^*_\tau-{C_0}\big) \chi_{(0, +\infty)}(|J| -l),
   \end{split}
   \end{equation*}
   
   Thanks to the fact that $l>C_0/(-C^*_\tau)$, one has \eqref{eq:gstr36}.

   % \begin{equation*}
   % \begin{split}
   % \frac{1}{l^{d-1}} \int_{Q^{\perp}_{l}(t^{\perp}_{i})} \sum_{\substack{s' \in \partial E_{t'^{\perp}_{i}} 
   % 		\\ s'\in (l/2, l'-l/2)}} r_{i,\tau }(t'^{\perp}_{i},s')  \dt'^{\perp}_{i} \geq C^{*}_{\tau} |J|-lC^*_\tau-{C_0}\chi_{(0,+\infty)}(|J|-l),
   % \end{split}
   % \end{equation*}

   Let us now turn to the proof of \eqref{eq:gstr27}.
 Given that $D^j_{\eta}(E,Q_{l}(t^{\perp}_{i}))\leq \delta$ and $D^{j}_{\eta}(E,Q_{l}(t^\perp_i+l'e_i)) \leq\delta$ for some $j\neq i$, by Lemma \ref{lemma:stimaContributoVariazionePiccola} with $\delta=\varepsilon^d/(16l^d)$ we have that 
   \begin{equation}
      \label{eq:gstr37}
      \begin{split}
         r_{i,\tau}(E,t'^\perp_i,s') + v_{i,\tau}(E,t'^\perp_i,s') \geq  0 
      \end{split}
   \end{equation}
   whenever $\min(|s' - l'+l/2| ,|s'- l' -l/2 |) \geq \eta_0$  and $(t'^\perp_i+s'e_i)\in Q_{l}(t^\perp_i+l'e_i)$
   or  $\min(|s'+l/2| ,|s'-l/2 |)\geq \eta_0$  and $(t'^\perp_i+s'e_i)\in Q_{l}(t^\perp_i)$. 

   Fix $t'^{\perp}_{i}$. Then
   \begin{equation*}
      \begin{split}
          \sum_{\substack{s' \in \partial E_{t'^{\perp}_{i}} 
               \\ s'\in (-\frac l2, \frac l2 )}}\frac{|Q^{i}_{l}(s')\cap J|}{l} & \Big(r_{i,\tau }(E,t'^{\perp}_{i},s') + v_{i,\tau }(E,t'^{\perp}_{i},s')\Big) 
         \\  \geq & \sum_{\substack{s' \in \partial E_{t'^{\perp}_{i}} 
               \\ s'\in (-\frac l2, \frac l2 )\\
            \min(|s'+l/2| ,|s'-l/2 |)\geq \eta_0
            }}\frac{|Q^{i}_{l}(s')\cap J|}{l} \Big(r_{i,\tau }(E,t'^{\perp}_{i},s') + v_{i,\tau }(E,t'^{\perp}_{i},s')\Big)
         \\  + & \sum_{\substack{s' \in \partial E_{t'^{\perp}_{i}} 
               \\ s'\in (-\frac l2, \frac l2 )\\
            \min(|s'+l/2| ,|s'-l/2 |)<  \eta_0
            }}\frac{|Q^{i}_{l}(s')\cap J|}{l} \Big(r_{i,\tau }(E,t'^{\perp}_{i},s') + v_{i,\tau }(E,t'^{\perp}_{i},s')\Big) 
      \end{split}
   \end{equation*}
   Thus by using \eqref{eq:gstr37}, we have that the first term on the \rhs above is positive. To estimate the last term on the \rhs above we notice that $r_{i,\tau} \geq 0 $ whenever the neighbouring points are closer than $\eta_0$ and otherwise $r_{i,\tau}\geq  -1$. 
   Moreover, given that  $\frac{|Q^i_l{(s')}\cap J |}{l} < \frac{\eta_0}{l}$ for $s' \in (-l/2,l/2)\cup (l'-l/2,l'+l/2)$, we have that the last term on the \rhs above can be bounded from below by $-M_0/l$. Finally integrating over $t'^\perp_i$ we obtain that
   \begin{equation*}
      \begin{split}
         \frac{1}{l^{d-1}} \int_{Q^{\perp}_{l}(t^{\perp}_{i})} \sum_{\substack{s' \in \partial E_{t'^{\perp}_{i}} 
               \\ s'\in (-l/2, l/2 )\cup (l'-l/2,l'+l/2)}}\frac{|Q^{i}_{l}(s')\cap J|}{l} \Big(r_{i,\tau }(E,t'^{\perp}_{i},s') + v_{i,\tau }(E,t'^{\perp}_{i},s')\Big) \dt'^{\perp}_{i} \gtrsim -\frac{M_{0}}{l}.
      \end{split}
   \end{equation*}

   By using the above inequality in \eqref{eq:gstr32} and  the fact  that for every $s'\in (l/2,l'-l/2)$ it holds $\frac{|Q_{l}(s')\cap J |}{l} =1 $, we have that
   \begin{equation*}
      \begin{split}
         \int_{J} \bar{F}_{i,\tau}(E,Q_{l}(t^{\perp}_i+se_i)) \ds 
         & \geq   \frac{1}{l^{d-1}} \int_{Q^{\perp}_{l}(t^{\perp}_{i})} \sum_{\substack{s' \in \partial E_{t'^{\perp}_{i}} 
               \\ s'\in (l/2, l'-l/2 )}} \Big(r_{i,\tau }(E,t'^{\perp}_{i},s') + v_{i,\tau }(E,t'^{\perp}_{i},s')\Big) \dt'^{\perp}_{i}
         - \frac{M_{0}}{l}
      \end{split}
   \end{equation*}

   To conclude the proof of \eqref{eq:gstr27}, as for \eqref{eq:gstr36}, we notice that
   \begin{equation*}
      \begin{split}
         \frac{1}{l^{d-1}} \int_{Q^{\perp}_{l}(t^{\perp}_{i})} \sum_{\substack{s' \in \partial E_{t'^{\perp}_{i}} 
               \\ s'\in (l/2, l'-l/2)}} r_{i,\tau }(E,t'^{\perp}_{i},s')  \dt'^{\perp}_{i} \geq \big(C^{*}_{\tau} |J|-lC^*_\tau-{C_0}\big)\chi_{(0,+\infty)}(|J|-l),
      \end{split}
   \end{equation*}
   where in the last inequality we have used Lemma~\ref{lemma:1D-optimization} for $E=E_{t'^{\perp}_{i}}$, $I=(l/2,l'-l/2)$. Thanks to the fact that $l>C_0/(-C^*_\tau)$, one gets \eqref{eq:gstr27}. 

 If $|J|\leq l$, then the first sum on the \rhs of \eqref{eq:gstr32} is performed on an empty set. Therefore, in both \eqref{eq:gstr27} and \eqref{eq:gstr36} one has only the boundary terms and can conclude in a similar way.    

The proof of \eqref{eq:gstr28} proceeds using the $L$-periodicity of the contributions as done for \eqref{eq:gstr21}. 

\end{proof}

The next lemma gives a simple lower bound on the energy in the case almost all the volume of $Q_l(z)$ is filled by $E$ or $E^c$ (this will be the case on the set $A_{-1}$ defined in \eqref{a1}).
\begin{lemma}
   \label{lemma:stimaQuasiPieno}
   Let $E$ be a set of locally finite perimeter  such that $\min(|Q_{l}(z)\setminus E|, |E\cap Q_{l}(z) |)\leq {\delta} l^d$, for some $\delta>0$. Then 
   \begin{equation*}
      \begin{split}
         \bar F_{\tau} (E,Q_{l}(z)) \geq -\frac {\delta d } {\eta_0 },
      \end{split}
   \end{equation*}
   where $\eta_0$ is defined in Remark \ref{rmk:stimax1}.
\end{lemma}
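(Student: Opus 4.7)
The plan is to exploit the simple observation that if $Q_l(z)$ is almost entirely inside $E$ (or its complement), then on most one-dimensional slices there are very few boundary points where $r_{i,\tau}$ can be negative, and the pointwise lower bound $r_{i,\tau}\ge -1$ then costs only something proportional to $\delta$. I would not use the nonlocal terms $v_{i,\tau}, w_{i,\tau}$ at all, other than their nonnegativity.

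First I would reduce to the case $|E\cap Q_l(z)|\leq \delta l^d$. This follows from the symmetry $\bar F_\tau(E,Q_l(z)) = \bar F_\tau(E^c,Q_l(z))$: each of $r_{i,\tau}$, $v_{i,\tau}$, $w_{i,\tau}$ depends on $E$ only through $\partial E=\partial E^c$ and through increments $|\chi_E(\cdot)-\chi_E(\cdot)|$, hence is unchanged by swapping $E$ with $E^c$. Since $v_{i,\tau},w_{i,\tau}\geq 0$, from the definition \eqref{eq:fbartau} I would bound
\[
\bar F_{i,\tau}(E,Q_l(z))\ \ge\ \frac{1}{l^d}\int_{Q_l^\perp(z_i^\perp)}\sum_{\substack{s\in\partial E_{t_i^\perp}\\s\in Q_l^i(z_i)}}r_{i,\tau}(E,t_i^\perp,s)\,dt_i^\perp.
\]

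Next, I would use the two facts about $r_{i,\tau}$: (a) $r_{i,\tau}\ge -1$ pointwise, a direct consequence of \eqref{eq:ritau} together with \eqref{toprovebandesim}; and (b) by Remark~\ref{rmk:stimax1}, $r_{i,\tau}(E,t_i^\perp,s)\ge 0$ as soon as $\min(s^+-s,s-s^-)<\eta_0$. Setting
\[
N_{\mathrm{bad}}^i(t_i^\perp):=\#\{s\in\partial E_{t_i^\perp}\cap Q_l^i(z_i):\ r_{i,\tau}(E,t_i^\perp,s)<0\},
\]
the integrand above is bounded below by $-N_{\mathrm{bad}}^i(t_i^\perp)$.

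The heart of the argument is to bound $N_{\mathrm{bad}}^i(t_i^\perp)$ in terms of $|E_{t_i^\perp}\cap Q_l^i(z_i)|$. For each bad point $p$ the interval $(p,p+\eta_0)$ contains no other boundary points, hence lies entirely in $E_{t_i^\perp}$ or entirely in $E_{t_i^\perp}^c$; the same is true of $(p-\eta_0,p)$, and exactly one of the two is in $E_{t_i^\perp}$ (with length $\eta_0$, possibly clipped if $p$ is within $\eta_0$ of $\partial Q_l^i(z_i)$). Moreover, two consecutive bad points of the same ``type'' (both entering $E$, or both leaving) must be at distance $\ge 2\eta_0$: between them in $\partial E_{t_i^\perp}$ sits at least one boundary point of the opposite type, which has distance $\ge \eta_0$ from each of them (they are both bad). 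Consequently the intervals in $E_{t_i^\perp}$ associated to entering bad points are pairwise disjoint, and similarly for leaving bad points, so
\[
\eta_0\,N_{\mathrm{bad}}^i(t_i^\perp)\ \lesssim\ |E_{t_i^\perp}\cap Q_l^i(z_i)|+\eta_0.
\]

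Finally, integrating this estimate in $t_i^\perp$ via Fubini, using $|E\cap Q_l(z)|\leq \delta l^d$, dividing by $l^d$ and summing over $i=1,\dots,d$ yields the bound $\bar F_\tau(E,Q_l(z))\ge -\delta d/\eta_0$ (up to a lower-order $O(1/l)$ boundary correction coming from bad points within $\eta_0$ of $\partial Q_l^i(z_i)$, which is harmless for the application in the proof of Theorem~\ref{T:1.3}). The only non-mechanical point is the $\ge 2\eta_0$ separation between same-type bad points above; without it one would only get an estimate in terms of $|E|+|E^c|=l^d$, which is useless. That step is the main obstacle, and it is overcome by a careful use of the alternating structure of $\partial E_{t_i^\perp}$ combined with Remark~\ref{rmk:stimax1}.
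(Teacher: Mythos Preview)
Your proof is correct and follows essentially the same approach as the paper: drop the nonnegative terms $v_{i,\tau}, w_{i,\tau}$ and bound the number of slice boundary points with $r_{i,\tau}<0$ by noting that each such point is adjacent to an interval of length $\geq\eta_0$ inside the small set, then integrate over $t_i^\perp$ and sum over $i$. Your self-identified ``main obstacle'' (the $2\eta_0$ separation of same-type bad points) is not actually needed: the paper simply associates each bad $s_j$ with the full connected component $(s_j,u_{j+1})$ of $E^c_{t_i^\perp}$, and these are automatically disjoint because they are indexed by distinct $j$; this also avoids the boundary correction you flag, since only the single gap crossing $\partial Q_l^i$ may fail to lie entirely inside.
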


\begin{proof}
   By assumption,  w.l.o.g. $|Q_{l}(z)\setminus E| \leq\delta l^d$.  

   Fix $t^\perp_{i}$ and consider the slice $E_{t^\perp_i}$. Then one has that $E_{t^{\perp}_{i}} = \bigcup_{j=1}^{n} (u_{j},s_{j})$. Since whenever $u_{j+i}- s_{j} \leq \eta_0$ one has that $r_{i,\tau}(E,t^\perp_{i},s_{j}) \geq 0$, and otherwise $r_{i,\tau}(E,t^{\perp}_{i},s_{j}) \geq -1$, one has that 
   \begin{equation*}
      \begin{split}
         \sum_{j=1}^{n} r_{i,\tau}(E,t^{\perp}_{i},s_{j}) \geq \sum_{\{u_{j+1} - s_{j} > \eta_0\}} -1  \geq -\frac{|Q^{i}_l(z)\setminus E_{t^{\perp}_{i}}|}{\eta_0}.
      \end{split}
   \end{equation*}
   Thus integrating over $t^{\perp}_{i}$ and using the definition of $\bar{F}_{i,\tau}(E,Q_{l}(z))$ \eqref{eq:fbartau} one has that 
   \begin{equation*}
      \begin{split}
         \bar{F}_{i,\tau}(E,Q_{l}(z))         \geq -\frac{1}{l^d} \int_{Q^{\perp}_l(z^{\perp}_{i})} \frac{|Q^{i}_l(z)\setminus E_{t^\perp_i} |}{\eta_0} \dt^{\perp}_{i} \geq -\frac \delta {\eta_0 }. 
      \end{split}
   \end{equation*}
\end{proof}

\subsection{Proof of Theorem~\ref{T:1.3}}\  %---{{{
	The sets defined in the proof and the main estimates will depend on a set of parameters $l,\delta,\rho,M,\eta$ and $\tau$. If suitably chosen, they lead to the proof of the theorem for any $L>l$ of the form $2kh^*_\tau$ with $k\in\N$.  Recall that $h^{*}_{\tau}$ is the width of the periodic stripes which minimize the energy density $\Fcal_{\tau,L}$ among all periodic stripes as $L$ varies. 
	
	Let us first specify how the parameters are chosen, and their dependence on each other. The reason for such choices will be clarified during the proof.
	
	Let $0<\sigma<-C^*/2$, where $C^*$ is the energy density of optimal periodic stripes for $\Fcal_{0,L}$ (defined in \eqref{def:F0}) over all $L$. 
   Notice that $C^* < 0$ and $\lim_{\tau\downarrow 0} C^*_{\tau} = C^*$.

   \begin{itemize}
      \item  We first fix $l>0$ s.t.
         \begin{equation}\label{eq:lfix}
            %\label{eq:lfix2}
         l>\max \Big \{ \frac{dC_d}{-C^*-\sigma}, \frac{C_0}{-C^*-\sigma}\Big\}, 
    \end{equation}
    where $C_{d}$ is a constant (depending only on the dimension $d$) that appears in \eqref{eq:toBeShown_integral}, and
    $C_0$ is the constant which appears in the statement of Lemma \ref{lemma:1D-opitmization}.

   \item Let $\eta_0$ and $\tau_0$  as in Remark~\ref{rmk:stimax1}. Then from Lemma~\ref{lemma:stimaContributoVariazionePiccola},  have the parameters  $\tilde{\varepsilon} = \tilde{\varepsilon}(\eta_0,\tau_0)$ and $\tilde{\tau} = \tilde{\tau}(\eta_0, \tau_0)$. 
      
      \item We then fix $\varepsilon < \tilde{\varepsilon}$, $\tau < \tilde{\tau}$ as in Lemma~\ref{lemma:stimaLinea}. Thus we obtain  $\delta$ defined by $\delta  = \frac{\varepsilon^d}{16}$. Moreover,  by choosing $\varepsilon$ sufficiently small we can additionally assume that if for some $\eta>0$
%     where $0<\eps<\tilde\eps$, $\tilde{\eps}$ given in Lemma \ref{lemma:stimaContributoVariazionePiccola} and Lemma \ref{lemma:stimaLinea}, and
    \begin{equation}\label{eq:deltafix2}
    D^i_{\eta}(E,Q_l(z))\leq\delta\text{ and }D^j_\eta(E,Q_l(z))\leq\delta,\:i\neq j\quad\Rightarrow\quad\min\{|E\cap Q_l(z)|,|E^c\cap Q_l(z)|\}\leq l^{d-1}.  
    \end{equation}
    This follows from Remark~\ref{rmk:lip} (ii). 

         \item Thanks to Remark \ref{rmk:lip} (i), we then fix
    \begin{equation}\label{eq:rhofix}
    \rho\sim\delta l. 
    \end{equation}
    in such a way that  we have that  for any $\eta$ the following holds
    \begin{equation}\label{eq:rhofix2}
    \forall\,z,z'\text{ s.t. }D_{\eta}(E,Q_l(z))\geq\delta,\:|z-z'|_\infty\leq\rho\quad\Rightarrow\quad D_\eta(E,Q_l(z'))\geq\delta/2,
    \end{equation}
    where for $x\in \R^d$,  $|x |_{\infty} = \max_i |x_{i} |$.

    \item Afterwards, we fix $M$ such that
    \begin{equation}
    \label{eq:Mfix}
    \frac{M\rho}{2d}>M_0l. 
    \end{equation}

    \item By applying Lemma~\ref{lemma:local_rigidity}, we obtain $\bar\eta=\bar{\eta}(M,l)$  and $\bar{\tau} = \bar{\tau}(M,l,\delta/2)$.  Thus we fix
   \begin{equation}\label{eq:etafix}
   0<\eta<\bar{\eta},\quad \bar\eta=\bar{\eta}(M,l)
   \end{equation}
   \item Finally, we choose further $\tau>0$ s.t.
   \begin{equation}
      \label{eq:taufix0}
      \begin{split}
         \tau<\tau_0  \qquad\text{as in Remark~\ref{rmk:stimax1},}
      \end{split}
   \end{equation}
   \begin{equation}
   \label{eq:taufix1}
   \tau<\tilde{\tau}, \,\tilde{\tau}\text{ as in Lemma \ref{lemma:stimaContributoVariazionePiccola} and Lemma \ref{lemma:stimaLinea}},
   \end{equation}
   \begin{equation}
   \label{eq:taufix2}
   \tau<\bar{\tau}, \text{ $\bar{\tau}$ as in Lemma \ref{lemma:local_rigidity} depending on $M,l,\delta/2$ and $\eta$}
   \end{equation}
   and
   \begin{equation}\label{eq:taufix3}
   \tau\text{ s.t. }C^*_\tau<C^*+\sigma.
   \end{equation}

   \end{itemize}

   Notice that, by the $\Gamma$-convergence result of Theorem \ref{thm:gammaconv}, $\exists\,\hat{\tau}$ s.t. if $\tau<\hat{\tau}$, then \eqref{eq:taufix3} holds. In particular, \eqref{eq:lfix} is satisfied with $-C^*_\tau$ instead of $-C^*-\sigma$.

Given such parameters, let us prove the theorem for any $L>l$ of the form $L=2kh^*_\tau$, with $k\in\N$.

% Since the minimizer of $\FtL$ is $L$-periodic it is convenient to think of $[0,L)^d$
Let $E$ be a minimizer of $\FtL$. Since $E$ is $L$-periodic, we can consider $E\subset \T^d_L$, where $\T^d_L$ is the $d$-dimensional torus of size $L$. Thus the problem is naturally defined on the torus. Hence
with a slight abuse of notation, we will denote by $[0,L)^d$  the cube of size $L$ with the usual identification of the boundary.

  {\bf Decomposition of $[0,L)^d$: }

We define
\begin{equation*}
   \begin{split}
      \tilde{A}_{0}:= \insieme{ z\in [0,L)^d:\ D_{\eta}(E,Q_{l}(z)) \geq \delta }.
   \end{split}
\end{equation*}
Hence, by Lemma \ref{lemma:local_rigidity}, for every $z\in \tilde{A}_{0}$ one has that $\bar{F}_{\tau}(E,Q_{l}(z)) > M$. 
   
Let us denote by $\tilde{A}_{-1}$ the set of points
\begin{equation*}
   \begin{split}
      \tilde{A}_{-1}: = \insieme{z\in [0,L)^d: \exists\, i,j \text{ with } i\neq j \text{ \st }\, D^{i}_{\eta} (E,Q_{l}(z))\leq\delta , D^{j}_{\eta} (E,Q_{l}(z)) \leq \delta }.
   \end{split}
\end{equation*}
One can easily see that $\tilde A_0$ and $\tilde{A}_{-1}$ are closed. 

By the choice of $\rho$ made in \eqref{eq:rhofix}, \eqref{eq:rhofix2} holds, namely for every $z\in \tilde{A}_{0}$ and $|z- z' |_\infty\leq\rho$ one has that $D_{\eta}(E,Q_{l}(z')) > \delta/2$.

Moreover, since $\delta$ satisfies \eqref{eq:deltafix2}, when $z\in \tilde{A}_{-1}$, then one has that $\min (|E\cap Q_{l}(z)|, |Q_{l}(z)\setminus E|) \leq  l^{d-1} $.

Thus, using  Lemma~\ref{lemma:stimaQuasiPieno} with $\delta=1/l$, one has that
\begin{equation*}
   \begin{split}
      \bar{F}_{\tau}(E, Q_{l}(z)) \gtrsim  -\frac1{l}.
   \end{split}
\end{equation*}
Moreover, let now $z'$ such that $|z- z' |_\infty\leq 1$ with $z\in \tilde{A}_{-1}$. It is not difficult to see that if $|Q_{l}(z)\setminus E | \leq l^{d-1}$ then $|Q_{l}(z')\setminus E| \lesssim l^{d-1}$. Thus from Lemma~\ref{lemma:stimaQuasiPieno}, one has that
\begin{equation}
\label{eq:tildeC}
   \begin{split}
      \bar{F}_{\tau}(E, Q_{l}(z')) \geq -\frac{\tilde C_d}{l}.
   \end{split}
\end{equation}

The above observations motivate the following definitions
   \begin{align}
         A_{0} &:= \insieme{ z' \in [0,L)^d: \exists\, z \in \tilde{A}_{0}\text{ with }|z-z'|_{\infty}  \leq \rho }\label{a0}\\
         A_{-1} &:= \insieme{ z' \in [0,L)^d: \exists\, z \in \tilde{A}_{-1}\text{ with }|z-z' |_{\infty}  \leq 1 },\label{a1}
   \end{align}

%    \achtung{
%      
%
% }

By the choice of the parameters and the observations above, for every $z\in A_{0}$ one has that $\bar{F}_{\tau}(E,Q_{l}(z)) > M$ and for every $z\in A_{-1}$, $\bar{F}_{\tau}(E,Q_{l}(z)) \geq-\tilde C_d/l$.

For simplicity of notation let us denote by $A:= A_{0}\cup A_{-1}$. 

The set $[0,L)^d\setminus A$ has the following property: for every $z\in [0,L)^d\setminus A$, there exists $i\in \{ 1,\ldots,d\}$ such that $D^{i}_{\eta}(E,Q_{l}(z)) \leq \delta$ and for every $k\neq i$ one has that $D^{k}_{\eta}(E,Q_{l}(z)) > \delta$.

   Given  that $A$ is closed, we consider the connected components $\mathcal C_{1},\ldots,\mathcal C_{n}$ of $[0,L)^d\setminus A$.  The sets $\mathcal C_{i}$ are path-wise connected. 

   Let us now show the following claim: given a connected component $\mathcal C_{j}$ one has that there exists  $i$ such that $D^{i}_{\eta}(E,Q_{l}(z)) \leq \delta$ for every $z\in\mathcal  C_{j}$  and for every $k\neq i$ one has that $D^{k}_{\eta}(E,Q_{l}(z)) > \delta$.  Indeed, suppose that there exists $z,z'\in\mathcal  C_{j}$  such that $D^{i}_{\eta}(E,Q_{l}(z)) \leq \delta$ and $D^{k}_{\eta}(E,Q_{l}(z')) \leq \delta$ with $i\neq k$ and take a continuous path $\gamma:[0,1]\to \mathcal C_{j}$ such that $\gamma(0) = z$ and $\gamma(1)= z'$.
   From our hypothesis, we have that $\{ s: D^{i}_{\eta}(E,Q_{l}(\gamma(s)))\leq\delta\} \neq \emptyset$ and there exists $\tilde{s}\in \partial \{s:\, D^{i}_{\eta}(E,Q_{l}(\gamma(s))) \leq \delta \}\cap\partial\{s:\,D^{j}_{\eta}(E,Q_{l}(\gamma(s))) \leq \delta\}$ for some $j\neq i$. Let $\tilde{z} = \gamma(\tilde{s})$. 
   Thus there are points arbitrary close to $\tilde{z}$ in $\mathcal C_{j}$ such that $D^{j}_{\eta}(E,Q_{l}(\cdot)) \leq \delta$ and $D^{i}_\eta(E,Q_{l}(\cdot)) \leq \delta$. 
   From the continuity of the maps $z\mapsto D^{i}_\eta(E,Q_{l}(z))$, $z\mapsto D^{j}_\eta(E,Q_{l}(z))$, we have that $\tilde{z}\in A_{-1} $, which contradicts our assumption. 
   We will say that $\mathcal C_j$ is oriented in direction $e_i$ if there is a point in $z\in \mathcal C_j$ such that $D^{i}_\eta(E,Q_{l}(z)) \leq \delta$. 
 Because of the above being oriented along direction $e_{i}$ is well-defined.

   We will denote by $A_{i}$ the union of the connected  components $\mathcal C_{j}$ such that $\mathcal C_{j}$ is oriented along the direction $e_{i}$. 
   
   An example of such a partition of $[0,L)^d$ for a periodic set $E$ is given in Figure \ref{fig:2bis}.
    
   \begin{figure}
   	\centering
   	\def\svgwidth{19cm}
   	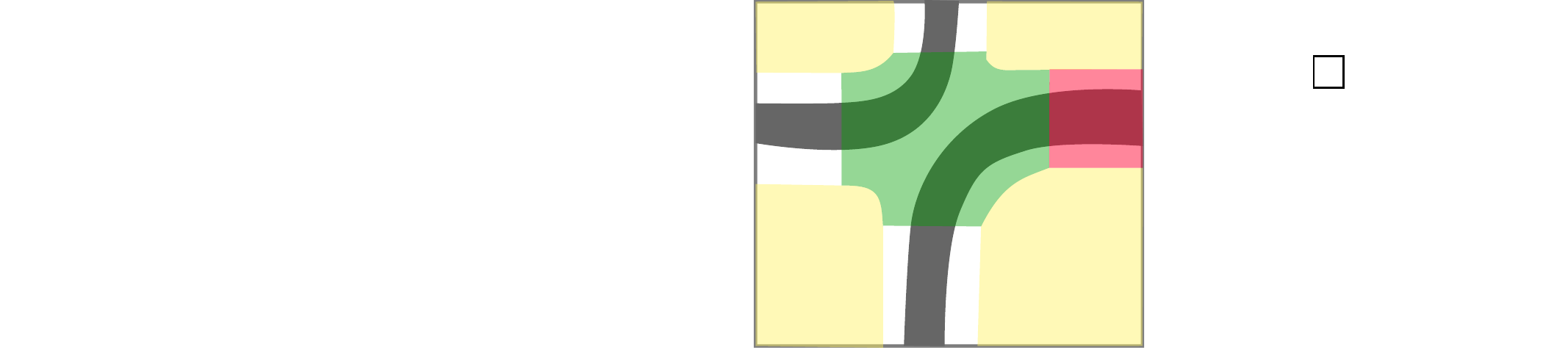
   	\caption{Example of decomposition of $[0,L)^d$ according to the distance of $E$ to stripes on squares $Q_l(z)$. The darker regions within the same color represent the set $E$.}
   	\label{fig:2bis}
   \end{figure}

   Let us now summarize the important properties that will be used in the following

   \begin{enumerate}[(i)]
   \item The sets $A=A_{-1}\cup A_{0}$, $A_{1}$, $A_{2}$, $\ldots, A_d$  form a partition of $[0,L)^d$. 
   \item The sets $A_{-1}, A_{0}$ are closed and $A_{i}$, $i>0$, are open.  
   \item For every $z\in A_{i}$, we have that $D^{i}_{\eta}(E,Q_{l}(z)) \leq \delta$. 
   \item  There exists $\rho$ (independent of $L,\tau$) such that  if $z\in A_{0}$, then $\exists\,z'$ s.t. $Q_{\rho}(z')\subset A_{0}$ and $z \in Q_{\rho}(z')$. If $z\in A_{-1}$ then $\exists\,z'$ s.t. $Q_{1}(z')\subset A_{-1}$ and $z \in Q_{1}(z')$. 
   \item For every $z\in {A}_{i}$ and $z'\in {A}_{j}$ one has that there exists a point $\tilde{z}$ in the segment connecting $z$ to $z'$ lying in ${A}_{0}\cup A_{-1}$. 
\end{enumerate}

   For simplicity we will denote by $B = \bigcup_{i> 0}A_{i}$.

{\bf Main Claim:}
For every $i$, and $\tau$ as in \eqref{eq:taufix1} and \eqref{eq:taufix2}, we will show that the following holds
\begin{equation}
   \label{eq:toBeShown_integral}
   \begin{split}
      \frac{1}{L^d}\int_{B} \bar{F}_{i,\tau}(E,Q_{l}(z))\dz + \frac1{d L^d} \int_{A}\bar{F}_{\tau}(E,Q_{l}(z)) \dz  \geq \frac{C^{*}_{\tau}|A_{i}|}{L^d} - C_d \frac{|A|}{l L^d}
   \end{split}
\end{equation}
for some constant $C_d$ depending on the dimension $d$.
Assuming  \eqref{eq:toBeShown_integral}, we can sum over $i$ and obtain that, since $l$ satisfies \eqref{eq:lfix} and $\tau$ satisfies \eqref{eq:taufix3}, 
\begin{equation*}
   \begin{split}
      \Fcal_{\tau,L}(E) & \geq \sum_{i=1}^{d}\frac{1}{L^d} \int_{[0,L)^d} \bar{F}_{i,\tau}(E,Q_{l}(z))  \dz \geq  \frac{C^{*}_{\tau}}{L^d} \sum_{i=1}^{d} |A_i|  - \frac{dC_d|A|}{lL^d}
      \\ & \geq C^{*}_{\tau} - C^*_{\tau} \frac{|A|} {L^d} - \frac{dC_d}{lL^d} |A| \geq C^{*}_{\tau},
   \end{split}
\end{equation*}
where in the above $C^{*}_{\tau}$ is the energy density of optimal stripes of stripes $h^{*}_{\tau}$ and  we have used that $C^{*}_{\tau} < 0$ and that $|A | + \sum_{i=1}^{d} |A_{i}| =  |[0,L)^d | = L^d$. 

Notice that, in the inequality above, equality holds only if $|A|=0$ and therefore by $(v)$ only if there is just one $A_i$, $i>0$ with $|A_i|>0$.

The rest of the proof is devoted to proving \eqref{eq:toBeShown_integral}.
Our claim, i.e. \eqref{eq:toBeShown_integral}, follows from the analogous statement on the slices: for every $t^{\perp}_{i}\in [0,L)^{d-1}$, it holds

\begin{equation}
   \label{eq:toBeShown_slice}
   \begin{split}
      \frac{1}{L^d} \int_{B_{t^{\perp}_{i}}} \bar{F}_{i,\tau}(E,Q_{l}(t^{\perp}_{i}+se_i))\ds + \frac1{dL^d} \int_{A_{t^{\perp}_{i}}}\bar{F}_{\tau}(E,Q_{l}(t^{\perp}_{i}+se_i)) \ds  \geq \frac{C^{*}_{\tau}|A_{i,t^{\perp}_{i}}|}{L^d} - C_d \frac{|A_{t^\perp_i}|}{l L^d}
   \end{split}
\end{equation}

Indeed by integrating \eqref{eq:toBeShown_slice} over $t^{\perp}_{i}$ we obtain \eqref{eq:toBeShown_integral}.

Notice also that $B_{t^{\perp}_{i}}$ is a finite union of intervals. Indeed, being a union of intervals follows from (ii) and finiteness follows from  condition  (v) on the decomposition.  Indeed, for every point that does not belong to $B_{t^\perp_{i}}$ because of (iv) there is a neighbourhood of fixed positive size that is not included in $B_{t^\perp_i}$. 
Let $\{ I_{1},\ldots,I_{n}\}$ such that $\bigcup_{j=1}^{n} I_{j} = B_{t_i^\perp}$ with $I_j \cap I_{k} = \emptyset$ whenever $j\neq k$. 
We can further assume that $I_{i} \leq I_{i+1}$, namely that for every $s\in I_{i}$ and $s'\in I_{i+1}$ it holds $s \leq s'$. 
By construction there exists $J_{j} \subset A_{t^{\perp}_{i}}$ such that $I_{j}\leq  J_{j} \leq I_{j+1}$.  

Whenever $J_j \cap A_{0,t_i^\perp}\neq\emptyset$,  we have that $|J_j | > \rho$  and whenever $J_{j} \cap A_{-1,t^\perp_i}\neq \emptyset $ then $|J_{i}| > 1$.

Thus we have that
\begin{equation*}
   \begin{split}
      \frac{1}{L^d} \int_{B_{t^{\perp}_{i}}} \bar{F}_{i,\tau}(E,Q_{l}(t^{\perp}_{i}+se_i)) \ds & + 
      \frac{1}{d L^d} \int_{A_{t^{\perp}_{i}}} \bar{F}_{\tau}(E,Q_{l}(t^{\perp}_{i}+se_i)) \ds 
      \\ & \geq \sum_{j=1}^n  \frac{1}{L^d}\int_{I_{j}} \bar{F}_{i,\tau}(E,Q_{l}(t^{\perp}_{i}+se_i)) \ds
       + \frac{1}{dL^d}\sum_{j=1}^n \int_{J_{j}} \bar{F}_{\tau}(E,Q_{l}(t^{\perp}_i+se_i)) \ds 
      \\ & \geq \frac{1}{L^d}\sum_{j=1}^n \Big( \int_{I_{j}} \bar{F}_{i,\tau}(E,Q_{l}(t^{\perp}_{i}+se_i)) \ds
       + \frac{1}{2d} \int_{J_{j-1}\cup J_j} \bar{F}_{\tau}(E,Q_{l}(t^{\perp}_i+se_i)) \ds\Big),
   \end{split}
\end{equation*}
where in order to obtain the third line from the second line, we have used periodicity and $J_0:=J_n$.

Let first $I_{j} \subset A_{i,t_i^\perp}$.  
By construction, we have that $\partial I_{j}\subset A_{t^\perp_i}$. 

If $\partial I_{j}\subset A_{-1,t^\perp_i}$, by using our choice of parameters, namely \eqref{eq:lfix} and \eqref{eq:taufix3}, we can apply \eqref{eq:gstr27} in Lemma~\ref{lemma:stimaLinea} and obtain
\begin{equation*}
   \begin{split}
      \frac{1}{L^d}\int_{I_{j}} \bar{F}_{i,\tau}(E,Q_{l}(t^{\perp}_{i}+se_i))\ds  \geq \frac{1}{L^d}\Big(| I_j| C^{*}_{\tau} -\frac{M_0} l\Big).
   \end{split}
\end{equation*}

If $\partial I_j \cap A_{0, t^\perp_i}\neq \emptyset$, by using our choice of parameters, namely \eqref{eq:lfix} and \eqref{eq:taufix3}, we can apply \eqref{eq:gstr36} in Lemma~\ref{lemma:stimaLinea}, and obtain
\begin{equation*}
   \begin{split}
      \frac{1}{L^d}\int_{I_{j}} \bar{F}_{i,\tau}(E,Q_{l}(t^{\perp}_{i}+se_i))\ds \geq\frac{1}{L^d}\Big(| I_j| C^{*}_{\tau}-M_0 l\Big).
   \end{split}
\end{equation*}

On the other hand, if $\partial I_j \cap A_{0,t^\perp_i}\neq \emptyset$, we have that either $J_{j}\cap A_{0,t^\perp_i}\neq \emptyset$ or $J_{j-1}\cap A_{0,t^\perp_i}\neq\emptyset$. Thus
\begin{equation*}
   \begin{split}
     \frac{1}{2dL^d}\int_{J_{j-1}} \bar{F}_{\tau}(E,Q_{l}(t^{\perp}_{i}+se_i)) \ds & + \frac{1}{2dL^d}\int_{J_{j}} \bar{F}_{\tau}(E,Q_{l}(t^{\perp}_{i}+se_i)) \ds  \\ &\geq  \frac{M\rho}{2dL^d}  - \frac{|J_{j-1}\cap A_{-1,t^\perp_i} |\tilde C_d}{2dl L^d} - \frac{|J_{j}\cap A_{-1,t^\perp_i} |\tilde C_d}{2dl L^d},
   \end{split}
\end{equation*}
where $\tilde C_d$ is the  constant in \eqref{eq:tildeC}.

Since $M$ satisfies \eqref{eq:Mfix}, in both cases $\partial I_{j}\subset A_{-1,t^\perp_i}$ or $\partial I_{j}\cap A_{0,t^\perp_i}\neq \emptyset$, we have that 
\begin{equation*}
   \begin{split}
      \frac{1}{L^d}\int_{I_{j}} \bar{F}_{i,\tau}(E,Q_{l}(t^\perp_i+se_i)) \ds &+ 
      \frac{1}{2dL^d}\int_{J_{j-1}}  \bar{F}_{\tau}(E,Q_{l}(t^{\perp}_{i}+se_i))\ds
      + \frac{1}{2dL^d}\int_{J_{j}}  \bar{F}_{\tau}(E,Q_{l}(t^{\perp}_{i}+se_i))\ds\\
      &\geq \frac{C^{*}_{\tau} |I_{j} |}{L^d} - \frac{|J_{j-1}\cap A_{-1,t^\perp_i}|\tilde C_d}{2dlL^d}
       - \frac{|J_{j}\cap A_{-1,t^\perp_i}|\tilde C_d}{2dlL^d}.
   \end{split}
\end{equation*}

If $I_{j} \subset A_{k,t^\perp_i}$ with $k \neq i$ from  Lemma~\ref{lemma:stimaLinea} Point (i) it holds
\begin{equation*}
   \begin{split}
      \frac 1{L^d}\int_{I_{j}} \bar{F}_{i,\tau}(E,Q_{l}(t^{\perp}_{i}+se_i)) \ds\geq  - \frac{M_0}{lL^d}.
   \end{split}
\end{equation*}

In general for every $J_{j}$  we have that 
\begin{equation*}
   \begin{split}
      \frac{1}{dL^d}\int_{J_{j}} \bar{F}_{\tau}(E,Q_{l}(t^{\perp}_{i}+se_i))\, \ds \geq   \frac{|J_{j}\cap A_{0,t^\perp_i} | M}{dL^d} - \frac{\tilde C_d}{dlL^d }|J_{j}\cap A_{-1,t^\perp_i}|. 
   \end{split}
\end{equation*}

For $I_{j}\subset A_{k,t^\perp_i}$ such that $(J_j \cup J_{j-1})\cap A_{0,t^\perp_i}\neq \emptyset$ with $k\neq i$, we have that 
\begin{equation*}
   \begin{split}
      \frac{1}{L^d}\int_{I_{j}} \bar{F}_{i,\tau}(E,Q_{l}(t^\perp_i+se_i)) \ds &+ 
      \frac{1}{2dL^d}\int_{J_{j-1}}  \bar{F}_{\tau}(E,Q_{l}(t^{\perp}_{i}+se_i))\ds
      + \frac{1}{2dL^d}\int_{J_{j}}  \bar{F}_{\tau}(E,Q_{l}(t^{\perp}_{i}+se_i))\ds\\
      &\geq -\frac{M_{0}}{lL^d} + \frac{M\rho}{2dL^d} - \frac{|J_{j-1}\cap A_{-1,t^\perp_i}|\tilde C_d}{2dlL^d}
       - \frac{|J_{j}\cap A_{-1,t^\perp_i}|\tilde C_d}{2dlL^d}.
       \\ &\geq
        - \frac{|J_{j-1}\cap A_{-1,t^\perp_i}|\tilde C_d}{2dlL^d}
       - \frac{|J_{j}\cap A_{-1,t^\perp_i}|\tilde C_d}{2dlL^d}.
   \end{split}
\end{equation*}
where the last inequality is true due to \eqref{eq:Mfix}.

% Thus, for every $j=1,\dots,n$ there are two cases.
%
% Either $I_j$ is followed by some $J_j$ with $J_j\cap A_0\neq\emptyset$, then $|J_j\cap A_0|\geq\rho$ and 
% \begin{align}
% \int_{I_{j}} \bar{F}_{i,\tau}(E,[0,L)^d(t^{\perp}_{i},s))\ds +\frac{1}{d}\int_{J_{j}} \bar{F}_{\tau}(E,[0,L)^d(t^{\perp}_{i},s))\ds &\geq \frac{1}{L^d}C^*_\tau|I_j|-\frac{C_d M_0}{lL^d}+\frac{\rho M}{dL^d}\notag\\
% &\geq \frac{1}{L^d}C^*_\tau|I_j|,\label{E:ineq1}
% \end{align}
% where the last inequality is true provided $\tau$ is so small that $M>M_0/(\rho l)$.

For $I_{j}\subset A_{k,t^\perp_i}$ such that $(J_j \cup J_{j-1})\subset  A_{-1,t^\perp_i}$ with $k\neq i$, we have that 
\begin{equation*}
   \begin{split}
      \frac{1}{L^d}\int_{I_{j}} \bar{F}_{i,\tau}(E,Q_{l}(t^\perp_i+se_i)) \ds &+ 
      \frac{1}{2dL^d}\int_{J_{j-1}}  \bar{F}_{\tau}(E,Q_{l}(t^{\perp}_{i}+se_i))\ds
      + \frac{1}{2dL^d}\int_{J_{j}}  \bar{F}_{\tau}(E,Q_{l}(t^{\perp}_{i}+se_i))\ds\\
      &\geq -\frac{M_{0}}{lL^d}   - \frac{|J_{j-1}\cap A_{-1,t^\perp_i}|\tilde C_d}{2dlL^d}
       - \frac{|J_{j}\cap A_{-1,t^\perp_i}|\tilde C_d}{2dlL^d}.
       \\ &\geq
        - \max\Big(M_{0},\frac{\tilde C_d}{d}\Big)\bigg(\frac{|J_{j-1}\cap A_{-1,t^\perp_i}|}{lL^d}
       + \frac{|J_{j}\cap A_{-1,t^\perp_i}|}{lL^d}\bigg).
   \end{split}
\end{equation*}
where in the last inequality we have used that $|J_j\cap A_{-1,t^\perp_i}|\geq1, \,|J_{j-1}\cap A_{-1,t^\perp_i}|\geq1$.

Summing over $j$, and taking $C_d=\max\Big(M_{0},\frac{\tilde C_d}{d}\Big) $, one obtains \eqref{eq:toBeShown_slice} as desired.

Therefore, it has been proved that there exists $i>0$ with $A_i = [0,L)^d$. 
    Finally, let us consider 
 	\begin{align}
 	\frac{1}{L^d}\int_{[0,L)^d}\bar F_\tau(E,Q_l(z))\dz&=\frac{1}{L^d}\int_{[0,L)^d}\bar F_{i,\tau}(E,Q_l(z))\dz\label{eq:fi}\\
 	&+\frac{1}{L^d}\sum_{j\neq i}\int_{[0,L)^d}\bar F_{j,\tau}(E,Q_l(z))\dz\label{eq:fj}
 	\end{align}

   We will now apply Lemma~\ref{lemma:stimaLinea} with $j =i$ and slice the cube $[0,L)^d$ in direction $e_i$. 
   From \eqref{eq:gstr21}, one has that \eqref{eq:fj} is nonnegative and strictly positive unless the set $E$ is a union of stripes in direction $e_i$. 
   On the other hand, from \eqref{eq:gstr28}, one has the \rhs of \eqref{eq:fi} is minimized by a periodic union of stripes in direction $e_i$ and with width $h^*_{\tau}$. 

   %   Applying \eqref{eq:gstr21} applied with $j=i$, one has that: when slicing $Q_L$ in directions orthogonal to $e_i$, any set that deviates from being exactly a stripe in direction $e_i$ gives a positive contribution (see \eqref{eq:gstr21}); slicing in direction $e_i$, instead, by \eqref{eq:gstr28} one has that periodic unions of stripes of period $h^*_\tau$ in direction $e_i$ are optimal. Therefore, 
 	% where by \eqref{eq:gstr21} the term \eqref{eq:fj} is always positive except when $E$ is a stripe in direction $e_i$, and by \eqref{eq:gstr28} the term \eqref{eq:fi} is minimized by periodic unions of stripes of period $h^*_\tau$ in direction $e_i$.  
 	%
 	% Then, similarly to the proof of Theorem \ref{T:main}, one gets that the minimizers must be periodic unions of stripes of period $h^*_\tau$ in direction $e_i$.}

\begin{proof}[Proof of Theorem~\ref{T:1.6}]
   Let us recall the notation $\kappa = \tau^{1/\beta}$, which was introduced in Section~\ref{sec:discrete}. 
   By using the continuous representation $\tilde{E}^{\kappa}$ of a discrete set $E\subset \kappa \Z^d$,  which is described in Section~\ref{sec:discrete}, 
   one can see the rescaled discrete functional $\Fcal^\discrete_{\tau,L}(E)$ as the functional $\Fcal^{\discreteToCont}_{\tau,L}(\tilde{E}^{\kappa})$. 
   Therefore the discrete problem is transformed into the continuous problem into a continuous one. 

   All the statements that have been used to prove Theorem~\ref{T:1.3}, are obtained by using 
   \begin{enumerate}[(i)]
      \item qualitative properties of the kernel, namely \eqref{eq:Kbound}
      \item one-dimensional optimization via the reflection positivity (see assumption \ref{eq:laplpos}).
   \end{enumerate}

   % All the qualitative properties can be adapted 
   In the discrete,  the kernel is $K^{\discrete}_{\kappa}(\zeta) = \frac{\kappa^{d}}{|\zeta|^p}$. The measure defined by $\mu_\kappa = \sum_{\zeta\in \kappa \Z^d\setminus\{0\}} K^{\discrete}_{\kappa}(\zeta)$ converges to the measure $\frac{1}{|\zeta|^p}\d\zeta$ and the piecewise constant function associated to $\mu_{\kappa}$, namely
\begin{equation*}
   \begin{split}
      x\to \sum_{\zeta \in \kappa \Z^{d}\setminus\{0\}} \frac{1}{|\zeta|^p}\chi_{Q_\kappa(\zeta)}(x)
   \end{split}
\end{equation*}
converges in $L^1_{\lok}(\R^d\setminus\{ 0\})$ to $\frac{1}{|x|^{p}}$. 

Thus in a similar way to the continuous setting, one can define similar quantities to $r_{i,\tau}$, $v_{i,\tau}$, $w_{i,\tau}$ and $\bar{F}_{i,\tau}$ and obtain the same type of estimates. Indeed, the only two lemmas that would depend on the specific form of the kernel are Lemma~\ref{lemma:1D-optimization} and Lemma~\ref{lemma:stimaLinea} in which Lemma~\ref{lemma:1D-optimization} is used. They depend on the specific form of the kernel as the reflection positivity technique is used. Given that reflection positivity holds for the discrete kernel, one can obtain the same type of results for Lemma~\ref{lemma:1D-optimization} and consequently for Lemma~\ref{lemma:stimaLinea}.

   % It is not difficult to see that all the estimates in Section~\ref{sec:taul} can be adapted. Indeed the properties needed are the qualitative ones of the kernel. 
   As the proof of Theorem~\ref{T:1.3} is a consequence of the previous lemmas, one has that the proof follows by the same reasoning. 
   % The only difference is when the periodic extension is made in the proof of Theorem 1.3: one cosiders instead
   % of a generic periodic extension, a periodic extension that is compatible with the lattice amplitude.
\end{proof}

\section*{Acknowledgements}
We would like to thank M. Cicalese, A. Giuliani and E. Spadaro for useful comments.

\end{document}